\theoremstyle{plain}
\newtheorem{thm}{Theorem}
\newtheorem{cor}[thm]{Corollary}
\newtheorem{lem}[thm]{Lemma}
\newtheorem{prop}[thm]{Proposition}
\theoremstyle{remark}
\theoremstyle{definition}
\newtheorem{defn}[thm]{Definition}
\newtheorem{notn}[thm]{Notation}
\newtheorem{rem}[thm]{Remark}
\newtheorem{ass}[thm]{Assumption}
\title{Equivariant scaling asymptotics for
Poisson and Szeg\H{o} kernels on Grauert tube boundaries}
\author{Simone Gallivanone and Roberto Paoletti\footnote{\noindent{\bf Address:}
Dipartimento di Matematica e Applicazioni, Universit\`a degli Studi
di Milano Bicocca, Via R. Cozzi 55, 20125 Milano,
Italy; {\bf e-mail}: s.gallivanone@campus.unimib.it,
roberto.paoletti@unimib.it }}
\date{}
\begin{document}
\maketitle

\begin{abstract}
Let $(M,\kappa)$ be a closed and connected real-analytic Riemannian manifold, acted upon by a compact Lie group of isometries $G$.
We consider the following two kinds of equivariant asymptotics along a fixed Grauer tube boundary $X^\tau$ of $(M,\kappa)$.

\noindent
1): Given the induced unitary representation of $G$ on the eigenspaces of the Laplacian of $(M,\kappa)$, these split 
over the irreducible representations of $G$.
On the other hand, the eigenfunctions of the Laplacian of $(M,\kappa)$ admit a simultaneous complexification to some Grauert tube.
We study the asymptotic concentration along $X^\tau$ of the complexified 
eigenfunctions pertaining to a fixed isotypical component.

\noindent
2):
There are furthermore an induced action of $G$ as a group of CR and contact automorphisms
on $X^\tau$, and a corresponding
unitary representation on the Hardy space $H(X^\tau)$. 
The action of $G$ on $X^\tau$ commutes with the homogeneous \lq geodesic flow\rq\,
and the representation on the Hardy space commutes with the elliptic self-adjoint Toeplitz operator induced by the
generator of the goedesic flow. Hence each eigenspace of the latter also splits over the irreducible representations of
$G$. We study the asymptotic concentration of the eigenfunctions in a given isotypical component.

We also give some applications of these asymptotics.

\end{abstract}

\section{Introduction}
\label{sctn:intro}

Let $M$ be a compact and connected $d$-dimensional real-analytic (in the following, $\mathcal{C}^\varpi$) manifold. There exists an essentially
unique complex manifold $(\tilde{M}, J)$, the so-called \textit{Bruhat-Whitney complexification} of $M$, in which $M$ embeds as totally real submanifold (i.e. $T_xM\cap J_x\left(T_xM\right) = \{0\}$ for any $x\in M$) 
\cite{bw}. More precisely, $(\tilde{M},J)$ 
is uniquely determined as a germ of complex manifold along $M$;
in particular, since both $(\tilde{M},J)$ and $(\tilde{M},-J)$ are 
complexifications of $M$, there is an anti-holomorphic involution 
$\sigma:\tilde{M}\rightarrow \tilde{M}$ having $M$ as its fixed locus. 

As proved and discussed in \cite{gs1}, \cite{gs2}, \cite{l}, \cite{ls},
\cite{sz} (see also \cite{pw2} and \cite{pw1}),
the choice of a $\mathcal{C}^\varpi$
Riemannian metric $\kappa$ on $M$ singles out a unique 
$\mathcal{C}^\varpi$ exhaustion function on $\tilde{M}$ (perhaps after passing to a smaller
tubular neighbourhood of $M$ in $\tilde{M}$), with the following
properties:
\begin{enumerate}
    \item $\rho:\tilde{M}\rightarrow [0,+\infty)$, and
    $M=\rho^{-1}(0)$;
    \item $\rho = \rho\circ \sigma$;
    \item $\rho$ is strictly plurisubharmonic, i.e. it induces  
    a K\"ahler form $\Omega$ and  a Riemannian metric
    $\hat{\kappa}$ on $\tilde{M}$ given by
    \begin{equation}
    \label{eqn:Omega and kappa}
    \Omega:=\imath\partial\overline{\partial}\rho,\qquad
    \hat{\kappa} := \Omega(\cdot, J(\cdot)).
    \end{equation} 
    \item $(M,\kappa)$ is a Riemannian submanifold of 
    $(\tilde{M},\hat{\kappa})$;
    \item on $\tilde{M}\setminus M$ the function $\sqrt{\rho}$ 
    satisfies the complex homogeneous Monge-Amp\`{e}re equation:
    \begin{equation*}
        \mathrm{det}\left(\frac{\partial^2\sqrt{\rho}}{\partial z_i\partial\overline{z}_j}\right)=0.
    \end{equation*}
\end{enumerate}
Without pretense of completeness, we refer the reader to \cite{b},
\cite{bh}, \cite{g}, \cite{z07}, \cite{z12}, \cite{z20}, \cite{cr1} and \cite{cr2}
for a wider foundational discussion.


Let $\tau_0^2:=\sup (\rho)$; then 
$\tau_0\in (0,+\infty]$, and
for any $\tau\in (0,\tau_0)$
the (open) \textit{Grauert tube of radius $\tau$} 
and its boundary are given by
\begin{equation}
\label{eqn:MXtau}
\tilde{M}^\tau:=\rho^{-1}\big([0,\tau^2)\big),
\qquad X^\tau:=\partial \tilde{M}^\tau
=\left\{\sqrt{\rho}=\tau   \right\}.
\end{equation}

Grauert tubes have been studied extensively in recent years, 
both for their intrinsic geometric interest and in relation
to the a fundamental holomorphic extension property of the eigenfunctions
of the non-negative Laplacian $\Delta$ of $(M,\kappa)$,
which was first discovered by Bouted de Monvel
\cite{bdm1}; this foundational result was also proved
and analyzed in \cite{leb}, \cite{s1}, \cite{s2},\cite{z20}
(see furthermore the discussion in \cite{gs2}).
Grauert tube techniques have also been applied to the study 
of nodal sets (see \cite{ct16}, \cite{ct18}, \cite{tz09},
\cite{tz21}).

Let us recall 
Boutet de Monvel's foundational result,
as formulated in \cite{leb}.
Denote by
\begin{equation}
\label{eqn:eigenvalue laplacian}
0=\mu_1^2<\mu_2^2<\cdots\uparrow +\infty\qquad 
\text{where}\qquad 
\mu_j\ge 0
\end{equation}
the distinct eigenvalues of $\Delta$; for every $j=1,2,\cdots$, 
let 
$W_j\subset \mathcal{C}^\infty(M)$ be the (finite-dimensional)
eigenspace for the eigenvalue
$\mu_j^2$, and let $(\varphi_{j,k})_{k=1}^{n_j}$ a real orthonormal basis of
$W_j$. 
For $\mu\in \mathbb{R}$, let us set
$\langle\mu\rangle:=\left(1+\mu^2\right)^{1/2}$.

\begin{thm}
\label{thm:holo extension} (Boutet de Monvel)
There exists $\tau_1\in (0,\tau_0]$ such that the following holds:
\begin{enumerate}
\item every eigenfunction 
$\varphi$ of $\Delta$    
admits a holomorphic extension $\tilde{\varphi}$ to 
 $\tilde{M}^{\tau_1}$, hence 
for any $\tau\in (0,\tau_1)$ 
 the restriction 
 $\tilde{\varphi}^\tau:=\left.\tilde{\varphi}\right|_{X^\tau}$ 
 to $X^\tau$ is a CR function;
 \item the sequence of functions
 $$
 \left(e^{-\tau\,\mu_j}\,\langle \mu_j\rangle^{(d-1)/4}\,
 \tilde{\varphi}_{j,k}^\tau\right)_{j\ge 1, \,k=1,\ldots,n_j}
 $$
 is a Riesz basis of $H(X^\tau)$.
\end{enumerate}

\end{thm}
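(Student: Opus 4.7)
My plan is first to establish part (1)---uniform holomorphic extendability of all eigenfunctions together with sharp exponential bounds---and then to use those bounds, in combination with the Boutet de Monvel--Sj\"ostrand microlocal description of the Szeg\H{o} kernel, to reduce (2) to the invertibility of a single natural Poisson-type operator.

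For (1), every eigenfunction $\varphi_{j,k}$ is real-analytic on $M$ by elliptic regularity applied to $(\Delta-\mu_j^2)\varphi_{j,k}=0$ with real-analytic coefficients, so it extends holomorphically to some a priori $j$-dependent complex neighbourhood of $M$. The uniformity in $j$ I would extract from the Poisson operator $e^{-t\sqrt{\Delta}}$: its Schwartz kernel $P_t(x,y)$ is the imaginary-time continuation of the half-wave propagator $e^{-it\sqrt{\Delta}}$, and the Hadamard parametrix together with analytic propagation of singularities (the technical core of Boutet de Monvel's theorem) shows that $P_t(x,y)$ admits a holomorphic extension in $x$ to the Grauert tube of radius $t$ around $y$. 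Applying this to an eigenfunction,
\[
\tilde\varphi_{j,k}(z)=e^{t\mu_j}\int_M P_t(z,y)\,\varphi_{j,k}(y)\,dV_M(y)\qquad\bigl(\sqrt{\rho(z)}<t\bigr),
\]
yields simultaneously the extension to $\tilde M^{\tau_1}$ for any $\tau_1<t$ and the pointwise bound $|\tilde\varphi_{j,k}(z)|\le C_\varepsilon\,e^{(\sqrt{\rho(z)}+\varepsilon)\,\mu_j}$.

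For (2), I would recast the claim as the boundedness and invertibility of the operator
\[
P_\tau:L^2(M)\longrightarrow H(X^\tau),\qquad P_\tau\,\varphi_{j,k}:=e^{-\tau\mu_j}\,\langle\mu_j\rangle^{(d-1)/4}\,\tilde\varphi_{j,k}^\tau,
\]
since any bounded isomorphism sends the orthonormal basis $(\varphi_{j,k})$ of $L^2(M)$ to a Riesz basis of its range. Boundedness of $P_\tau$ follows from the estimates of (1) combined with the stationary-phase asymptotic
\[
\|\tilde\varphi_{j,k}^\tau\|_{L^2(X^\tau)}^2=c_d\,\mu_j^{-(d-1)/2}\,e^{2\tau\mu_j}\,\bigl(1+o(1)\bigr),
\]
which in particular explains the normalising exponent $(d-1)/4$, together with a quantitative control of the off-diagonal inner products $(\tilde\varphi_{j,k}^\tau,\tilde\varphi_{l,m}^\tau)_{L^2(X^\tau)}$ for $\mu_j\neq\mu_l$ obtained by integration by parts in $\sqrt{\Delta}$. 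For invertibility, the key remark is that $P_\tau$ intertwines $\sqrt{\Delta}$ on $L^2(M)$ with the elliptic self-adjoint first-order Toeplitz operator on $H(X^\tau)$ induced by the generator of the homogeneous geodesic flow; by the Boutet de Monvel--Guillemin Hermite--FIO symbol calculus applied to the Szeg\H{o} kernel, $P_\tau$ is then an elliptic Fourier-integral operator of order zero, and the spectral description of its domain and range forces it to be injective with dense range, hence a Banach-space isomorphism.

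The step I expect to be the main obstacle is the lower Riesz bound, i.e.\ the invertibility of $P_\tau$. The upper bound is essentially the exponential pointwise estimate of (1), but the lower bound is a quantitative spectral completeness statement in $H(X^\tau)$ that requires genuine microlocal input---specifically the Boutet de Monvel--Sj\"ostrand parametrix for the Szeg\H{o} projector and the identification of $(X^\tau,H(X^\tau))$ with the Toeplitz quantisation of the co-disc bundle of $(M,\kappa)$. I would treat these as black-box facts imported from \cite{bdm1}, \cite{leb}, \cite{z20} rather than reprove them.
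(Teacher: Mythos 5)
The paper does not prove this statement at all: it is quoted as Boutet de Monvel's foundational theorem, in Lebeau's formulation, and imported from \cite{bdm1}, \cite{leb}, \cite{s1}, \cite{s2}, \cite{z20}, so there is no internal proof to compare against. Your sketch reproduces the standard strategy of that literature: extension and exponential bounds via the analytically continued Poisson kernel, and the Riesz basis property via the operator $\varphi_{j,k}\mapsto e^{-\tau\mu_j}\langle\mu_j\rangle^{(d-1)/4}\tilde\varphi^\tau_{j,k}$ being a bounded isomorphism $L^2(M)\to H(X^\tau)$ — which is exactly what follows from the facts the paper itself later records in \S\ref{sctn:complexified eigenf lapl}, namely that $\mathfrak{P}^\tau$ is an FIO with complex phase of degree $-(d-1)/4$ inducing Sobolev isomorphisms $W^s(M)\to\mathcal{O}^{s+\frac{d-1}{4}}(X^\tau)$, and that $\left\|\tilde\varphi^\tau_{j,k}\right\|^2_{L^2(X^\tau)}\sim D_\tau\,e^{2\tau\mu_j}\mu_j^{-(d-1)/2}$. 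Two caveats: (i) the "integration by parts in $\sqrt{\Delta}$" control of off-diagonal inner products is not really the mechanism — the cleaner route is that $(\tilde\varphi^\tau_{j,k},\tilde\varphi^\tau_{l,m})_{L^2(X^\tau)}=e^{\tau(\mu_j+\mu_l)}\langle\mathfrak{Q}^\tau\varphi_{j,k},\varphi_{l,m}\rangle_{L^2(M)}$ with $\mathfrak{Q}^\tau=(\mathfrak{P}^\tau)^*\mathfrak{P}^\tau$ an elliptic positive pseudodifferential operator of order $-(d-1)/2$, which gives both Riesz bounds at once; (ii) the two black boxes you invoke (holomorphic extendability of the Poisson kernel to the tube, and invertibility/surjectivity of $P_\tau$) constitute the entire analytic content of the theorem, so at the level of detail given your argument is a correct reduction to the cited results rather than an independent proof — which is also precisely how the paper treats the statement.
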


If $\tau\in (0,\tau_0)$, 
on $X^\tau$ we have the following geometric structures
(see \S 3.2 of \cite{p24}, to which we conform for conventions and notation):
\begin{enumerate}
\item a contact form $\alpha^\tau$, given a follows: if 
$\jmath^\tau:X^\tau\hookrightarrow \tilde{M}$ is the inclusion,
\begin{equation}
\label{eqn:contact str}
\alpha^\tau:={\jmath^\tau}^*(\alpha),\qquad
\text{where}\qquad
\alpha:=\Im(\partial\rho);
\end{equation}

\item a CR structure 
\begin{equation}
\label{eqn:defn Htau}
\mathcal{H}^\tau:=TX^\tau\cap J(TX^\tau)=\ker(\alpha^\tau)
\end{equation}
(the maximal complex sub-vector bundle of the tangent bundle $TX^\tau$);

\item the closed symplectic cone sprayed by $\alpha^\tau$ 
in the complement of the
zero section $X^\tau_0$ of the cotangent bundle $T^\vee X^\tau$
of $X^\tau$:
\begin{equation}
\label{eqn:Sigmataucone}
\Sigma^\tau:=\left\{\left(x,r\,\alpha^\tau_x\right)\,:\,
x\in X^\tau,\,r>0\right\}\subseteq T^\vee X^\tau\setminus X^\tau_0;
\end{equation}
\item the smooth vector field $\upsilon_{\sqrt{\rho}}^\tau\in 
\mathfrak{X}(X^\tau)$ given by the restriction to $X^\tau$ of the Hamiltonian vector field $\upsilon_{\sqrt{\rho}}$ of $\sqrt{\rho}$
with respect to $\Omega$;

\item the Reeb vector field $\mathcal{R}^\tau$ of $(X^\tau,\alpha^\tau)$,
given by 
\begin{equation}
\label{eqn:reeb vct field}
\mathcal{R}^\tau:=-\frac{1}{\tau}\,\upsilon_{\sqrt{\rho}}^\tau;
\end{equation}
\item the real vector sub-bundle $\mathcal{T}^\tau$ 
of the tangent bundle $TX^\tau$ generated $\mathcal{R}^\tau$, so that
\begin{equation}
\label{eqn:dicrect sum vert hor}
TX^\tau=\mathcal{T}^\tau\oplus \mathcal{H}^\tau
\end{equation}
(we shall occasionally refer to $\mathcal{T}^\tau$ and
$\mathcal{H}^\tau$ as, respectively, the \textit{$\alpha^\tau$-vertical and
the 
$\alpha^\tau$-horizontal tangent bundles of $X^\tau$});
\item a \lq Riemannian\rq\, volume form $\mathrm{vol}^R_{X^\tau}$,
given by the contraction of the gradient of $\sqrt{\rho}$ with the 
symplectic volume form of $(\tilde{M}\setminus M,\Omega)$, restricted to
$X^\tau$.
\end{enumerate}

In addition, 
on $X^\tau$ we have the following analytic structures:

\begin{enumerate}
\item the Hardy space $H(X^\tau):=\mathrm{ker}(\overline{\partial}_b)\subseteq L^{2}(X^{\tau})$;
\item
the corresponding Szeg\H{o} projector and its distributional kernel,
the so-called Szeg\H{o} kernel:
    \begin{equation}
    \label{eqn:szego proj and ker}
   \Pi^\tau: L^{2}(X^{\tau}) \longrightarrow H(X^\tau),
   \qquad
   \Pi^\tau(\cdot,\cdot)\in \mathcal{D}'(X^\tau\times X^\tau)
   \end{equation}
(what follows is pivoted on the microlocal description of 
$\Pi^\tau$ as a Fourier integral operator in \cite{bs});
    
    \item a privileged collection of CR functions (elements of 
    $H(X^\tau)$),
    given by the restrictions $\tilde{\varphi}^\tau$
    to $X^\tau$ of the holomorphic extensions 
    $\tilde{\varphi}$
    of the eigenfunctions $\varphi$ of $\Delta$ (if $\tau\in (0,\tau_1)$);
\item the self-adjoint first-order elliptic 
    Toeplitz operator
    \begin{equation}
    \label{eqn:DsqrtrhoToeplitz}
    \mathfrak{D}^\tau_{\sqrt{\rho}}:= \Pi^\tau\circ D^\tau_{\sqrt{\rho}}\circ \Pi^\tau, \qquad \text{where}\qquad 
    D^\tau_{\sqrt{\rho}}:=\imath\,\upsilon^\tau_{\sqrt{\rho}}.
    \end{equation}
     
\end{enumerate}

In fact, the principal symbol of $\mathfrak{D}^\tau_{\sqrt{\rho}}$
(i.e., the restriction to $\Sigma^\tau$
in (\ref{eqn:Sigmataucone}) of the principal symbol of 
$D^\tau_{\sqrt{\rho}}$) is strictly positive; 
hence the spectrum of $\mathfrak{D}^\tau_{\sqrt{\rho}}$
is discrete
and accumulates only at $+\infty$ \cite{bg}.


In a series of papers,
Zelditch has laid the foundations and paved the way for the asymptotic study 
of both the complexified eigenfunctions of the Laplacian and of the
eigenfunctions of $\mathfrak{D}^\tau_{\sqrt{\rho}}$
(see e.g. \cite{z07}, \cite{z12}, \cite{z14}, \cite{z17}, \cite{z20}).
These asymptotics are encrypted in certain \lq tempered\rq\, 
complexified
spectral projectors (in the case of the Laplacian) and in a smoothed
spectral projector (in the case of $\mathfrak{D}^\tau_{\sqrt{\rho}}$),
whose definitions we briefly recall.

In the case of the Laplacian, the asymptotic distribution of the
complexified eigenfunctions of $\Delta$, restricted to $X^\tau$,
is encapsulated in the asymptotics for 
$\lambda\rightarrow+\infty$ of
the tempered complexified projector kernels
$P^\tau_{\chi,\,\lambda}\in \mathcal{C}^\infty(X^\tau\times X^\tau)$ 
given by
\begin{equation}
\label{eqn:temepered_proj_kernel_poisson}
        P^\tau_{\chi,\,\lambda}(x,y) := \sum_{j\geq 1} 
        \hat{\chi}(\lambda-\mu_j) e^{-2\tau\mu_j} \sum_{k=1}^{n_j} \tilde{\varphi}^\tau_{j,k}(x)\,\overline{\tilde{\varphi}^\tau_{j,k}(y)},
\end{equation}
where $\chi\in \mathcal{C}^\infty_c(\mathbb{R})$
(the space of compactly supported smooth functions on $\mathbb{R}$) 
and
$\hat{\chi}$ is its Fourier transform. 
Since the restrictions $(\tilde{\varphi}^\tau_{j,k})_k$ are not
orthonormal, the inner sum may not be interpreted as a genuine
projector, even in a rescaled sense.

Similarly, let
$\lambda_1^\tau<\lambda_2^\tau<\cdots\uparrow +\infty$ denote the distinct eigenvalues
of $\mathfrak{D}^\tau_{\sqrt{\rho}}$, and for each 
$j$ let $H(X^\tau)_j\subset H(X^\tau)$ be the eigenspace of
$\lambda_j^\tau$. For $j=1,2,\ldots$, let
$(\rho_{j,k}^\tau)_{k=1}^{\ell_j^\tau}$ be an orthonormal basis of
$H(X^\tau)_j$. The asymptotic distribution of the 
$\rho_{j,k}^\tau$'s and of the $\lambda_j^\tau$'s is probed 
by smoothed projection kernels of the form
$\Pi^\tau_{\chi,\,\lambda}\in \mathcal{C}^\infty(X^\tau\times X^\tau)$
given by
\begin{equation}
\label{eqn:smoothed proj kern}
        \Pi^\tau_{\chi,\,\lambda}(x,y) := 
        \sum_{j\geq 1} \hat{\chi}\left(\lambda-\lambda_j^\tau\right) 
        \,\sum_{k=1}^{ \ell_j^\tau} \rho_{j,k}(x)\,\overline{\rho_{j,k}(y)}.
    \end{equation}
This is the Schwartz kernel of the smoothed projector 
\begin{equation}
\label{eqn:Pi chi lambda}
\Pi^\tau_{\chi,\,\lambda}:= 
\sum_{j\geq 0} \hat{\chi}\left(\lambda-\lambda_j^\tau\right)\Pi^\tau_j,
\end{equation}
where $\Pi^\tau_j:L^2(X^\tau)\rightarrow H(X^\tau)_j$ is the orthogonal
projector.

\begin{rem}
In the following, for notational simplicity we shall omit the suffix $\tau$ from the
spectral data $\lambda_j^\tau$ and $\ell_j^\tau$.
\end{rem}

While (\ref{eqn:temepered_proj_kernel_poisson}) is peculiar to the
Grauert tube setting, (\ref{eqn:smoothed proj kern}) is the
counterpart of classically studied kernels in the spectral theory
of pseudodifferential operators (see e.g. \cite{dg} and \cite{gs});
in the Toeplitz setting, in the geometric context of positive line bundles and with an emphasis
on local scaling asymptotics, they have been used in \cite{p09}, \cite{p10}, \cite{p11},
\cite{p12}, \cite{p17}, \cite{p18}, \cite{zz18}, \cite{zz119},
\cite{zz219}.

In two recent papers (\cite{cr1} and \cite{cr2}), 
Chang and Rabinowitz have 
made groundbreaking progress in pushing forward the analogy
between the line bundle and the Grauert tube settings, providing
local scaling asymptotics for 
(\ref{eqn:temepered_proj_kernel_poisson}) and 
(\ref{eqn:smoothed proj kern}) bearing a striking similarity with
the local scaling asymptotics for the 
Fourier components of Szeg\H{o} kernels 
in the line bundle setting (\cite{z98}, \cite{bsz} and \cite{sz}).
Besides the description of $\Pi^\tau$ as a Fourier integral
operator in \cite{bs}, which underpins the whole theory, 
the approach of Chang and Rabinowitz in based on the following
two pillars. The first is the description, due to Zelditch,
of certain 1-parameter groups of Toeplitz operators
as \lq dynamical Toeplitz operators\rq, which goes back to
\cite{z97} and is pervasive in his treatment of the subject.
The second is a clever use of the \lq Heisenberg local coordinates\rq\,
adapted to a hypersurface in a complex manifold introduced by
Folland and Stein in \cite{fs1} and \cite{fs2}. The approach of
Chang and Rabinowitz has been reviewed and elaborated in
\cite{p24}, where a refinement of their asymptotic
expansions is given in the near-diagonal case, where 
$\chi\in \mathcal{C}^\infty_0\big( (-\epsilon,\epsilon) \big)$.

The aim of this paper is to extend the results of \cite{p24}
to the (off-diagonal and) equivariant setting. 
Suppose given a smooth action
$\mu:G\times M\rightarrow M$ of a 
$d_G$-dimensional
compact Lie group $G$ as a group of
symmetries on $(M,\kappa)$. 
Then there are naturally induced unitary representations of
$G$ on $L^2(M)$ and $H(X^\tau)$, commuting with $\Delta$ and 
$\mathfrak{D}^\tau_{\sqrt{\rho}}$, respectively.
Therefore, for every $j\ge 1$ 
both the eigenspaces $W_j\subset L^2(M)$ of $\Delta$ and 
$H(X^\tau)_j\subset H(X^\tau)$ are invariant. 

\begin{notn}
\label{notn:irred repr char}
Let $\hat{G}$ denote the unitary dual of $G$; for every
$\nu\in \hat{G}$,
we shall adopt the following notation.

\begin{enumerate}
\item $V_\nu$ is the representation space
of $\nu$, and $\dim(\nu):=\dim(V_\nu)$;
\item $\Xi_\nu:G\rightarrow \mathbb{C}$ is the character of $\nu$.
\item If $G$ is unitarily represented on a Hilbert space
$H$, $H_\nu\subseteq H$ will
denote  the $\nu$-th isotypical component of $H$.
\end{enumerate}

\end{notn}

By the Theorem 
of Peter and Weyl (see e.g. \cite{s95})
there are unitary equivariant Hilbert direct sum decompositions
\begin{equation}
\label{eqn:L2Mdec}
L^2(M)=\bigoplus_{\nu\in \hat{G}}L^2(M)_\nu,
\quad W_j=\bigoplus_{\nu\in \hat{G}}W_{j,\nu},
\end{equation}
where $W_{j,\nu}:=W_j\cap L^2(M)_\nu$,
and similarly
\begin{equation}
\label{eqn:L2Mdecompequiv}
H(X^\tau)=\bigoplus_{\nu\in \hat{G}}H(X^\tau)_\nu,
\quad H(X^\tau)_j=\bigoplus_{\nu\in \hat{G}}H(X^\tau)_{j,\nu},
%
\end{equation}
where
$H(X^\tau)_{j,\nu}:=H(X^\tau)_j\cap H(X^\tau)_\nu$.

For every $(j,\nu)$ such that $W_{j,\nu}\neq (0)$, let 
$(\varphi_{j,\nu,k})_{k=1}^{n_{j,\nu}}$ be a real orthonormal
basis of $W_{j,\nu}$. The equivariant analogue of 
(\ref{eqn:temepered_proj_kernel_poisson}) is the smooth kernel
$P^\tau_{\chi,\nu,\lambda}(\cdot,\cdot)\in \mathcal{C}^\infty(X^\tau\times X^\tau)$
given by
\begin{equation}
\label{eqn:temepered_proj_kernel_poisson_equiv}
        P^\tau_{\chi,\nu,\lambda}(x,y) := \sum_{j\geq 1} 
        \hat{\chi}(\lambda-\mu_j) e^{-2\tau\mu_j} \sum_{k=1}^{n_{j,\nu}} \tilde{\varphi}^\tau_{j,\nu,k}(x)\,
        \overline{\tilde{\varphi}^\tau_{j,\nu,k}(y)}.
\end{equation}
Similarly, for every $(j,\nu)$ for which 
$H(X^\tau)_{j,\nu}\neq (0)$, 
let $(\rho_{j,\nu,k})_{k=1}^{\ell_{j,\nu}}$
be an orthonormal basis of $H(X^\tau)_{j,\nu}$. The 
equivariant analogue of (\ref{eqn:smoothed proj kern}) is the
smooth kernel $\Pi^\tau_{\chi,\nu,\lambda}(\cdot,\cdot)\in 
\mathcal{C}^\infty(X^\tau\times X^\tau)$ given by
\begin{equation}
\label{eqn:smoothed proj kern equiv}
        \Pi^\tau_{\chi,\nu,\lambda}(x,y) := 
        \sum_{j\geq 1} \hat{\chi}\left(\lambda-\lambda_j\right) 
        \,\sum_{k=1}^{ \ell_{j,\nu}} \rho_{j,\nu,k}(x)\,
        \overline{\rho_{j,\nu,k}(y)}.
    \end{equation}
Thus $\Pi^\tau_{\chi,\nu,\lambda}(\cdot,\cdot)$ is the Schwartz kernel
of the smoothed projector
\begin{equation}
\label{eqn:Pi chi nu lambda projector}
\Pi^\tau_{\chi,\nu,\lambda} := 
        \sum_{j\geq 1} \hat{\chi}\left(\lambda-\lambda_j\right) 
        \,\sum_{k=1}^{ \ell_{j,\nu}} \,
        \Pi^\tau_{\nu,j}
=P_\nu\circ \Pi_{\chi,\lambda}, 
\end{equation}
where $\Pi^\tau_{\nu,j}:L^2(X^\tau)\rightarrow H(X^\tau)_{j,\nu}$
and $P_\nu:L^2(X^\tau)\rightarrow L^2(X^\tau)_\nu$
are the orthogonal projectors.

We shall provide local scaling asymptotics for (\ref{eqn:temepered_proj_kernel_poisson_equiv}) and 
(\ref{eqn:smoothed proj kern equiv}).
Unlike \cite{p24}, we are not assuming here that 
$\chi$ is supported near the origin; in the special 
action-free
case, we shall thus recover a refinement of the near-graph 
scaling asymptotics of Chang and Rabinowitz in \cite{cr2}.

The smooth action $\mu:G\times M\rightarrow M$
extends (perhaps after decreasing $\tau_0$) to an 
action $\tilde{\mu}:G\times \tilde{M}^{\tau_0}\rightarrow
\tilde{M}^{\tau_0}$ of $G$ as a group of biholomorphisms.
Since $\mu$ is isometric for $(M,\kappa)$, $\tilde{\mu}$
preserves $\rho$.
Thus $G$ acts as a group of Hamiltonian automorphisms
of the K\"{a}hler manifold $(\tilde{M}^{\tau_0},J,\Omega)$,
with a moment map $\Phi:\tilde{M}^{\tau_0}\rightarrow \mathfrak{g}^\vee$
vanishing identically on $M$.

Since $\tilde{\mu}$ preserves $\rho$,
it also preserves every $X^\tau$, with its 
CR and contact structure, for $\tau\in (0,\tau_0)$;
let $\tilde{\mu}^\tau:G\times X^\tau\rightarrow X^\tau$
be the resulting contact and CR action.
Let us set
\begin{equation}
\label{eqn:defn di Ztau}
Z:=\Phi^{-1}(0)\subseteq \tilde{M}^{\tau_0},
\qquad Z^\tau:=Z\cap X^\tau.
\end{equation}
What follows depends on the following hypothesis.

\begin{ass}
\label{ass:ba}
We shall assume that:
\begin{enumerate}
\item $Z\setminus M\neq \emptyset$ (hence $Z^\tau\neq \emptyset$ for every
$\tau\in (0,\tau_0)$);
\item $\tilde{\mu}^\tau$ is locally free on $Z^\tau$.
\end{enumerate}

\end{ass} 

\begin{rem}
\label{rem:rem on assmpt}
Let us notice the following regarding the two conditions
in Assumption \ref{ass:ba}.
\begin{enumerate}
\item The second requirement is equivalent to $0\in \mathfrak{g}^\vee$ being 
a regular value of $\left.\Phi\right|_{\tilde{M}^{\tau_0}\setminus M}$ (see e.g. \cite{mm}, \cite{gstb}); therefore, it implies that $Z\setminus M$ and $Z^\tau$
are submanifolds of $\tilde{M}$ and $X^\tau$,
respectively.
Our scaling asymptotics rely on a direct sum decomposition
of $TX^\tau$ along $Z^\tau$ that depends on this
smoothness assumption (see \S \ref{sctn:decomp}). 
\item $\tilde{\mu}^\tau$ is \textit{a fortiori} locally free on $Z^\tau$ if $\mu$ itself is locally free, e.g., if $M$ is a principal $G$-bundle;
for more examples, see \S \ref{sctn:Z tau}.
\item Under Assumption \ref{ass:ba}, $d>d_G$ 
(see Corollary \ref{cor:d>d_G} below); for instance, the
case of a compact Lie group acting on itself by, say,
left translations is not covered by the present analysis.
This is because $Z^\tau=\emptyset$ in this case (we shall
consider this specific situation in a separate paper).
\end{enumerate}

\end{rem}
 
Let $\Gamma:\mathbb{R}\times \tilde{M}\rightarrow \tilde{M}$ 
denote the flow
of $\upsilon_{\sqrt{\rho}}$ ($\Gamma$ will be referred to - 
with some abuse of language - as the \lq homogeneous geodesic flow\rq,
\, since it is intertwined with the 
latter by the imaginary time exponential map - see \cite{gs2}, \cite{ls}, \cite{gls}). Its restriction to $X^\tau$
is the flow of $\upsilon_{\sqrt{\rho}}^\tau$, and will be denoted
$\Gamma^\tau:\mathbb{R}\times X^\tau\rightarrow X^\tau$.
Since $\tilde{\mu}^\tau$ and $\Gamma^\tau$ commute,
there is a product action of $G\times \mathbb{R}$ on $X^\tau$;
our first result is that (\ref{eqn:temepered_proj_kernel_poisson_equiv}) and 
(\ref{eqn:smoothed proj kern equiv}) asymptotically concentrate
near certain compact loci $\mathfrak{X}^\tau_\chi
\subset X^\tau\times X^\tau$ determined
by $\chi$, the moment map, and the orbits of the latter action.

\begin{defn}
\label{defn:concentration locus}
If $x\in X^\tau$ and $\chi\in \mathcal{C}^\infty_c(\mathbb{R})$, we set
$$
x^{G\times \chi}:=\left\{\tilde{\mu}^\tau_g\circ \Gamma^\tau_t(x)\,:
\,g\in G,\,t\in \mathrm{supp}(\chi)\right\}.
$$
We then pose
$$
\mathfrak{X}^\tau_\chi:=\left\{(x_1,x_2)\in Z^\tau\times Z^\tau\,:\,
x_1\in x_2^{G\times \chi}\right\}.
$$
We also set
$$
x^{ \chi}:=\left\{ \Gamma^\tau_t(x)\,:
\,t\in \mathrm{supp}(\chi)\right\}.
$$
\end{defn}

\begin{thm}
\label{thm:main 1}
For any $C,\,\epsilon'>0$, we have
$$
P^\tau_{\chi,\nu,\lambda}(x,y) =O\left(\lambda^{-\infty}\right)
\quad\text{and}\quad
\Pi^\tau_{\chi,\nu,\lambda}(x,y) =O\left(\lambda^{-\infty}\right),
$$
uniformly for
$$
\max\left\{\mathrm{dist}_{X^\tau}\left(x,y^{G\times\chi}\right),
 \mathrm{dist}_{X^\tau}\left(x,Z^\tau\right)  \right\}
 \ge C\,\lambda^{\epsilon'-1/2}.
 $$
\end{thm}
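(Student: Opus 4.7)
The plan is to reduce the equivariant kernels to $G$-averages of the non-equivariant ones and then to apply two stationary-phase arguments, one in the \lq time\rq\ variables dual to the eigenvalues, and one in the group variable $g\in G$. Using the isotypical projector in the first factor, both kernels can be rewritten as
\begin{align*}
\Pi^\tau_{\chi,\nu,\lambda}(x,y) &= \dim(\nu)\int_G\overline{\Xi_\nu(g)}\,\Pi^\tau_{\chi,\lambda}\bigl(\tilde{\mu}^\tau_{g^{-1}}(x),y\bigr)\,dg,\\
P^\tau_{\chi,\nu,\lambda}(x,y) &= \dim(\nu)\int_G\overline{\Xi_\nu(g)}\,P^\tau_{\chi,\lambda}\bigl(\tilde{\mu}^\tau_{g^{-1}}(x),y\bigr)\,dg,
\end{align*}
reducing matters to a uniform $G$-integral of the non-equivariant kernels studied in \cite{p24} and \cite{cr2}.

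For the bound in terms of $\mathrm{dist}_{X^\tau}(x,y^{G\times\chi})$, one observes that $G$ acts by isometries on $(X^\tau,\hat{\kappa}|_{X^\tau})$ and that $\tilde{\mu}^\tau_g(y^\chi)\subseteq y^{G\times\chi}$ for every $g\in G$; hence the hypothesis gives $\mathrm{dist}_{X^\tau}\bigl(\tilde{\mu}^\tau_{g^{-1}}(x),y^\chi\bigr)\ge C\lambda^{\epsilon'-1/2}$ uniformly in $g$. The off-graph rapid-decay part of the non-equivariant scaling asymptotics in \cite{p24} (refining \cite{cr2}) then provides $\Pi^\tau_{\chi,\lambda}\bigl(\tilde{\mu}^\tau_{g^{-1}}(x),y\bigr)=O(\lambda^{-\infty})$ uniformly in $g$, and integration over the compact group $G$ preserves this estimate. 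The same argument applies to $P^\tau_{\chi,\lambda}$.

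For the bound in terms of $\mathrm{dist}_{X^\tau}(x,Z^\tau)$ I would use the oscillatory-integral description of $\Pi^\tau_{\chi,\lambda}$ obtained by combining the Fourier representation
\[
\Pi^\tau_{\chi,\lambda}=\int \chi(t)\,e^{i\lambda t}\,e^{-it\,\mathfrak{D}^\tau_{\sqrt{\rho}}}\,\Pi^\tau\,dt,
\]
Zelditch's description of $e^{-it\,\mathfrak{D}^\tau_{\sqrt{\rho}}}\,\Pi^\tau$ as a \lq dynamical Toeplitz operator\rq\ with underlying flow $\Gamma^\tau$, and the Boutet de Monvel--Sj\"{o}strand parametrix $\Pi^\tau(x,y)\sim\int_0^\infty e^{iu\psi(x,y)}\,s(x,y,u)\,du$. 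Rescaling $u=\lambda r$ and inserting into the character average, the equivariant kernel becomes an oscillatory integral with large phase $\lambda\Psi$, where
\[
\Psi(g,t,r;x,y)=t+r\,\psi\bigl(\Gamma^\tau_{-t}\circ\tilde{\mu}^\tau_{g^{-1}}(x),y\bigr).
\]
Using the identity $d_x\psi|_{x=y}=\alpha_y$ and the moment-map relation $\iota_{\xi_{\tilde{M}}}\Omega=-d\langle\Phi,\xi\rangle$, one finds that at any $(t,r)$-critical point of $\Psi$ the differential $d_g\Psi|_{g=e}(\xi)$ is, to leading order, proportional to $-r\,\langle\Phi(x),\xi\rangle$. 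By Assumption \ref{ass:ba}(2), $\Phi$ vanishes transversally on $Z^\tau\setminus M$, so the hypothesis $\mathrm{dist}_{X^\tau}(x,Z^\tau)\ge C\lambda^{\epsilon'-1/2}$ implies $\|\Phi(x)\|\ge c\,\lambda^{\epsilon'-1/2}$, and therefore $\|d_g(\lambda\Psi)\|\ge c'\,\lambda^{1/2+\epsilon'}$ near the $(t,r)$-critical set. Repeated integration by parts in $g$ then yields an $O(\lambda^{-\infty})$ bound. The Poisson kernel $P^\tau_{\chi,\nu,\lambda}$ is treated identically using Boutet de Monvel's Poisson FIO (i.e., the analytic continuation of the wave group) in place of the dynamical Toeplitz representation.

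The main obstacle will be the interplay of the two non-stationary regimes. The linearization $d_g\Psi\approx -r\,\langle\Phi(x),\cdot\rangle$ is only accurate near the $(t,r)$-critical set of $\Psi$, so the $g$-integration by parts must be performed after microlocalizing in $(t,r)$: on the complement of a suitable neighborhood of that critical set, $(t,r)$-non-stationarity of $\lambda\Psi$ already produces $O(\lambda^{-\infty})$, while inside it the bound on $\|d_g(\lambda\Psi)\|$ makes the $g$-integration by parts effective. A partition of unity in $(t,r)$, with cutoffs tuned to the Heisenberg scale $\lambda^{\epsilon'-1/2}$, is then needed to assemble these two estimates into the uniform bound claimed.
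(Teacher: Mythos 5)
Your outline follows essentially the same route as the paper: reduce to a character average of the non-equivariant kernel, represent it via Zelditch's dynamical Toeplitz description together with the Boutet de Monvel--Sj\"{o}strand parametrix, localize the fiber variable away from $0$ (near $1/\tau$) by integration by parts in the time/angular variables, obtain off-graph decay from $\Im\psi^\tau\gtrsim \mathrm{dist}_{X^\tau}^2$, and obtain decay transverse to $Z^\tau$ by iterated integration by parts in $g$ using $\partial_\xi\Psi\approx u\,\langle\Phi(x),\xi\rangle$ combined with $\|\Phi(x)\|\gtrsim \mathrm{dist}_{X^\tau}(x,Z^\tau)$, which is exactly the paper's mechanism. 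The only caveat concerns your first estimate: instead of citing \cite{cr2} and \cite{p24} for off-graph rapid decay at the $C\,\lambda^{\epsilon'-1/2}$ scale (note \cite{p24} only treats $\chi$ supported near the origin, i.e.\ the near-diagonal case), the paper proves this estimate directly with the same oscillatory-integral machinery, a step your Part 2 apparatus would in any case supply in the action-free setting.
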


We are thus led to consider the asymptotics of
$P^\tau_{\chi,\nu,\lambda}(x,y)$ and 
$\Pi^\tau_{\chi,\nu,\lambda}(x,y)$ when $(x,y)$ ranges in a
shrinking neighourhood of a fixed pair 
$(x_1,x_2)\in \mathfrak{X}^\tau_\chi$.

\subsection{Scaling asymptotics}

\label{sctn:scaling}

The previous estimates motivate the expectation that 
near $\mathfrak{X}^\tau_\chi$ the two kernels
satisfy scaling asymptotics on a scale of $O(\lambda^{-1/2})$ exhibing an exponential decay along directions transverse to 
$\mathfrak{X}^\tau_\chi$. The aim of the following statements is to substantiate this expectation. 

Before giving precise statements, 
it is in order to premise a few general remarks.
While the general approach is heuristically inspired by the line bundle setting, the arguments are significantly more involved in the present context,   
the primary reason being that the geodesic flow is generally not holomorphic. As the reader will appreciate, 
Zelditch' method of  dynamical Toeplitz operators plays an essential conceptual and technical role in dealing with this difficulty. Another key ingredient is the use of
suitable sets of local coordinates (whose construction goes
back to Folland and Stein in \cite{fs1} and \cite{fs2}, and which were first used in this context by Chang and Rabinowitz), specifically adapted to the local
CR geometry of $X^\tau$.

More precisely, as in \cite{p24}
the asymptotics in point will be formulated in suitable systems
of local coordinates at points $x\in X^\tau$, called \textit{normal
Heisenberg local coordinates} (in the following, NHLC's), 
which are a slight specialization
of the Heisenberg local coordinates in \cite{cr1} and \cite{cr2}
(see \S 3.3 of \cite{p24}).

NHLC's centered at $x\in X^\tau$ will be written, in additive notation, 
as 
$x+(\theta,\mathbf{v})$, where $(\theta,\mathbf{v})\in \mathbb{R}\times 
\mathbb{R}^{2d-2}$ belongs to a neighbourhood of the origin. 
In terms of (\ref{eqn:dicrect sum vert hor}), we have
\begin{equation}
\label{eqn:Reeb theta v H}
\left.\frac{\partial}{\partial \theta}\right|_x=\mathcal{R}^\tau(x)
\in \mathcal{T}_x^\tau ,\qquad
\left.\frac{\partial}{\partial \mathbf{v}}\right|_x\in 
\mathcal{H}_x^\tau
\quad\forall\,\mathbf{v}\in \mathbb{R}^{2d-2}.
\end{equation}

Let us fix $(x_1,x_2)\in \mathfrak{X}^\tau_\chi$ and NHLC's on $X^\tau$
at $x_1$ and $x_2$. We shall work in rescaled coordinates and set
\begin{equation}
\label{eqn:rescaled coord 12}
x_{j,\lambda}:=x_j+\left(\frac{\theta_j}{\sqrt{\lambda}},
\frac{\mathbf{v}_j}{\sqrt{\lambda}}   \right)
\qquad (j=1,2).
\end{equation}

The non-holomorphicity, or equivalently the non-unitarity, of the geodesic flow is  encapsulated in the appearance of a (generally) non-unitary matrix $B$ in the local description of
$\Gamma^\tau$ (see (\ref{eqn:defn of M12}) and \S \ref{sctn:psitaux1x2}), and in the 
ensuing oscillatory and Gaussian 
integrals computing the asymptotics. 

In order to obtain tractable computations and extract more intelligible geometric information, we have in some cases restricted the type of rescaled directions.
Specifically, in the near-graph action-free setting of
\cite{cr2} (Theorem \ref{thm:main 3} below) we consider arbitrary rescaled displacements; 
similarly, in the near-diagonal equivariant
case (Theorem \ref{thm:diagonal case})
we allow arbitrary displacements away from $G$-orbits. 
However, in the general equivariant case we consider more specific choices of directions (see below).

\subsubsection{Scaling asymptotics in the action-free case}
To fix ideas and ease of exposition, 
let us first consider separately 
the action-free case (i.e. where $G$ is trivial);
we shall write $x^\chi$ for $x^{G\times \chi}$.
In this case, $Z^\tau=X^\tau$ and by 
Theorem \ref{thm:main 1} we have
$$
P^\tau_{\chi,\lambda}(x,y) =O\left(\lambda^{-\infty}\right)
\quad\text{and}\quad
\Pi^\tau_{\chi,\lambda}(x,y) =O\left(\lambda^{-\infty}\right),
$$
uniformly for
$$
\mathrm{dist}_{X^\tau}\left(x,y^{\chi}\right)
 \ge C\,\lambda^{\epsilon'-1/2}.
 $$

We shall present near-graph 
scaling asymptotics 
for $P^\tau_{\chi,\lambda}(x_{1,\lambda},x_{2,\lambda})$ and 
$\Pi^\tau_{\chi,\lambda}(x_{1,\lambda},x_{2,\lambda})$
refining those in \cite{cr2}, where rescaling is according to
Heisenberg type; furthermore, we shall provide an explicit description
of the leading order term and an estimate on the degree
of the polynomials in the rescaled variables intervening in the
lower order terms of the asymptotic expansions.

The leading order term describing the exponential decay in the 
scaling asymptotics of $P^\tau_{\chi,\nu,\lambda}(x_{1,\lambda},x_{2,\lambda})$ and 
$\Pi^\tau_{\chi,\nu,\lambda}(x_{1,\lambda},x_{2,\lambda})$
for $\lambda\rightarrow +\infty$
depends on a real-quadratic complex-valued form on the vector
subspace
$\mathcal{H}_{x_1}^\tau\times \mathcal{H}_{x_2}^\tau
\subseteq T_{x_1}X^\tau\times T_{x_2}X^\tau$. To describe the latter
form, we need a brief digression.

\begin{defn}
\label{defn:sympl compl mat}
Given a symplectic matrix $A\in \mathrm{Sp}(2d-2)$, we shall set
$$
A_c:=\mathcal{W}\,A\,\mathcal{W}^{-1}\qquad
\text{where}\qquad
\mathcal{W}:=\frac{1}{\sqrt{2}}\,
\begin{pmatrix}
I_{d-1}&\imath\,I_{d-1}\\
I_{d-1}&-\imath\,I_{d-1}
\end{pmatrix}.
$$
Then 
\begin{equation}
\label{eqn:matrice APQ}
A_c=\begin{pmatrix}
P&Q\\
\overline{Q}&\overline{P}
\end{pmatrix},
\end{equation}
where $P$ is invertible and $\|P\,\mathbf{z}\|\ge \|\mathbf{z}\|$, $\forall\,\mathbf{z}\in 
\mathbb{C}^{d-1}$ (\S 4.1 of \cite{fo}).
\end{defn}

\begin{defn}
\label{defn:quadratice form}
Let us identify $\mathbb{R}^{2d-2}\cong \mathbb{C}^{d-1}$ in the usual
norm-preserving manner, so that if $Z_j\in \mathbb{C}^{d-1}$ corresponds
to $\mathbf{v}_j\in \mathbb{R}^{2d-2}$ then
$$
h_0(Z_1,Z_2)=g_0(\mathbf{v}_1,\mathbf{v}_2)
-\imath\,\omega_0(\mathbf{v}_1,\mathbf{v}_2),\qquad
(\mathbf{v}_1,\,\mathbf{v}_2\in \mathbb{C}^{d-1}),
$$
where $h_0$, $g_0$, and $\omega_0$ denote the standard 
Hermitian, Euclidean, and symplectic structures, respectively.
Given $A\in \mathrm{Sp}(2d-2)$, let us define 
$\Psi_A:\mathbb{R}^{2d-2}\times \mathbb{R}^{2d-2}\rightarrow \mathbb{C}$
(or, equivalently, $\Psi_{A_c}:\mathbb{C}^{d-1}\times 
\overline{\mathbb{C}}^{d-1}
\rightarrow \mathbb{C}$)
as follows.
With $P$ and $Q$ as in Definition \ref{defn:sympl compl mat},
\begin{eqnarray*}
\lefteqn{\Psi_{A}(\mathbf{v}_1,\mathbf{v}_2)
=
\Psi_{A_c}(Z_1,Z_2)
}\\
&:=&\frac{1}{2}\,\left(Z_1^\dagger\,\overline{Q}\,P^{-1}\,Z_1
+2\,\overline{Z}_2^\dagger\, P^{-1}\,Z_1-
\overline{Z}_2^\dagger\,P^{-1}\,Q\,\overline{Z}_2-\|Z_1\|^2
-\|Z_2\|^2
\right). 
\end{eqnarray*}
\end{defn}
\begin{rem}
$\Psi_A$ plays an important role in the theory
of the metaplectic representation (see the discussions
in \S 4 of \cite{fo}, 
\cite{dau80}, \cite{zz18}).
\end{rem}

\begin{defn}
\label{defn:psi2}
If $(V,h')$ is a Hermitian complex $k$-dimensional vector space,
so that $h=g_h-\imath\,\omega_h$, where $g_h:=
\Re(h)$ and $\omega_h:=-\Im(h)$ are,
respectively, an Euclidean product and a 
symplectic bilinear form on $V$.
We define $\psi_2^{\omega_h}:V\times V\rightarrow \mathbb{C}$ by 
$$
\psi_2^{\omega_h}(v,v'):=
h(v,v')-\frac{1}{2}\,\|v\|_h^2-\frac{1}{2}\,\|v'\|_h^2
=-\imath\,\omega_h(v,v')-\frac{1}{2}\,\|v-v'\|_h^2,
$$
where $\|v\|_h:=\sqrt{h(v,v)}$.
\end{defn}

\begin{notn}
\label{notn:psi2}
When $(V,h)=\left(\mathbb{C}^k,h_0\right)$, where $h_0$ is the
standard Hermitian product, we shall also view $\psi_2:=\psi_2^{\omega_0}$ as 
being defined on $\mathbb{R}^{2\,k}\times \mathbb{R}^{2k}$, where
$\mathbb{R}^{2k}\cong \mathbb{C}^k$ in the standard manner.
If $\mathbf{v}_j\in \mathbb{R}^{2\,k}$,
$j=1,2$, corresponds to $Z_j\in \mathbb{C}^k$
under the previous identification, we shall equivalently
write $\psi_2(Z_1,Z_2)$ as $\psi_2(\mathbf{v}_1,\mathbf{v}_2)$ 
to emphasize the symplectic structure.

\end{notn}

\begin{rem}
Let us identify the unitary group $U(d-1)$ as the maximal compact subgroup
$\hat{U}(d-1):=\mathrm{Sp}(2d-2)\cap O(2d-2)$ of
$\mathrm{Sp}(2d-2)$ in the standard manner.
Then $A\in \hat{U}(d-1)\leqslant \mathrm{Sp}(2d-2)$ if and only if
 in (\ref{eqn:matrice APQ}) we have 
$Q=0$ and $P\in U(d-1)\leqslant \mathrm{GL}_\mathbb{C}(d-1)$.
Hence if $A\in \hat{U}(d-1)$ then
\begin{eqnarray*}
\Psi_{A}(\mathbf{v}_1,\mathbf{v}_2)
&=&
\overline{Z}_2^\dagger\, P^{-1}\,Z_1
-\frac{1}{2}\,\|Z_1\|^2-\frac{1}{2}\,\|Z_2\|^2\\
&=&h_0(Z_1,P\,Z_2) 
-\frac{1}{2}\,\|Z_1\|^2-\frac{1}{2}\,\|Z_2\|^2\\
&=&\psi_2(Z_1,P\,Z_2)=\psi_2(\mathbf{v}_1,A\,\mathbf{v}_2)
=\psi_2(A^{-1}\,\mathbf{v}_1,\mathbf{v}_2).
\end{eqnarray*}

\end{rem}

\begin{rem}
\label{rem:warning norm}
Let us identify $T_xX^\tau\cong \mathbb{R}\oplus \mathbb{R}^{2d-2}$
through NHLC's at $x$, 
hence $\mathcal{H}^\tau_x\cong \mathbb{R}^{2d-2}
\cong \mathbb{C}^{d-1}$. Then
the previous invariants may be viewed as defined on 
$\mathcal{H}^\tau_x$, and it is natural to expect that they have
an intrinsic geometric meaning. This is so, but the
symplectic and Euclidean pairings
on $\mathcal{H}^\tau_x$ corresponding
to $\omega_0=-\Im(h_0)$ and $g_0=\Re(h_0)$ are not
$\Omega_x$ and $\hat{\kappa}_x$, but rather their halves
$\omega_x:=\frac{1}{2}\,\Omega_x$ and $\tilde{\kappa}_x
:=\frac{1}{2}\,\hat{\kappa}_x$ (see \cite{p24}).
\end{rem}

Let us return to our geometric setting and consider,
in the action-free case, a pair $(x_1,x_2)\in \mathfrak{X}^\tau_\chi$.
Hence there exists $t\in \mathrm{supp}(\chi)$ such that
$x_1=\Gamma^\tau_{t}(x_2)$. It turns out that if 
$\mathrm{supp}(\chi)$ is sufficiently small (shorter than $2\,\epsilon$
for some $\epsilon>0$, say), then for \textit{any}  
$(x_1,x_2)\in \mathfrak{X}^\tau_\chi$ there exists a \textit{unique}  $t_1=t_1(x_1,x_2)\in \mathrm{supp}(\chi)$ such that 
$x_1=\Gamma^\tau_{t_1}(x_2)$
(see Lemma \ref{lem:unique t} below). If furthermore 
NHLC's on $X^\tau$ are chosen at $x_1$ and $x_2$, there is a unique
$B=B_{x_1,x_2}\in \mathrm{Sp}(2d-2)$ such that
\begin{equation}
\label{eqn:defn of M}
\Gamma^\tau_{-t_1}\big(x_1+(\theta,\mathbf{v})\big)=x_2+
\big(\theta+R_3(\theta,\mathbf{v}),\,B\mathbf{v}+
\mathbf{R}_2(\theta,\mathbf{v})    \big),
\end{equation}
where $R_k$ (respecively, $\mathbf{R}_k$) denotes, here and in the following,
a generic real-valued (respectively, vector-valued)
function on an open neighbourhood of the origin of some
Euclidean space, vanishing to $k$-th order at the origin;
(\ref{eqn:defn of M}) 
is a special case of Lemma \ref{lem:Mt1transl} below.

\begin{thm}
\label{thm:main 3}
Assume that
$\mathrm{supp}(\chi)$ is sufficiently small and 
$x_1\in x_2^{\mathrm{supp}(\chi)}$. Let
$t_1=t_1(x_1,x_2)$ be as above, and let $B=B_{x_1,x_2}$
be as in (\ref{eqn:defn of M}). 
Suppose $C>0$ and $\epsilon'\in (0,1/6)$. 
Then, uniformly for
$(\theta_j,\mathbf{v}_j)\in T_{x_j}X^\tau$ with
$\|(\theta_j,\mathbf{v}_j)\|\le C\,\lambda^{\epsilon'}$, there
are asymptotic expansions
\begin{eqnarray*}
\lefteqn{\Pi^\tau_{\chi,\lambda}(x_{1,\lambda},x_{2,\lambda})}\\
&\sim&\frac{1}{\sqrt{2\,\pi}}
\cdot \left(\frac{\lambda}{2\,\pi\,\tau}\right)^{d-1}
\cdot e^{\frac{1}{\tau}\,\left[\imath\,\sqrt{\lambda}\,(\theta_1-\theta_2)
+\Psi_{B^{-1}}(\mathbf{v}_1,\mathbf{v}_2)\right]}
\cdot  e^{-\imath\,\lambda\,t_1}
\\
&&\cdot e^{\imath\,\theta^\tau(x_1,x_2)}\cdot\left[\chi (t_1) +\sum_{k\ge 1}\,\lambda^{-k/2}\,F_k\left(x_1,x_2;\theta_1,\mathbf{v}_1,
\theta_2,\mathbf{v}_2\right)\right],
\nonumber
\end{eqnarray*}
where $e^{\imath\,\theta^\tau(x_1,x_2)},\,e^{\imath\,\tilde{\theta}^\tau(x_1,x_2)}\in S^1$ and 
$F_k(x_1,x_2;\cdot)$, $\widetilde{F}_k(x_1,x_2;\cdot)$ are 
polynomials in the rescaled variables of
degree $\le 3\,k$ and parity $k$. 
\end{thm}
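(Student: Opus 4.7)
The plan is to reduce the statement to a complex stationary phase computation built on three pillars: the Boutet de Monvel--Sjöstrand (BS) parametrix for $\Pi^\tau$, Zelditch's representation of $U^\tau_t:=\exp\!\left(-\imath\,t\,\mathfrak{D}^\tau_{\sqrt{\rho}}\right)\circ \Pi^\tau$ as a dynamical Toeplitz Fourier integral operator, and a careful Taylor analysis of the BS phase in NHLC's. First, I would write
\begin{equation*}
\Pi^\tau_{\chi,\lambda} \;=\; \frac{1}{2\pi}\int_{\mathbb{R}} \chi(t)\,e^{\imath\,\lambda t}\,\bigl(U^\tau_t\,\Pi^\tau\bigr)\,dt,
\end{equation*}
and express the kernel of $U^\tau_t\,\Pi^\tau$ as
\begin{equation*}
\bigl(U^\tau_t\,\Pi^\tau\bigr)(x,y)\;\sim\;\int_0^\infty e^{\imath\,\eta\,\psi^\tau(x,\Gamma^\tau_{-t}(y))}\,s^\tau\bigl(x,\Gamma^\tau_{-t}(y),\eta\bigr)\,\beta(x,y,t,\eta)\,d\eta,
\end{equation*}
where $\psi^\tau$ is the BS complex phase (with $\Im(\psi^\tau)\ge 0$ and vanishing to second order on the diagonal), $s^\tau\sim \sum s^\tau_{-j}\,\eta^{d-1-j}$ is the classical Szegő symbol, and $\beta$ arises from the transport equations of the dynamical construction, with $\beta|_{t=0}=1$.

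Next I would choose NHLC's at $x_1$ and $x_2$, insert the rescaling (\ref{eqn:rescaled coord 12}), and substitute $\eta\mapsto \lambda\,\eta$. Using Lemma \ref{lem:unique t} to identify $t_1=t_1(x_1,x_2)$ as the unique critical time in $\mathrm{supp}(\chi)$, and using (\ref{eqn:defn of M}) to describe $\Gamma^\tau_{-t}$ locally as a shear by $B\in \mathrm{Sp}(2d-2)$ on the horizontal part plus a Reeb-translation in $\theta$, the integral becomes
\begin{equation*}
\Pi^\tau_{\chi,\lambda}(x_{1,\lambda},x_{2,\lambda}) \;=\; \frac{\lambda}{2\pi}\int\!\!\int_0^\infty e^{\imath\,\lambda\,\Phi_\lambda(t,\eta)}\,A_\lambda(t,\eta)\,d\eta\,dt,
\end{equation*}
where the phase $\Phi_\lambda(t,\eta)=t+\eta\,\lambda^{-1}\,\psi^\tau(x_{1,\lambda},\Gamma^\tau_{-t}(x_{2,\lambda}))$ expands, modulo $O(\lambda^{-3/2}\|(\theta,\mathbf{v})\|^3)$ remainders, as
\begin{equation*}
\Phi_\lambda(t,\eta)\;\simeq\;t+\eta\Bigl[(t_1-t) + \imath\,\tau - \imath\,(\theta_1-\theta_2)/\sqrt{\lambda} + \tau^{-1}\,\Psi_{B^{-1}}(\mathbf{v}_1,\mathbf{v}_2)/\lambda\Bigr]
\end{equation*}
plus the Heisenberg cocycle phase $\theta^\tau(x_1,x_2)$ coming from comparing the NHLC's.

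The analysis is now a complex stationary phase in $(t,\eta)$. The unique non-degenerate critical point lies at $t=t_1$, $\eta=1/\tau$, producing the prefactor $(2\pi)^{-1/2}(\lambda/2\pi\tau)^{d-1}$, the overall oscillation $e^{-\imath\,\lambda\,t_1}$, the evaluation $\chi(t_1)$ at the leading order, and the Gaussian damping $e^{\tau^{-1}\,\Psi_{B^{-1}}(\mathbf{v}_1,\mathbf{v}_2)}$ whose real part is strictly negative off the graph of $B^{-1}$ by the bound $\|P\,Z\|\ge \|Z\|$ in Definition \ref{defn:sympl compl mat}. The identification of $\Psi_{B^{-1}}$ as the correct quadratic form is the key geometric content: one must recognize that the quadratic part of $\psi^\tau(x_1,\Gamma^\tau_{-t_1}(x_2+\mathbf{v}_2/\sqrt{\lambda}))$ in the horizontal variables, which is built from the Hermitian form $h_0$ pre-composed with the (non-unitary) symplectic shear $B$, equals precisely $\Psi_{B^{-1}}$ after the halving of symplectic/metric pairings (Remark \ref{rem:warning norm}).

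The principal obstacle is exactly the non-holomorphicity of $\Gamma^\tau$: if $B$ were unitary, the horizontal Gaussian would reduce to a near-diagonal Szegő-type expression in $\psi_2$, but in general $B\not\in \hat U(d-1)$ and the full $\mathrm{Sp}(2d-2)$-metaplectic data must be tracked via Zelditch's dynamical construction and Folland's formulas for the metaplectic representation. For the structural claim on the $F_k$'s, I would run the stationary phase expansion in its standard form (e.g.~Hörmander), carefully booking where each factor $\lambda^{-1/2}$ originates: either from a cubic or higher Taylor remainder of $\Phi_\lambda$ in $(t,\eta,\theta_j/\sqrt{\lambda},\mathbf{v}_j/\sqrt{\lambda})$, which contributes rescaled polynomials of odd total degree, or from a subprincipal term in $s^\tau$ or $\beta$, which contributes even polynomials. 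An induction on the stationary phase order gives the bound $\deg F_k\le 3k$ and parity $k$. Finally, the hypothesis $\epsilon'<1/6$ is precisely what is needed so that the cubic remainders $\lambda^{-1/2}\,O(\|(\theta,\mathbf{v})\|^3)=O(\lambda^{3\epsilon'-1/2})$ are $o(1)$ and are dominated, at each order, by the Gaussian damping from $\Re\bigl(\Psi_{B^{-1}}\bigr)$, legitimating the formal expansion uniformly in the stated polydisc of rescaled displacements.
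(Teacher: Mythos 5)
There is a genuine gap at the heart of your argument: the mechanism by which $\Psi_{B^{-1}}$ appears. You collapse Zelditch's description $U^\tau_{\sqrt{\rho}}(t)\sim\Pi^\tau\circ P^\tau_t\circ\Pi^\tau_{-t}$ into a single oscillatory integral with phase $\psi^\tau\big(x,\Gamma^\tau_{-t}(y)\big)$ and then assert that the quadratic part of this phase in the horizontal variables ``equals precisely $\Psi_{B^{-1}}$''. When $B\notin\hat{U}(d-1)$ this is false: writing the flow in NHLC's via (\ref{eqn:defn of M}) and inserting into Theorem \ref{thm: phase expression in HLC} yields quadratics of the type $\psi_2(\mathbf{v}_1,B^{-1}\mathbf{v}_2)$, which contain no analogue of the terms $Z_1^\dagger\,\overline{Q}\,P^{-1}\,Z_1$ and $\overline{Z}_2^\dagger\,P^{-1}\,Q\,\overline{Z}_2$ in Definition \ref{defn:quadratice form} (indeed the two expressions coincide exactly when $B$ is unitary). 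The pulled-back Szeg\H{o} phase parametrizes the correct real canonical relation but not the correct positive Lagrangian, so it does not encode the transverse Gaussian profile of the dynamical Toeplitz operator; one cannot repair this with a classical-symbol amplitude $\beta$. In the paper $\Psi_{B^{-1}}$ is never read off from a single phase: the composition is kept, with an intermediate integration over $y\in X^\tau$ and two frequency variables $u,v$; after the four-variable stationary phase in $(t,v,\theta,u)$ one is left with the Gaussian integral $\int e^{\psi_2(\mathbf{v}_1,\mathbf{u})+\psi_2(B\mathbf{u},\mathbf{v}_2)}\,\mathrm{d}\mathbf{u}$ in the intermediate horizontal variable, which is evaluated through the metaplectic covariance identity of Daubechies/Zelditch--Zhou, (\ref{eqn:ZZ18KePi})--(\ref{eqn:defn di mathcalI03}). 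That step is the only source of $\Psi_{B^{-1}}$, and also of the factor $\left|\det(P)\right|^{-1}$, which is essential: combined with Lemma \ref{lem:product sigma detP} (via (\ref{eqn:correcting factor tau t}) and Zelditch's vacuum-state computation) it gives $\sigma^\tau_{t_1,0}(x_1)\cdot\left|\det(P)\right|^{-1}=e^{\imath\,\theta^\tau_{t_1}(x_1)}$, i.e.\ that the leading coefficient is exactly $\chi(t_1)$ times a unimodular factor. Your outline never produces $\left|\det(P)\right|^{-1}$ (you normalize $\beta|_{t=0}=1$ and do not use the symbol $\sigma^\tau_{t,0}$ at all), so even granting the exponent it cannot account for the normalization of the leading term.

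A smaller but telling symptom: your displayed expansion of $\Phi_\lambda$ contains the term $\eta\cdot\imath\,\tau$, which belongs to the complexified Poisson--wave kernel $U_{\mathbb{C}}(t+2\,\imath\,\tau)$ relevant for $P^\tau_{\chi,\lambda}$, not to $e^{\imath\,t\,\mathfrak{D}^\tau_{\sqrt{\rho}}}$; taken literally it makes $\Pi^\tau_{\chi,\lambda}(x_{1,\lambda},x_{2,\lambda})$ of size $e^{-\lambda}$ at the critical point $\eta=1/\tau$, contradicting the stated $(\lambda/2\pi\tau)^{d-1}$ growth, and the placement of $\imath$ in front of the $(\theta_1-\theta_2)$ and $\Psi_{B^{-1}}$ terms is likewise inconsistent with the target formula. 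The parts of your plan that do match the paper are the reduction via (\ref{eqn:PitaunuFIO}) and (\ref{eqn:Ptautcomp}), the localization at the unique $t_1$ of Lemma \ref{lem:unique t}, the degree/parity bookkeeping for the $F_k$'s, and the role of $\epsilon'<1/6$; but the two points above are where the actual content of the theorem lies, and they are missing.
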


When $x_1=x_2$ and $\chi\in \mathcal{C}^\infty_c\big((-\epsilon,
\epsilon)\big)$ for suitably small $\epsilon>0$ then
$M=I_{2d-2}$, $t_1=0$ and 
$\theta^\tau(x_1,x_2)=0$
Thus we recover the near-diagonal 
scaling asymptotic
in \cite{p24}. Furthermore, we recover the near-graph
scaling asymptotics of \cite{cr2}, 
with an explicit
determination of the leading factor, 
by fixing 
$(\theta_j,\mathbf{v}_j)$ and rescaling according to 
Heisenberg type (that is, in the form
$(\theta_j/\lambda,\mathbf{v}_j/\sqrt{\lambda})$).
The analogue of Theorem 15 for $P^\tau_{\chi,\nu,\lambda}$ is discussed in Section \ref{sctn:scaling Poissontau}.

\subsubsection{A decomposition of
$T_xX^\tau$ in the general equivariant case}
\label{sctn:decomp}

In order to state our results in the general equivariant setting,
we need to introduce a finer decomposition of $T_xX^\tau$,
valid at $x\in Z^\tau$, than
the one dictated by $\alpha^\tau$ in 
(\ref{eqn:dicrect sum vert hor}); this decomposition depends on 
$\mu^\tau$ and $\Phi$, and is the analogue of the decomposition appearing in the equivariant asymptotics in the line bundle setting (see \cite{mz08} and
\cite{p08}). 

\begin{notn}
\label{defn:induced vct field}
For any $\xi\in \mathfrak{g}$ (the Lie algebra of $G$)
we shall denote by $\xi_{X^\tau}$ the induced vector field
on $X^\tau$, and for any $x\in X^\tau$ we shall denote 
by
$\mathfrak{g}_{X^\tau}(x)\subseteq T_xX^\tau$ the tangent space
at $x$ to the $G$-orbit through $x$.
\end{notn}

\begin{rem}
Suppose $x\in Z^\tau$. Then
\begin{enumerate}
\item $\mathfrak{g}_{X^\tau}(x)\subseteq \mathcal{H}^\tau_x$;
\item since $\mathcal{H}^\tau_x=T_xX^\tau\cap J_x(T_xX^\tau)$,
we also have
$J_x\big(\mathfrak{g}_{X^\tau}(x)\big)\subset 
\mathcal{H}^\tau_x$;
\item under Assumption \ref{ass:ba}, 
$\dim \big(\mathfrak{g}_{X^\tau}(x)\big)=d_G$.
\end{enumerate}

\end{rem}

\begin{defn}
\label{defn:vth dec Ztau}
Suppose $x\in Z^\tau$. We set
$$
T_x^vX^\tau:=\mathfrak{g}_{X^\tau}(x),\quad
T_x^tX^\tau:=J_x\big(\mathfrak{g}_{X^\tau}(x)\big),
\quad 
T_x^hX^\tau:=\mathcal{H}^\tau_x\cap \left(T_x^vX^\tau
\oplus T^t_xX^\tau\right)^\perp,
$$
where $\perp$ denotes the Hermitian (equivalently,
symplectic or Riemannian) orthocomplement of the complex
subspace $T_x^vX^\tau
\oplus T^t_xX^\tau\subseteq T_x\tilde{M}$.

We shall refer to $T_x^vX^\tau,\,T_x^tX^\tau,\,T_x^hX^\tau$
as the $\tilde{\mu}^\tau$-vertical, $\tilde{\mu}^\tau$-transverse,
and $\tilde{\mu}^\tau$-horizontal tangent spaces at $x$, 
respectively (the terminology being inspired by the
line bundle setting).

We shall accordingly decompose 
$\mathbf{v}\in \mathcal{H}^\tau_x$ as 
$$
\mathbf{v}=\mathbf{v}^t+\mathbf{v}^v+\mathbf{v}^h,
\quad\text{where}
\quad
\mathbf{v}^t\in T_x^tX^\tau,\,
\mathbf{v}^v\in T^v_xX^\tau,\,
\mathbf{v}^h\in T_x^hX^\tau.
$$

\end{defn}

\begin{rem}
\label{rem:mutau decomp}
At any $x\in X^\tau$, we have direct sum decompositions 
$$
\mathcal{H}^\tau_x=\left(T_x^vX^\tau\oplus 
T^t_x X^\tau \right)\oplus 
T_x^hX^\tau,
$$
and
$$
T_xZ^\tau=\mathcal{T}^\tau_x\oplus \left(T_x^vX^\tau
\oplus T_x^hX^\tau\right),
\qquad
T_xX^\tau=T_x^tX^\tau\oplus T_xZ^\tau,
$$
so that $T_x^tX^\tau$ is the normal space to $Z^\tau$
at $x$ (in $X^\tau$).
\end{rem}

\begin{notn}
\label{rem:NHLC euclid decomp}
Given a choice of NHLC's centered for $X^\tau$ at $x$, we obtain
a direct sum decomposition of 
$\mathbb{R}^{2\,d-1}=\mathbb{R}\oplus \mathbb{C}^{d-1}
\cong T_xX^\tau$, dictated by $\alpha^\tau$, where
$\mathbb{C}^{d-1}\cong \mathcal{H}^{\tau}_x$. 
The finer decomposition of $\mathcal{H}^{\tau}_x$ in 
Definition \ref{defn:vth dec Ztau}, dictated by 
$\tilde{\mu}^\tau$, determines a 
corresponding decomposition of 
$\mathbb{C}^{d-1}$, which - emphasizing dimensions -
we shall write in the form
$$
\mathbb{C}^{d-1}=\mathbb{R}^{d_G}_t\oplus
\mathbb{R}^{d_G}_v\oplus \mathbb{C}^{d-1-d_G}_h,
\quad\text{where}\quad 
\mathbb{R}^{d_G}_t=J_0( \mathbb{R}^{d_G}_v ).
$$
We shall shift from real to complex notation, 
identifying 
$$
\mathbb{C}^{d-1}\cong \mathbb{R}^{2d-2},\quad
\mathbb{C}^{d-1-d_G}_h=\mathbb{R}^{2d-2-2d_G}_h.
$$
Any $\mathbf{u}\in \mathbb{R}^{2d-2}$ may thus be decomposed as
$$
\mathbf{u}=\mathbf{u}^t+\mathbf{u}^v+\mathbf{v}^h,\quad 
\text{where}\quad 
\mathbf{u}^t\in \mathbb{R}^{d_G}_t,\,
\mathbf{u}^v\in \mathbb{R}^{d_G}_v,\,
\mathbf{u}^h\in \mathbb{R}^{2d-2-2d_G}_h.
$$
\end{notn}

\subsubsection{Scaling asymptotics for $\Pi^\tau_{\chi,\nu,\lambda}$}
\label{sctn:scaling Pitau}

For the sake of brevity,
we shall first discuss the asymptotic expansions for
$\Pi^\tau_{\chi,\nu,\lambda}$, and then explain the
necessary changes for $P^\tau_{\chi,\nu,\lambda}$.

Before stating our results, we need to make some recalls
and introduce 
some further notation.

If $x\in X^\tau$, we shall  denote by
$G_x\leqslant G$ the stabilizer subgroup
of $x$, and by $r_x=|G_x|$ its cardinality
($r_x$ is always finite if $x\in Z^\tau$).

Assume as before that Assumption \ref{ass:ba} holds
and that $\mathrm{supp}(\chi)$ is sufficiently small.
Then for any $(x_1,x_2)\in \mathfrak{X}_\chi^\tau$
the following holds.

\begin{enumerate}
\item There exists a unique $t_1=t_1(x_1,x_2)$ such that
$x_1=\tilde{\mu}_g^\tau\circ \Gamma^\tau_{t_1}(x_2)$ for some
$g\in G$ (Lemma \ref{lem:unique t} below). 
\item There are exactly $r_{x_1}$
elements $g_l\in G$
such that $x_1=\tilde{\mu}_{g_l}^\tau\circ \Gamma^\tau_{t_1}(x_2)$
(see Corollary \ref{cor:unique t1chi} and Remark 
\ref{rem:unique t1chi} below).

\item The condition $(x_1,x_2)\in \mathfrak{X}_\chi^\tau$ is
tantamount to $x_1^G\cap x_2^\chi\neq \emptyset$, and one has
\begin{equation}
\label{eqn:defn di x12}
x_1^G\cap x_2^\chi=\{x_{12}\},\quad
\text{where}\quad
x_{12}:= \Gamma^\tau_{t_1}(x_2).
\end{equation}

\end{enumerate}

\begin{defn}
\label{defn:effective volume}
Under Assumption \ref{ass:ba},  
the \textit{effective volume} at $x\in Z^\tau$, denoted
$V_{eff}(x)$, is the volume of the $G$-orbit through
$x$, $x^G\subset X^\tau$ for the induced
Riemannian density (with respect to $\tilde{\kappa}$ - see also
Remark \ref{rem:warning norm}).

\end{defn}

\begin{defn}
\label{eqn:defn di Achi}
Assume as above that $\chi$ has sufficiently small support,
and that $(x_1,x_2)\in \mathfrak{X}_\chi^\tau$.
Let $t_1(x_1,x_2)\in \mathrm{supp}(\chi)$ and $x_{12}\in x_2^\chi$ be
as in (\ref{eqn:defn di x12}).
Given any choice of NHLC's at $x_{12}$ and $x_2$, let $B\in 
\mathrm{Sp}(2d-2)$ be defined by the analogue of
(\ref{eqn:defn of M}):
\begin{equation}
\label{eqn:defn of M12}
\Gamma^\tau_{-t_1}\big(x_{12}+(\theta,\mathbf{v})\big)=x_2+
\big(\theta+R_3(\theta,\mathbf{v}),\,B\mathbf{v}+
\mathbf{R}_2(\theta,\mathbf{v})    \big)
\end{equation}
(see Lemma \ref{lem:Mt1transl} below and the notational remark
preceding Theorem \ref{thm:main 3}).
Referring to the direct sum decomposition in
Notation \ref{rem:NHLC euclid decomp},
let us set 
\begin{eqnarray}
\label{eqn:comp of Achi}
A_\chi(x_1,x_2)
&:=&\int_{\mathbb{R}^{d_G}_t}\,\mathrm{d}\mathbf{u}^t\,
\int_{\mathbb{R}^{d_G}_v}\,\mathrm{d}\mathbf{u}^v\,
\int_{\mathbb{R}^{2d-2-2d_G}_h}\,\mathrm{d}\mathbf{u}^h\nonumber\\
&&\left[         
e^{-\left\|\mathbf{u}^t\right\|^2-\frac{1}{2}\,\left\|\mathbf{u}^h \right\|^2
-\imath\,\omega_0\left(\mathbf{u}^v,\mathbf{u}^t\right)
-\frac{1}{2}\,\| B\mathbf{u}\|^2}
\right].
\end{eqnarray}

\end{defn}

\begin{defn}
\label{defn:FchiBnu}
Let $x_1,\,x_2\in X^\tau$ be as in (\ref{eqn:defn di x12}), 
$M$ as in (\ref{eqn:defn of M12}), 
and $A_\chi (x_1,x_2)$ as in Definition \ref{eqn:defn di Achi}; 
define $P$ by
(\ref{eqn:matrice APQ}), with $A=B$. We set
\begin{eqnarray*}
\mathcal{F}_{\chi}(x_1,x_2)&:=&
\chi(t_1)
\cdot\frac{\left|\det(P)\right|}{r\cdot V_{eff}(x_1)}\cdot 
\frac{A_\chi (x_1,x_2)}{\pi^{d-1}},
\\
\mathcal{B}_\nu(x_1,x_2)_l&:=&\dim(\nu)\cdot
\overline{\Xi_\nu 
\left(g_l\right)},
\end{eqnarray*}

\end{defn}

\begin{rem}
\label{rem:Achi prop}
Let us note the following:
\begin{enumerate}
\item Under the previous assumptions, $A_\chi(x_1,x_2)$ is finite and non-zero, 
and can
in principle be computed in terms of $B$ (see Appendix A of \cite{fo});
\item if $B$ is orthogonal (e.g., if $B$ is the
identity matrix), by an interated Gaussian integration one obtains
$A_\chi(x_1,x_2)=\pi^{d-1}$;
\item if $B$ is orthogonal one also has $|\det(P)|=1$ (since in this case $P$ is a
unitary complex matrix), hence 
$\mathcal{F}_{\chi}(x_1,x_2)=
\chi(t_1)/\big(r\cdot V_{eff}(x_1)\big)$.
\end{enumerate}

\end{rem}

In view of Theorem \ref{thm:main 1},
we expect an exponential decay of 
$\Pi^\tau_{\chi,\nu,\lambda}(x_{1,\lambda},x_{2,\lambda})$
(and $P^\tau_{\chi,\nu,\lambda}(x_{1,\lambda},x_{2,\lambda})$)
along normal displacements to $Z^\tau$, that is, when 
$\mathbf{v}_j=\mathbf{v}_j^t\in T_{x_j}^tX^\tau$,
and we wish to determine the leading exponent governing.
Accordingly,
in the general equivariant case we restrict first to dispacements along directions in $T_{x_j}^tX^\tau$
(Theorem \ref{thm:main 2}, case 1); 
in this case, the leading exponent is determined 
explicitly.
In the same setting, an expansion is also obtained, although with a less explicit determination of the exponent, for more general displacements, provided we take $\mathbf{v}_2^h=0$
(Theorem \ref{thm:main 2}, case 2).
As discussed at the beginning of \S \ref{sctn:scaling}, 
this unequal treatment of $\mathbf{v}_1$ and $\mathbf{v}_2$
is aimed at making the computations more tractable,
while obtaining managable results that are sufficient
for the applications.

\begin{thm}
\label{thm:main 2}
Suppose that $\chi\in \mathcal{C}^\infty_c(\mathbb{R})$
has sufficiently small support, 
and that $(x_1,x_2)\in \mathfrak{X}^\tau_\chi$.
Let $g_1,\ldots,g_{r_{x_1}}\in G$ be the distinct elements such
that $x_1=\mu^\tau_{g_l}\circ \Gamma^\tau_{t_1}(x_2)$,
where $t_1\in \mathbb{R}$ is as in (\ref{eqn:defn di x12}).
Fix $C>0$ and $\epsilon'\in (0,1/6)$.
\begin{enumerate}
\item Uniformly for
$$\theta_j\in \mathbb{R},\quad
\mathbf{v}_j=\mathbf{v}_j^t\in T^t_{x_j}X^\tau,\quad
|\theta_j|,\,\|\mathbf{v}_j^t\|\le C\,\lambda^{\epsilon'}$$ 
we have for $\lambda\rightarrow +\infty$
$$
\Pi_{\chi,\nu,\lambda}^\tau(x_{1,\lambda},x_{2,\lambda})\sim 
\sum_{l=1}^{r_{x_1}}
\Pi_{\chi,\nu,\lambda}^\tau(x_{1,\lambda},x_{2,\lambda})_l
$$
where for each $l$
there is an asymptotic expansion 
\begin{eqnarray*}
\lefteqn{\Pi_{\chi,\nu,\lambda}^\tau(x_{1,\lambda},x_{2,\lambda})_l}\\
&\sim&e^{-\imath\,\lambda\,t_1}\cdot\frac{1}{\sqrt{2\,\pi}}\,
\left( \frac{\lambda}{2\,\pi\,\tau}  \right)^{d-1-d_G/2}
\cdot e^{\frac{1}{\tau}\left[\imath\,\sqrt{\lambda}\,
\,(\theta_1-\theta_2)
-\left(\|\mathbf{v}_1^t\|^2
+\|\mathbf{v}_2^t\|^2\right)\right]}
\nonumber\\
&&\cdot 
\left[e^{\imath\,\theta^\tau_{t_1}(x_1)}\cdot 
\mathcal{F}_{\chi}(x_1,x_2)
\cdot \mathcal{B}_\nu(x_1,x_2)_l+\sum_{k\ge 1}\,\lambda^{-k/2}\,
F_{k,l,\nu}\left(x_1,x_2;\theta_1,\mathbf{v}_1^t,
\theta_2,\mathbf{v}_2^t\right)\right];
\end{eqnarray*}
here
$e^{\imath\,\theta^\tau_{t_1}(x_1)}\in S^1$ and
$F_{k,l,\nu}(x_1,x_2;\cdot)$ is a polynomial in the rescaled variables of
degree $\le 3\,k$ and parity $k$.

\item 
Under the same assumptions and with the same notation,
uniformly for
$$\theta_j\in \mathbb{R},
\quad |\theta_j|\le C\,\lambda^{\epsilon'}
$$
$$
\mathbf{v}_1=\mathbf{v}_1^t+\mathbf{v}_1^h\in 
T^t_{x_1}X^\tau\oplus T^h_{x_1}X^\tau,\quad
\|\mathbf{v}_1\|\le C\,\lambda^{\epsilon'}
$$
$$
\mathbf{v}_2=\mathbf{v}_2^t\in T^t_{x_2}X^\tau,
\quad \|\mathbf{v}_2\|\le C\,\lambda^{\epsilon'},
$$
we have for $\lambda\rightarrow +\infty$
$$
\Pi_{\chi,\nu,\lambda}^\tau(x_{1,\lambda},x_{2,\lambda})\sim 
\sum_{l=1}^{r_{x_1}}\Pi_{\chi,\nu,\lambda}^\tau(x_{1,\lambda},x_{2,\lambda})_l
$$
where for each $l$
there is an asymptotic expansion 
\begin{eqnarray*}
\lefteqn{\Pi_{\chi,\nu,\lambda}^\tau(x_{1,\lambda},x_{2,\lambda})_l}\\
&\sim&e^{-\imath\,\lambda\,t_1}\cdot\frac{1}{\sqrt{2\,\pi}}\,
\left( \frac{\lambda}{2\,\pi\,\tau}  \right)^{d-1-d_G/2}
\cdot
e^{\frac{1}{\tau}
\,\left[\imath\,\sqrt{\lambda}\,(\theta_1-\theta_2)
-\frac{1}{2}\,\langle \mathbf{V},D_l\,\mathbf{V}\rangle\right]}
\nonumber\\
&&\cdot 
\left[e^{\imath\,\theta^\tau_{t_1}(x_1)}\cdot 
\mathcal{F}_{\chi}(x_1,x_2)
\cdot \mathcal{B}_\nu(x_1,x_2)_l+\sum_{k\ge 1}\,\lambda^{-k/2}\,
\tilde{F}_{k,l,\nu}\left(x_1,x_2;\theta_1,
\theta_2,\mathbf{V}\right)\right];
\end{eqnarray*}
here 
$\tilde{F}_{k,l}(x_1,x_2;\cdot)$ is a polynomial in the rescaled variables of
degree $\le 3\,k$ and parity $k$,
$$
\mathbf{V}^\dagger:=\begin{pmatrix}
(\mathbf{v}_1^t)^\dagger&(\mathbf{v}_1^h)^\dagger&
(\mathbf{v}_2^t)^\dagger
\end{pmatrix},
$$
and $D_l=D_l^\dagger$, $\Re(D_l)\gg 0$ for every $l$.

\end{enumerate}

\end{thm}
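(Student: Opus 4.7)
The plan is to reduce Theorem \ref{thm:main 2} to the action-free scaling asymptotics of Theorem \ref{thm:main 3} via a Peter--Weyl integration against the character $\Xi_\nu$, followed by complex Gaussian stationary phase in the group parameter. Writing the isotypical projector as $P_\nu=\dim(\nu)\int_G\overline{\Xi_\nu(g)}\,\pi(g)\,\mathrm{d} g$, with $\pi(g)f:=f\circ\tilde{\mu}^\tau_{g^{-1}}$, and using the $G$-invariance of $\Pi^\tau$ in $\Pi^\tau_{\chi,\nu,\lambda}=P_\nu\circ\Pi^\tau_{\chi,\lambda}$, one obtains
\[
\Pi^\tau_{\chi,\nu,\lambda}(x_{1,\lambda},x_{2,\lambda})=\dim(\nu)\int_G\overline{\Xi_\nu(g)}\,\Pi^\tau_{\chi,\lambda}\bigl(\tilde{\mu}^\tau_{g^{-1}}(x_{1,\lambda}),\,x_{2,\lambda}\bigr)\,\mathrm{d} g.
\]
Theorem \ref{thm:main 1} applied to $\Pi^\tau_{\chi,\lambda}$ forces $\tilde{\mu}^\tau_{g^{-1}}(x_{1,\lambda})$ to lie $O(\lambda^{\epsilon'-1/2})$-close to $(x_{2,\lambda})^\chi$, so up to $O(\lambda^{-\infty})$ the support of the integrand is confined to shrinking neighborhoods of the elements $g_1,\dots,g_{r_{x_1}}$. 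A partition of unity subordinate to these neighborhoods produces the claimed splitting into summands $\Pi^\tau_{\chi,\nu,\lambda}(x_{1,\lambda},x_{2,\lambda})_l$.

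Next, on each summand I parametrize $g=g_l\exp(\xi/\sqrt{\lambda})$ with $\xi\in\mathfrak{g}$ in an $O(\lambda^{\epsilon'})$-ball around $0$; the Jacobian $\lambda^{-d_G/2}$ of this substitution converts the $\lambda^{d-1}$ prefactor of Theorem \ref{thm:main 3} into the announced $\lambda^{d-1-d_G/2}$. Setting $x_{12}:=\Gamma^\tau_{t_1}(x_2)=\tilde{\mu}^\tau_{g_l^{-1}}(x_1)$ and choosing NHLC's at $x_2$ and $x_{12}$, with NHLC's at $x_1$ induced so that $\mathrm{d}\tilde{\mu}^\tau_{g_l^{-1}}$ intertwines the $(t,v,h)$-decompositions of Definition \ref{defn:vth dec Ztau}, one obtains
\[
\tilde{\mu}^\tau_{g^{-1}}(x_{1,\lambda})=x_{12}+\Bigl(\tfrac{\theta_1}{\sqrt{\lambda}}+R_3,\;\tfrac{\tilde{\mathbf{v}}_1-\xi^\sharp(x_{12})}{\sqrt{\lambda}}+\mathbf{R}_2\Bigr),
\]
where $\tilde{\mathbf{v}}_1$ is $\mathbf{v}_1$ rewritten at $x_{12}$ and $\xi^\sharp(x_{12})\in T^v_{x_{12}}X^\tau$ is the infinitesimal generator at $x_{12}$. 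Inserting this into Theorem \ref{thm:main 3} at the pair $(x_{12},x_2)$, invoking the isomorphism $\xi\mapsto\xi^\sharp(x_{12})$ from $\mathfrak{g}$ to $\mathbb{R}^{d_G}_v$ (whose Jacobian, combined with the stabilizer count $r=r_{x_1}$ and the normalization of Haar measure, produces the prefactor $1/(r\cdot V_{eff}(x_1))$ in $\mathcal{F}_\chi$), and freezing $\overline{\Xi_\nu(g)}$ at $g_l$ (legitimate since $g$ is $O(\lambda^{\epsilon'-1/2})$-close to $g_l$), the problem reduces to a complex Gaussian integral governed by the quadratic form $\tau^{-1}\Psi_{B^{-1}}(\tilde{\mathbf{v}}_1-\xi^\sharp(x_{12}),\mathbf{v}_2)$.

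In case (1), where $\mathbf{v}_j=\mathbf{v}_j^t$ is purely transverse, completing the square in $\xi\leftrightarrow\mathbf{u}^v$ and keeping track of the quadratic coupling with $\mathbf{u}^t,\mathbf{u}^h$ arising from the imaginary (symplectic) part of $\Psi_{B^{-1}}$ reproduces the oscillatory Gaussian defining $A_\chi(x_1,x_2)$ in Definition \ref{eqn:defn di Achi}, while the Jacobian $|\det(P)|/\pi^{d-1}$ emerges from the complexification $A_c$ of Definition \ref{defn:sympl compl mat}. The surviving exponent reduces to $-\tau^{-1}\bigl(\|\mathbf{v}_1^t\|^2+\|\mathbf{v}_2^t\|^2\bigr)$, since the $T^v$-directions have been integrated out while $T^t\perp T^v$ in the real Euclidean structure $g_0$ (because $T^v$ is $\omega$-isotropic, so $g(v,Jw)=\omega(v,w)=0$ for $v,w\in T^v$). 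In case (2), retaining the horizontal component $\mathbf{v}_1^h$ adds quadratic terms coupling $\mathbf{v}_1^h$ to $\mathbf{v}_j^t$; after integrating in $\xi$ one obtains the implicit Hermitian form $D_l$ with $\Re(D_l)\gg 0$, positivity being guaranteed by $\Re\Psi_{B^{-1}}\le 0$ (cf.\ Definition \ref{defn:quadratice form}), which vanishes only along directions already absorbed in the localization to $\mathfrak{X}^\tau_\chi$. The lower-order terms $F_{k,l,\nu},\,\tilde F_{k,l,\nu}$ and the bounds on degree ($\le 3k$) and parity ($k$) are inherited from the corresponding asymptotic expansion of Theorem \ref{thm:main 3} through standard stationary-phase estimates.

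The main obstacle is arranging the Gaussian decoupling in $(\xi,\mathbf{u}^t,\mathbf{u}^v,\mathbf{u}^h)$ despite the fact that $B\in\mathrm{Sp}(2d-2)$ is generically non-unitary and need not respect the finer decomposition $\mathbb{R}^{d_G}_t\oplus\mathbb{R}^{d_G}_v\oplus\mathbb{C}^{d-1-d_G}_h$. Establishing $\Re D_l\gg 0$ in case (2), and the clean reduction to $A_\chi$ in case (1), will rest on the moment-map identity $J_x\xi^\sharp=-\mathrm{grad}\langle\Phi,\xi\rangle$ together with the vanishing $\Phi\equiv 0$ on $Z$, which cancels the leading linear-in-$\xi$ phase, and on the Siegel-type estimate $\Re\Psi_{B^{-1}}\le 0$ to ensure absolute convergence of the resulting Gaussian.
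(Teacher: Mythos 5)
Your route---writing $\Pi^\tau_{\chi,\nu,\lambda}=\dim(\nu)\int_G\overline{\Xi_\nu(g)}\,\Pi^\tau_{\chi,\lambda}\bigl(\tilde{\mu}^\tau_{g^{-1}}(\cdot),\cdot\bigr)\,\mathrm{d}g$, localizing in $g$ near the $g_l$'s via Theorem \ref{thm:main 1}, substituting $g=g_l\exp(\xi/\sqrt{\lambda})$, and then treating Theorem \ref{thm:main 3} as a black box before integrating the resulting Gaussian in $\xi$---is genuinely different from the paper's proof, which never invokes Theorem \ref{thm:main 3} but redoes the full oscillatory integral (stationary phase in $(t,v,\theta,u)$, then joint Gaussian integration in $(\xi,\mathbf{u})$, cf.\ (\ref{eqn:defn di mathcalI0})--(\ref{eqn:defn di Al})). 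In outline your reduction is viable (by Fubini the two orders of Gaussian integration agree, and the $\lambda^{-d_G/2}$ Jacobian together with the transfer $\int_{\mathfrak g}\mapsto\frac{1}{r\cdot V_{eff}}\int_{T^v}$ reproduces the correct prefactor), but as written it has a gap that invalidates the computation of the leading exponent. Your local formula for $\tilde{\mu}^\tau_{g^{-1}}(x_{1,\lambda})$ omits the second-order angular shift $\omega_x\bigl(\xi^\sharp_{X^\tau}(x),\mathbf{v}_1\bigr)$ of Corollary \ref{cor:LHC action Ztau} (it is quadratic, not cubic, so it cannot be hidden in $R_3$). Under the $\sqrt{\lambda}$-rescaling this shift enters the phase as the $O(1)$ (indeed up to $\lambda^{2\epsilon'}$) oscillatory factor $e^{\frac{\imath}{\tau}\omega_0(\xi_{X^\tau}(x_1),\mathbf{v}_1)}$ appearing in (\ref{eqn:defn di Sc}), and it is exactly this factor which, through the $\xi$-integration (\ref{eqn:xi computint}), produces the claimed decay $e^{-\frac{1}{\tau}(\|\mathbf{v}_1^t\|^2+\|\mathbf{v}_2^t\|^2)}$: already in the model case $B=I_{2d-2}$, without it the $\xi$-integral of $e^{\frac1\tau\psi_2(\mathbf{v}_1^t-\xi^\sharp,\mathbf{v}_2^t)}$ yields $e^{-\frac{1}{2\tau}(\|\mathbf{v}_1^t-\mathbf{v}_2^t\|^2+\|\mathbf{v}_2^t\|^2)}$ rather than $e^{-\frac{1}{\tau}(\|\mathbf{v}_1^t\|^2+\|\mathbf{v}_2^t\|^2)}$ (one needs $\tfrac12\|\mathbf{v}_1^t-\mathbf{v}_2^t\|^2+\tfrac12\|\mathbf{v}_1^t+\mathbf{v}_2^t\|^2=\|\mathbf{v}_1^t\|^2+\|\mathbf{v}_2^t\|^2$, and the second summand only arises from the omitted shift). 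The moment-map cancellation you invoke handles the linear term $\varphi^\xi(x)$, which indeed vanishes on $Z^\tau$, but not this symplectic-area term.

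Beyond that, the two delicate points of the theorem are asserted rather than proved. In case (1) the identification of the leading constant (the factor $|\det(P)|\,A_\chi(x_1,x_2)/\pi^{d-1}$ and the exact exponent) does not follow from ``completing the square'': since $B$ is generically non-unitary and does not respect the splitting $\mathbb{R}^{d_G}_t\oplus\mathbb{R}^{d_G}_v\oplus\mathbb{C}^{d-1-d_G}_h$, the paper itself cannot evaluate the Gaussian directly and instead pins down the $\mathbf{v}^t$-dependence by the conjugation symmetry $\overline{\Pi^\tau_{\chi,\nu,\lambda}(x_1,x_2)}=\Pi^\tau_{\chi_-,\nu,\lambda}(x_2,x_1)$ (see (\ref{eqn:conjugate swap})), together with Lemma \ref{lem:product sigma detP} relating $\sigma^\tau_{t_1,0}$, $|\det P|$ and $e^{\imath\theta^\tau_{t_1}}$. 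In case (2), $\Re(D_l)\gg0$ does not follow merely from $\Re\Psi_{B^{-1}}\le 0$: that real part vanishes on a nontrivial subspace, and after integrating out $\xi$ the real part of the resulting Schur complement must still be shown positive on the $(\mathbf{v}_1^t,\mathbf{v}_1^h,\mathbf{v}_2^t)$-space; the paper obtains this indirectly, by applying the expansion to the trivial representation and invoking the off-locus rapid decay of Theorem \ref{thm:main 1} (and fixes $\det(R)=1$ by consistency with case (1)). If you keep your reduction-to-Theorem-\ref{thm:main 3} strategy, you must correct the local expression for the action as above and supply substitutes for these two arguments.
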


\begin{rem}
\label{rem:unitary factor Pi}
The unitary factor $e^{\imath\,\theta^\tau_{t}(x)}$  
is a smooth function on $\mathbb{R}\times X^\tau$ and is
related to the description (by Zelditch)
of the unitary Toeplitz operators
$e^{\imath\,t\,\mathfrak{D}^\tau_{\sqrt{\rho}}}$ 
as \lq dynamical
Toeplitz operators\rq\,(recall (\ref{eqn:DsqrtrhoToeplitz})); it is therefore an intrinsic invariant, given \textit{a priori}, of
the CR structure of $X^\tau$.
Similar considerations apply to the unitary factor 
$e^{\imath\,\tilde{\theta}^\tau_{t}(x)}$ appearing
in the asymptotics of $P^\tau_{\chi,\nu,\lambda}$ of
\S \ref{sctn:scaling Poissontau},
with $e^{\imath\,t\,\mathfrak{D}^\tau_{\sqrt{\rho}}}$
replaced by $U_\mathbb{C}(t+2\,\imath\,\tau)$
in (\ref{eqn:UC_spectral}) below - see (\ref{eqn:U_C compos})
and (\ref{eqn:leading coeff poisson}) below.
\end{rem}

Let us dwell on the special case where 
$$
x_1=x_2=x\in Z^\tau\quad
\text{and} \quad\chi\in 
\mathcal{C}^\infty_c\big((-\epsilon,\epsilon)\big),
$$ 
so that
$t_1=0$ and $e^{\imath\,\theta^\tau_{t}(x)}=1$.
Furthermore, we fix one system of NHLC's at $x$, in terms of
which
in (\ref{eqn:rescaled coord 12}) we write
\begin{equation}
\label{eqn:xx12}
x_{1,\lambda}:=x+\left(\frac{\theta_1}{\sqrt{\lambda}},
\frac{\mathbf{v}_1}{\sqrt{\lambda}}   \right),
\quad
x_{2,\lambda}:=x+\left(\frac{\theta_2}{\sqrt{\lambda}},
\frac{\mathbf{v}_2}{\sqrt{\lambda}}   \right).
\end{equation}
In (\ref{eqn:defn of M12}) we then have $M=I_{2d-2}$.

\begin{notn}
\label{notn:stabilizer action}
Suppose $G_{x}=\{\kappa_l\}_{l=1}^{r_{x_1}}$.
For every $l$, $\mathrm{d}_x\tilde{\mu}^\tau_{\kappa_l}:
T_xX^\tau\rightarrow T_xX^\tau$ satisfies
$$
\mathcal{R}^\tau(x)=
\mathrm{d}_x\tilde{\mu}^\tau_{\kappa_l}\left(
\mathcal{R}^\tau(x)\right)\quad\Rightarrow\quad
\mathrm{d}_x\tilde{\mu}^\tau_{\kappa_l}\left(
\left.\frac{\partial}{\partial \theta}\right|_x\right)
=\left.\frac{\partial}{\partial \theta}\right|_x
$$
and $\mathrm{d}_x\tilde{\mu}^\tau_{\kappa_l}\left(
\mathcal{H}^\tau_x\right)= 
\mathcal{H}^\tau_x$
(recall (\ref{eqn:reeb vct field}) and 
(\ref{eqn:Reeb theta v H})).
Thus the action of $\kappa_l\in G_x$ on 
$T_xX^\tau=\mathcal{T}^\tau_x\oplus \mathcal{H}^\tau_x
\cong \mathbb{R}\times \mathbb{R}^{2d-2}$ has the
form
$$
\mathrm{d}_x\tilde{\mu}^\tau_{\kappa_l}(\theta,\mathbf{u})=
(\theta,\mathbf{u})=
\left(\theta,\mathrm{d}_x\tilde{\mu}^\tau_{\kappa_l}(\mathbf{u})
\right).
$$
In the following, we shall adopt the short-hand
$$
\mathbf{u}^{(l)}:=
\mathrm{d}_x\tilde{\mu}^\tau_{\kappa_l^{-1}}(\mathbf{u})
\quad\text{for}\quad \mathbf{u}\in \mathbb{R}^{2d-2}
\cong \mathcal{H}^\tau_x.
$$
Furthermore, if $\mathbf{u}=\mathbf{u}^t+\mathbf{u}^v+\mathbf{u}^h$
then $(\mathbf{u}^t)^{(l)}=(\mathbf{u}^{(l)})^t$, 
$(\mathbf{u}^v)^{(l)}=(\mathbf{u}^{(l)})^v$,
$(\mathbf{u}^h)^{(l)}=(\mathbf{u}^{(l)})^h$.
\end{notn}

\begin{thm}
\label{thm:diagonal case}
Under the assumptions and with the notation 
of Theorem \ref{thm:main 2},
if $\chi\in \mathcal{C}^\infty_c\big((-\epsilon,\epsilon)\big)$
and $x_1=x_2=x\in Z^\tau$, then with the previous notation
the following holds:
uniformly for
$$\theta_j\in \mathbb{R},\quad\mathbf{v}_j=\mathbf{v}_j^h
+\mathbf{v}_j^t\in 
T^h_{x_j}X^\tau\oplus T^t_{x_j}X^\tau,\quad 
|\theta_j|,\,\|\mathbf{v}_j^t\|\le C\,\lambda^{\epsilon'},
$$ 
we have for $\lambda\rightarrow +\infty$
$$
\Pi_{\chi,\nu,\lambda}^\tau(x_{1,\lambda},x_{2,\lambda})\sim 
\sum_{l=1}^{r_{x_1}}\Pi_{\chi,\nu,\lambda}^\tau(x_{1,\lambda},x_{2,\lambda})_l
$$
where for each $l$
there is an asymptotic expansion 
\begin{eqnarray*}
\Pi^\tau_{\chi,\nu,\lambda}(x_{1,\lambda},x_{2,\lambda})_l
&\sim&\frac{1}{\sqrt{2\,\pi}}\cdot
\left( \frac{\lambda}{2\,\pi\,\tau}\right)^{d-1-d_G/2}\, 
\frac{\dim(\nu)}{r_x\cdot V_{eff}(x_1)}\cdot \nonumber\\
&&\cdot e^{\frac{1}{\tau}\,\left[\imath\,\sqrt{\lambda}\,
(\theta_1-\theta_2)
-\left\|\mathbf{v}_1^t\right\|^2
-\left\|\mathbf{v}_2^t\right\|^2+
\psi_2\left((\mathbf{v}^h_1)^{(l)}, \mathbf{v}^h_2 \right)\right]}
\\
&&\cdot \left[\chi (0)\cdot\overline{\Xi_\nu 
\left(\kappa_l\right)}+\sum_{k\ge 1}\,\lambda^{-k/2}\,F_{k,l,\nu}\left(x_1,x_2;\theta_1,\mathbf{v}_1^t,
\theta_2,\mathbf{v}_2^t\right)\right];
\nonumber
\end{eqnarray*}
here $F_{k,l,\nu}(x_1,x_2;\cdot)$ is a polynomial in the rescaled variables of
degree $\le 3\,k$ and parity $k$.
\end{thm}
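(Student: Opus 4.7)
The plan is to derive this near-diagonal, diagonal-case statement by specializing and refining the argument of Theorem \ref{thm:main 2}. The simplifying features are that $x_1=x_2=x$ and $\mathrm{supp}(\chi)\subset(-\epsilon,\epsilon)$ force $t_1=0$, so the matrix $B$ in (\ref{eqn:defn of M12}) is the identity and $\Psi_{B^{-1}}=\Psi_I=\psi_2$; this in turn removes the need to restrict to the transverse subspace, and allows general displacements $\mathbf{v}_j=\mathbf{v}_j^t+\mathbf{v}_j^h$.

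First, I would reduce the equivariant kernel to the non-equivariant one via
\[
\Pi^\tau_{\chi,\nu,\lambda}(x_{1,\lambda},x_{2,\lambda})=\dim(\nu)\int_G \overline{\Xi_\nu(g)}\,\Pi^\tau_{\chi,\lambda}\!\bigl(\tilde{\mu}^\tau_{g^{-1}}(x_{1,\lambda}),x_{2,\lambda}\bigr)\,\mathrm{d}g.
\]
By Theorem \ref{thm:main 1} (action-free version), the integrand is $O(\lambda^{-\infty})$ unless $\tilde{\mu}^\tau_{g^{-1}}(x)$ is within $O(\lambda^{\epsilon'-1/2})$ of the $\chi$-orbit of $x$; combined with local freeness and the smallness of $\mathrm{supp}(\chi)$, this confines $g$ to shrinking neighborhoods of $G_x=\{\kappa_l\}_{l=1}^{r_x}$. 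A smooth partition of unity on $G$ then splits the integral into $r_x$ contributions $\Pi^\tau_{\chi,\nu,\lambda}(x_{1,\lambda},x_{2,\lambda})_l$, each supported near $\kappa_l$. In the $l$-th piece, write $g=\exp(\eta)\,\kappa_l$ with $\eta\in\mathfrak{g}$ small. Since $\kappa_l\in G_x$, the differential $\mathrm{d}_x\tilde\mu^\tau_{\kappa_l^{-1}}$ preserves $\mathcal{R}^\tau(x)$ and acts as $\mathbf{u}\mapsto\mathbf{u}^{(l)}$ on $\mathcal{H}^\tau_x$, so in NHLC's at $x$
\[
\tilde{\mu}^\tau_{g^{-1}}(x_{1,\lambda})=x+\Bigl(\tfrac{\theta_1}{\sqrt\lambda}+R_2,\;\tfrac{\mathbf{v}_1^{(l)}}{\sqrt\lambda}-\eta_{X^\tau}(x)+\mathbf{R}_2\Bigr).
\]

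Next I would insert the near-diagonal non-equivariant expansion (Theorem \ref{thm:main 3} with $t_1=0$, $B=I$), which yields the inner exponent $\tfrac{1}{\tau}[\imath\sqrt\lambda(\theta_1-\theta_2)+\psi_2(\mathbf{v}_1^{(l)}-\sqrt\lambda\,\eta_{X^\tau}(x)+\ldots,\,\mathbf{v}_2)]$ and leading amplitude $\chi(0)$. After rescaling $\eta\mapsto\eta/\sqrt\lambda$, the $\eta$-integration becomes a Gaussian integral over $\mathfrak{g}\cong\mathbb{R}^{d_G}$, whose quadratic form in $\eta$ comes from the Hermitian piece of $\psi_2$ restricted to $T_x^vX^\tau=\mathfrak{g}_{X^\tau}(x)$. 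The change of variables $\eta\mapsto\eta_{X^\tau}(x)$ introduces the Jacobian $1/V_{eff}(x)$ (as $V_{eff}(x)$ is the volume of $x^G$ for the metric $\tilde\kappa$, cf.\ Remark \ref{rem:warning norm}), and the Gaussian integration produces the expected power $\lambda^{-d_G/2}$. The $J$-correspondence between $T_x^vX^\tau$ and $T_x^tX^\tau$ and the compatibility of $\psi_2$ with the Hermitian decomposition $\mathcal{H}^\tau_x=(T_x^v\oplus T_x^t)\oplus T_x^h$ ensure that the $\mathbf{v}^t$-components of $\mathbf{v}_1,\mathbf{v}_2$ get absorbed into the real Gaussian damping $-\tfrac{1}{\tau}(\|\mathbf{v}_1^t\|^2+\|\mathbf{v}_2^t\|^2)$, while the $\mathbf{v}^h$-components decouple and survive in the oscillatory factor $\psi_2((\mathbf{v}_1^h)^{(l)},\mathbf{v}_2^h)/\tau$.

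The remaining bookkeeping combines: the factor $\dim(\nu)\overline{\Xi_\nu(\kappa_l)}$ from the equivariant average; the prefactor $(\lambda/(2\pi\tau))^{d-1-d_G/2}/\sqrt{2\pi}$ obtained from the non-equivariant leading factor $(\lambda/(2\pi\tau))^{d-1}$ times $\lambda^{-d_G/2}$; and the coefficient $1/(r_x V_{eff}(x))$, where the $1/r_x$ arises because the $r_x$ pieces indexed by $l$ contribute independently (so the naive partition gives an over-count that is absorbed into the definition of $\mathcal{F}_\chi$; compare Remark \ref{rem:Achi prop}(3) where $B=I$ gives $|\det P|=1$ and $A_\chi=\pi^{d-1}$). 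Lower order terms arise by Taylor-expanding the remainders $R_2,\mathbf{R}_2$, the sub-principal amplitudes of the non-equivariant expansion, and the metric/volume densities against the Gaussian weight; the standard parity count for stationary phase with a cubic perturbation of a Gaussian (half-integer powers of $\lambda$, polynomials of degree $\le 3k$ and parity $k$) yields the claimed form of $F_{k,l,\nu}$.

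The main obstacle is the Gaussian computation in $\eta$: one must verify that it decouples cleanly, damping the normal directions $\mathbf{v}_j^t$ into $-\|\mathbf{v}_j^t\|^2/\tau$ while leaving the horizontal pairing $\psi_2((\mathbf{v}_1^h)^{(l)},\mathbf{v}_2^h)/\tau$ intact, and that the Jacobian of the identification $\mathfrak{g}\to T_x^vX^\tau$ matches exactly the $V_{eff}(x)^{-1}$ in the stated leading coefficient. This requires careful use of Remark \ref{rem:warning norm} (the halving of $\Omega$ and $\hat\kappa$), the explicit form of $\psi_2$ in Definition \ref{defn:psi2}, and the fact that the $T^v$ and $T^t$ subspaces are $J$-related isometrically for the $\tilde\kappa$-metric.
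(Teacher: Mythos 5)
Your route is genuinely different from the paper's. The paper does not prove Theorem \ref{thm:diagonal case} by inserting the already-established action-free expansion into the $G$-average: it goes back to the dynamical Toeplitz representation $U^\tau_{\sqrt{\rho}}(t)\sim\Pi^\tau\circ P^\tau_t\circ\Pi^\tau_{-t}$, writes everything as an oscillatory integral in $(g,t,y,u,v)$, rescales all variables, applies stationary phase in $(t,v,\theta,u)$, and only then performs the Gaussian integrals in the intermediate horizontal variable $\mathbf{u}$ and in $\xi\in\mathfrak{g}$ (equations (\ref{eqn:defn di Alt=0})--(\ref{eqn:defn di mathcalI02t0=0diag})); Theorems \ref{thm:main 3}, \ref{thm:main 2} and \ref{thm:diagonal case} are all outputs of that single master computation, not inputs to one another. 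Your scheme — localize $g$ near $G_x$ via Theorem \ref{thm:main 1}, write $g=e^{\eta}\kappa_l$, express $\tilde\mu^\tau_{g^{-1}}(x_{1,\lambda})$ in NHLC's at $x$ via Lemma \ref{lem:G action in local HCs}, feed this into the action-free near-diagonal expansion with $t_1=0$, $B=I$, and Gaussian-integrate over $\mathfrak{g}$ — does close: I checked that the leading exponent comes out as $-\|\mathbf{v}_1^t\|^2-\|\mathbf{v}_2^t\|^2+\psi_2\big((\mathbf{v}_1^h)^{(l)},\mathbf{v}_2^h\big)$, \emph{provided} you keep the angular correction $\omega_x\big(\xi_{X^\tau}(x),\mathbf{v}_1\big)$ in the first coordinate of Lemma \ref{lem:G action in local HCs}(2); multiplied by $\imath\sqrt{\lambda}/\tau$ it survives the rescaling $\eta=\zeta/\sqrt{\lambda}$ and supplies the term $\imath\,\omega_0(\zeta_{X^\tau}(x),\mathbf{v}_1)$ without which your Gaussian completion of squares gives an asymmetric (and wrong) damping in $\mathbf{v}_1^t,\mathbf{v}_2^t$. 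Your plan also needs, and tacitly uses, the uniformity in the rescaled displacements of the action-free expansion so that it may be integrated term by term against the Gaussian over the expanding $\zeta$-region; this is available and the $N$-dependent losses of size $\lambda^{d_G\epsilon'}$ are harmless.

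The concrete flaw is in your accounting of the constant $1/(r_x\,V_{eff}(x))$. With the paper's normalization (Haar probability measure, $\mathrm{d}\xi$ the Lebesgue measure of a bi-invariant metric whose density is Haar), the orbit map $G\to x^G$ is an $r_x$-fold covering with constant Jacobian, so $\mathrm{vol}(G)=1$ forces the Jacobian of the linear isomorphism $\xi\mapsto\xi_{X^\tau}(x)$, $\mathfrak{g}\to T^v_xX^\tau$, to equal $r_x\,V_{eff}(x)$, i.e. $\mathrm{d}\xi=\dfrac{\mathrm{d}\mathbf{a}}{r_x\,V_{eff}(x)}$ — this is exactly the replacement the paper makes before (\ref{eqn:xi computint}). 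You instead attribute a Jacobian $1/V_{eff}(x)$ to this substitution and recover the missing $1/r_x$ by claiming the $r_x$ localized pieces "over-count"; but they do not: for large $\lambda$ the cut-offs near the distinct $\kappa_l$ have disjoint supports, each piece legitimately carries its own weight $\overline{\Xi_\nu(\kappa_l)}$, and the theorem sums them. Your two misstatements cancel numerically, but if one corrected only one of them (e.g. used the true Jacobian while keeping your over-count division) the constant would come out wrong by a factor $r_x$. Replace that paragraph by the covering-degree argument above and the rest of your plan assembles into a valid alternative proof.
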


Theorem \ref{thm:diagonal case} is the analogue in the Grauert tube setting
of the equivariant Szeg\H{o} kernel asymptotics in the line bundle setting of \cite{p08}.

\subsubsection{Scaling asymptotics for $P^\tau_{\chi,\nu,\lambda}$}
\label{sctn:scaling Poissontau}

The scaling asymptotics of $P^\tau_{\chi,\nu,\lambda}$
can be studied, with some adaptations, 
by arguments and techniques similar to
those used for $\Pi^\tau_{\chi,\nu,\lambda}$.

\begin{thm}
\label{thm:complexified Poisson wave asy}
The statements of Theorems \ref{thm:main 1}, \ref{thm:main 3},
\ref{thm:main 2} and
\ref{thm:diagonal case} 
apply with 
$P^\tau_{\chi,\nu,\lambda}$ in place of $\Pi^\tau_{\chi,\nu,\lambda}$,
with the following changes:
\begin{enumerate}
\item the leading order terms of the asymptotic expansions in
Theorems \ref{thm:main 2}, \ref{thm:main 3} and \ref{thm:diagonal case}
are multiplied by $(\lambda/\pi\,\tau)^{-(d-1)/2}$;
\item the real smooth function $\theta^\tau_t$ in the definition of
$F_{\chi,\lambda}$ is replaced by a possibly different function
real smooth function $\tilde{\theta}^\tau_t$.
\end{enumerate}
\end{thm}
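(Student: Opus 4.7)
The plan is to reduce the Poisson-kernel asymptotics to the Szeg\H{o}-kernel analysis already carried out for $\Pi^\tau_{\chi,\nu,\lambda}$, by inserting the exponential weight $e^{-2\tau\mu_j}$ into the symbolic calculus that produced those asymptotics. The starting point is the identity
\[
P^\tau_{\chi,\lambda}(x,y)=\frac{1}{2\pi}\int_{\mathbb{R}}\chi(t)\,e^{\imath\lambda t}\,U_{\mathbb{C}}(t+2\imath\tau)(x,y)\,\mathrm{d}t,
\]
where $U_{\mathbb{C}}(s)$ is the analytic continuation of the wave group of $\sqrt{\Delta}$, evaluated on a pair in $X^\tau\times X^\tau$ via the holomorphic extension of its kernel, as anticipated in Remark \ref{rem:unitary factor Pi}. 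Restricted to $X^\tau\times X^\tau$, $U_{\mathbb{C}}(t+2\imath\tau)$ admits a description as a Fourier integral operator whose canonical relation coincides with that of $\Pi^\tau\circ e^{\imath t\mathfrak{D}^\tau_{\sqrt{\rho}}}\circ\Pi^\tau$ (composed with the geodesic flow of time $t$), but of order lower by $(d-1)/2$; this order drop accounts exactly for the extra factor $(\lambda/\pi\,\tau)^{-(d-1)/2}$ in the leading term of the asymptotics, in view of the Riesz-basis normalization appearing in item (2) of Theorem \ref{thm:holo extension}.

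Next I would apply the equivariant projector $P_\nu$ (represented as integration against $\Xi_\nu(g)\,\tilde\mu^\tau_g$), obtaining an oscillatory integral over $G\times\mathbb{R}$ parallel to the one used in the proofs of Theorems \ref{thm:main 1}, \ref{thm:main 3}, \ref{thm:main 2} and \ref{thm:diagonal case}, with two modifications: the amplitude acquires a factor whose principal part behaves like $\langle\mu_j\rangle^{-(d-1)/2}$, and Zelditch's dynamical-Toeplitz description of $e^{\imath t\mathfrak{D}^\tau_{\sqrt{\rho}}}$ is replaced by the analogous description of $U_{\mathbb{C}}(t+2\imath\tau)$, which carries its own intrinsic unitary factor $e^{\imath\tilde\theta^\tau_t(x)}$ in place of $e^{\imath\theta^\tau_t(x)}$. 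The rapid-decay localization in Theorem \ref{thm:main 1} transfers with no change, since it relies only on the wavefront properties of the underlying FIO and on the geometry of the concentration locus $\mathfrak{X}^\tau_\chi$, both of which are unaffected by the order drop.

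With this oscillatory representation in hand, I would run the proofs of Theorems \ref{thm:main 3}, \ref{thm:main 2}, and \ref{thm:diagonal case} almost verbatim: introduce NHLC's centered at $x_1$ and $x_2$, perform the rescaling $(\theta_j,\mathbf{v}_j)/\sqrt{\lambda}$, Taylor-expand phase and amplitude, and apply complex stationary phase in the Heisenberg-type coordinates. The algebraic form of the critical-point Hessian, and hence of the quadratic forms $\Psi_{B^{-1}}$ and $\psi_2$ appearing in the leading exponential, is dictated by the symplectic geometry of $\Sigma^\tau$ and the local expression (\ref{eqn:defn of M12}) for $\Gamma^\tau$, which do not change; the degree and parity estimates on the polynomial remainders $F_{k,l,\nu}$ survive because they reflect only Taylor-expansion bookkeeping and the parity of the Gaussian moments, not the order of the FIO. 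The main obstacle I foresee is verifying that the extra $(d-1)/2$ drop in order is uniform in the rescaled variables and contributes no additional correction on the critical manifold beyond a scalar unitary factor: this requires a careful identification of the full symbol of $U_{\mathbb{C}}(t+2\imath\tau)$ as a complex FIO (using the composition formulas underlying (\ref{eqn:U_C compos}) and (\ref{eqn:leading coeff poisson})), which is precisely the source of the possibly different unitary factor $\tilde\theta^\tau_t$ asserted in item (2) of the statement.
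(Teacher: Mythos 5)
Your overall route is the same as the paper's: represent $P^\tau_{\chi,\nu,\lambda}$ via (\ref{eqn:PtaunuFIO}) and (\ref{eqn:equivariant projection complex}), replace $U^\tau_{\sqrt{\rho}}(t)$ by $U_\mathbb{C}(t+2\,\imath\,\tau)$ through its dynamical-Toeplitz description (\ref{eqn:U_C compos}), and rerun the localization, rescaling and stationary-phase arguments of Theorems \ref{thm:main 1}, \ref{thm:main 3}, \ref{thm:main 2} and \ref{thm:diagonal case}; the new unitary factor $\tilde{\theta}^\tau_t$ enters exactly as you say, through the leading coefficient (\ref{eqn:leading coeff poisson}). (Incidentally, your starting identity should read as in (\ref{eqn:PtaunuFIO}), with the factor $1/\sqrt{2\,\pi}$ and the phase $e^{-\imath\,\lambda\,t}$, to match the paper's Fourier conventions.)

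There is, however, a genuine gap at the point you defer as the ``careful identification of the full symbol'': that identification is the heart of the paper's proof. What is needed is the precise leading coefficient, namely that the principal symbol of $\mathfrak{Q}^\tau={\mathfrak{P}^\tau}^*\circ\mathfrak{P}^\tau$ equals $\left(\pi\,\tau/\|\xi\|\right)^{(d-1)/2}$ (Lemma \ref{lem:princ symbol Qtau}), whence $\gamma^\tau_{t,0}(x)=(\pi\,\tau)^{\frac{d-1}{2}}\,e^{\imath\,\tilde{\theta}^\tau_t(x)}\,\langle \sigma_J^{(x)},\sigma_{J_t}^{(x)}\rangle^{-1}$, and, after the rescaling $u\mapsto\lambda\,u$ and evaluation at the critical value $u=1/\tau$ (where the principal symbol $u\,\tau$ of $D^\tau_{\sqrt{\rho}}$ equals $1$), exactly the asserted factor $(\lambda/\pi\,\tau)^{-(d-1)/2}$. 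Your justification --- that the order drop by $(d-1)/2$ together with the Riesz-basis normalization of Theorem \ref{thm:holo extension} ``accounts exactly'' for this factor --- only fixes the power of $\lambda$: a Riesz basis pins down normalizations up to uniformly bounded constants and says nothing about the constant $(\pi\,\tau)^{(d-1)/2}$, which is moreover sensitive to the choice of volume form on $X^\tau$ through ${\mathfrak{P}^\tau}^*$; this sensitivity is precisely why the paper's computation corrects the leading constant of \cite{p24} by a factor $\pi^{(d-1)/2}$. The paper supplies the missing input by computing $\mathfrak{Q}^\tau$ explicitly in the Euclidean model (stationary phase on the sphere, (\ref{eqn:Qtauxy1})--(\ref{eqn:Qtau euclidean case})) and then reducing a general real-analytic metric to that case via the homogeneity and metric-rescaling argument leading to Lemma \ref{lem:princ symbol Qtau}; without some such computation your proposal establishes the shape of the expansion but not the stated leading factor.
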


In the action-free case, for example, rather than the expansion
in Theorem \ref{thm:main 3} we have
\begin{eqnarray*}
\lefteqn{P^\tau_{\chi,\lambda}(x_{1,\lambda},x_{2,\lambda})}\\
&\sim&\frac{e^{-\imath\,\lambda\,t_1}}{\sqrt{2\,\pi}}
\cdot \left(\frac{1}{2}\right)^{d-1}\cdot 
\left(\frac{\lambda}{\tau\,\pi}\right)^{\frac{d-1}{2}}
\cdot e^{\frac{1}{\tau}\,\left[\imath\,\sqrt{\lambda}\,(\theta_1-\theta_2)
+\Psi_{M^{-1}}(\mathbf{v}_1,\mathbf{v}_2)\right]}
\cdot  
e^{\imath\,\tilde{\theta}^\tau_{t_1}(x)}\,
\\
&&\cdot \left[\chi (t_1) +\sum_{k\ge 1}\,\lambda^{-k/2}\,\widetilde{F}_k\left(x_1,x_2;\theta_1,\mathbf{v}_1,
\theta_2,\mathbf{v}_2\right)\right],
\nonumber
\end{eqnarray*}
for certain polynomials $\widetilde{F}_k(x_1,x_2;\cdot)$ 
in the rescaled variables of
degree $\le 3\,k$ and parity $k$. 
In the yet more particular near-diagonal case, 
the previous expansion corrects the leading order
factor appearing in the expansion in Theorem 7 of \cite{p24} by a power of $\pi^{(d-1)/2}$;
this is related to the symbolic computation in
\S \ref{sctn:complexified eigenf lapl} below
(see Lemma \ref{lem:princ symbol Qtau}).

The original expansion in Theorem 1.9 of \cite{cr2} follows by rescaling according
to Heisenberg type (as in the Introduction
of \cite{p24}).

\subsection{Unrescaled Asymptotics}

The equivariant scaling asymptotic expansions of Theorems \ref{thm:main 2},
\ref{thm:diagonal case},
and \ref{thm:complexified Poisson wave asy}
hold uniformly, say, on compact subsets
of the locus in $Z^\tau$ with principal orbit type, but
it is not \textit{a priori} obvious that they hold uniformly near
the locus where the cardinality of the stabilizer has
a discontinuity. 
In the following Theorem, which rests on the previous ones,
we establish a near-graph 
unrescaled version of the previous
asymptotic expansions; these allow for some
uniform estimates which, in turn, will be useful in the following
applications.

We give an explicit statement and proof in the case of 
$\Pi^\tau_{\chi,\nu,\lambda}$; the 
extension to $P^\tau_{\chi,\nu,\lambda}$
can be carried out by the same arguments used for Theorem
\ref{thm:complexified Poisson wave asy}, and will be left to the reader. 

\begin{thm}
\label{thm:near-graph-unrescaled}
With the previous assumptions and notation, the following holds
for $\lambda\rightarrow +\infty$.
\begin{enumerate}
\item 
For small enough 
$$
(\theta_j,\mathbf{v}_j^t)\in \mathbb{R}\times \mathbb{R}^{d_G}_t\cong \mathcal{T}^\tau_{x_j}\oplus
T^t_{x_j}X^\tau,
$$
there is an asymptotic expansion 
$$
\Pi^\tau_{\chi,\nu,\lambda}
\big(x_1+(\theta_1,\mathbf{v}_1^t),
x_2+(\theta_2,\mathbf{v}_2^t)\big)\sim
\sum_{g_{l}\in G_{x_1}}
\Pi^\tau_{\chi,\nu,\lambda}\big(x_1+(\theta_1,\mathbf{v}_1^t),
x_2+(\theta_2,\mathbf{v}_2^t)\big)_l,
$$
where for each $l$
\begin{eqnarray*}
\lefteqn{\Pi^\tau_{\chi,\nu,\lambda}\big(x_1+(\theta_1,\mathbf{v}_1^t),
x_2+(\theta_2,\mathbf{v}_2^t)\big)_l}\\
&\sim&
e^{-\imath\,\lambda\,t_1}\,\frac{1}{\sqrt{2\,\pi}}\,
\left( \frac{\lambda}{2\,\pi\,\tau}  \right)^{d-1-d_G/2}\\
&&
\cdot e^{\frac{\lambda}{\tau}\,\left[
\imath\,
\,(\theta_1-\theta_2)
-\left(\|\mathbf{v}_1^t\|^2
+\|\mathbf{v}_2^t\|^2\right)+R_3\left(\theta_1,\mathbf{v}_1^t,
\theta_2,\mathbf{v}_2^t\right)\right]}
\nonumber\\
&&\cdot 
\left[e^{\imath\,\theta^\tau_{t_1}(x_1)}\cdot \mathcal{F}_{\chi}(x_1,x_2)
\cdot \mathcal{B}_\nu(x_1,x_2)_l+\sum_{s\ge 1}\,\lambda^{-s}\,
L_{\nu,l,s}\left(x_1,x_2;\theta_1,\mathbf{v}_1^t,
\theta_2,\mathbf{v}_2^t\right)\right],
\nonumber
\end{eqnarray*}
where $R_3$ vanishes to third order at the origin
and 
$$
\Re\left[(\imath\,
\,(\theta_1-\theta_2)
-\left(\|\mathbf{v}_1^t\|^2
+\|\mathbf{v}_2^t\|^2\right)+R_3\left(\theta_1,\mathbf{v}_1^t,
\theta_2,\mathbf{v}_2^t\right)\right]\le 0.
$$ 
\item For small enough 
$(\theta_j,\mathbf{v}_j)\in \mathbb{R}\times \mathbb{R}^{2d-2}\cong 
T_{x_j}X^\tau=
\mathcal{T}^\tau_{x_j}\oplus \mathcal{H}^{\tau}_{x_j}$ 
with
$$
\mathbf{v}_1=\mathbf{v}_1^t+\mathbf{v}_2^h\in 
T^t_{x_1}X^\tau\oplus T^h_{x_1}X^\tau,\quad
\mathbf{v}_2=\mathbf{v}_2^t\in T^t_{x_2}X^\tau,
$$
there is an asymptotic expansion 
$$
\Pi^\tau_{\chi,\nu,\lambda}
\big(x_1+(\theta_1,\mathbf{v}_1),
x_2+(\theta_2,\mathbf{v}_2)\big)\sim
\sum_{g_{l}\in G_{x_1}}
\Pi^\tau_{\chi,\nu,\lambda}\big(x_1+(\theta_1,\mathbf{v}_1),
x_2+(\theta_2,\mathbf{v}_2)\big)_l,
$$
where for each $l$
\begin{eqnarray*}
\lefteqn{\Pi^\tau_{\chi,\nu,\lambda}
\big(x_1+(\theta_1,\mathbf{v}_1),
x_2+(\theta_2,\mathbf{v}_2)\big)_l}\\
&\sim&
e^{-\imath\,\lambda\,t_1}\,\frac{1}{\sqrt{2\,\pi}}\,
\left( \frac{\lambda}{2\,\pi\,\tau}  \right)^{d-1-d_G/2}\\
&&
\cdot e^{\frac{\lambda}{\tau}\,\left[\imath\,
\,(\theta_1-\theta_2)
-\frac{1}{2}\,\langle \mathbf{V},D_l\,\mathbf{V}\rangle+R_3\left(\theta_1,
\theta_2,\mathbf{V}\right)\right]}
\nonumber\\
&&\cdot 
\left[e^{\imath\,\theta^\tau_{t_1}(x_1)}\cdot \mathcal{F}_{\chi}(x_1,x_2)
\cdot \mathcal{B}_\nu(x_1,x_2)_l+\sum_{s\ge 1}\,\lambda^{-s}\,
K_{\nu,l,s}\left(x_1,x_2;\theta_1,
\theta_2,\mathbf{V}\right)\right],
\nonumber
\end{eqnarray*}
\end{enumerate}
where $\mathbf{V}$ is as in Theorem \ref{thm:main 2}, 
$R_3$ vanishes to third order at the origin, and
$$\Re\left[ \imath\,
\,(\theta_1-\theta_2)
-\frac{1}{2}\,\langle \mathbf{V},D_l\,\mathbf{V}\rangle+R_3\left(\theta_1,
\theta_2,\mathbf{V}\right)  \right]\le 0.$$
\end{thm}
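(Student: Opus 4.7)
The strategy is to retrace the proof of Theorem \ref{thm:main 2} without performing the full $\sqrt{\lambda}$-rescaling of the displacement variables. Combining the Boutet de Monvel--Sj\"ostrand description of $\Pi^\tau$ as a Fourier integral operator, Zelditch's realization of $e^{\imath\,t\,\mathfrak{D}^\tau_{\sqrt{\rho}}}$ as a dynamical Toeplitz operator, and the Peter--Weyl identity $P_\nu(\cdot)=\dim(\nu)\int_G\overline{\Xi_\nu(g)}\,\tilde{\mu}^{\tau,*}_g(\cdot)\,\mathrm{d}g$, one arrives at an oscillatory integral representation
\[
\Pi^\tau_{\chi,\nu,\lambda}(y_1,y_2)=\int e^{\imath\,\lambda\,\Phi(y_1,y_2;\xi)}\,a(y_1,y_2;\xi;\lambda)\,\mathrm{d}\xi,
\]
where $\xi$ collectively denotes the integration variables $(t,g,\eta,\ldots)$, the phase $\Phi$ has semidefinite imaginary part, and the amplitude $a$ admits a classical symbol expansion in $\lambda^{-1}$. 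A partition of unity localizes the integral to contributions indexed by the elements $g_l\in G_{x_1}$ singled out by Corollary \ref{cor:unique t1chi}.

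First I would insert the \emph{unrescaled} displacements $y_j=x_j+(\theta_j,\mathbf{v}_j^t)$ (case (1)) or $y_1=x_1+(\theta_1,\mathbf{v}_1^t+\mathbf{v}_1^h)$, $y_2=x_2+(\theta_2,\mathbf{v}_2^t)$ (case (2)) directly into each localized summand, treating the displacements as small smooth parameters. The nondegeneracy of $\Phi$ transverse to its critical manifold -- which is precisely the condition ensured by Assumption \ref{ass:ba} and which is what produces the Gaussian factor in Theorem \ref{thm:main 2} -- allows the complex stationary-phase method to be applied in the integration variables $\xi$ alone. This yields, for each $l$, an expression of the form
\[
e^{-\imath\,\lambda\,t_1}\,\Bigl(\tfrac{\lambda}{2\,\pi\,\tau}\Bigr)^{d-1-d_G/2}\,e^{\lambda\,\Psi_l(\theta_1,\mathbf{v}_1,\theta_2,\mathbf{v}_2)}\,\Bigl[A_l(x_1,x_2)+\sum_{s\ge 1}\lambda^{-s}\,L_{\nu,l,s}(x_1,x_2;\theta_j,\mathbf{v}_j)\Bigr],
\]
where $\Psi_l$ is a $\mathbb{C}$-valued smooth germ vanishing at the origin of the displacement variables; the integer powers of $\lambda^{-1}$ arise because the phase has not been Taylor-expanded.

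Second I would identify $\Psi_l$ and $A_l$ by matching against Theorem \ref{thm:main 2}. Writing $\Psi_l=\Psi_l^{(2)}+R_3$ with $\Psi_l^{(2)}$ its quadratic part and $R_3$ vanishing to third order, the substitution $\theta_j\mapsto\theta_j/\sqrt{\lambda}$, $\mathbf{v}_j\mapsto\mathbf{v}_j/\sqrt{\lambda}$ turns $\lambda\,\Psi_l^{(2)}$ into $\Psi_l^{(2)}(\theta_j,\mathbf{v}_j)$ and makes $\lambda\,R_3$ a factor of the form $1+O(\lambda^{-1/2})$ that is absorbed into the lower-order terms of the rescaled expansion. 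Consistency with Theorem \ref{thm:main 2} therefore forces $\Psi_l^{(2)}$ to coincide with $\frac{1}{\tau}\bigl[\imath\,(\theta_1-\theta_2)-(\|\mathbf{v}_1^t\|^2+\|\mathbf{v}_2^t\|^2)\bigr]$ in case (1), respectively with $\frac{1}{\tau}\bigl[\imath\,(\theta_1-\theta_2)-\tfrac12\langle\mathbf{V},D_l\,\mathbf{V}\rangle\bigr]$ in case (2), and pins down the leading coefficient $A_l=e^{\imath\,\theta^\tau_{t_1}(x_1)}\,\mathcal{F}_\chi(x_1,x_2)\,\mathcal{B}_\nu(x_1,x_2)_l$. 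The bound $\Re\,\Psi_l\le 0$ near the origin then follows from the negative semidefiniteness of $\Psi_l^{(2)}$ (negative definiteness transversely, via $\Re(D_l)\gg 0$ in case (2)) combined with smallness of the neighbourhood, which makes $R_3$ a lower-order perturbation.

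The principal obstacle is uniformity of the stationary-phase expansion across the stratification of $Z^\tau$ by stabilizer type: at a point $x_1$ of higher stabilizer the several critical branches indexed by $g_l\in G_{x_1}$ coalesce, and the partition of unity localizing the $G$-integral cannot be chosen globally smoothly without care. I would handle this by working in a slice neighbourhood of $x_1$ for the $G$-action and exploiting that, under Assumption \ref{ass:ba}, the critical manifold of $\Phi$ in $\xi$ is smooth with continuously varying Hessian; this makes the sum $\sum_{l=1}^{r_{x_1}}$ well defined and smoothly dependent on $(x_1,x_2)$ as they move across nearby strata, with the $L_{\nu,l,s}$ and $K_{\nu,l,s}$ generated by the standard recipe of the complex stationary-phase expansion.
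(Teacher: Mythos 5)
Your overall route is the one the paper follows: keep the displacements unrescaled, localize the $(g,t)$-integration near $\Sigma_\chi(x_1,x_2)$ into summands indexed by $G_{x_1}$, apply complex stationary phase with the displacements as parameters, and identify the quadratic part of the resulting phase and the leading coefficient by matching against the rescaled expansions of Theorem \ref{thm:main 2}. However, there are two genuine gaps. First, the key technical step you wave through is the non-degeneracy of the critical point of the \emph{full} phase, in all the variables $(t,v,\theta,u,\mathbf{u},\xi)$ simultaneously. Your justification --- that this is ``precisely the condition ensured by Assumption \ref{ass:ba}'' and ``what produces the Gaussian factor in Theorem \ref{thm:main 2}'' --- conflates two different mechanisms: in the rescaled proof the stationary phase is applied only in the four variables $(t,v,\theta,u)$ (Hessian determinant $\tau^2$), while the $(\mathbf{u},\xi)$-integrals are performed afterwards as explicit Gaussian integrals, so no non-degeneracy of a full-phase critical manifold was ever established there. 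In the unrescaled setting one must actually compute the Hessian at the critical point $P_0$, check that it is block diagonal, and prove that the $(\mathbf{u},\xi)$-block is invertible with the right sign; in the paper this requires showing that the Schur complement $S$ built from $\left(I+B^\dagger B\right)^{-1}$ and the injective matrices $D^l_{x_1}$ (injectivity is where local freeness enters) satisfies $\Re(S)\gg 0$, using the symplecticity of $B$. Without this computation the appeal to Melin--Sj\"ostrand is unsupported.

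Second, your derivation of $\Re\bigl[\imath(\theta_1-\theta_2)-(\|\mathbf{v}_1^t\|^2+\|\mathbf{v}_2^t\|^2)+R_3\bigr]\le 0$ from ``negative semidefiniteness of the quadratic part plus smallness of the neighbourhood'' does not work: the real part of the quadratic form vanishes identically in the $\theta$-directions, and along such degenerate directions a cubic term can be positive and is not dominated by the quadratic part, no matter how small the neighbourhood (consider $0+c\,\theta^3$). The sign statement should instead come from the intrinsic non-negativity of $\Im\psi^\tau$ (Corollary 1.3 of \cite{bs}), which forces $\Im\Psi_{x_1,x_2}\ge 0$ on the whole domain and is preserved by the Melin--Sj\"ostrand complex stationary phase, so the critical value evaluated at the (possibly complex) critical point has non-positive real part. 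With these two points repaired --- the explicit Hessian/Schur-complement verification and the correct source of the sign bound --- your argument coincides with the paper's proof, including the matching step and the slice/uniformity discussion across stabilizer strata, which the paper handles via Lemma \ref{lem:continuity Sigma} and stationary phase with parameters.
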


\subsection{Applications}
We conclude this Introduction with a sample of some of the
possible applications 
of the previous asymptotics.

\subsubsection{An equivariant global Weyl law for $\mathfrak{D}^\tau_{\sqrt{\rho}}$}\label{sctn:intro weyl Pitau}

The previous expansions may be integrated to obtain global information on the asymptotic
distribution of the $\lambda_j$'s. 
Recalling (\ref{eqn:L2Mdecompequiv}),
let us set
\begin{equation} \label{eqn:defn di Wtaonu}
\mathcal{W}_\nu^\tau(\lambda):=
\sum_{\lambda_j\le \lambda}\,
\dim H^\tau(X)_{j,\nu} \qquad (\lambda\in \mathbb{R}).
\end{equation}

We shall make here the simplifying assumption that $G$ acts freely on $Z^\tau$; then the quotient $Z^\tau/G$
is a compact manifold; it has a uniquely determined Riemannian structure
such that the projection $Z^\tau\rightarrow Z^\tau/G$ is a Riemannian
submersion. Let $\mathrm{vol}(Z^\tau/G)$ be the corresponding
volume of $Z^\tau/G$.

\begin{thm}
\label{thm:weyl law geod}
In the situation of Theorem \ref{thm:diagonal case},
let us assume that $G$ acts freely on
$Z^\tau$. 
Then as $\lambda\rightarrow +\infty$
\begin{eqnarray*}
\mathcal{W}_\nu^\tau(\lambda)&=&
\frac{1}{2^{d_G/2}}\cdot 
\frac{\tau}{d-d_G}\cdot \left( \frac{\lambda}{2\,\pi\,\tau}  \right)^{d-d_G}
\cdot \dim(\nu)^2\cdot
\mathrm{vol}(Z^\tau/G)\nonumber\\
&&
\cdot \left[1+O\left(  \lambda^{-1} \right)\right]\nonumber
\end{eqnarray*}
 
\end{thm}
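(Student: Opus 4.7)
The plan is to compute $\mathrm{tr}\big(\Pi^\tau_{\chi,\nu,\lambda}\big)$ in two ways, for a suitably chosen $\chi\in \mathcal{C}^\infty_c(\mathbb{R})$ with small support and $\chi(0)\neq 0$, and then extract the counting asymptotic for $\mathcal{W}^\tau_\nu$ via a Tauberian argument. Orthonormality within each $H(X^\tau)_{j,\nu}$ gives
\begin{equation*}
\mathrm{tr}\left(\Pi^\tau_{\chi,\nu,\lambda}\right)
=\sum_{j\ge 1}\hat\chi(\lambda-\lambda_j)\,\dim H(X^\tau)_{j,\nu}
=\left(\hat\chi\ast d\mathcal{W}^\tau_\nu\right)(\lambda),
\end{equation*}
while on the geometric side this trace equals $\int_{X^\tau}\Pi^\tau_{\chi,\nu,\lambda}(y,y)\,\mathrm{d}V(y)$. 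By Theorem \ref{thm:main 1}, the integrand is $O(\lambda^{-\infty})$ outside a tubular neighbourhood of $Z^\tau$ of radius $O(\lambda^{\epsilon'-1/2})$, so it suffices to integrate over such a tube.

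Within this shrinking tube I would parametrise $y=x+(0,\mathbf{v}^t)$ with $x\in Z^\tau$ and $\mathbf{v}^t\in T^t_xX^\tau\cong \mathbb{R}^{d_G}$, so that $\mathrm{d}V_{X^\tau}(y)=\big(1+O(|\mathbf{v}^t|)\big)\,\mathrm{d}V_{Z^\tau}(x)\,\mathrm{d}\mathbf{v}^t$. Freeness of the $G$-action on $Z^\tau$ forces $G_x=\{e\}$ and $r_x=1$ for every $x\in Z^\tau$, so only the summand $l=1$ of Theorem \ref{thm:near-graph-unrescaled}.1 survives; moreover, since $x_1=x_2=x$ and $t_1=0$, the defining relation (\ref{eqn:defn of M12}) yields $B=I_{2d-2}$, whence $|\det(P)|=1$ and $A_\chi(x,x)=\pi^{d-1}$ by Remark \ref{rem:Achi prop}, so $\mathcal{F}_\chi(x,x)=\chi(0)/V_{eff}(x)$ and $\mathcal{B}_\nu(x,x)_1=\dim(\nu)^2$. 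Substituting into Theorem \ref{thm:near-graph-unrescaled}.1, carrying out the Gaussian integral
\begin{equation*}
\int_{\mathbb{R}^{d_G}}e^{-2\lambda\|\mathbf{v}^t\|^2/\tau}\,\mathrm{d}\mathbf{v}^t=\left(\frac{\pi\,\tau}{2\,\lambda}\right)^{d_G/2},
\end{equation*}
and using the Riemannian-submersion identity $\int_{Z^\tau}V_{eff}(x)^{-1}\,\mathrm{d}V_{Z^\tau}(x)=\mathrm{vol}(Z^\tau/G)$ (which holds because the orbit fibres of $Z^\tau\rightarrow Z^\tau/G$ have volume $V_{eff}(x)$), produces, after bookkeeping of the prefactors, an asymptotic of the form $c\cdot \chi(0)\cdot\dim(\nu)^2\cdot\mathrm{vol}(Z^\tau/G)\cdot\lambda^{d-1-d_G}$ with an explicit constant $c=c(d,d_G,\tau)$ matching the claim upon integration from $0$ to $\lambda$.

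Since $d\mathcal{W}^\tau_\nu$ is a positive Radon measure and the previous asymptotic holds with leading coefficient linear in $\chi(0)$, a Tauberian theorem of H\"ormander--Levitan type allows one to integrate up to the sharp counting function and obtain $\mathcal{W}^\tau_\nu(\lambda)\sim\lambda^{d-d_G}/(d-d_G)\cdot(\text{constant})\cdot\big(1+O(\lambda^{-1})\big)$; the $O(\lambda^{-1})$ remainder is consistent with the parity statement in Theorem \ref{thm:near-graph-unrescaled}.1, since the odd-$k$ subleading terms are odd polynomials in $\mathbf{v}^t$ and vanish after Gaussian integration, so the first nontrivial correction is of relative size $\lambda^{-1}$. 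The main obstacle will be the Tauberian step with a uniform $O(\lambda^{-1})$ remainder: this requires estimating the subleading polynomials $L_{\nu,1,s}$ of Theorem \ref{thm:near-graph-unrescaled}.1 uniformly in the shrinking transverse tube, together with a careful verification that the unitary phase $e^{\imath\,\theta^\tau_0(x)}$ equals $1$ on the diagonal (consistent with Zelditch's dynamical Toeplitz identification of $e^{\imath\,t\,\mathfrak{D}^\tau_{\sqrt{\rho}}}$ with the identity at $t=0$), so that no residual oscillation survives upon integrating over $Z^\tau$.
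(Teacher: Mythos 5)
Your strategy coincides with the paper's own proof: the trace identity for $\Pi^\tau_{\chi,\nu,\lambda}$, localization to an $O(\lambda^{\epsilon'-1/2})$-tube around $Z^\tau$ via Theorem \ref{thm:main 1}, the near-diagonal expansion in transverse Heisenberg coordinates (the paper uses the rescaled form of Theorem \ref{thm:diagonal case}; your use of Theorem \ref{thm:near-graph-unrescaled} with $x_1=x_2=x$, $r_x=1$, $B=I_{2d-2}$, $A_\chi=\pi^{d-1}$, $\mathcal{F}_\chi=\chi(0)/V_{eff}$ is equivalent), Gaussian integration in $\mathbf{v}^t$, the fibre identity $\int_{Z^\tau}V_{eff}^{-1}\,\mathrm{d}V_{Z^\tau}=\mathrm{vol}(Z^\tau/G)$, and a Tauberian step; your remarks on the vanishing of odd-order corrections after Gaussian integration and on $e^{\imath\,\theta^\tau_0(x)}=1$ (indeed $\sigma^\tau_{0,0}=1$) also match the paper.

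Two points in your bookkeeping need fixing. First, the transverse Jacobian is not $1+O(|\mathbf{v}^t|)$: in NHLC's one has $\mathrm{d}V_{X^\tau}=2^{d_G/2}\,\mathcal{V}(x'+\mathbf{v}^t)\,\mathrm{d}V_{Z^\tau}(x')\,\mathrm{d}\mathbf{v}^t$ with $\mathcal{V}=1$ along $Z^\tau$ (see (\ref{eqn:volume form Ztau v}); the point, cf.\ Remark \ref{rem:warning norm}, is that the Heisenberg coordinates are adapted to $\tilde{\kappa}=\frac{1}{2}\hat{\kappa}$ rather than $\hat{\kappa}$). With the Jacobian as you wrote it, combining $\left(\frac{\lambda}{2\pi\tau}\right)^{d-1-d_G/2}$ with $\left(\frac{\pi\tau}{2\lambda}\right)^{d_G/2}$ yields $2^{-d_G}$ in front of $\left(\frac{\lambda}{2\pi\tau}\right)^{d-1-d_G}$ instead of the correct $2^{-d_G/2}$; since the theorem is precisely a statement about this constant, the factor $2^{d_G/2}$ must be included. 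Second, for the Tauberian step you should impose $\hat{\chi}>0$ (as the paper does), not merely $\chi(0)\neq 0$ with small support: positivity of $\hat{\chi}$ is what lets the smoothed asymptotics control the increments $\mathcal{W}_\nu^\tau(\lambda-t)-\mathcal{W}_\nu^\tau(\lambda)$ (the adaptation of Lemma 70 of \cite{p24} invoked in the paper), giving the $O\left(\lambda^{d-1-d_G}\right)$ error term and hence the relative $O\left(\lambda^{-1}\right)$ in the statement.
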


\subsubsection{Pointwise estimates on eigenfunctions of 
$\mathfrak{D}^\tau_{\sqrt{\rho}}$}

\label{sctn:pointwise est}

Theorem 
\ref{thm:near-graph-unrescaled} has the following 
following straighforward consequence.

\begin{prop}
\label{prop:uniform diagonal estimate}
There exists
 $C_\nu,\,\lambda_\nu^\tau>0$ such that
 $$
\Pi^\tau_{\chi,\nu,\lambda}(x,x)\le C_\nu\,
\left(\frac{\lambda}{\tau}\right)^{d-1-d_G/2}
\qquad \forall\,x\in X^\tau, \,\lambda\ge \lambda_\nu^\tau.
$$

\end{prop}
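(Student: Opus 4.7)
The plan is to split $X^\tau$ based on distance to $Z^\tau$, using rapid decay far from $Z^\tau$ and the unrescaled near-graph asymptotics on the complementary shrinking tube, then exploit compactness of $Z^\tau$ and continuous dependence of the leading coefficient on the basepoint to extract uniformity in $x$.

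Fix $C,\,\epsilon'>0$ as in Theorem~\ref{thm:main 1}; applying that theorem with $y:=x$ gives $\Pi^\tau_{\chi,\nu,\lambda}(x,x)=O(\lambda^{-\infty})$ uniformly on $\{x\in X^\tau:\mathrm{dist}_{X^\tau}(x,Z^\tau)\ge C\,\lambda^{\epsilon'-1/2}\}$, so this contribution is trivially absorbed into the claimed polynomial bound. For $x$ in the complementary tubular neighbourhood $\mathcal{N}$ of $Z^\tau$, let $\pi:\mathcal{N}\to Z^\tau$ denote the smooth foot-of-the-normal projection and, for $\lambda$ sufficiently large, write $x=x'+(\theta,\mathbf{v}^t)$ in NHLC's at $x':=\pi(x)$, with $\mathbf{v}^t\in T^t_{x'}X^\tau$, $\theta\in\mathbb{R}$, and $\max\{|\theta|,\|\mathbf{v}^t\|\}\le C\,\lambda^{\epsilon'-1/2}\to 0$. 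Since $(x',x')\in\mathfrak{X}^\tau_\chi$ with $t_1=0$, one invokes Theorem~\ref{thm:near-graph-unrescaled}(1) at the basepoints $(x',x')$ with coincident displacements $(\theta,\mathbf{v}^t)$ in both slots; the leading exponential factor then reduces to $\exp\bigl\{\tfrac{\lambda}{\tau}\bigl[-2\|\mathbf{v}^t\|^2+R_3(\theta,\mathbf{v}^t,\theta,\mathbf{v}^t)\bigr]\bigr\}$, whose modulus is $\le 1$ on the tube by the sign condition in that theorem, while the leading pre-factor is $(2\pi)^{-1/2}(\lambda/(2\pi\tau))^{d-1-d_G/2}\sum_{l=1}^{r_{x'}}\mathcal{F}_\chi(x',x')\,\mathcal{B}_\nu(x',x')_l$.

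The main obstacle is extracting uniform-in-$x$ bounds despite the possible discontinuities of $r_{x'}=|G_{x'}|$. Local freeness on the compact set $Z^\tau$ together with upper semicontinuity forces $r_{x'}\le r_{\max}<+\infty$ uniformly, the characters satisfy $|\Xi_\nu(g_l)|\le\dim(\nu)$, and $|\mathcal{F}_\chi(x',x')|$ is continuous and hence bounded on $Z^\tau$; the subtler remainder $\sum_{s\ge 1}\lambda^{-s}L_{\nu,l,s}(x',x';\cdots)$ is precisely what Theorem~\ref{thm:near-graph-unrescaled} was designed to handle uniformly, its polynomial coefficients depending smoothly on $x'\in Z^\tau$ and being evaluated at arguments of size $O(\lambda^{\epsilon'-1/2})$, so the remainder is $o(1)$ uniformly. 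Absorbing the finitely many uniform suprema into a single constant $C_\nu$ depending on $\dim(\nu)$, $r_{\max}$, $\sup_{Z^\tau}|\mathcal{F}_\chi|$, and $\chi$, and choosing $\lambda^\tau_\nu$ so that the remainder is dominated by the leading term, yields the asserted pointwise estimate for every $x\in X^\tau$.
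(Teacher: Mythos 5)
Your proposal is correct and follows essentially the same route as the paper's proof: rapid decay away from $Z^\tau$ (Theorem~\ref{thm:main 1}), the unrescaled near-graph expansion of Theorem~\ref{thm:near-graph-unrescaled} with its sign condition $\Re[\cdots]\le 0$ killing the exponential on the diagonal near $Z^\tau$, and compactness to obtain a uniform constant (the paper simply covers the compact $X^\tau$ by finitely many fixed neighbourhoods of the two types instead of splitting along a $\lambda$-dependent tube). Only a cosmetic caveat: $\mathcal{F}_\chi(x',x')$ is in general not continuous across stabilizer jumps because of the factor $1/r_{x'}$, but boundedness — which is all your argument uses — does hold since $r_{x'}\ge 1$ and $V_{eff}$ is bounded below on the compact $Z^\tau$, and the uniformity of the remainder is exactly the locally-uniform-with-parameters content of Theorem~\ref{thm:near-graph-unrescaled} combined with compactness.
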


\begin{cor}
\label{cor:pointwise eigenfunction est}
There exist $C_\nu>0$ and $j_\nu^\tau>0$ such that
for any $j\ge j_\nu^\tau$ and $\lambda\ge \lambda_\nu^\tau$
$$
\sum_k |\rho_{j,\nu,k}(x)|^2\le C_\nu\,\left(
\frac{\lambda_j}{\tau}\right)^{d-1-d_G/2}
\qquad (x\in X^\tau).
$$

\end{cor}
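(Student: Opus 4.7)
The plan is to exploit the non-negativity available in the spectral sum defining $\Pi^\tau_{\chi,\nu,\lambda}(x,x)$, namely
$$
\Pi^\tau_{\chi,\nu,\lambda}(x,x) \;=\; \sum_{j\ge 1} \hat\chi(\lambda-\lambda_j)\,\sum_k |\rho_{j,\nu,k}(x)|^2,
$$
by choosing $\chi$ so that $\hat\chi\ge 0$. Concretely, I would take any $\psi\in \mathcal{C}^\infty_c(\mathbb{R})$ supported in a sufficiently small interval around $0$ (small enough that $\chi:=\psi\ast\tilde\psi$, with $\tilde\psi(t):=\overline{\psi(-t)}$, still has support satisfying the hypothesis of Proposition \ref{prop:uniform diagonal estimate}) and normalized so that $\psi(0)\ne 0$. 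Then $\hat\chi=|\hat\psi|^2\ge 0$ on $\mathbb{R}$ and $\hat\chi(0)=\|\psi\|_{L^1}^2>0$; in particular, by continuity, $\hat\chi(0)=:c_\chi>0$.

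With this choice, every summand in the spectral sum is non-negative (since $\sum_k|\rho_{j,\nu,k}(x)|^2\ge 0$ and $\hat\chi\ge 0$). Evaluating at $\lambda=\lambda_j$, we may retain only the single term with index $j$:
$$
c_\chi\,\sum_k |\rho_{j,\nu,k}(x)|^2 \;=\; \hat\chi(0)\sum_k |\rho_{j,\nu,k}(x)|^2 \;\le\; \Pi^\tau_{\chi,\nu,\lambda_j}(x,x).
$$

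Next, I would invoke Proposition \ref{prop:uniform diagonal estimate}: choosing $j_\nu^\tau$ large enough that $\lambda_{j_\nu^\tau}\ge \lambda_\nu^\tau$ (possible since $\lambda_j\uparrow+\infty$), for all $j\ge j_\nu^\tau$ we have
$$
\Pi^\tau_{\chi,\nu,\lambda_j}(x,x)\;\le\; C_\nu\,\left(\frac{\lambda_j}{\tau}\right)^{d-1-d_G/2}\qquad (x\in X^\tau).
$$
Combining these two inequalities and absorbing the factor $1/c_\chi$ into the constant yields the desired bound with a new constant, still depending only on $\nu$ (and $\tau$).

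There is essentially no deep obstacle here: the only point deserving care is the compatibility of the two smallness requirements on $\chi$, namely that $\chi=\psi\ast\tilde\psi$ retains sufficiently small support to satisfy the standing hypothesis of Proposition \ref{prop:uniform diagonal estimate} while still providing $\hat\chi(0)>0$. This is automatic from the convolution construction, since $\mathrm{supp}(\psi\ast\tilde\psi)\subseteq \mathrm{supp}(\psi)-\mathrm{supp}(\psi)$ and $\hat\chi(0)=\|\psi\|_{L^1}^2$ can be made strictly positive independently of $\mathrm{supp}(\psi)$. Everything else is a bookkeeping exercise.
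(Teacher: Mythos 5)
Your proof is correct and is exactly the argument the paper intends: take $\chi$ with $\hat\chi\ge 0$ and $\hat\chi(0)>0$ and small support, evaluate the non-negative spectral sum at $\lambda=\lambda_j$, drop all terms but the $j$-th, and apply Proposition \ref{prop:uniform diagonal estimate} after choosing $j_\nu^\tau$ so that $\lambda_{j}\ge\lambda_\nu^\tau$. Only a cosmetic caveat: with the paper's normalization $\hat\chi(0)$ equals $\tfrac{1}{\sqrt{2\pi}}\bigl|\int\psi\bigr|^2$ rather than $\|\psi\|_{L^1}^2$, so you should take $\psi\ge 0$ (or simply require $\int\psi\ne 0$) to guarantee $c_\chi>0$; this does not affect the argument.
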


\subsubsection{Operator norm estimates
for $\Pi^\tau_{\chi,\nu,\lambda}$}
\label{scn:opne}
In the action-free case, Chang and Rabinowitz have
established in \cite{cr2}
operator norm estimates on
$\Pi^\tau_{\chi,\lambda}:L^p(X^\tau)\rightarrow 
L^q(X^\tau)$. 
Namely, they proved that
for $2\le p,q\le +\infty$ there
exist constants $C^\tau_{p}>0$ such that 
\begin{equation}
\label{eqn:cr norm estimate}
\left\|\Pi^\tau_{\chi,\lambda}\right\|_{L^p\rightarrow L^q}
\le C^\tau_{p}\,\lambda^{(d-1)\,
\left( \frac{1}{p}-\frac{1}{q}  \right)}.
\end{equation}
This result is the analogue of similar estimates proved in the 
line bundle setting by Shiffman and Zelditch in \cite{sz03},
and the proof follows the same general lines, adapted to the 
Grauert tube context
(for antecedents in the real domain, see \cite{so88}
and \cite{so}).  
More precisely, the argument given
by Chang and Rabinowitz
for (\ref{eqn:cr norm estimate})
is based on the off-graph scaling asymptotics of
$\Pi^\tau_{\chi,\lambda}$, paired with the Shur-Young inequality
\cite{so}.

The goal of this section is to describe
an equivariant extension of (\ref{eqn:cr norm estimate}), 
involving the operator norm of
$\Pi^\tau_{\chi,\nu,\lambda}:L^p(X^\tau)\rightarrow 
L^q(X^\tau)$.

Theorem \ref{thm:near-graph-unrescaled}
allows an adaptation to the equivariant setting of the arguments in \cite{cr2} for the 
proof of (\ref{eqn:cr norm estimate}).

\begin{thm}
\label{thm:norm operator estimate}
Under the previous assumptions, there exists a constant
$\rho^\tau_{p,\nu}>0$ such that
$$\left\|\Pi^\tau_{\chi,\nu,\lambda}\right\|_{L^p\rightarrow L^q}
\le \rho_{p,\nu}^\tau
\left( \frac{\lambda}{\tau}  \right)
^{\left(\frac{1}{p}-\frac{1}{q}\right)\,\left(d-1-\frac{d_G}{2} \right)}.
$$
\end{thm}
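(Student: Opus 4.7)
The plan is to adapt to the equivariant setting the Shiffman--Zelditch--Chang--Rabinowitz strategy behind (\ref{eqn:cr norm estimate}): derive uniform pointwise and $L^1$-integrated bounds on the Schwartz kernel $\Pi^\tau_{\chi,\nu,\lambda}(x,y)$, extract from these three vertex estimates corresponding to $(p,q) \in \{(1,1), (\infty,\infty), (1,\infty)\}$, and interpolate by Riesz--Thorin.

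First I would combine Theorem \ref{thm:main 1} (off-graph rapid decay) with the near-graph unrescaled expansion of Theorem \ref{thm:near-graph-unrescaled} to obtain a uniform pointwise bound of the form
$$
|\Pi^\tau_{\chi,\nu,\lambda}(x,y)| \le C_\nu \, \lambda^{d-1-\frac{d_G}{2}} \, \Phi_\lambda(x,y) + O(\lambda^{-\infty}),
$$
where $\Phi_\lambda$ is a Gaussian envelope that, in NHLC's adapted to the decomposition of Notation \ref{rem:NHLC euclid decomp}, decays at scale $\lambda^{-1/2}$ in the $2d-2-d_G$ directions of $X^\tau$ transverse to the orbit piece $x^{G\times\chi}\cap Z^\tau$. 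The diagonal case $x=y$ is supplied by Proposition \ref{prop:uniform diagonal estimate}, which is consistent with this envelope.

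Next I would integrate $|\Pi^\tau_{\chi,\nu,\lambda}(x,y)|$ in $y$ for fixed $x$. The intersection of the effective support of $\Phi_\lambda(x,\cdot)$ with $X^\tau$ is a $\lambda^{-1/2}$-tube about a $(1+d_G)$-dimensional orbit piece, and Gaussian integration in the remaining $2d-2-d_G$ transverse directions produces a factor $\lambda^{-(d-1-d_G/2)}$ that exactly cancels the prefactor, yielding
$$
\sup_{x\in X^\tau}\int_{X^\tau}|\Pi^\tau_{\chi,\nu,\lambda}(x,y)|\,\mathrm{d}\mathrm{vol}^R_{X^\tau}(y) \le C'_\nu,
$$
together with the corresponding estimate for $\sup_y \int\cdots \mathrm{d}\mathrm{vol}^R_{X^\tau}(x)$, by the symmetric role of $x$ and $y$ in $\mathfrak{X}^\tau_\chi$. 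The Schur test then gives $\|\Pi^\tau_{\chi,\nu,\lambda}\|_{L^1\to L^1}, \|\Pi^\tau_{\chi,\nu,\lambda}\|_{L^\infty\to L^\infty} \le C_\nu'$, while the uniform pointwise bound above is equivalent to $\|\Pi^\tau_{\chi,\nu,\lambda}\|_{L^1\to L^\infty} \le C_\nu \lambda^{d-1-d_G/2}$.

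Bilinear Riesz--Thorin interpolation on the triangle with vertices $(1/p,1/q) = (1,1), (0,0), (1,0)$, carrying vertex norms $C_\nu', C_\nu', C_\nu\lambda^{d-1-d_G/2}$, then yields the stated bound for every $(p,q)$ with $1\le p\le q\le \infty$, after absorbing $\tau$-dependent constants into $\rho^\tau_{p,\nu}$. The hardest step is the first: the uniform pointwise bound must hold across the entire $X^\tau$, including the strata where the stabilizer $G_{x_1}$ has non-principal order. This is precisely the uniformity built into Theorem \ref{thm:near-graph-unrescaled} through the stabilizer sum $\sum_{g_l \in G_{x_1}}$, which is designed to interpolate continuously across orbit-type jumps. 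A secondary technical point is that the Gaussian envelope must integrate to $O(\lambda^{-(d-1-d_G/2)})$ uniformly in $(x,y)$; this follows from the uniform positivity $\Re(D_l)\gg 0$ asserted in Theorem \ref{thm:near-graph-unrescaled} combined with compactness of $X^\tau$.
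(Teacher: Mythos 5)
Your argument is correct, and it rests on exactly the same analytic input as the paper's proof: the off-graph rapid decay of Theorem \ref{thm:main 1} plus the uniform near-graph Gaussian bounds (Theorem \ref{thm:main 2}, case 2, made uniform across orbit-type strata by Theorem \ref{thm:near-graph-unrescaled}), together with the dimension count $2d-2-d_G$ for the directions transverse to the $(d_G+1)$-dimensional $G\times\mathbb{R}$-orbit piece, whose Gaussian integration at scale $\lambda^{-1/2}$ produces the factor $\lambda^{-(d-1-d_G/2)}$ that cancels the on-graph peak. Where you diverge is in how the kernel estimates are converted into operator norms: the paper applies the Schur--Young (generalized Young) inequality in one shot, bounding $\|\Pi^\tau_{\chi,\nu,\lambda}\|_{L^p\to L^q}$ by $\bigl[\sup_y\int_{X^\tau}|\Pi^\tau_{\chi,\nu,\lambda}(y',y)|^r\,\mathrm{d}V_{X^\tau}(y')\bigr]^{1/r}$ with $1/r=1-1/p+1/q$, and then computes this single $r$-integral $F_r$ directly from the Gaussian estimate, obtaining the exponent $(r-1)(d-1-d_G/2)$ and hence $(1/p-1/q)(d-1-d_G/2)$ after taking the $1/r$-th power; you instead extract the three endpoint bounds $L^1\to L^1$, $L^\infty\to L^\infty$ (Schur test, i.e.\ the case $r=1$ of the same computation, in both variables via the $\chi\mapsto\chi_-$ symmetry of (\ref{eqn:conjugate swap})) and $L^1\to L^\infty$ (the sup bound), and then run Riesz--Thorin over the triangle with vertices $(1/p,1/q)=(1,1),(0,0),(1,0)$. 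The two routes are equivalent in substance -- Schur--Young is itself provable by precisely this interpolation -- so neither is stronger; the paper's version avoids discussing endpoints separately and needs only one mixed-norm computation, while yours makes the endpoint norms explicit (a small bonus) at the cost of having to verify the row and column Schur integrals and the two-step interpolation. Your identification of the delicate point is also the paper's: the uniformity of the Gaussian bound across jumps of $|G_{x_1}|$, supplied by the stabilizer sum and $\Re(D_l)\gg 0$ in Theorem \ref{thm:near-graph-unrescaled}.
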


\subsubsection{Estimates on the complexifications
$\widetilde{\varphi}^\tau_{j,\nu,k}$'s.}

We discuss some estimates on equivariant complexified
eigenfunctions of $\Delta$, which are 
equivariant analogues of estimates in
\cite{z20}. The proofs are either straightforward, or
minor modifications of the ones for the eigenfunctions
of $\mathfrak{D}^\tau_{\sqrt{\rho}}$.

Let $\chi$ be as in \S \ref{sctn:pointwise est};
Then $P^\tau_{\chi,\nu,\lambda}(x,x)\ge 0$.
Furthermore, as we have remarked an analogue of Theorem 
\ref{thm:near-graph-unrescaled} holds for $P^\tau_{\chi,\nu,\lambda}$,
leading to the following analogue of Proposition \ref{prop:uniform diagonal estimate}:

\begin{prop}
\label{prop}
For some constant $C_\nu>0$ and every
$x\in X^\tau$ and $\lambda\gg 0$ we have
$$
P^\tau_{\chi,\nu,\lambda}\left(
x,x\right)\le C_\nu\,
\left( \frac{\lambda}{\tau}\right)^{(d-1-d_G)/2}.
$$

\end{prop}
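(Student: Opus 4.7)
The plan is to mimic the argument that yields Proposition \ref{prop:uniform diagonal estimate}, substituting the $P^\tau_{\chi,\nu,\lambda}$-analogues of Theorems \ref{thm:main 1} and \ref{thm:near-graph-unrescaled} furnished by Theorem \ref{thm:complexified Poisson wave asy}. First I would split $X^\tau$ according to $\mathrm{dist}_{X^\tau}(x,Z^\tau)$. In the far region, where $\mathrm{dist}_{X^\tau}(x,Z^\tau)\geq C\,\lambda^{\epsilon'-1/2}$, the $P$-version of Theorem \ref{thm:main 1} applied with $y=x$ delivers $P^\tau_{\chi,\nu,\lambda}(x,x)=O(\lambda^{-\infty})$, much stronger than the claim.

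For $x$ inside a tubular neighborhood of $Z^\tau$ of thickness $O(\lambda^{\epsilon'-1/2})$, I would choose a nearest point $x_0\in Z^\tau$ and NHLC's centered at $x_0$, so that $x=x_0+(0,\mathbf{v}^t)$ with $\mathbf{v}^t\in T^t_{x_0}X^\tau$ small. Then I would apply the $P$-version of Theorem \ref{thm:near-graph-unrescaled}(1) at the diagonal pair $(x_1,x_2)=(x_0,x_0)\in \mathfrak{X}^\tau_\chi$ with $t_1=0$, evaluated with common displacement $(\theta_j,\mathbf{v}_j^t)=(0,\mathbf{v}^t)$. The exponent reduces to $(\lambda/\tau)\bigl[-2\|\mathbf{v}^t\|^2+R_3(0,\mathbf{v}^t,0,\mathbf{v}^t)\bigr]$, whose real part is non-positive by the theorem, so the exponential is uniformly bounded in modulus by $1$. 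At $x_1=x_2=x_0$ with $t_1=0$ the matrix $B$ in (\ref{eqn:defn of M12}) is the identity, so $|\det(P)|=1$ and $A_\chi(x_0,x_0)=\pi^{d-1}$ by Remark \ref{rem:Achi prop}, giving
\[
|\mathcal{F}_\chi(x_0,x_0)|=\frac{|\chi(0)|}{r_{x_0}\,V_{eff}(x_0)}.
\]
Summing the $r_{x_0}$ stabilizer contributions with $|\mathcal{B}_\nu(x_0,x_0)_l|\leq \dim(\nu)^2$, the $1/r_{x_0}$ factor exactly absorbs the cardinality of the sum; since the $P$-version carries the extra prefactor $(\lambda/\pi\,\tau)^{-(d-1)/2}$ of Theorem \ref{thm:complexified Poisson wave asy}(1), the leading contribution is majorized by $|\chi(0)|\,\dim(\nu)^2\,V_{eff}(x_0)^{-1}\,(\lambda/\tau)^{(d-1-d_G)/2}$.

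The subleading terms contribute at most a relative $O(\lambda^{-1})$ factor, uniformly. By Assumption \ref{ass:ba}, $Z^\tau$ is compact and $G$ acts locally freely on it, so $V_{eff}$ is uniformly bounded below on $Z^\tau$; hence the stated bound holds uniformly in $x$. The principal technical subtlety is that $x\mapsto r_x$ is only upper semicontinuous and jumps across orbit-type strata, which is precisely why the unrescaled Theorem \ref{thm:near-graph-unrescaled} (rather than the pointwise rescaled Theorem \ref{thm:diagonal case}) is the right tool here: the explicit factor $1/r_{x_0}$ inside $\mathcal{F}_\chi$ cancels against the $r_{x_0}$-fold summation over $G_{x_0}$, producing an estimate whose only remaining dependence on $x_0$ is through the uniformly bounded quantity $V_{eff}(x_0)^{-1}$.
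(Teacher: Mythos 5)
Your proposal is correct and follows essentially the same route as the paper: Proposition \ref{prop} is obtained there exactly as Proposition \ref{prop:uniform diagonal estimate}, by combining the $P^\tau_{\chi,\nu,\lambda}$-analogues of Theorem \ref{thm:main 1} (rapid decay away from $Z^\tau$) and of the unrescaled expansion of Theorem \ref{thm:near-graph-unrescaled} (with the extra factor $(\lambda/\pi\tau)^{-(d-1)/2}$ from Theorem \ref{thm:complexified Poisson wave asy}), and then using compactness to get uniformity in $x$. Your explicit cancellation of $1/r_{x_0}$ against the $r_{x_0}$-fold stabilizer sum and the uniform lower bound on $V_{eff}$ is just a slightly more detailed rendering of the paper's terse compactness argument, not a different method.
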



With $\lambda=\mu_j$ we deduce the following.

\begin{cor}
If $x\in X^\tau$ and $j\gg 0$,
\begin{equation}
\label{eqn:sup abs val x}
\sum_k 
\left|\widetilde{\varphi}^\tau_{j,\nu,k}(x)\right|^2
\le C_\nu\,e^{2\,\tau\,\mu_j}\,
\left( \frac{\mu_j}{\tau}\right)^{(d-1-d_G)/2}.
\end{equation}
\end{cor}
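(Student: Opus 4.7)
The statement is a pointwise bound on the equivariant complexified eigenfunctions, and it follows directly from the preceding Proposition by a positivity-and-extraction argument. The plan is to select a single term from the spectral sum defining $P^\tau_{\chi,\nu,\lambda}(x,x)$ by evaluating at a well-chosen $\lambda$, using non-negativity of the summands to bound that term by the whole sum.

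More precisely, first I would recall from \S \ref{sctn:pointwise est} that $\chi$ is chosen so that $\hat\chi \ge 0$ and $\hat\chi(0) > 0$ (for instance, taking $\chi = \eta \ast \tilde\eta$ with $\tilde\eta(t) := \overline{\eta(-t)}$ for some smooth compactly supported $\eta$, so $\hat\chi = |\hat\eta|^2$). Inserting $y=x$ into the definition
\[
P^\tau_{\chi,\nu,\lambda}(x,x) \;=\; \sum_{j\ge 1} \hat\chi(\lambda - \mu_j)\,e^{-2\tau\mu_j}\,\sum_{k=1}^{n_{j,\nu}} \bigl|\widetilde{\varphi}^\tau_{j,\nu,k}(x)\bigr|^2,
\]
each term in the double sum is non-negative. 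In particular, retaining only the single index corresponding to a fixed $j_0$ and evaluating at $\lambda = \mu_{j_0}$ yields
\[
\hat\chi(0)\,e^{-2\tau\mu_{j_0}}\,\sum_k \bigl|\widetilde{\varphi}^\tau_{j_0,\nu,k}(x)\bigr|^2 \;\le\; P^\tau_{\chi,\nu,\mu_{j_0}}(x,x).
\]

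At this point I would invoke Proposition \ref{prop}, which provides a constant $C_\nu > 0$ such that $P^\tau_{\chi,\nu,\lambda}(x,x) \le C_\nu\,(\lambda/\tau)^{(d-1-d_G)/2}$ uniformly in $x \in X^\tau$ for $\lambda$ sufficiently large. Substituting $\lambda = \mu_{j_0}$ and dividing by $\hat\chi(0) > 0$, one obtains (\ref{eqn:sup abs val x}) after renaming the constant to absorb $\hat\chi(0)^{-1}$, with $j_0$ playing the role of the generic index $j \gg 0$.

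The only thing to be a little careful about is the choice of $\chi$: one needs simultaneously the non-negativity of $\hat\chi$ (to drop all spectral terms but one), the hypothesis on $\mathrm{supp}(\chi)$ required for the analogue of Theorem \ref{thm:near-graph-unrescaled} to be applicable (and hence for Proposition \ref{prop} to hold), and $\hat\chi(0) > 0$. The standard convolution construction $\chi = \eta \ast \tilde\eta$, with $\eta$ smooth, real, compactly supported in a sufficiently small neighbourhood of the origin and not identically zero, satisfies all three requirements simultaneously, so no real obstacle arises.
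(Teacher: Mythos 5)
Your proposal is correct and is essentially the paper's own argument: since $\hat\chi\ge 0$ and $\hat\chi(0)>0$, every term of $P^\tau_{\chi,\nu,\lambda}(x,x)$ is non-negative, so setting $\lambda=\mu_j$ and keeping only the $j$-th term gives $\hat\chi(0)\,e^{-2\tau\mu_j}\sum_k|\widetilde{\varphi}^\tau_{j,\nu,k}(x)|^2\le P^\tau_{\chi,\nu,\mu_j}(x,x)$, and Proposition \ref{prop} then yields the bound. Only a cosmetic remark: in your construction $\chi=\eta\ast\tilde\eta$ you should take $\eta\ge 0$ (not merely "not identically zero") to guarantee $\hat\chi(0)=|\hat\eta(0)|^2>0$.
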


As in \cite{z20}, \cite{cr1}, \cite{cr2}, let us consider 
the (equivariant) Husimi distributions
\begin{equation}
\label{eqn:husimi equiv}
U_{j,\nu,k}(x):=
\dfrac{\left|\widetilde{\varphi}^\tau_{j,\nu,k}(x)\right|^2}{
\left\|\widetilde{\varphi}^\tau_{j,\nu,k}\right\|_{L^2(X^\tau)}^
2}.
\end{equation}
By Lemma 0.2 of \cite{z20}, 
$$
\left\|\widetilde{\varphi}^\tau_{j,\nu,k}\right\|_{L^2(X^\tau)}^
2\sim D_\tau\,e^{2\,\tau\,\mu_j}\,\mu_j^{-(d-1)/2}
$$
for some constant $D_\tau>0$.
We obtain the following equivariant
refinement of the action-free upper bound in \cite{z20}.

\begin{cor}\label{cor:sup husimi}
Under the previous assumptions, for $j\gg 0$ and every $k$
\begin{equation*}
\sup _{x\in X^\tau}U_{j,\nu,k}(x)
\le D_\tau''\,\mu_j^{d-1-\frac{1}{2}\,d_G}.
\end{equation*}
\end{cor}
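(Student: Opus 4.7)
The plan is to obtain the upper bound directly from the two ingredients that have just been assembled: the pointwise sum estimate \eqref{eqn:sup abs val x} for the numerator of $U_{j,\nu,k}(x)$, and the $L^{2}$-norm asymptotic from Lemma 0.2 of \cite{z20}, recalled immediately before the Corollary, for the denominator. Since no cross-cancellations between the different $k$'s are needed, the argument should reduce to simple arithmetic in the exponents.

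More precisely, I would first recall that by \eqref{eqn:husimi equiv}
$$
U_{j,\nu,k}(x)=\frac{\bigl|\widetilde{\varphi}^\tau_{j,\nu,k}(x)\bigr|^{2}}{\bigl\|\widetilde{\varphi}^\tau_{j,\nu,k}\bigr\|_{L^{2}(X^\tau)}^{2}}.
$$
Dropping the non-negative contributions of the other basis vectors, the pointwise estimate \eqref{eqn:sup abs val x} yields, for every $k$ and $x\in X^\tau$,
$$
\bigl|\widetilde{\varphi}^\tau_{j,\nu,k}(x)\bigr|^{2}\le \sum_{k'}\bigl|\widetilde{\varphi}^\tau_{j,\nu,k'}(x)\bigr|^{2}\le C_\nu\,e^{2\tau\mu_j}\left(\frac{\mu_j}{\tau}\right)^{\!(d-1-d_G)/2},
$$
uniformly in $x$ for $j\gg 0$. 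For the denominator, the cited asymptotic gives
$$
\bigl\|\widetilde{\varphi}^\tau_{j,\nu,k}\bigr\|_{L^{2}(X^\tau)}^{2}\sim D_\tau\,e^{2\tau\mu_j}\,\mu_j^{-(d-1)/2}\qquad(j\to+\infty),
$$
so in particular $\bigl\|\widetilde{\varphi}^\tau_{j,\nu,k}\bigr\|_{L^{2}(X^\tau)}^{2}\ge \tfrac{1}{2}D_\tau\,e^{2\tau\mu_j}\,\mu_j^{-(d-1)/2}$ for $j$ sufficiently large (independently of $k$, since the equivalence holds for each eigenfunction).

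Dividing these two inequalities, the exponential factors $e^{2\tau\mu_j}$ cancel and the remaining powers of $\mu_j$ combine as
$$
\frac{d-1-d_G}{2}-\left(-\frac{d-1}{2}\right)=d-1-\frac{d_G}{2},
$$
which is precisely the exponent claimed. Absorbing the constant $\tau^{-(d-1-d_G)/2}$ and the ratio $C_\nu/(\tfrac{1}{2}D_\tau)$ into a new constant $D_\tau''$ yields
$$
\sup_{x\in X^\tau}U_{j,\nu,k}(x)\le D_\tau''\,\mu_j^{\,d-1-d_G/2}
$$
for $j\gg 0$ and every $k$. There is no genuine obstacle here: the proof is essentially bookkeeping, and the only point worth being explicit about is that the $L^{2}$-norm asymptotic of \cite{z20} applies uniformly to each $\widetilde{\varphi}^\tau_{j,\nu,k}$, so that the lower bound on the denominator is independent of $k$.
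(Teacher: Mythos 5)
Your proof is correct and follows exactly the route the paper intends: the Corollary is stated as an immediate consequence of the pointwise bound (\ref{eqn:sup abs val x}) and the $L^2$-norm asymptotic from Lemma 0.2 of \cite{z20}, and your exponent bookkeeping $(d-1-d_G)/2+(d-1)/2=d-1-d_G/2$ is precisely the computation behind the stated bound. Your explicit remark on the uniformity in $k$ of the denominator's lower bound is a reasonable point of care but does not change the argument.
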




We conclude with an $L^2$-analogue of the Weyl law for
$\mathfrak{D}^\tau_{\sqrt{\rho}}$ in Theorem \ref{thm:weyl law geod}.

\begin{prop}
\label{prop:weyl law integrand split 1 int glob 1}
Under the assumptions and with the notation
of Theorem \ref{thm:weyl law geod},
the following asymptotic expansion holds for 
$\lambda\rightarrow +\infty$:
\begin{eqnarray*}
\mathrm{trace}\left(P^\tau_{\chi,\nu,\lambda}\right)&=&
\sum_j\hat{\chi}(\lambda-\mu_j)\,
e^{-2\,\tau\,\mu_j}\,\sum_k \left\|\widetilde{\varphi}^\tau_{j,\nu,k}\right\|_{L^2(X^\tau)}^2\\
&\sim&\frac{1}{\sqrt{2^{d+d_G}\,\pi}}\,
\left( \frac{\lambda}{2\,\pi\,\tau}\right)^{\frac{d-1}{2}-d_G}\,
\cdot \dim(\nu)^2\cdot
\mathrm{vol}(Z^\tau/G)\nonumber\\
&&
\cdot \left[\chi (0)+\sum_{k\ge 1}\,\lambda^{-k}\,F_{k,\nu}
\left(x\right)\right]
\nonumber
\end{eqnarray*}
\end{prop}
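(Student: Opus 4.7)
The plan is to compute $\mathrm{trace}(P^\tau_{\chi,\nu,\lambda})$ by integrating the diagonal of its Schwartz kernel over $X^\tau$, localising to a shrinking tubular neighbourhood of $Z^\tau$, and then carrying out a Laplace integration in the directions transverse to $Z^\tau$. First, by (\ref{eqn:temepered_proj_kernel_poisson_equiv}) and Fubini,
\[
\mathrm{trace}(P^\tau_{\chi,\nu,\lambda}) = \int_{X^\tau} P^\tau_{\chi,\nu,\lambda}(x,x)\,\mathrm{d}\mathrm{vol}^R_{X^\tau}(x) = \sum_j \hat{\chi}(\lambda-\mu_j)\,e^{-2\tau\mu_j}\sum_k \|\widetilde{\varphi}^\tau_{j,\nu,k}\|_{L^2(X^\tau)}^2 .
\]
The $P^\tau$-version of Theorem \ref{thm:main 1} guarantees that $P^\tau_{\chi,\nu,\lambda}(x,x) = O(\lambda^{-\infty})$ uniformly for $\mathrm{dist}_{X^\tau}(x,Z^\tau)\ge C\lambda^{\epsilon'-1/2}$ with $\epsilon'\in(0,1/6)$; a tubular neighbourhood of $Z^\tau$ is parametrised by $(y,\mathbf{v}^t)\mapsto y+(0,\mathbf{v}^t)$ with $y\in Z^\tau$ and $\mathbf{v}^t\in T^t_yX^\tau\cong \mathbb{R}^{d_G}_t$, and outside this neighbourhood the contribution to the trace is negligible.

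Since $G$ acts freely on $Z^\tau$, for every $y\in Z^\tau$ the stabiliser is trivial, so the $l$-sum in Theorem \ref{thm:near-graph-unrescaled} reduces to a single term $l=1$ with $g_1=e$ and $\mathcal{B}_\nu(y,y)_1=\dim(\nu)^2$. For $x_1=x_2=y$ we moreover have $t_1=0$, hence $B=I_{2d-2}$ in (\ref{eqn:defn of M12}); by Remark \ref{rem:Achi prop} this yields $|\det P|=1$, $A_\chi(y,y)=\pi^{d-1}$ and $\mathcal{F}_\chi(y,y)=\chi(0)/V_{eff}(y)$. Applying the $P^\tau$-version of Theorem \ref{thm:near-graph-unrescaled}, case 1, with $\theta_j=0$ and $\mathbf{v}_1^t=\mathbf{v}_2^t=\mathbf{v}^t$ then produces, uniformly in $y$ and for small $\mathbf{v}^t$,
\[
P^\tau_{\chi,\nu,\lambda}\bigl(y+(0,\mathbf{v}^t),y+(0,\mathbf{v}^t)\bigr)\sim C_{d,d_G,\tau}\,\lambda^{(d-1-d_G)/2}\,\frac{\dim(\nu)^2}{V_{eff}(y)}\,e^{-\frac{2\lambda}{\tau}\|\mathbf{v}^t\|^2+\lambda R_3(\mathbf{v}^t)}\,\Bigl[\chi(0)+\sum_{s\ge1}\lambda^{-s}L_{s,\nu}(y;\mathbf{v}^t)\Bigr],
\]
where $C_{d,d_G,\tau}$ collects the leading numerical factors of Theorems \ref{thm:complexified Poisson wave asy} and \ref{thm:near-graph-unrescaled}, and $R_3$ vanishes to third order at the origin.

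Rescaling $\mathbf{w}=\sqrt{\lambda}\,\mathbf{v}^t$ converts this to a Laplace integral whose leading transverse factor is the Gaussian $\int_{\mathbb{R}^{d_G}}e^{-2\|\mathbf{w}\|^2/\tau}\,\mathrm{d}\mathbf{w}=(\pi\tau/2)^{d_G/2}$, contributing an extra $\lambda^{-d_G/2}$ and yielding the claimed power $\lambda^{(d-1)/2-d_G}$. The parity assertion in Theorem \ref{thm:near-graph-unrescaled} kills all half-integer powers of $\lambda$, leaving only integer powers $\lambda^{-k}$. Finally, since $V_{eff}(y)$ is by definition the volume of the fibre $y^G$ of the Riemannian submersion $Z^\tau\to Z^\tau/G$, the coarea formula gives $\int_{Z^\tau}V_{eff}(y)^{-1}\,\mathrm{d}\mathrm{vol}_{Z^\tau}(y)=\mathrm{vol}(Z^\tau/G)$, which together with $\dim(\nu)^2$ produces the stated leading coefficient.

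The main obstacle is the bookkeeping of constants: recovering the precise factor $1/\sqrt{2^{d+d_G}\pi}$ requires careful reconciliation between $\mathrm{vol}^R_{X^\tau}$ and the NHLC-based tubular parametrisation, and between the pairings induced by $\Omega,\hat{\kappa}$ and the halved pairings $\omega=\Omega/2$, $\tilde{\kappa}=\hat{\kappa}/2$ on $\mathcal{H}^\tau_y$ noted in Remark \ref{rem:warning norm}. The lower-order coefficients $F_{k,\nu}$ are then produced by Laplace-expanding the $L_{s,\nu}$ together with the Taylor expansions of these Jacobians and of $V_{eff}^{-1}$ around $y$; the parity structure guarantees that only integer powers $\lambda^{-k}$ survive and cleans up the sum to the form stated.
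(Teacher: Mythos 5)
Your proposal is correct and follows essentially the same route as the paper: the paper simply writes down the $P^\tau$-analogue of the rescaled near-diagonal expansion (Theorem \ref{thm:diagonal case} corrected by the $(\lambda/\pi\,\tau)^{-(d-1)/2}$ factor of Theorem \ref{thm:complexified Poisson wave asy}) and then integrates over the tubular neighbourhood of $Z^\tau$ exactly as in the proof of Theorem \ref{thm:weyl law geod}, whereas you pass through the unrescaled Theorem \ref{thm:near-graph-unrescaled} and rescale by hand, which amounts to the same computation (free action, single term $l=1$, Gaussian integration transverse to $Z^\tau$, parity killing the half-integer powers, and $\int_{Z^\tau}V_{eff}^{-1}=\mathrm{vol}(Z^\tau/G)$). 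The constant you defer as the ``main obstacle'' is in fact forced by ingredients you already cite: combining the explicit prefactor of Theorem \ref{thm:diagonal case} with the multiplier $(\lambda/\pi\,\tau)^{-(d-1)/2}$, the Jacobian $2^{d_G/2}$ of the tubular parametrization (\ref{eqn:volume form Ztau v}) already used for Theorem \ref{thm:weyl law geod}, and your Gaussian value $(\pi\,\tau/2)^{d_G/2}$ yields exactly $\frac{1}{\sqrt{2^{d+d_G}\,\pi}}\left(\frac{\lambda}{2\,\pi\,\tau}\right)^{\frac{d-1}{2}-d_G}$, so no further reconciliation is needed.
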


Setting $\lambda=\mu_j$, we obtain the following consequence.
\begin{cor}
\label{cor:L2 norm compl est 1}
There is a constant $C_\nu^\tau>0$ 
such that for all $j\gg 0$
$$
\sum_k 
\left\|
\widetilde{\varphi}^\tau_{j,\nu,k}\right\|_{L^2(X^\tau)}^2
\le C_\nu^\tau\,e^{2\,\tau\,\mu_j}\,\mu_j^{\frac{d-1}{2}-d_G}.
$$

\end{cor}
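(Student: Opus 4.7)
\textbf{Proof plan for Corollary \ref{cor:L2 norm compl est 1}.} The strategy is the classical Tauberian-style extraction of a single spectral term from the Weyl-type asymptotic in Proposition \ref{prop:weyl law integrand split 1 int glob 1}, exploiting positivity. First I would fix once and for all a test function $\chi\in\mathcal{C}^\infty_c(\mathbb{R})$ with the two properties that (i) $\mathrm{supp}(\chi)$ is small enough for all previous asymptotics to apply, and (ii) $\hat{\chi}\ge 0$ on all of $\mathbb{R}$ with $\hat{\chi}(0)>0$. The standard way to achieve this is to take $\chi=\eta*\tilde{\eta}$, where $\eta\in\mathcal{C}^\infty_c(\mathbb{R})$ is arbitrary, non-zero, and supported in a sufficiently small neighborhood of the origin, and $\tilde{\eta}(t):=\overline{\eta(-t)}$; then $\hat{\chi}=|\hat{\eta}|^2\ge 0$ and $\hat{\chi}(0)=\|\eta\|_{L^2}^2>0$.

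With this choice, every summand in the trace formula
\[
\mathrm{trace}\!\left(P^\tau_{\chi,\nu,\lambda}\right)=\sum_{j'\ge 1}\hat{\chi}(\lambda-\mu_{j'})\,e^{-2\tau\mu_{j'}}\,\sum_k \bigl\|\widetilde{\varphi}^\tau_{j',\nu,k}\bigr\|_{L^2(X^\tau)}^2
\]
is non-negative, since $\hat{\chi}\ge 0$ and $\|\widetilde{\varphi}^\tau_{j',\nu,k}\|^2\ge 0$. Consequently, for any fixed $j$ and any $\lambda\in\mathbb{R}$, singling out the $j'=j$ term yields the one-sided inequality
\[
\hat{\chi}(\lambda-\mu_j)\,e^{-2\tau\mu_j}\,\sum_k \bigl\|\widetilde{\varphi}^\tau_{j,\nu,k}\bigr\|_{L^2(X^\tau)}^2\;\le\;\mathrm{trace}\!\left(P^\tau_{\chi,\nu,\lambda}\right).
\]

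Now I would set $\lambda=\mu_j$ and invoke Proposition \ref{prop:weyl law integrand split 1 int glob 1}, which gives $\mathrm{trace}(P^\tau_{\chi,\nu,\mu_j})=O\bigl(\mu_j^{(d-1)/2-d_G}\bigr)$ as $j\to+\infty$; the left-hand side reduces to $\hat{\chi}(0)\,e^{-2\tau\mu_j}\sum_k\|\widetilde{\varphi}^\tau_{j,\nu,k}\|_{L^2(X^\tau)}^2$. Dividing through by the positive constant $\hat{\chi}(0)$ and multiplying by $e^{2\tau\mu_j}$ produces the claimed bound with $C_\nu^\tau$ equal to $\hat{\chi}(0)^{-1}$ times the implicit constant in the asymptotic expansion.

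The only potential obstacle is the subtlety that Proposition \ref{prop:weyl law integrand split 1 int glob 1} is phrased as an asymptotic expansion whose leading coefficient involves $\chi(0)$ (and, implicitly, the ingredients from Theorem \ref{thm:diagonal case}); for the estimate to be useful one must know that the total right-hand side is genuinely $O\bigl(\mu_j^{(d-1)/2-d_G}\bigr)$, \emph{not} $o$ of it, which could fail if the leading coefficient were to vanish. However, since the corollary only requires an upper bound, vanishing of the leading coefficient would merely improve the estimate; so no issue arises. Apart from this purely notational check, the proof is a direct consequence of positivity combined with Proposition \ref{prop:weyl law integrand split 1 int glob 1}, with no further geometric or microlocal input required.
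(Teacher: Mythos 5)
Your argument is correct and is essentially the paper's own (implicit) proof: with $\hat{\chi}\ge 0$ and $\hat{\chi}(0)>0$, positivity lets you bound the single $j$-th term of the trace by the whole sum at $\lambda=\mu_j$, and Proposition \ref{prop:weyl law integrand split 1 int glob 1} then supplies the $O\bigl(\mu_j^{\frac{d-1}{2}-d_G}\bigr)$ upper bound, exactly as intended when the paper says ``setting $\lambda=\mu_j$''. One small slip in your construction of $\chi$: with $\chi=\eta*\tilde{\eta}$ one has $\chi(0)=\|\eta\|_{L^2}^2$, whereas $\hat{\chi}(0)$ is a positive multiple of $\bigl|\int\eta\bigr|^2$, so ``arbitrary non-zero $\eta$'' does not guarantee $\hat{\chi}(0)>0$ (the quantity you divide by); you must additionally require $\int\eta\neq 0$, e.g. take $\eta\ge 0$ and $\eta\not\equiv 0$, after which the proof goes through unchanged.
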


The asymptotic expansion in Proposition  \ref{prop:weyl law integrand split 1 int glob 1} may be translsated into 
information about the asymptotics of
\begin{equation}
\label{eqn:defn Ptaunuweyl}
\mathcal{P}^\tau_\nu(\lambda)=
\sum_{\mu_j\le \lambda}
e^{-2\,\tau\,\mu_j}\,\sum_k \left\|\widetilde{\varphi}^\tau_{j,\nu,k}\right\|_{L^2(X^\tau)}^2,
\end{equation}
essentially by the same
argument  used for Theorem \ref{thm:weyl law geod}.
\begin{thm}
\label{thm:weyl poisson}
In the situation of Theorem \ref{thm:weyl law geod}, let us also assume
that $d\ge 2\,d_G$.
Then 
\begin{eqnarray*}
\mathcal{P}_\nu^\tau(\lambda)&\sim&
\frac{1}{\sqrt{2^{d+1+d_G}}\,\pi}\,\left( \frac{\lambda}{2\,\pi\,\tau}\right)^{\frac{d-1}{2}-d_G}\,
\cdot \dim(\nu)^2\cdot
\mathrm{vol}(Z^\tau/G)\nonumber\\
&&
\cdot \left[\dfrac{\lambda}{\frac{d+1}{2}-d_G}+O(1)
\right].\nonumber
\end{eqnarray*}
\end{thm}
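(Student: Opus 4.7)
The plan is to deduce the unsmoothed Weyl asymptotic for $\mathcal{P}_\nu^\tau$ from the smoothed spectral expansion in Proposition \ref{prop:weyl law integrand split 1 int glob 1}, by a Tauberian argument mirroring that used in Theorem \ref{thm:weyl law geod}. First, I would write $\mathcal{P}_\nu^\tau(\lambda)$ as the distribution function of the positive Stieltjes measure $d\mathcal{P}_\nu^\tau=\sum_j a_j\,\delta_{\mu_j}$, where $a_j:=e^{-2\tau\mu_j}\sum_k\|\widetilde{\varphi}^\tau_{j,\nu,k}\|^2_{L^2(X^\tau)}\ge 0$. The crucial structural observation is that $\mathcal{P}_\nu^\tau$ is monotone non-decreasing, which is what makes a Tauberian inversion possible. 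Directly from (\ref{eqn:temepered_proj_kernel_poisson_equiv}) one has $\mathrm{trace}(P^\tau_{\chi,\nu,\lambda})=(\hat\chi\ast d\mathcal{P}_\nu^\tau)(\lambda)$, so Proposition \ref{prop:weyl law integrand split 1 int glob 1} provides a complete asymptotic expansion of this convolution of order $\lambda^{(d-1)/2-d_G}$ with leading coefficient proportional to $\chi(0)$.

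Next, I would apply the standard Hörmander-type Tauberian theorem for monotone functions to $\mathcal{P}_\nu^\tau$, selecting $\chi\in\mathcal{C}^\infty_c(\mathbb{R})$ with $\chi(0)=1$, $\hat\chi\ge 0$ and $\hat\chi$ supported close enough to $0$ so that the hypotheses of Theorem \ref{thm:diagonal case} and Proposition \ref{prop:weyl law integrand split 1 int glob 1} are met. The theorem then yields
\[
\mathcal{P}_\nu^\tau(\lambda)=\int_0^\lambda C_\nu^\tau\,\mu^{(d-1)/2-d_G}\,d\mu+O\bigl(\lambda^{(d-1)/2-d_G}\bigr),
\]
where $C_\nu^\tau$ is the leading coefficient from Proposition \ref{prop:weyl law integrand split 1 int glob 1} (with $\chi(0)=1$) readjusted by the $1/\sqrt{2\pi}$ factor coming from the Fourier convention in passing from the smoothed derivative back to the primitive. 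Explicit integration produces the leading term $\lambda^{(d+1)/2-d_G}/\bigl((d+1)/2-d_G\bigr)$, while the remainder $O\bigl(\lambda^{(d-1)/2-d_G}\bigr)$ is precisely the $O(1)$ contribution inside the bracket of the stated expansion.

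The hypothesis $d\ge 2d_G$ is used to ensure $(d+1)/2-d_G\ge 1/2>0$, so that the power-type leading term dominates and the Tauberian conclusion is sharp. Two main difficulties deserve attention. The first is bookkeeping the constants: the prefactor in the statement, $1/\bigl(\sqrt{2^{d+1+d_G}}\,\pi\bigr)$, differs from the one in Proposition \ref{prop:weyl law integrand split 1 int glob 1}, $1/\sqrt{2^{d+d_G}\,\pi}$, exactly by the factor $1/\sqrt{2\pi}$ naturally produced by the Tauberian inversion, so the identification is automatic once the conventions are fixed. The second is verifying that the full expansion in Proposition \ref{prop:weyl law integrand split 1 int glob 1} is uniform in the test function $\chi$ over a rich enough family so that the Hörmander Tauberian theorem applies rigorously; this is routine once one knows the smoothed expansion holds for a one-parameter family of admissible cutoffs, and is precisely the same mechanism already exploited in Theorem \ref{thm:weyl law geod}. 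With these points in place, the argument goes through essentially as in the earlier Weyl law.
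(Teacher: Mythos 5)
Your proposal is correct and follows essentially the same route as the paper: there, too, the statement is deduced from Proposition \ref{prop:weyl law integrand split 1 int glob 1} by the Tauberian mechanism of Theorem \ref{thm:weyl law geod} (monotonicity of $\mathcal{P}^\tau_\nu$, positivity of $\hat\chi$, the Heaviside/Fubini identity, and a Lemma-70-type bound for the remainder $\int[\mathcal{P}^\tau_\nu(\lambda-t)-\mathcal{P}^\tau_\nu(\lambda)]\hat\chi(t)\,\mathrm{d}t$), carried out by hand rather than by quoting H\"ormander's Tauberian theorem, and your bookkeeping of the extra $1/\sqrt{2\pi}$ and of the role of the hypothesis $d\ge 2\,d_G$ matches the paper exactly. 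One small slip: the smallness-of-support condition must be imposed on $\chi$, not on $\hat\chi$ (both cannot be compactly supported); with $\chi\in\mathcal{C}^\infty_c$ supported near $0$, $\chi(0)=1$ and $\hat\chi>0$ as in Theorem \ref{thm:weyl law geod}, your argument goes through as written.
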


\section{Preliminaries}

We shall identify the tangent and cotangent bundles of $M$, 
$TM$ and $T^\vee M$, by means of $\kappa$.

\subsection{The action on the Grauert tubes}
\label{sctn: grauert and coshpere}

In Section 
\ref{sctn:intro}, the Grauert tubes 
$\tilde{M}^\tau$, $\tau\in (0,\tau_0)$,
have been described 
as strictly pseudoconvex domains in $\tilde{M}$ intrisically associated
to $\kappa$. 
An alternative characterization of $\tilde{M}^\tau$
is in terms of disk bundles
$T^\tau M\subset T^\vee M$ of norm $\tau$ with respect to
$\kappa$, where $T^\vee M$ has been endowed 
in a neighbourhood of the zero section $M_0$ with a
complex structure intrinsically associated to $\kappa$
(\cite{ls}, \cite{l}, \cite{szo}, \cite{gs1}, \cite{gs2}).
This complex structure was called \textit{adapted} in \cite{ls},
and will be denoted $J_{ad}$; it 
is uniquely determined by the condition that
the Riemannian and Monge-Amp\`{e}re foliations coincide
\cite{ls}.

\begin{notn}\label{defn:defn di psigamma}
Let $\gamma:\mathbb{R}\rightarrow M$ be a geodesic, with 
velocity curve $\dot{\gamma}:\mathbb{R}\rightarrow TM$.
Let us define $\psi_\gamma:\mathbb{C}\rightarrow TM$
by 
$$
\psi_\gamma:a+\imath\,b\mapsto N_b\big(\dot{\gamma}(a)\big),
$$
where $N_b$ denotes dilation by the factor $b$.
\end{notn}

\begin{thm}
\label{thm:Jad char}
(Lempert and Sz\"{o}ke) 
Let $J_0$ be the complex structure on $\mathbb{C}$. 
Given any sufficiently small neighbourhood $T'M\subseteq TM$
of the zero section, 
$J_{ad}$ is the unique complex structure on 
$T'M\subseteq TM$ such that 
$\psi_\gamma:\psi_\gamma^{-1}(T'M)\rightarrow T'M$ is 
$(J_0,J_{ad})$-holomorphic for every geodesic $\gamma$ on $(M,\kappa)$.

\end{thm}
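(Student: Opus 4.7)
The plan centers on the observation that the images $\psi_\gamma(\mathbb{C})$ foliate a punctured neighbourhood of the zero section $M_0 \subset TM$ by real two-dimensional leaves. Indeed, every $v \in T'M \setminus M_0$ lies on exactly one such image: take $\gamma$ to be the unit-speed geodesic on $(M,\kappa)$ with $\gamma(0) = \pi(v)$ and $\dot\gamma(0) = v/|v|_\kappa$, so that $v = \psi_\gamma\bigl(\imath\,|v|_\kappa\bigr)$. I would first record this parametrization fact, which reduces the problem to controlling the candidate complex structure leafwise together with its behaviour under leaf-to-leaf variation.

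For uniqueness, suppose $J'$ is any complex structure on $T'M$ fulfilling the requirement. On the leaf through $v = \psi_\gamma(\zeta)$ with $\zeta = a+\imath b$, the identity $d_\zeta\psi_\gamma \circ J_0 = J' \circ d_\zeta\psi_\gamma$ forces
$$
J'\bigl(d_\zeta\psi_\gamma(\partial_a)\bigr) = d_\zeta\psi_\gamma(\partial_b),
$$
so $J'$ is pinned down on the two-real-dimensional leaf tangent subspace. The same holds for $J_{ad}$, hence $J'$ and $J_{ad}$ agree on each leaf tangent plane. Varying the geodesic $\gamma$ by perturbing its initial data in $M$ produces a $(2d)$-real-parameter family of leaves sweeping a neighbourhood of $v$; the corresponding leaf tangent planes at $v$ (each one two-dimensional) together span the full tangent space $T_v(T'M)$, so $\mathbb{C}$-linearity forces $J' = J_{ad}$ pointwise.

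For existence, I would use this same leafwise prescription as a \emph{definition}: declare $J_{ad}$ so that each $\psi_\gamma$ is holomorphic (pushing forward $J_0$ to the tangent plane of the leaf), and extend to transverse directions by means of complexified Jacobi fields along $\gamma$ — a real Jacobi field $Y(a)$ along $\gamma$ has a canonical extension to a vector field on $\psi_\gamma(\mathbb{C})$, and $J_{ad}$ acts on these transverse vectors via multiplication by $\imath$ in the complexified Jacobi equation. Integrability of the resulting almost complex structure is equivalent, after unravelling Notation \ref{defn:defn di psigamma}, to $\sqrt{\rho}$ satisfying the homogeneous complex Monge-Amp\`ere equation (property (5) at the start of the paper), whose characteristic foliation coincides with the foliation by the $\psi_\gamma(\mathbb{C})$'s; this is the intrinsic content of the Riemannian=Monge-Amp\`ere foliation characterization of $J_{ad}$.

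The main obstacle is the integrability step: verifying that the leafwise-defined almost complex structure fits together to yield a bona fide complex structure. Concretely, this amounts to computing the Nijenhuis tensor of the candidate $J_{ad}$ in terms of the Jacobi equation and the curvature of $(M,\kappa)$ and checking that it vanishes, or equivalently identifying the leaves $\psi_\gamma(\mathbb{C})$ as the leaves of the Monge-Amp\`ere foliation and invoking the uniqueness of the adapted structure of Lempert-Sz\"oke. Once integrability is in hand, the uniqueness argument above yields the full characterization on any sufficiently small tubular neighbourhood $T'M$.
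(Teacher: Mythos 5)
There is a genuine gap in your uniqueness argument, and it is exactly where the real content of the Lempert--Sz\H{o}ke theorem lies. As you yourself observe in your first paragraph, through each point $v\in T'M\setminus M_0$ there passes \emph{exactly one} leaf $\psi_\gamma(\mathbb{C})$ (reparametrizations of $\gamma$ give the same image, precomposed with a real-affine map of $\mathbb{C}$). Consequently, at such a $v$ the holomorphicity of all the maps $\psi_\gamma$ constrains a candidate $J'$ only on the single two-plane $\mathrm{d}\psi_\gamma(T_\zeta\mathbb{C})\subset T_v(T'M)$. Your claim that perturbing the initial data of $\gamma$ produces leaf tangent planes \emph{at the same point} $v$ spanning $T_v(T'M)$ is false: the nearby leaves pass through nearby points, not through $v$. (The spanning you want does hold at points of the zero section, where $\psi_\gamma(a+\imath 0)=0_{\gamma(a)}$ and all geodesics through the base point contribute; but that only pins $J'$ down along $M_0$, and two complex structures agreeing on a totally real submanifold need not agree nearby.) The missing idea is the propagation along leaves used by Lempert and Sz\H{o}ke: a variation of geodesics $\gamma_s$ gives a vector field $\xi(\zeta)=\partial_s\psi_{\gamma_s}(\zeta)|_{s=0}$ along the leaf whose $(1,0)$-part $\tfrac12(\xi-\imath J'\xi)$ is holomorphic in $\zeta$ for \emph{any} complex structure $J'$ making the $\psi_{\gamma_s}$ holomorphic; since $J'$ is already determined on the real axis (the zero section), the identity theorem determines $J'\xi$ on the whole leaf, and since the values $\xi(\zeta)$ — equivalently the Jacobi data $\big(Y(a),\dot Y(a)\big)$ — span $T_{\psi_\gamma(\zeta)}(T'M)$, this forces $J'=J_{ad}$ pointwise. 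Without this (or an equivalent) step, uniqueness is not established.

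The existence half of your proposal is also not a proof as it stands: defining $J_{ad}$ leafwise and ``extending by complexified Jacobi fields'' is the right heuristic, but the integrability verification is precisely the hard part, and your fallback — ``identifying the leaves with the Monge--Amp\`ere foliation and invoking the uniqueness of the adapted structure of Lempert--Sz\H{o}ke'' — is circular, since that is the theorem being proved. Note also that the paper itself offers no proof of this statement: it is quoted with attribution and rests on \cite{ls} (see also \cite{gs1}, \cite{gs2}, \cite{szo}), so what is required here is either a citation or a self-contained argument along the lines sketched above (holomorphic extension of Jacobi/parallel fields along each geodesic, nonvanishing of the associated determinant on a sufficiently small tube, and integrability of the resulting structure).
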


As proved in \cite{ls} and \cite{gls}, 
for some tubular neighbourhood $T'M\subseteq T^\vee M$ 
of the zero section
the 
\textit{imaginary time exponential map} 
$E:T'M\rightarrow \tilde{M}$ 
(essentially defined by 
complexifying the ordinary exponential map)
is $(J_{ad},J)$-holomorphic and
intertwines the square norm function with $\rho$ and
the canonical symplectic structure $\Omega_{can}$ on $T^\vee M$
with $\Omega$ (we conform to the notation and conventions in
\cite{p24}, thus $\Omega_{can}=\mathrm{d}\mathbf{q}\wedge\mathrm{d}\mathbf{p}$ in local $(\mathbf{q},\mathbf{p})$ coordinates).
Hence $E$ restricts to an isomorphism of K\"{a}hler manifolds
$$
E^\tau:(T^\tau M,\Omega_{can},J)\cong (\tilde{M}^\tau,\Omega,J).
$$

\begin{notn}
Given $f:M\rightarrow M$ smooth,
$\mathrm{d} f:TM\rightarrow TM$ will denote its 
differential (tangent map). 
\end{notn}

\begin{prop}\label{prop: from isometries to niholomorphisms}
 If $f: M\rightarrow M$ is a Riemannian isometry of $(M,\kappa)$, then 
 $\mathrm{d} f$ restricts to a biholomorphism of $(T^{\tau}M, J_{\mathrm{ad}})$ into itself, for any $\tau\in (0,\tau_0)$.
\end{prop}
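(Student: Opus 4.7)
The plan is to deduce the result from the Lempert--Sz\H{o}ke characterization of $J_{ad}$ (Theorem \ref{thm:Jad char}), since that characterization pins down $J_{ad}$ uniquely via the holomorphicity of the maps $\psi_\gamma$ attached to geodesics, and these are precisely the objects that interact naturally with isometries.

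First I would check that $\mathrm{d}f$ preserves $T^\tau M$: since $f$ is a Riemannian isometry of $(M,\kappa)$, its tangent map is fiberwise linear and preserves the $\kappa$-norm on each fiber, so it carries the disk bundle $T^\tau M$ to itself. Since $f^{-1}$ is also an isometry, $\mathrm{d}f$ restricts to a diffeomorphism of $T^\tau M$.

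The core step is to verify that $\mathrm{d}f$ intertwines the maps $\psi_\gamma$ correctly. Namely, for any geodesic $\gamma$ on $(M,\kappa)$, the curve $f\circ\gamma$ is again a geodesic (isometries preserve the Levi--Civita connection), and
\[
\mathrm{d}f\circ\psi_\gamma(a+\imath b)
=\mathrm{d}f\bigl(N_b(\dot\gamma(a))\bigr)
=N_b\bigl(\mathrm{d}f(\dot\gamma(a))\bigr)
=N_b\bigl(\overline{(f\circ\gamma)}\,\dot{}\,(a)\bigr)
=\psi_{f\circ\gamma}(a+\imath b),
\]
using that $\mathrm{d}f$ is linear on fibers (hence commutes with the dilations $N_b$) and that it maps velocities of $\gamma$ to velocities of $f\circ\gamma$.

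To conclude, let $\tilde{J}:=(\mathrm{d}f)^*J_{ad}$ denote the pullback almost complex structure on $T^\tau M$; saying that $\mathrm{d}f$ is a biholomorphism of $(T^\tau M,J_{ad})$ is equivalent to $\tilde{J}=J_{ad}$. A map is $(J_0,\tilde{J})$-holomorphic if and only if its composition with $\mathrm{d}f$ is $(J_0,J_{ad})$-holomorphic; combining this with the identity above, we see that for every geodesic $\gamma$ the map $\psi_\gamma$ is $(J_0,\tilde{J})$-holomorphic, because $\mathrm{d}f\circ\psi_\gamma=\psi_{f\circ\gamma}$ is $(J_0,J_{ad})$-holomorphic by Theorem \ref{thm:Jad char} applied to the geodesic $f\circ\gamma$. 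By the uniqueness clause of Theorem \ref{thm:Jad char}, this forces $\tilde{J}=J_{ad}$, proving that $\mathrm{d}f$ is $(J_{ad},J_{ad})$-holomorphic. There is no real obstacle here; the main thing to be careful about is the bookkeeping in the identity $\mathrm{d}f\circ\psi_\gamma=\psi_{f\circ\gamma}$ and the invocation of the uniqueness part of the Lempert--Sz\H{o}ke theorem rather than just the existence part.
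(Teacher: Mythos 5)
Your proposal is correct and follows essentially the same route as the paper: the key identity $\mathrm{d}f\circ\psi_\gamma=\psi_{f\circ\gamma}$ (using that isometries send geodesics to geodesics and commute with the fiberwise dilations $N_b$), followed by the uniqueness clause of the Lempert--Sz\H{o}ke characterization applied to the pullback structure $(\mathrm{d}f)^*J_{ad}$, which is exactly the paper's argument. The only addition is your explicit preliminary check that $\mathrm{d}f$ preserves the disk bundle $T^\tau M$, which the paper leaves implicit.
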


\begin{proof}
    The claim is that 
    \begin{equation}
    \label{eqn:df olom}
    \mathrm{d}(\mathrm{d} f) \circ J_{\mathrm{ad}} = J_{\mathrm{ad}}\circ \mathrm{d}(\mathrm{d} f)
    \end{equation}
on $T(T^{\tau}M)$. Let $(m,v)\in TM\setminus M_0$, and let $\gamma:\mathbb{R}\longrightarrow M$ be the unique geodesic
with $\dot{\gamma}(0)=(m,v)$. 
Since $f$ is an isometry, the geodesic corresponding to 
$\mathrm{d} f (m,v)$ is $f\circ \gamma$, and its velocity is 
$\mathrm{d} f(\dot{\gamma})$. Hence, 
$$\mathrm{d} f \circ \psi_{\gamma}(a+\imath\,b) =
\mathrm{d} f \circ N_b\left(\dot{\gamma}(a)\right)
=N_b\circ \mathrm{d} f \left(\dot{\gamma}(a)\right)=
N_b\circ \dot{\overbrace{{f\circ \gamma}}}(a)=
\psi_{f\circ \gamma}(a+\imath\,b).
$$
Thus, by Theorem \ref{thm:Jad char} 
$\mathrm{d} f \circ \psi_{\gamma}=\psi_{f\circ \gamma}$
is $(J_0,J_{\mathrm{ad}})$-holomorphic for every $\gamma$ 
(on the preimage in $\mathbb{C}$ of $T^{\tau}M$).
In other worlds,
    \begin{equation*}
        J_{\mathrm{ad}}\circ \mathrm{d}(\mathrm{d} f \circ \psi_{\gamma}) = \mathrm{d}(\mathrm{d} f\circ \psi_{\gamma}) \circ J_0.
    \end{equation*}
    By the chain rule, 
    \begin{equation*}
        \left(\mathrm{d}(\mathrm{d} f)^{-1}\circ J_{\mathrm{ad}} \circ \mathrm{d}(\mathrm{d} f)\right) \circ \mathrm{d}\psi_{\gamma} = \mathrm{d}\psi_{\gamma}\circ J_0.
    \end{equation*}
    By the uniqueness part in Theorem \ref{thm:Jad char}, 
    \begin{equation*}
        \mathrm{d}(\mathrm{d} f)^{-1}\circ J_{\mathrm{ad}} \circ \mathrm{d}(\mathrm{d} f) = J_0,
    \end{equation*}
i.e. (\ref{eqn:df olom}) holds.
\end{proof}

\begin{cor}
    Any isometry of $(M,\kappa)$ is real-analytic.
\end{cor}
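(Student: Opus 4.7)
The plan is to promote Proposition \ref{prop: from isometries to niholomorphisms} from a statement about $\mathrm{d}f$ to one about $f$ itself. The first step is to recall that an isometry of a smooth Riemannian manifold is automatically smooth (by Myers--Steenrod), so that $\mathrm{d}f:TM\to TM$ is a well-defined smooth map; Proposition \ref{prop: from isometries to niholomorphisms} then guarantees that for any $\tau\in(0,\tau_0)$ the restriction of $\mathrm{d}f$ to $T^\tau M$ is a biholomorphism of $(T^\tau M,J_{\mathrm{ad}})$ onto itself.

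The second step is to invoke the general principle that any biholomorphism between complex manifolds is, in particular, a real-analytic map between the underlying real-analytic manifolds. For this to yield information about $f$, one needs $J_{\mathrm{ad}}$ to be compatible with the canonical real-analytic structure carried by $TM$ (since $M$ is real-analytic). This is where the Lempert--Sz\H{o}ke construction enters: $J_{\mathrm{ad}}$ is produced from the real-analytic exponential map of $(M,\kappa)$, or equivalently is characterized via the real-analytic curves $\psi_\gamma$ of Theorem \ref{thm:Jad char}, so that $J_{\mathrm{ad}}$ is a real-analytic tensor on $TM$ and the biholomorphisms it defines are real-analytic in the canonical sense.

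Finally, I would observe that $\mathrm{d}f(m,0)=(f(m),0)$, so that $f$ is recovered as the restriction of $\mathrm{d}f$ to the zero section $M_0\subset T^\tau M$. Since $M_0$ is a real-analytic submanifold of $TM$ canonically identified with $M$ via $m\mapsto 0_m$, the restriction of the real-analytic map $\mathrm{d}f$ to $M_0$ is real-analytic, and we conclude that $f:M\to M$ is itself real-analytic. The only conceptual obstacle in this plan is making rigorous the compatibility between $J_{\mathrm{ad}}$ and the ambient real-analytic atlas of $TM$; this is immediate from the real-analyticity of the imaginary-time exponential map $E$ (which is the complexification of the real-analytic geodesic exponential), but it is the only ingredient beyond Proposition \ref{prop: from isometries to niholomorphisms} that truly carries the weight of the corollary.
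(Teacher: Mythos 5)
Your argument is correct and is essentially the proof the paper leaves implicit: the Proposition makes $\mathrm{d}f$ a biholomorphism of $(T^\tau M,J_{\mathrm{ad}})$, hence real-analytic for the canonical real-analytic structure of $TM$ (compatibility being guaranteed by the real-analyticity of $E$, equivalently of $J_{\mathrm{ad}}$), and restriction to the zero section recovers $f$. Your extra appeal to Myers--Steenrod is harmless (in the paper's setting smoothness of $f$ is already assumed) and the compatibility point you flag is exactly the right one.
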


Therefore, if $f$ is an isometry of $(M,\kappa)$, it uniquely
extends
to a holomorphic map $\tilde{f}:\tilde{M}^\tau\rightarrow \tilde{M}$
for any sufficiently small $\tau>0$.

\begin{cor}
For any $\tau\in (0,\tau_0)$ the following holds:
\begin{enumerate}
\item $\tilde{f}$ is defined on $\tilde{M}^\tau$, 
where it is the biholomorphism
\begin{equation*}
    \tilde{f} = E^{\tau}\circ \mathrm{d} f\circ (E^{\tau})^{-1}: \tilde{M}^{\tau}\rightarrow\tilde{M}^{\tau};\end{equation*}
    \item $\tilde{f}\circ \rho=\rho$, $\tilde{f}^*(\Omega)=
    \Omega$;
\item $\tilde{f}$ restricts to a CR and contact automorphism 
$\tilde{f}^\tau: X^{\tau}\rightarrow X^\tau$.

\end{enumerate}

\end{cor}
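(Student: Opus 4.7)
The plan is to build the extension $\tilde{f}$ on all of $\tilde{M}^\tau$ directly via the imaginary-time exponential map, using the preceding Proposition that $\mathrm{d} f$ is $J_{\mathrm{ad}}$-holomorphic, and then read off the remaining geometric properties from the intertwining relations of $E^\tau$.

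First, I would establish that $\mathrm{d} f$ preserves the disk bundle $T^\tau M$ for every $\tau\in(0,\tau_0)$. Since $f$ is a $\kappa$-isometry, $\mathrm{d} f$ preserves the fibrewise square-norm function $\|\cdot\|_\kappa^2$ on $TM$; hence it maps $T^\tau M$ biholomorphically onto itself (by the previous Proposition). Composing with the biholomorphism $E^\tau:(T^\tau M,J_{\mathrm{ad}})\cong(\tilde{M}^\tau,J)$ from the Lempert--Sz\H{o}ke theory, I would \emph{define} $F^\tau:=E^\tau\circ\mathrm{d} f\circ(E^\tau)^{-1}:\tilde{M}^\tau\to\tilde{M}^\tau$, a biholomorphism. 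On the zero section $M_0\subset T^\tau M$ one has $E^\tau(m,0)=m$ and $\mathrm{d} f(m,0)=(f(m),0)$, so $F^\tau|_M=f$. Since by assumption $\tilde{f}$ exists on some small tube $\tilde{M}^{\tau'}$ and agrees with $f$ on $M$, both $F^\tau$ and $\tilde{f}$ are holomorphic extensions of $f$ across the totally real submanifold $M$; by the uniqueness of holomorphic extension from a totally real submanifold, they coincide on $\tilde{M}^{\tau'}$, which justifies defining $\tilde{f}:=F^\tau$ globally on $\tilde{M}^\tau$. This gives~(1).

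For~(2), I would use the intertwining properties of $E^\tau$. The square-norm function $\|\cdot\|_\kappa^2$ on $T^\tau M$ corresponds via $E^\tau$ to $\rho$, while $\mathrm{d} f$ visibly preserves $\|\cdot\|_\kappa^2$; conjugating yields $\tilde{f}^*\rho=\rho$. For the K\"ahler form, one uses $(E^\tau)^*\Omega=\Omega_{\mathrm{can}}$ and the fact that, under the $\kappa$-induced identification $TM\cong T^\vee M$, the map $\mathrm{d} f$ corresponds to the cotangent lift of $f$ (because $f$ is an isometry), which always preserves the tautological symplectic form. Hence $(\mathrm{d} f)^*\Omega_{\mathrm{can}}=\Omega_{\mathrm{can}}$, and conjugating by $E^\tau$ gives $\tilde{f}^*\Omega=\Omega$.

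For~(3), preservation of $\rho$ implies $\tilde{f}(X^\tau)=X^\tau$, so $\tilde{f}$ restricts to a diffeomorphism $\tilde{f}^\tau:X^\tau\to X^\tau$. Since $\tilde{f}$ is holomorphic (hence $J$-linear on tangent spaces), $\mathrm{d}\tilde{f}^\tau$ preserves $\mathcal{H}^\tau=TX^\tau\cap J(TX^\tau)$ together with its complex structure, yielding a CR automorphism. Finally, from $\tilde{f}^*\rho=\rho$ and the holomorphicity of $\tilde{f}$ one gets $\tilde{f}^*(\partial\rho)=\partial(\rho\circ\tilde{f})=\partial\rho$, hence $\tilde{f}^*\alpha=\Im\,\tilde{f}^*(\partial\rho)=\alpha$, and restricting to $X^\tau$ gives $(\tilde{f}^\tau)^*\alpha^\tau=\alpha^\tau$, so $\tilde{f}^\tau$ is a contact automorphism.

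The only genuinely delicate point is the identification of the globally defined map $F^\tau$ with the \emph{a priori} only germ-theoretic extension $\tilde{f}$, and this is handled cleanly by uniqueness of holomorphic extension from a totally real submanifold; the remaining verifications are then functorial consequences of the intertwining properties recorded in the Lempert--Sz\H{o}ke picture and of the isometry hypothesis on $f$.
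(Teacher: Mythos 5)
Your proposal is correct and follows essentially the route the paper intends: the corollary is meant to be read off from Proposition \ref{prop: from isometries to niholomorphisms} by conjugating $\mathrm{d} f$ with the K\"ahler isomorphism $E^\tau$, identifying the result with the germ-theoretic extension $\tilde f$ by uniqueness of holomorphic extension across the totally real submanifold $M$, and then deducing $\rho$- and $\alpha$-invariance (hence the CR/contact statement) exactly as in the paper's later proof of Proposition \ref{prop: basic facts on G action}. The only cosmetic simplification available is that $\tilde f^*(\Omega)=\Omega$ also follows immediately from holomorphy and $\rho\circ\tilde f=\rho$ via $\Omega=\imath\,\partial\overline{\partial}\rho$, without invoking the cotangent-lift picture.
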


Applying the above to a smooth action by isometries, we conclude the following.

\begin{prop}\label{prop: basic facts on G action}
    Let $(M,\kappa)$ be a compact $\mathcal{C}^\varpi$ Riemannian manifold, $G$ a compact Lie group, and 
    $\mu:G\times M\rightarrow M$ a smooth action as a group of isometries of $(M,\kappa)$. Then:
    \begin{enumerate}
        \item for every $g\in G$, $\mu_g:M \rightarrow M$ is real-analytic;
        \item for $\tau\in (0,\tau_{0})$, $\mu$ extends to a $G$-action $\tilde{\mu}^\tau:G\times \tilde{M}^{\tau}\rightarrow \tilde{M}^{\tau}$ as a group of automorphisms of the K\"ahler manifold 
        $(\tilde{M}^{\tau}, \Omega, J)$;
\item        $E^\tau$ intertwines $\tilde{\mu}^\tau$ on $\tilde{M}^\tau$ with
        the (co)tangent action on $T^\tau M$;
        \item $(\tilde{\mu}^\tau_g)^*(\alpha) = \alpha,\,\forall\,g\in G$;
        \item $\tilde{\mu}$ restricts to an action $\mu^{\tau}:G\times  X^{\tau}\rightarrow X^{\tau}$ as a group of automorphisms of the contact CR manifold $ X^{\tau}$.
    \end{enumerate}
\end{prop}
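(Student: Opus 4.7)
The plan is to assemble the five claims from the preceding pointwise results by upgrading from individual group elements to the smooth joint action. Item (1) is immediate from the corollary that any isometry of $(M,\kappa)$ is $\mathcal{C}^\varpi$: each orbit map $\mu_g$ is an isometry, hence real-analytic. For item (3), I define $\tilde{\mu}^\tau:G\times\tilde{M}^\tau\to\tilde{M}^\tau$ by the explicit formula
\[
\tilde{\mu}^\tau(g,z):=E^\tau\circ\mathrm{d}\mu_g\circ(E^\tau)^{-1}(z),
\]
so that intertwining with the tangent action is built in. Joint smoothness then follows because $(g,v)\mapsto \mathrm{d}\mu_g(v)$ is smooth on $G\times TM$ (it is a classical consequence of smoothness of $\mu$) and $E^\tau$ is a diffeomorphism onto $\tilde{M}^\tau$. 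The group law $\tilde{\mu}^\tau_g\circ\tilde{\mu}^\tau_h=\tilde{\mu}^\tau_{gh}$ follows either directly from the chain rule applied to $\mathrm{d}\mu_g\circ\mathrm{d}\mu_h=\mathrm{d}\mu_{gh}$, or by noting that both sides are holomorphic on the connected complex manifold $\tilde{M}^\tau$ and agree on the totally real submanifold $M$ (where they reduce to $\mu_{gh}$), hence coincide by the identity principle.

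For item (2), the preceding corollary gives, for each fixed $g$, that $\tilde{\mu}^\tau_g$ is a biholomorphism of $\tilde{M}^\tau$ with $\tilde{\mu}^\tau_g{}^*\rho=\rho$ and $\tilde{\mu}^\tau_g{}^*\Omega=\Omega$; combined with the homomorphism property and smoothness established above, this is the statement that $\tilde{\mu}^\tau$ is a smooth action by Kähler automorphisms. Item (4) then drops out: since $\tilde{\mu}^\tau_g$ is holomorphic (so $\tilde{\mu}^\tau_g{}^*\circ\partial=\partial\circ\tilde{\mu}^\tau_g{}^*$) and preserves $\rho$, it preserves $\partial\rho$, hence also $\alpha=\Im(\partial\rho)$.

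For item (5), preservation of $\rho$ forces $\tilde{\mu}^\tau_g(X^\tau)=X^\tau$ since $X^\tau=\{\sqrt{\rho}=\tau\}$; the restriction $\mu^\tau_g:=\tilde{\mu}^\tau_g|_{X^\tau}$ is a diffeomorphism of $X^\tau$. Item (4) implies that $\mu^\tau_g$ preserves the contact form $\alpha^\tau={\jmath^\tau}^*\alpha$, and the fact that $\tilde{\mu}^\tau_g$ is $J$-holomorphic on $\tilde{M}^\tau$ implies that the CR distribution $\mathcal{H}^\tau=TX^\tau\cap J(TX^\tau)$ is $G$-invariant and $\mathrm{d}\mu^\tau_g$ commutes with $J$ on it. Thus $\mu^\tau$ acts by contact CR automorphisms.

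The main technical obstacle is the joint smoothness assertion in item (2): naively one only obtains that each $\tilde{\mu}^\tau_g$ is biholomorphic, and that the family depends smoothly on $g$ is not automatic from the pointwise extension result. The formula $\tilde{\mu}^\tau(g,z)=E^\tau\bigl(\mathrm{d}\mu_g((E^\tau)^{-1}(z))\bigr)$ is the crucial device that reduces joint smoothness to the elementary smoothness of the tangent lift of a smooth group action, bypassing any delicate analytic-continuation in parameters.
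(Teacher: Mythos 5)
Your proof is correct and follows essentially the same route as the paper: both rely on the preceding corollary giving $\tilde{\mu}^\tau_g = E^\tau\circ\mathrm{d}\mu_g\circ(E^\tau)^{-1}$ (which makes items (1)--(3) and (5) immediate), and both obtain the invariance of $\alpha$ from the fact that each $\tilde{\mu}^\tau_g$ is holomorphic and preserves $\rho$, so that $\alpha=\Im(\partial\rho)$ is preserved. The only difference is that you spell out the joint smoothness in $(g,z)$ and the group law, which the paper treats as implicit in the explicit formula; this is a welcome but not substantively different elaboration.
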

\begin{proof}
    The only thing left to prove is that $\tilde{\mu}_g^*(\alpha^{\tau}) = \alpha^{\tau}$ for all $g\in G$. 
    Since $\tilde{\mu}_g$ is holomorphic, preserves $\rho$ and $\alpha = \Im(\partial\rho)$, the thesis follows by restriction to $ X^{\tau}$. 
\end{proof}

The (co)tangent lift of $\mu$ preserves 
the tautological 1-form 
$\lambda_{\mathrm{\mathrm{can}}}$ on $T^\vee M$
(locally given by $\mathbf{p}\,\mathrm{d}\mathbf{q}$), 
and is therefore Hamiltonian with respect to $\Omega_{\mathrm{\mathrm{can}}}$, with moment map 
\begin{equation*}
    \Phi^{\xi}_{TM} = \langle\Phi_{TM},\xi\rangle = \lambda_{\mathrm{\mathrm{can}}}(\xi_{TM}).
\end{equation*}
On the other hand, for every $\tau\in (0,\tau_0)$ 
$$
\alpha=-{E^\tau}^*(\lambda_{can})\quad\text{on}
\quad M^\tau
$$
(see the discussion in the introduction of \cite{p24}).

Given $\xi\in \mathfrak{g}$ and $\tau\in (0,\tau_0)$, 
we shall denote by 
$\xi_{\tilde{M}^{\tau_0}}$ and $\xi_{X^\tau}$ the induced vector
fields on $\tilde{M}^{\tau_0}$ and $X^\tau$, respectively.
Hence $\xi_{\tilde{M}^{\tau_0}}$
is tangent to $X^\tau$ and $\xi_{X^\tau}$ is the restriction of
$\xi_{\tilde{M}^{\tau_0}}$ to $X^\tau$.

By the previous discussion (see also Lemma 22 of \cite{p24}),
we conclude the following.

\begin{cor}\label{cor: the moment map on tubes}
With the preceding assumptions and notation, 
for every $\tau\in (0,\tau_0)$
the following holds. 

\begin{enumerate}
\item $\tilde{\mu}^\tau$ is Hamiltonian for $\Omega$, 
with moment map $\Phi_{\Tilde{M}^\tau}:\tilde{M}^\tau\rightarrow \mathfrak{g}^\vee$ 
given by
$$\varphi^{\xi} := \langle\Phi_{\Tilde{M}^\tau},\xi\rangle= -\alpha(\xi_{\tilde{M}^\tau})\quad 
(\xi\in \mathfrak{g}),
$$
\item For any $\xi\in \mathfrak{g}$, we have
$\xi_{X^\tau}=\xi_{X^\tau}^\sharp-\varphi^\xi\,\mathcal{R}^\tau$,
where $\xi_{X^\tau}^\sharp$ is a smooth section of $\mathcal{H}^\tau$
and we write $\varphi^\xi$ for $\left.\varphi^\xi\right|_{X^\tau}$.
\end{enumerate}

\end{cor}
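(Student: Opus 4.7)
The plan is to deduce both claims from the known Hamiltonian description of the cotangent lift on $T^\vee M$, transferred to $\tilde{M}^\tau$ via the isomorphism $E^\tau$. First, I would recall that the cotangent lift $\mu^T: G \times T^\vee M \rightarrow T^\vee M$ preserves $\lambda_{\mathrm{can}}$, so by Cartan's formula it is Hamiltonian for $\Omega_{\mathrm{can}} = -\mathrm{d}\lambda_{\mathrm{can}}$ with moment map $\Phi_{TM}^\xi = \lambda_{\mathrm{can}}(\xi_{TM})$, as noted in the discussion preceding the corollary.

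Now I would invoke the isomorphism $E^\tau : (T^\tau M, \Omega_{\mathrm{can}}, J_{ad}) \to (\tilde{M}^\tau, \Omega, J)$ of K\"ahler manifolds. By item (3) of Proposition \ref{prop: basic facts on G action}, $E^\tau$ intertwines the tangent action with $\tilde{\mu}^\tau$, so $(E^\tau)_* \xi_{T^\tau M} = \xi_{\tilde{M}^\tau}$. Combined with the identity $\alpha = -(E^\tau)^*(\lambda_{\mathrm{can}})$ recalled just before the statement, one gets
\[
\alpha(\xi_{\tilde{M}^\tau}) = (E^\tau)^*(\alpha)(\xi_{T^\tau M}) = -\lambda_{\mathrm{can}}(\xi_{T^\tau M}) = -\Phi_{TM}^\xi \circ (E^\tau)^{-1}.
\]
Since $(E^\tau)^*\Omega = \Omega_{\mathrm{can}}$, pulling back the moment map identity $\mathrm{d}\Phi_{TM}^\xi = \iota_{\xi_{TM}}\Omega_{\mathrm{can}}$ yields $\mathrm{d}\varphi^\xi = \iota_{\xi_{\tilde{M}^\tau}}\Omega$ with $\varphi^\xi = -\alpha(\xi_{\tilde{M}^\tau})$, which is the content of part (1). (The $G$-equivariance of $\varphi^\xi$ follows from the $G$-invariance of $\alpha$ established in item (4) of Proposition \ref{prop: basic facts on G action}.)

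For part (2), the essential observation is that $\mathcal{R}^\tau$ is the Reeb vector field of the contact form $\alpha^\tau$, hence $\alpha^\tau(\mathcal{R}^\tau) = 1$. The direct sum decomposition $TX^\tau = \mathcal{T}^\tau \oplus \mathcal{H}^\tau = \mathcal{T}^\tau \oplus \ker(\alpha^\tau)$ in (\ref{eqn:dicrect sum vert hor}) then produces a canonical splitting of any $v \in \mathfrak{X}(X^\tau)$ as $v = \bigl(v - \alpha^\tau(v)\,\mathcal{R}^\tau\bigr) + \alpha^\tau(v)\,\mathcal{R}^\tau$, with first summand in $\mathcal{H}^\tau$. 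Applying this to $\xi_{X^\tau}$ and using part (1) to rewrite $\alpha^\tau(\xi_{X^\tau}) = \alpha(\xi_{\tilde{M}^\tau})|_{X^\tau} = -\varphi^\xi|_{X^\tau}$ gives the desired formula with $\xi_{X^\tau}^\sharp := \xi_{X^\tau} + \varphi^\xi\,\mathcal{R}^\tau \in \Gamma(\mathcal{H}^\tau)$.

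The only real obstacle is sign bookkeeping: I would carefully track the conventions $\Omega_{\mathrm{can}} = \mathrm{d}\mathbf{q}\wedge \mathrm{d}\mathbf{p}$ versus $\lambda_{\mathrm{can}} = \mathbf{p}\,\mathrm{d}\mathbf{q}$ (so that $\Omega_{\mathrm{can}} = -\mathrm{d}\lambda_{\mathrm{can}}$), the minus sign in $\alpha = -(E^\tau)^*\lambda_{\mathrm{can}}$, and the normalization $\mathcal{R}^\tau = -\tau^{-1}\upsilon_{\sqrt{\rho}}^\tau$ in (\ref{eqn:reeb vct field}), cross-checking everything against Lemma 22 of \cite{p24} cited in the statement.
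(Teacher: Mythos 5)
Your proposal is correct and follows exactly the route the paper intends: the paper proves this corollary only by appeal to "the previous discussion" (the cotangent lift preserving $\lambda_{\mathrm{can}}$, the intertwining property of $E^\tau$, and $\alpha=-{E^\tau}^*(\lambda_{\mathrm{can}})$), which is precisely the transfer argument you carry out, and your part (2) is the standard Reeb-field splitting with $\alpha^\tau(\xi_{X^\tau})=-\varphi^\xi$. The sign bookkeeping you flag works out as you describe, so nothing is missing.
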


\begin{notn}
\label{notn:moment map}
We stress the notation used above: $\tilde{\mu}^\tau$ is the 
action of $G$ on $\tilde{M}^\tau$ as a group of K\"{a}hler automorphisms;
$\mu^\tau$ is the restricted action of $G$ 
on $X^\tau$ by contact CR automorphisms.
Furthermore, in the following we shall generally simplify notation
and write $\Phi$ for $\Phi_{\Tilde{M}^\tau}$.
\end{notn}

Since $\rho$ is $G$-invariant, 
$[\xi_{\tilde{M}^{\tau_0}},\upsilon_{\sqrt{\rho}}]=0$, hence
$\Phi$ is constant along the geodesic flow. 
Recalling (\ref{eqn:reeb vct field}) and (\ref{eqn:defn di Ztau}),
we conclude the following.

\begin{cor}
\label{cor:reeb tangent Z}
$\mathcal{R}^\tau$ is tangent to $Z^\tau$.

\end{cor}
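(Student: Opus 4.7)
The plan is to establish tangency of $\upsilon_{\sqrt{\rho}}$ to $Z$ on the complement of $M$, and then observe that the corollary follows by restriction to $X^\tau$ together with the definition of $\mathcal{R}^\tau$ in (\ref{eqn:reeb vct field}).

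First, I would exploit the $G$-invariance of $\rho$. Since $\mu$ acts by isometries, $\rho$ is preserved by $\tilde{\mu}^\tau$, and hence so is $\sqrt{\rho}$ on $\tilde{M}\setminus M$ (where it is smooth). This means $\xi_{\tilde{M}^{\tau_0}}(\sqrt{\rho})=0$ for every $\xi\in\mathfrak{g}$.

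Next, I would translate this into a statement about $\upsilon_{\sqrt{\rho}}$. By Corollary \ref{cor: the moment map on tubes}, the function $\varphi^\xi=\langle\Phi,\xi\rangle$ is the Hamiltonian of $\xi_{\tilde{M}^{\tau_0}}$ with respect to $\Omega$. Therefore, writing $\{\cdot,\cdot\}$ for the Poisson bracket,
\[
\upsilon_{\sqrt{\rho}}\bigl(\varphi^\xi\bigr)
=\bigl\{\sqrt{\rho},\varphi^\xi\bigr\}
=-\bigl\{\varphi^\xi,\sqrt{\rho}\bigr\}
=-\xi_{\tilde{M}^{\tau_0}}\bigl(\sqrt{\rho}\bigr)=0
\]
on $\tilde{M}\setminus M$, for every $\xi\in\mathfrak{g}$. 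This is equivalent to the commutation relation $[\xi_{\tilde{M}^{\tau_0}},\upsilon_{\sqrt{\rho}}]=0$ noted in the paragraph preceding the statement: the flow of $\upsilon_{\sqrt{\rho}}$ leaves $\Phi$ invariant, so it preserves the level set $Z=\Phi^{-1}(0)$. Under Assumption \ref{ass:ba}, $Z\setminus M$ is a submanifold of $\tilde{M}\setminus M$, so preservation of $Z\setminus M$ under the flow of $\upsilon_{\sqrt{\rho}}$ is equivalent to $\upsilon_{\sqrt{\rho}}$ being tangent to $Z\setminus M$.

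Finally, $\upsilon_{\sqrt{\rho}}$ is tangent to $X^\tau$ (indeed $\upsilon_{\sqrt{\rho}}(\sqrt{\rho})=0$, so its flow preserves the level set $\{\sqrt{\rho}=\tau\}$), so its restriction $\upsilon_{\sqrt{\rho}}^\tau$ takes values in $TX^\tau$. Since it is tangent both to $Z\setminus M$ and to $X^\tau$, it is tangent to $Z^\tau=Z\cap X^\tau$. By (\ref{eqn:reeb vct field}), $\mathcal{R}^\tau=-\tfrac{1}{\tau}\,\upsilon_{\sqrt{\rho}}^\tau$ is then tangent to $Z^\tau$ as well. There is no substantive obstacle here; the only minor point to verify is smoothness of $Z$ where the argument is applied, which is exactly what Assumption \ref{ass:ba} provides.
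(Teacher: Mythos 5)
Your argument is correct and follows essentially the same route as the paper: $G$-invariance of $\rho$ forces $\Phi$ (equivalently each $\varphi^\xi$) to be constant along the flow of $\upsilon_{\sqrt{\rho}}$, which, combined with $\upsilon_{\sqrt{\rho}}(\sqrt{\rho})=0$ and the definition (\ref{eqn:reeb vct field}), gives tangency to $Z^\tau=Z\cap X^\tau$. Your Poisson-bracket computation is just an equivalent rephrasing of the commutation relation $[\xi_{\tilde{M}^{\tau_0}},\upsilon_{\sqrt{\rho}}]=0$ that the paper uses, so no further comment is needed.
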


\subsection{$\Pi^\tau$ in NHLC's on $X^\tau$}
\label{sctn:szego}


As mentioned in \S \ref{sctn:intro}, 
our computations will be carried out in so-called \textit{normal Heisenberg
local coordinates} on $X^\tau$
(recall the discussion surrounding (\ref{eqn:Reeb theta v H}) and 
(\ref{eqn:rescaled coord 12})); these are induced 
(by projection and restriction)
by suitable holomorphic local coordinates on $\tilde{M}$, in which
the defining equation of $X^\tau$ has a canonical approximate form,
and which are also called NHLC's (on $\tilde{M}$). 

NHLC's were defined in \S 3.3 of \cite{p24}, and 
are a slight specialization of the normal coordinates introduced in
\cite{fs1} and \cite{fs2}, and first put to use
(under the name of Heisenberg local coordinates) in the present setting in
\cite{cr1} and \cite{cr2}. In the line bundle context, Heisenberg local
coordinates have been thoroughly 
used in the study of scaling asymptotics since their appearance in \cite{sz}.

In NHLC's on $\tilde{M}$ centered at $x\in X^\tau$, $\alpha$, $\Omega$ and $\hat{\kappa}$ (and hence also $\omega =\frac{1}{2}\Omega$ and $\tilde{\kappa} = \frac{1}{2}\hat{\kappa}$) have a simple local approximate expression, for which we refer to Proposition 34
of \cite{p24}. Another key point in our discussion is that the phase and the
leading order term of the symbol of the Szeg\H{o} kernel $\Pi^\tau$
can be computed fairly explicitly in NHLC's on $X^\tau$.

As proved in \cite{bs}, $\Pi^\tau$ in
(\ref{eqn:szego proj and ker}) 
is a Fourier integral operator with complex phase.
Its wave front is the anti-diagonal $(\Sigma^{\tau})^{\sharp}$ of
$\Sigma^\tau$ in (\ref{eqn:Sigmataucone}):
\begin{equation*}
    (\Sigma^{\tau})^{\sharp} := \left\{\left(x, r\alpha^\tau_x, x, -r\alpha^\tau_x\right)\ \big|\ x\in X^{\tau}, r>0 \right\}\subseteq T^{\lor}X^{\tau}\setminus(0) \times T^{\lor}X^{\tau}\setminus(0).
\end{equation*}
Up to a smoothing kernel, the distributional kernel $\Pi^\tau(\cdot,\cdot)$ 
has the form: 
\begin{equation}
\label{eqn:szego as FIO}
    \Pi^{\tau}(x,y) \sim \int_{0}^{+\infty} e^{\imath u\psi^{\tau}(x,y)}s^{\tau}(x,y,u) \mathrm{d}u, 
\end{equation}
where
\begin{enumerate}
    \item $s^{\tau}$ is a classical symbol of the form: 
    \begin{equation*}
        s^{\tau}(x,y,u)\sim \sum_{j\geq 0} u^{d-1-j} s_j^{\tau}(x,y);
    \end{equation*}
    \item $\psi^{\tau}$ is a phase of positive type and is determined by:
    \begin{equation*}
        \psi^{\tau} := -\imath \tilde{\phi}^{\tau}\big|_{X^{\tau}\times X^{\tau}} 
    \end{equation*}
    where $\tilde{\phi}$ is the holomorphic extension to $\tilde{M}\times \overline{\tilde{M}}$  of  the defining function $\phi^{\tau} = \rho -\tau^2$  of $X^\tau$ in $\tilde{M}$
(see the discussions in \cite{cr1}, \cite{cr2}, and
 \S3.3.2 of \cite{p24} - and of course \cite{bs}). 
\end{enumerate}

For the following, see Proposition 48 and Theorem 51 of \cite{p24}.

\begin{thm}\label{thm: phase expression in HLC}
    Suppose $x\in X^{\tau}$ and fix a NHLC's on $X^\tau$
    centered at $x$ 
    and defined on an open neighourhood $U^\tau\subseteq X^\tau$ 
    of $x$. Then, if $x',x''\in U^{\tau}$ are of the form $x' = x + (\theta, z')$, $x'' = x+(\eta, z'')$, we have: 
    \begin{equation*}
        \imath \psi^{\tau}(x',x'') = \imath (\theta-\eta) -\frac{1}{4\tau^{2}}(\theta-\eta)^2 + \psi^{\omega_x}_2(z',z'') + R_3(\theta, z',\overline{z'},\eta, z'',\overline{z''}).
    \end{equation*}
    Moreover, in the same chart, the principal term of the symbol satisfies 
   $$s^{\tau}_0(x,x) = \frac{\tau}{(2\pi)^{d}}.$$
\end{thm}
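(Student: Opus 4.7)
The plan is to derive both claims from the Boutet de Monvel--Sj\"{o}strand parametrix \eqref{eqn:szego as FIO}, in which the phase $\psi^\tau$ and the leading symbol $s^\tau_0$ are determined by the complex geometry of the defining function $\phi^\tau = \rho - \tau^2$ of $X^\tau$ in $\tilde{M}$. Concretely, I would first fix NHLC on $\tilde{M}$ at $x$ that induce the prescribed NHLC on $X^\tau$, introducing a real coordinate transverse to $X^\tau$. Using Proposition 34 of \cite{p24} (i.e. the NHLC normal form for $\alpha$, $\Omega$ and $\hat\kappa$ at $x$), together with the Monge--Amp\`ere equation for $\sqrt\rho$ and strict plurisubharmonicity of $\rho$, one obtains a canonical Taylor expansion of $\rho$ through order two: the linear part in the transverse direction has an explicit coefficient fixed by the equation $\rho = \tau^2$ on $X^\tau$, while the quadratic horizontal part is dictated by $\Omega = \imath\partial\bar\partial\rho$.

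Second, since $\phi^\tau$ is real-analytic, it admits a unique holomorphic polarization $\tilde\phi(z,\bar w)$ on $\tilde M \times \overline{\tilde M}$, and by the definition recalled after \eqref{eqn:szego as FIO} one has $\psi^\tau = -\imath\,\tilde\phi|_{X^\tau \times X^\tau}$. Setting $x' = x + (\theta, z')$, $x'' = x + (\eta, z'')$ and expanding $\tilde\phi$ through order two in the polarized variables, the transverse-to-$X^\tau$ contributions are killed by restriction while three terms survive: the pure Reeb contribution $\imath(\theta - \eta)$ coming from the polarization of the $\theta$-linear part of $\phi^\tau$; the Hermitian quadratic $\psi^{\omega_x}_2(z',z'')$ produced by the polarization of the horizontal K\"ahler potential, with the coefficient $\omega_x = \tfrac{1}{2}\Omega_x$ singled out in Remark \ref{rem:warning norm}; and the real quadratic $-\frac{1}{4\tau^2}(\theta-\eta)^2$, whose coefficient tracks the second Reeb-derivative of $\rho$ which the Monge--Amp\`ere normalization fixes in terms of $\tau$. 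All remaining contributions vanish to third order and are collected into $R_3$.

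Finally, the symbol normalization $s^\tau_0(x,x) = \tau/(2\pi)^d$ is extracted from the reproducing identity $\Pi^\tau\circ\Pi^\tau = \Pi^\tau$ by a stationary-phase computation on the diagonal, in which the phase obtained in the previous step has a non-degenerate quadratic part whose Hessian determinant produces the factor $(2\pi)^d$; the factor $\tau$ tracks the normalization of $\mathcal{R}^\tau = -\upsilon_{\sqrt\rho}^\tau/\tau$ in \eqref{eqn:reeb vct field} and the ensuing form of $\mathrm{vol}^R_{X^\tau}$ at $x$. The principal difficulty is the careful bookkeeping in the polarization step: one must verify that those third-order terms of $\rho$ in NHLC which could \emph{a priori} contribute to order two after polarization and restriction to $X^\tau \times X^\tau$ are eliminated by the NHLC normal form, so that the coefficients $\imath$, $-1/(4\tau^2)$ and $\omega_x$ indeed come out universal, independent of $x$.
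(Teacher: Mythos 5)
The paper does not actually prove this statement: immediately before the theorem it refers to Proposition 48 and Theorem 51 of \cite{p24}, so the result is imported, and your proposal should be judged as a reconstruction of that argument. Your route is indeed the intended one: $\psi^\tau=-\imath\,\tilde\phi^\tau|_{X^\tau\times X^\tau}$ with $\tilde\phi^\tau$ the polarization of $\rho-\tau^2$, expanded in NHLC's on $\tilde M$ (whose whole purpose is to put the defining function, hence $\alpha$, $\Omega$, $\hat\kappa$, in canonical form to third order, cf.\ Proposition 33--34 of \cite{p24}), which produces the $\imath(\theta-\eta)$ term, the $-\tfrac{1}{4\tau^2}(\theta-\eta)^2$ term (visible already in the $\tfrac{1}{2\tau^2}(\bar z_0\,\mathrm{d}z_0-z_0\,\mathrm{d}\bar z_0)$ contribution to $\alpha$ quoted in the proof of Lemma \ref{lem:2nd order horizontal}), and $\psi_2^{\omega_x}$ with $\omega_x=\tfrac12\Omega_x$ as in Remark \ref{rem:warning norm}; and the diagonal value of $s^\tau_0$ is fixed by the parametrix normalization together with the specific choice of $\mathrm{vol}^R_{X^\tau}$, which is where the factor $\tau$ enters.

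That said, as written your text is an outline rather than a proof: the two decisive computations are announced but not carried out. First, you correctly identify that cubic terms of $\rho$ in NHLC could \emph{a priori} contribute at order two after polarization and restriction, but you only assert that the NHLC normal form removes them; this verification is precisely the content of the cited Proposition 48 of \cite{p24} and cannot be waved through, since it is what makes the coefficients $\imath$, $-1/(4\tau^2)$, $\omega_x$ universal. Second, the stationary-phase evaluation of $\Pi^\tau\circ\Pi^\tau=\Pi^\tau$ on the diagonal does determine $s^\tau_0(x,x)$ once the phase and the volume form are fixed (one gets $s_0=s_0^2\cdot C(x)$ with $C(x)$ an explicit Gaussian factor), but the specific value $\tau/(2\pi)^d$ depends on the Riemannian volume form $\mathrm{vol}^R_{X^\tau}$ and on the Hessian of the phase you computed in the first step; you should actually perform the $(u,v,y)$-integration to see that the constant comes out as claimed, rather than gesturing at the source of the factors $\tau$ and $(2\pi)^d$. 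With those two computations supplied, your argument would be a legitimate self-contained proof along the same lines as the cited one.
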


\subsection{The submanifold $Z^{\tau}$}
\label{sctn:Z tau}

Our analysis rests on the assumption that $G$ acts locally freely on
$Z^\tau$ in (\ref{eqn:defn di Ztau}). Thus
Assumption \ref{ass:ba} is certainly
satisfied if $\mu$ itself is locally free and $Z^\tau\neq \emptyset$, for instance if 
$M$ is the total space of a principal $G$-bundle
over a non-trivial base, $p:M\rightarrow N$ with $\dim(N)>0$.
In this case, under the map $E$ in \S \ref{sctn: grauert and coshpere},
$Z$ is the image in $\tilde{M}$ of the orthocomplement of the vertical tangent bundle
$\mathrm{Ver}(p)\subseteq TM$
(that is, of the horizontal tangent bundle $\mathrm{Hor}(p)\subseteq TM$) of $M$ as a 
principal $G$-bundle (more precisely, 
$Z=E\big(\mathrm{Hor}(p)\big)\cap T'M$).
Then the quotient $Z/G$ can be identified 
with a tubular neighbourhood of $N$ in $TN$,
and $Z^\tau/G$ with the bundle of tangent spheres of
radius $\tau$ over $N$.

The case of a principal $G$-bundle is clearly not the only circumstance where the
hypothesis is statisfied; 
for example, if $G$ acts in a Hamiltonian manner on
a compact real-analytic symplectic manifold 
$(R,\omega_R)$, and $0\in \mathfrak{g}^\vee$
is a regular value of the moment map
$\Phi_R:R\rightarrow \mathfrak{g}^\vee$, then 
$G$ acts locally freely on $M:=\Phi_R^{-1}(0)$,
but in most cases the action is not free.

Furthermore, it can happen that Assumption \ref{ass:ba}
is satisfied even if $\mu$ is not locally free.
Here we give two more sufficient conditions,
whose proof is left to the reader.

\begin{lem}
With the previous assumptions and notation, 
$\mu^\tau$ is locally free on $Z^\tau$ under the following
two sets of circumstances. 

\begin{enumerate}
\item $\mu$ is locally free away from a finite set of 
fixed  points.
\item $G$ is Abelian, and if $T_1,\ldots,T_r\leqslant G$
are the subtori that appear as stabilizers of poits in $M$
then
the connected components of the fixed locus of each $T_i$ in $M$
are single $G$-orbits of $\mu$.
\end{enumerate}

\end{lem}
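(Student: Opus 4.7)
The plan is to translate the problem to the cotangent bundle model. By Proposition \ref{prop: basic facts on G action}, the imaginary-time exponential map $E^\tau$ gives a $G$-equivariant K\"ahler isomorphism $(\tilde{M}^\tau,\Omega,J)\cong(T^\tau M,\Omega_{\mathrm{can}},J_{\mathrm{ad}})$ intertwining $\tilde{\mu}^\tau$ with the tangent lift of $\mu$, so the stabilizer of $\tilde{m}=E^\tau(m,v)\in X^\tau$ coincides with that of $(m,v)$ under the lifted action. Using $\kappa$ to identify $TM\cong T^\vee M$, the moment map of the lift reads $\langle\Phi(m,v),\xi\rangle=\kappa_m(v,\xi_M(m))$, so $Z^\tau$ corresponds to the set of $(m,v)\in TM$ satisfying $\|v\|_\kappa=\tau$ and $v\perp\mathfrak{g}_M(m)$, and
\[
G_{(m,v)}=\{g\in G_m\,:\,\mathrm{d}_m\mu_g(v)=v\}.
\]
The task is thus to show that this stabilizer is finite in each of the two cases.

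In case (1), I would split on whether $m$ lies in the exceptional finite set $F$ of global fixed points. If $m\notin F$ then $G_m$ is finite by hypothesis, and so is $G_{(m,v)}\subseteq G_m$. If $m\in F$, then since $F$ is finite and $v\neq 0$, one can choose $s_0>0$ small enough that $\exp_m(s_0 v)\notin F$; by $G$-equivariance of the Riemannian exponential (coming from the isometric action), every $g\in G_{(m,v)}$ satisfies $g\cdot \exp_m(s_0 v)=\exp_m(s_0\,\mathrm{d}_m\mu_g(v))=\exp_m(s_0 v)$, so $G_{(m,v)}\subseteq G_{\exp_m(s_0 v)}$, which is finite by hypothesis.

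In case (2), I would argue by contradiction. Set $T:=G_{(m,v)}^0$; since $G$ is compact Abelian, $T$ is a subtorus of $G$, and we assume $\dim T>0$. Let $N$ denote the connected component of $\mathrm{Fix}(T)\subseteq M$ containing $m$: it is a smooth submanifold, and since $T$ fixes both $m$ and $v$, equivariance of $\exp_m$ ensures that the geodesic $\gamma(s)=\exp_m(sv)$ lies entirely in $N$. Let $T^*\geq T$ denote the identity component of the generic isotropy along $N$ (well defined because compact Lie group actions on compact manifolds have only finitely many isotropy types); by upper semi-continuity $T^*$ stabilizes $N$ pointwise, hence $T_m N\subseteq (T_m M)^{T^*}$. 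In particular $v=\gamma'(0)\in T_m N$ is fixed by $T^*$, forcing $T^*\leq G_{(m,v)}^0=T$; combined with $T\leq T^*$ we conclude $T=T^*$. This identifies $T$ with one of the subtori $T_i$ appearing as stabilizers of points of $M$. The hypothesis then gives that $N$, being a connected component of $\mathrm{Fix}(T_i)$, is a single $G$-orbit, necessarily $G\cdot m$; differentiating the inclusion $\gamma\subseteq G\cdot m$ at $s=0$ yields $v\in T_m(G\cdot m)=\mathfrak{g}_M(m)$, contradicting $v\perp\mathfrak{g}_M(m)$ together with $\|v\|=\tau>0$.

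The main obstacle is the identification in case (2) of the abstract identity component $T=G_{(m,v)}^0$ with one of the listed $T_i$'s: a priori the generic isotropy $T^*$ along the fixed submanifold $N$ could be strictly larger than $T$, and the delicate point is that the geodesic $\gamma$ lies in $N$, so that $v\in T_m N\subseteq (T_m M)^{T^*}$ forces $T^*$ to fix $v$ as well, collapsing the ambiguity and bringing $T$ onto the hypothesized list.
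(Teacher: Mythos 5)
Your proof is correct. The paper leaves this lemma to the reader, so there is no written argument to compare with; your reduction via $E^\tau$ to the (co)tangent lift — with $Z^\tau$ corresponding to $\{(m,v):\|v\|_\kappa=\tau,\ v\perp_\kappa\mathfrak{g}_M(m)\}$ and $G_{(m,v)}=\{g\in G_m:\mathrm{d}_m\mu_g(v)=v\}$ — is exactly the intended setting, and case (1) (which in fact gives local freeness on all of $X^\tau$) is clean.

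Two points in case (2) deserve to be made explicit. First, the assertion that $T^*$ fixes $N$ pointwise is most cleanly justified by the principal orbit type theorem applied to the $G^0$-action on the connected, $G^0$-invariant submanifold $N$: the principal isotropy $H_N$ (a genuine subgroup, since $G$ is Abelian) is contained in every isotropy group along $N$, the set where the isotropy equals $H_N$ is open and dense in $N$, and $\mathrm{Fix}\left(H_N^0\right)\cap N$ is closed in $N$, hence equals $N$; your appeal to "upper semi-continuity and finitely many isotropy types" is the right idea, but this is the precise statement that makes it run. Second, your identification of $T=T^*$ with one of the listed $T_i$ tacitly reads the hypothesis as referring to the \emph{identity components} of stabilizers: what you actually produce is $T=(G_x)^0$ for a generic $x\in N$, and $G_x$ itself may be disconnected, hence not literally a torus on the list. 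This is the natural (and, for the lemma to be usable, the intended) reading of the informally stated hypothesis, but it should be said explicitly. With these two clarifications the argument is complete.
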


We record a statement in the opposite direction,
whose proof is also left to the reader.

\begin{lem}
    Suppose that there exists a submanifold $N\subseteq M$ of positive dimension such that 
    $\mu_g(n)=n$ for every $g\in G$ and $n\in N$. 
    Then $\mu^\tau$ is not locally free 
    on $Z^\tau$.
\end{lem}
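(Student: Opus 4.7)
The plan is to exhibit an explicit point $x\in Z^\tau$ whose $G$-stabilizer is all of $G$, and hence positive-dimensional; this immediately contradicts local freeness of $\mu^\tau$ on $Z^\tau$. The candidate point is produced by applying the imaginary-time exponential map $E^\tau$ to a nonzero tangent vector to $N$ of length $\tau$.

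First, since $N\subseteq M$ has positive dimension, for any $n\in N$ we can pick $v\in T_nN$ with $\|v\|_\kappa=\tau$; under the identification $T^\tau M\cong \tilde{M}^\tau$ provided by $E^\tau$ (see Proposition \ref{prop: basic facts on G action}), set $x:=E^\tau(v)\in X^\tau$. Because $E^\tau$ intertwines $\tilde{\mu}^\tau$ with the tangent lift of $\mu$, and because $\mu_g|_N=\mathrm{id}_N$ forces $\mathrm{d}_n\mu_g|_{T_nN}=\mathrm{id}_{T_nN}$ for every $g\in G$, the tangent-lifted action fixes $(n,v)\in T^\tau M$; consequently $\tilde{\mu}^\tau_g(x)=x$ for every $g\in G$, so $G_x=G$.

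Next I verify that $x\in Z^\tau$, i.e. that $\Phi(x)=0$. Since $x$ is fixed by the whole group, every fundamental vector field vanishes at $x$: $\xi_{\tilde{M}^\tau}(x)=0$ for all $\xi\in\mathfrak{g}$. By Corollary \ref{cor: the moment map on tubes}, the moment map components are given by $\varphi^\xi=-\alpha(\xi_{\tilde{M}^\tau})$, so $\varphi^\xi(x)=-\alpha_x(\xi_{\tilde{M}^\tau}(x))=0$ for every $\xi\in\mathfrak{g}$. Hence $\Phi(x)=0$, and combined with $x\in X^\tau$ this gives $x\in Z^\tau$.

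Finally, since $G$ has positive dimension (otherwise local freeness on $Z^\tau$ would be automatic and the statement vacuous), the stabilizer $G_x=G$ is not discrete, so $\mu^\tau$ fails to be locally free at $x\in Z^\tau$. There is really no serious obstacle here: the only conceptual content lies in the combination of two facts already recorded in the paper, namely that $E^\tau$ is $G$-equivariant and that the moment map is expressed by contraction of $\alpha$ with the infinitesimal action; both force points fixed by $G$ in $X^\tau$ to lie in the zero level of $\Phi$. The minor subtlety to flag in the writeup is the implicit assumption $\dim G>0$, since otherwise the conclusion is meaningless.
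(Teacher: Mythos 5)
Your proof is correct, and since the paper explicitly leaves this lemma's proof to the reader there is no official argument to compare against; yours is the natural (and almost certainly intended) one: a vector $v\in T_nN$ with $\|v\|_\kappa=\tau$ is fixed by the (co)tangent lift because $\mu_g|_N=\mathrm{id}_N$ forces $\mathrm{d}_n\mu_g|_{T_nN}=\mathrm{id}$, so $x=E^\tau(v)\in X^\tau$ is $G$-fixed by the equivariance of $E^\tau$ (Proposition \ref{prop: basic facts on G action}), and then $\varphi^\xi(x)=-\alpha_x\bigl(\xi_{\tilde{M}^\tau}(x)\bigr)=0$ for all $\xi$ by Corollary \ref{cor: the moment map on tubes} puts $x\in Z^\tau$ with $G_x=G$. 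You were right to flag the implicit hypothesis $d_G>0$ (for finite $G$ every action is locally free and the statement is vacuous in the paper's context); the only other pedantic point, not worth more than a remark, is that $E^\tau$ is stated as an isomorphism of the open disc bundle onto the open tube, so one should note that $E$ also carries the radius-$\tau$ sphere bundle onto $X^\tau$ (it does, since $E$ intertwines the norm with $\sqrt{\rho}$ on the larger neighbourhood $T'M$), which makes the evaluation at $\|v\|=\tau$ legitimate.
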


If $G$ acts locally freely on $Z\setminus M$, the
latter is a submanifold of $\tilde{M}$ of codimension $d_G$
(Remark \ref{rem:rem on assmpt}).
Given that $Z$ is obviously transverse to $X^\tau$, 
we conclude the following.

\begin{lem}
Under Assumption \ref{ass:ba},
$ Z^{\tau}$ is a submanifold of $X^{\tau}$ of dimension $2d-1-d_G$.
\end{lem}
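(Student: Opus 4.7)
The plan is to reduce the lemma to a transversality statement: since $Z\setminus M$ is already known to be a smooth submanifold of $\tilde{M}^{\tau_0}$ of codimension $d_G$ (by Assumption \ref{ass:ba} and Remark \ref{rem:rem on assmpt}(1)), it suffices to verify that $Z$ meets $X^\tau$ transversally at every point $x\in Z^\tau$. If so, $Z^\tau=Z\cap X^\tau$ is a submanifold of $\tilde{M}$ of codimension $d_G+1$, hence of $X^\tau$ of codimension $d_G$, giving the stated dimension $2d-1-d_G$.

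Fix $x\in Z^\tau$. Since $X^\tau$ is a hypersurface in $\tilde{M}$, transversality at $x$ is equivalent to showing that $T_xZ\not\subset T_xX^\tau$, or dually that $\nabla \sqrt{\rho}|_x\notin (T_xZ)^\perp$ with respect to $\hat{\kappa}$. The K\"ahler identity $\mathrm{d}\varphi^\xi=\Omega(\xi_{\tilde{M}},\cdot)=\hat{\kappa}(J\xi_{\tilde{M}},\cdot)$ shows that $\nabla \varphi^\xi=J\xi_{\tilde{M}}$, so $(T_xZ)^\perp=J\,\mathfrak{g}_{\tilde{M}}(x)$. Since $\nabla\sqrt{\rho}$ and $\upsilon_{\sqrt{\rho}}$ are related by $J$ (up to sign), the transversality condition reduces to
\[
\upsilon_{\sqrt{\rho}}(x)\notin \mathfrak{g}_{\tilde{M}}(x).
\]

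The key step is to verify this non-containment. Using Corollary \ref{cor: the moment map on tubes}(2) together with $\varphi^\xi(x)=0$ for $x\in Z^\tau$, one has
\[
\xi_{\tilde{M}}(x)=\xi_{X^\tau}(x)=\xi_{X^\tau}^\sharp(x)-\varphi^\xi(x)\,\mathcal{R}^\tau(x)=\xi_{X^\tau}^\sharp(x)\in \mathcal{H}_x^\tau,
\]
so that $\mathfrak{g}_{\tilde{M}}(x)\subseteq \mathcal{H}_x^\tau$. On the other hand, $\upsilon_{\sqrt{\rho}}(x)=-\tau\,\mathcal{R}^\tau(x)$ by (\ref{eqn:reeb vct field}), and $\alpha^\tau_x(\mathcal{R}^\tau(x))=1\neq 0$ forces $\mathcal{R}^\tau(x)\notin \ker\alpha^\tau_x=\mathcal{H}_x^\tau$. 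Hence $\upsilon_{\sqrt{\rho}}(x)\notin \mathcal{H}_x^\tau$, and \emph{a fortiori} $\upsilon_{\sqrt{\rho}}(x)\notin \mathfrak{g}_{\tilde{M}}(x)$, which is the transversality we wanted.

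There is no serious obstacle: the argument is a short algebraic verification once the normal spaces are identified via the moment map, and the essential geometric input is the complementarity of the Reeb direction and the horizontal CR distribution $\mathcal{H}^\tau$, combined with the fact that at points of $Z^\tau$ the generators $\xi_{\tilde{M}}$ have vanishing Reeb component by Corollary \ref{cor: the moment map on tubes}(2). The dimension count then yields $\dim Z^\tau=(2d-1)-d_G$, as claimed.
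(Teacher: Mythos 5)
Your proof is correct and follows the same route as the paper: reduce to the fact that, by Assumption \ref{ass:ba}, $Z\setminus M=\Phi^{-1}(0)\setminus M$ is a codimension-$d_G$ submanifold of $\tilde{M}$, and then intersect it transversally with the hypersurface $X^\tau$, which gives $\dim Z^\tau=2d-1-d_G$. The only difference is that the paper dismisses the transversality as obvious, whereas you verify it explicitly — and soundly — by observing that at $x\in Z^\tau$ the vanishing of $\varphi^\xi$ forces $\mathfrak{g}_{\tilde{M}}(x)\subseteq\mathcal{H}^\tau_x$ (Corollary \ref{cor: the moment map on tubes}), while $\upsilon_{\sqrt{\rho}}(x)=-\tau\,\mathcal{R}^\tau(x)$ is transverse to $\mathcal{H}^\tau_x$, so $\nabla\sqrt{\rho}\,(x)\notin (T_xZ)^\perp=J\,\mathfrak{g}_{\tilde{M}}(x)$.
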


Since $G$ acts locally freely on $ Z^{\tau}$,
$d_G\leq\mathrm{dim}(Z^{\tau}) $.
\begin{cor}
\label{cor:d>d_G}
    $d_G\leq d-1$.
\end{cor}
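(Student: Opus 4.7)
The proof is essentially a two-line arithmetic consequence of the preceding lemma and the remark immediately before the corollary, so the plan is short. The strategy is simply to combine the dimension count for $Z^{\tau}$ with the local freeness of the $G$-action on $Z^{\tau}$.

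First I would invoke the lemma just proved, which gives $\dim(Z^{\tau}) = 2d-1-d_G$. This rests on the fact that under Assumption \ref{ass:ba}, $Z\setminus M$ is a submanifold of $\tilde{M}$ of (real) codimension $d_G$ (because $0$ is a regular value of $\Phi|_{\tilde{M}^{\tau_0}\setminus M}$), together with the transversality of $Z$ to $X^{\tau}$.

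Next I would use the hypothesis of local freeness: since $\mu^{\tau}$ acts locally freely on $Z^{\tau}$, the orbits through points of $Z^{\tau}$ are smooth immersed submanifolds of dimension exactly $d_G$, all contained in $Z^{\tau}$. Thus the isotropy-free dimension count $d_G \leq \dim(Z^{\tau})$ holds, which is precisely the inequality observed in the sentence preceding the corollary.

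Combining the two inequalities gives $d_G \leq 2d-1-d_G$, i.e.\ $2\,d_G \leq 2\,d-1$, so $d_G \leq d - \tfrac{1}{2}$. Since $d_G$ is a non-negative integer and $d$ is the dimension of $M$ (hence also an integer), this forces $d_G \leq d-1$, which is the claim. There is no real obstacle here; the only thing to be careful about is that the argument presupposes $Z^{\tau}\neq\emptyset$, but this is built into Assumption \ref{ass:ba}(1), and the integrality step at the end (passing from $d_G\le d-\tfrac12$ to $d_G\le d-1$) relies only on $d_G,d\in\mathbb{Z}$.
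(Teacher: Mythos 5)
Your proof is correct and follows exactly the route the paper takes: the lemma $\dim(Z^\tau)=2d-1-d_G$ combined with the observation $d_G\le\dim(Z^\tau)$ from local freeness gives $2\,d_G\le 2d-1$, and integrality then yields $d_G\le d-1$. Your remark that the argument needs $Z^\tau\neq\emptyset$, guaranteed by Assumption~\ref{ass:ba}(1), is a correct and appropriate point of care.
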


\subsection{Intersections of $G$-orbits and $\mathbb{R}$-orbits}

The orbits of the the geodesic flow
$t\mapsto\Gamma^\tau_t$ on $X^\tau$ are \lq vertical\rq\, 
(i.e., tangent to
$\mathcal{T}^\tau$ in (\ref{eqn:dicrect sum vert hor})); 
in view of (\ref{cor: the moment map on tubes}),
along $Z^\tau$ the orbits of the $G$-action $g\mapsto
\mu^\tau_g$ are \lq horizontal\rq\, (i.e., tangent to $\mathcal{H}^\tau$).
Since under Assumption \ref{ass:ba}
both actions are locally free on $Z^\tau$ and commute, 
the product action of 
$G\times \mathbb{R}$ is also locally free on $Z^\tau$.

Points $x_1,\,x_2\in Z^\tau$ belong to the same $G\times \mathbb{R}$
orbit, that is, $x_1\in x_2^{G\times \mathbb{R}}$, if and only if the 
$\mathbb{R}$-orbit of either one intersects the $G$-orbit of the other,
i.e.
$x_1^G\cap x_2^{\mathbb{R}}\neq \emptyset$, and then the intersection might be infinite. However, given a
compact subset $K\subset\mathbb{R}$ by the above there are
at most finitely many $t\in K$ for which $\Gamma^\tau_t(x_2)\in 
x_1^G$. A uniform statement can be given in terms 
of the size of $K$, but first we need to lay down some useful 
consequences of the local freeness of the action of $G\times \mathbb{R}$
on the compact manifold $Z^\tau$; the proof is left to the reader.

\begin{lem}
\label{lem:technical facts}
Under Assumption \ref{ass:ba}, the following holds.
\begin{enumerate}
\item There exists $R,\,r>0$ such that if
$(g,t)$ belongs to an $R$-neighbourhood of $(e_G,0)$ then
$$
\mathrm{dist}_{X^\tau}\left(y,\mu^\tau_g\circ \Gamma^\tau_t(y)\right)
\ge r\,\mathrm{dist}_{G\times \mathbb{R}}\big((g,t),(e_G,0)\big).
$$
\item There exist $C>0$ such that
$$
\mathrm{dist}_{X^\tau}\left(x,\Gamma^\tau_{t}(x)   \right)
< C\,|t|,\quad \forall\,(x,t)\in Z^\tau\times \mathbb{R}.
$$
\item There exists $D>0$ such that
for any $y\in Z^\tau$ and $\delta>0$ sufficiently small
$$
\mathrm{dist}_{G}(g,G_y)\ge \delta\quad \Rightarrow\quad
\mathrm{dist}_{X^\tau}\left(y,\mu^\tau_g(y)\right)
\ge \frac{\delta}{D}.
$$
\end{enumerate}

\end{lem}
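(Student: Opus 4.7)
My plan is that all three items follow from compactness of $Z^\tau$ combined with the local freeness of the $G\times\mathbb{R}$ action there (by Assumption \ref{ass:ba} and Corollaries \ref{cor: the moment map on tubes} and \ref{cor:reeb tangent Z}), together with quantitative forms of the inverse function theorem and the slice theorem. Part (2) is essentially immediate: the Hamiltonian vector field $\upsilon^\tau_{\sqrt{\rho}}$ is smooth on the compact manifold $X^\tau$, hence its $\hat{\kappa}$-norm is bounded by some $C_0<+\infty$; the integral curve $s\mapsto \Gamma^\tau_s(x)$ then has Riemannian length at most $C_0\,|t|$, which in turn majorizes $\mathrm{dist}_{X^\tau}(x,\Gamma^\tau_t(x))$, so the strict inequality follows with $C:=C_0+1$.

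For (1), I would introduce the orbit map $\Psi_y:G\times\mathbb{R}\to Z^\tau$ defined by $\Psi_y(g,t):=\mu^\tau_g\circ\Gamma^\tau_t(y)$ and observe that its differential at $(e_G,0)$ sends $(\xi,s)\in\mathfrak{g}\times\mathbb{R}$ to $\xi_{X^\tau}(y)+s\,\upsilon^\tau_{\sqrt{\rho}}(y)\in T_yZ^\tau$, which is injective for every $y\in Z^\tau$ by local freeness. The norm of its left inverse is a continuous function of $y$, hence uniformly bounded on the compact set $Z^\tau$; a uniform quantitative inverse function theorem then yields constants $R,r>0$, independent of $y$, such that $\Psi_y$ restricted to the $R$-ball around $(e_G,0)$ is a bi-Lipschitz embedding with distortion at most $1/r$, which is the stated estimate.

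For (3), the substantive regime is that in which $g$ is close to $G_y$ (the conclusion being trivial otherwise after enlarging $D$). Fix $y\in Z^\tau$: since $G_y$ is finite by local freeness, the slice theorem furnishes a $G_y$-invariant slice $S_y\subset T_yX^\tau$ transverse to $\mathfrak{g}_{X^\tau}(y)$, together with a $G$-equivariant diffeomorphism from a neighbourhood of the zero section in $G\times_{G_y}S_y$ onto a $G$-invariant neighbourhood of $y^G$ in $X^\tau$. For $\delta:=\mathrm{dist}_G(g,G_y)$ sufficiently small, pick $h\in G_y$ realizing this distance and write $g=h\exp(\xi)$ with $\xi\in\mathfrak{g}$, $\|\xi\|$ comparable to $\delta$; since $h\in G_y$ one has $\mu^\tau_g(y)=\mu^\tau_{\exp(\xi)}(y)$, and reading this equality off the slice chart yields $\mathrm{dist}_{X^\tau}(y,\mu^\tau_g(y))\ge \|\xi\|/D_y$ for a local constant $D_y>0$. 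Covering $Z^\tau$ by finitely many such tube neighbourhoods then produces a uniform $D$.

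The main obstacle is precisely this last uniformity in (3) across the orbit-type stratification of $Z^\tau$, since the isotropies $G_y$ can vary discontinuously with $y$. This is overcome using the fact that, via the slice theorem, for $y'$ near $y$ the stabilizer $G_{y'}$ is conjugate by an element of $G$ close to $e_G$ to a subgroup of $G_y$; hence local bi-Lipschitz estimates depending only on the single isotropy $G_y$ suffice, and patching via the finite cover argument completes the proof.
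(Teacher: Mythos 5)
Your arguments for parts (1) and (2) are fine and are the natural ones (the paper in fact leaves this proof to the reader): uniform injectivity of the differential of the orbit map $(g,t)\mapsto \mu^\tau_g\circ\Gamma^\tau_t(y)$ over the compact set $Z^\tau$, plus a quantitative inverse-function/Taylor estimate, gives (1), and the bound on $\|\upsilon^\tau_{\sqrt{\rho}}\|$ gives (2).

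Part (3), however, contains a genuine gap, and it sits exactly where you wave it away. You dismiss the regime $\mathrm{dist}_G(g,G_y)$ not small as ``trivial after enlarging $D$'', but the uniform-in-$y$ version of that claim is false whenever the isotropy is not locally constant on $Z^\tau$ (a situation the paper explicitly allows, cf.\ the discussion of loci where $r_x$ jumps). Indeed, suppose $y\in Z^\tau$, $h\in G_y\setminus\{e_G\}$, and $y'\to y$ in $Z^\tau$ with $G_{y'}$ containing no element near $h$ (isotropy drops along the approximating points). Since $G$ acts by isometries and $\mu^\tau_h(y)=y$, one has
\begin{equation*}
\mathrm{dist}_{X^\tau}\left(y',\mu^\tau_h(y')\right)\le
\mathrm{dist}_{X^\tau}(y',y)+\mathrm{dist}_{X^\tau}\left(\mu^\tau_h(y),\mu^\tau_h(y')\right)
=2\,\mathrm{dist}_{X^\tau}(y',y)\longrightarrow 0,
\end{equation*}
while $\mathrm{dist}_G\big(h,G_{y'}\big)$ stays bounded below. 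So for a fixed $\delta$ and fixed $D$ the implication in (3) fails at $(y',h)$ once $y'$ is close enough to $y$: the set $\{(y,g):\mathrm{dist}_G(g,G_y)\ge\rho_0\}$ is not closed, and the infimum of $\mathrm{dist}_{X^\tau}(y,\mu^\tau_g(y))$ over it can be $0$. Your closing paragraph does not repair this: the slice-theorem fact you invoke (for $y'$ near $y$, $G_{y'}$ is conjugate into $G_y$) is precisely the source of the problem, not its cure, because it yields lower bounds in terms of the distance from $g$ to (a conjugate of) the \emph{larger} group $G_y$, which is weaker than the needed bound in terms of $\mathrm{dist}_G(g,G_{y'})$; when $g$ is near an element of $G_y\setminus G_{y'}$ the former distance is tiny and the latter is not. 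Hence the ``finite cover'' patching cannot produce a $y$-independent smallness threshold for $\delta$.

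The statement is salvageable only in the reading in which the smallness threshold for $\delta$ is allowed to depend on $y$ (equivalently, one restricts to $g$ in a fixed small neighbourhood of $G_y$, i.e.\ to $\delta=\mathrm{dist}_G(g,G_y)$ small), with $D$ uniform; and then the proof is simpler than your slice-theorem construction: if $h\in G_y$ realizes $\mathrm{dist}_G(g,G_y)$ and $g=h\exp(\xi)$, then by $G$-invariance of the metric $\mathrm{dist}_{X^\tau}(y,\mu^\tau_g(y))=\mathrm{dist}_{X^\tau}\big(y,\mu^\tau_{\exp(\xi)}(y)\big)\ge r\,\|\xi\|$ by part (1) with $t=0$, which handles the near regime with $D=1/r$; for fixed $y$ the far regime then follows from compactness of $G$, since $g\mapsto\mathrm{dist}_{X^\tau}(y,\mu^\tau_g(y))$ vanishes exactly on $G_y$. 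As written, though, your proof of (3) claims a uniformity that your argument does not deliver and that, in the strong reading, is not true.
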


\begin{lem}
\label{lem:unique t}
Assume that
$\epsilon>0$ is sufficiently small and
$\chi\in \mathrm{C}^\infty_c\big((t_0-\epsilon,t_0+\epsilon)\big)$
for some $t_0\in \mathbb{R}$.
Then for any $x\in Z^\tau$ and $x'\in x^{G\times \chi}$ there exists a unique
$t\in \mathrm{supp}(\chi)$ such that
$x'=\mu^\tau_g\circ \Gamma^\tau_t(x)$
for some $g\in G$.

\end{lem}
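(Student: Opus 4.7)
The plan is to argue for uniqueness by contradiction: suppose there exist two pairs $(g_1,t_1),(g_2,t_2)\in G\times \mathrm{supp}(\chi)$ with $\mu^\tau_{g_1}\circ\Gamma^\tau_{t_1}(x)=\mu^\tau_{g_2}\circ\Gamma^\tau_{t_2}(x)=x'$, and deduce $t_1=t_2$ by exploiting the local freeness of the product $G\times\mathbb{R}$-action on $Z^\tau$ together with Lemma \ref{lem:technical facts}. The two actions commute (by construction of $\Gamma^\tau$ as the flow of a $G$-invariant vector field), so the element $(g_1,t_1)^{-1}(g_2,t_2)=(g_1^{-1}g_2,\,t_2-t_1)$ lies in the $G\times\mathbb{R}$-stabilizer of $x$. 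Setting $h:=g_1^{-1}g_2$ and $s:=t_2-t_1$, this reads $\mu^\tau_h(x)=\Gamma^\tau_{-s}(x)$, with $|s|<2\epsilon$.

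Next I would estimate how close $(h,s)$ must be to $(e_G,0)$. From Lemma \ref{lem:technical facts}(2),
$$
\mathrm{dist}_{X^\tau}\bigl(x,\mu^\tau_h(x)\bigr)=\mathrm{dist}_{X^\tau}\bigl(x,\Gamma^\tau_{-s}(x)\bigr)<2C\epsilon,
$$
and hence by Lemma \ref{lem:technical facts}(3) there exists $k\in G_x$ with $\mathrm{dist}_G(h,k)\le 2CD\epsilon$. Since $k\in G_x$ fixes $x$ and the two actions commute, one checks directly that $\mu^\tau_{k^{-1}h}\circ\Gamma^\tau_s(x)=x$, so $(k^{-1}h,s)$ still stabilizes $x$ and, by construction, lies within distance $O(\epsilon)$ of $(e_G,0)$ in $G\times\mathbb{R}$.

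The final step is to invoke Lemma \ref{lem:technical facts}(1) after shrinking $\epsilon$ so that $(k^{-1}h,s)$ lies in the $R$-neighborhood of $(e_G,0)$ supplied there; the conclusion of that lemma gives
$$
0=\mathrm{dist}_{X^\tau}\bigl(x,\mu^\tau_{k^{-1}h}\circ\Gamma^\tau_s(x)\bigr)\ge r\cdot\mathrm{dist}_{G\times\mathbb{R}}\bigl((k^{-1}h,s),(e_G,0)\bigr),
$$
which forces $s=0$, i.e. $t_1=t_2$ as desired.

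The main (mild) subtlety is that $\epsilon$ must be chosen \emph{uniformly in} $x\in Z^\tau$; this is guaranteed by the uniformity on the compact manifold $Z^\tau$ of the constants $C$, $D$, $R$, $r$ delivered by Lemma \ref{lem:technical facts}, so the threshold $\epsilon_0>0$ depends only on the geometry of the $G$-action and of the homogeneous geodesic flow on $Z^\tau$, and in particular not on the base-point $t_0$.
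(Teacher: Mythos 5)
Your argument is correct and essentially coincides with the paper's proof: both reduce the question to an element $(h,s)$ of the $G\times\mathbb{R}$-stabilizer of $x\in Z^\tau$ and then apply statements 2, 3 and 1 of Lemma \ref{lem:technical facts}, in that order, to force $s=0$. The only difference is packaging: you give the direct quantitative version with an explicit uniform threshold for $\epsilon$, while the paper runs the same chain of estimates as a contradiction along a sequence with $|t_j-t_j'|<1/j$.
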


\begin{proof}[Proof of Lemma \ref{lem:unique t}]
If the Lemma is false, for any $j=1,2,\ldots$ there exist
$x_j,\,x_j'\in Z^\tau$,
$t_j,\,t_j'\in \mathbb{R}$, and $g_j,\,g_j'\in G$ such that
%
such that
$$
0<|t_j-t_j'|<\frac{1}{j}\quad 
\text{and}\quad
x_j'=\mu^\tau_{g_j'}\circ \Gamma^\tau_{t_j'}(x_j)=
\mu^\tau_{g_j}\circ \Gamma^\tau_{t_j}(x_j).
$$
Hence
\begin{equation}
\label{eqn:xjstabilized}
x_j=\mu^\tau_{g_j^{-1}\,g_j'}\circ \Gamma^\tau_{t_j'-t_j}(x_j).
\end{equation}

By statement 2 of Lemma \ref{lem:technical facts},
$$
\mathrm{dist}_{X^\tau}\left(x,\Gamma^\tau_{t_j'-t_j}(x)   \right)
< C/j\quad\forall\,x\in X^\tau.
$$
By the triangle inequality, we conclude that
\begin{eqnarray}
\lefteqn{  \mathrm{dist}_{X^\tau}\left(x_j,\mu^\tau_{g_j^{-1}\,g_j'}(x_j)   \right) }\\
&\le&
\mathrm{dist}_{X^\tau}\left(x_j,\mu^\tau_{g_j^{-1}\,g_j'}\circ \Gamma^\tau_{t_j'-t_j}(x_j)\right)+
\mathrm{dist}_{X^\tau}\left(\mu^\tau_{g_j^{-1}\,g_j'}\circ \Gamma^\tau_{t_j'-t_j}(x_j),
\mu^\tau_{g_j^{-1}\,g_j'}(x_j)
\right)\nonumber\\
&\le&0+C/j=C/j.\nonumber
\end{eqnarray}
Hence, by statement 3 of Lemma \ref{lem:technical facts}, 
$$
\mathrm{dist}_G\left(g_j^{-1}\,g_j',G_{x_j}   \right)
\le (C\cdot D)/j.
$$
We may thus find $\sigma_j\in G_{x_j}$ and $\delta_j\in G$ such that
$$
g_j^{-1}\,g_j'=\delta_j\,\sigma_j,\quad
\mathrm{dist}_G\left(\delta_j,e_G\right)\le D/j.
$$
Then (\ref{eqn:xjstabilized}) implies

\begin{equation}
\label{eqn:xjstabilized1}
x_j=\mu^\tau_{\delta_j\,\sigma_j}\circ \Gamma^\tau_{t_j'-t_j}(x_j)
=\mu^\tau_{\delta_j}\circ \Gamma^\tau_{t_j'-t_j}(x_j),
\end{equation}
and $(\delta_j,t_j'-t_j)$ belongs to a neighbourhood of $(0,e_G)$
of radius $O\left(1/j\right)$.
By statement 1 of Lemma \ref{lem:technical facts}
this is absurd, unless $\delta_j=e_G$ and $t_j=t_j'$.

\end{proof}

\begin{cor}
\label{cor:unique t1chi}
If $\epsilon>0$ is sufficiently small and
$|\mathrm{supp}(\chi)|<2\,\epsilon$, 
then for any  $x_1,\,x_2\in Z^\tau$ we have the following alternative;
either $x_1^G\cap x_2^{\chi}=\emptyset$, or else  
$x_1^G\cap x_2^{\chi}=\{x_{12}\}$ for a unique $x_{12}\in Z^\tau$.

\end{cor}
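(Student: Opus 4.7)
The plan is to reduce the statement directly to Lemma \ref{lem:unique t}. The hypothesis $|\mathrm{supp}(\chi)|<2\,\epsilon$ means that, after choosing any $t_0\in \mathrm{supp}(\chi)$, the support of $\chi$ is contained in $(t_0-\epsilon,t_0+\epsilon)$, which is exactly the setting of that lemma; so its conclusion is available.

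Next, I would suppose $x_1^G\cap x_2^{\chi}\neq \emptyset$ and pick any $x_{12}$ in the intersection. By the definitions of $x_1^G$ and $x_2^{\chi}$ there exist $g\in G$ and $t\in \mathrm{supp}(\chi)$ with
$$
x_{12}=\mu^\tau_g(x_1)=\Gamma^\tau_t(x_2).
$$
Rearranging gives $x_1=\mu^\tau_{g^{-1}}\circ \Gamma^\tau_t(x_2)$, so $x_1\in x_2^{G\times\chi}$ in the sense of Definition \ref{defn:concentration locus}. Applying Lemma \ref{lem:unique t} with $(x,x')=(x_2,x_1)$ then yields that the parameter $t$ with this property is uniquely determined by the pair $(x_1,x_2)$. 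Since $x_{12}=\Gamma^\tau_t(x_2)$, the uniqueness of $t$ forces the uniqueness of $x_{12}$.

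That $x_{12}\in Z^\tau$ follows from Corollary \ref{cor:reeb tangent Z}: the flow $\Gamma^\tau$ is generated by $\upsilon^\tau_{\sqrt{\rho}}$, which is parallel to the Reeb vector field $\mathcal{R}^\tau$, and the latter is tangent to $Z^\tau$; hence $\Gamma^\tau_t(Z^\tau)\subseteq Z^\tau$, so $x_{12}=\Gamma^\tau_t(x_2)\in Z^\tau$ because $x_2\in Z^\tau$.

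No step presents a real obstacle here, as this is essentially a translation of Lemma \ref{lem:unique t} from the language of $G\times \chi$-orbits to that of intersections $x_1^G\cap x_2^{\chi}$. The only bookkeeping point is the compatibility of the two formulations of the smallness-of-support assumption, which is handled by the choice of $t_0\in \mathrm{supp}(\chi)$ in the first paragraph.
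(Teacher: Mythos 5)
Your proof is correct and matches the paper's intent: the corollary is stated there without a separate proof as an immediate consequence of Lemma \ref{lem:unique t}, and your deduction (any point of $x_1^G\cap x_2^{\chi}$ has the form $\Gamma^\tau_t(x_2)$ with $x_1=\mu^\tau_{g^{-1}}\circ\Gamma^\tau_t(x_2)$, so uniqueness of the time parameter forces uniqueness of $x_{12}$, which lies in $Z^\tau$ because the geodesic flow preserves $Z^\tau$) is exactly that argument. The only tiny slip is the claim that for an \emph{arbitrary} $t_0\in\mathrm{supp}(\chi)$ the support lies in $(t_0-\epsilon,t_0+\epsilon)$; this can fail if $t_0$ is near an endpoint, so take $t_0$ to be the midpoint of the convex hull of $\mathrm{supp}(\chi)$ (or enlarge $\epsilon$ to $2\epsilon$, which is harmless since Lemma \ref{lem:unique t} holds for all sufficiently small $\epsilon$).
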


\begin{rem}
\label{rem:unique t1chi}
In the assumptions of Corollary \ref{cor:unique t1chi}, 
suppose that $x_1^G\cap x_2^{\mathrm{supp}(\chi)}\neq \emptyset$;
equivalently, 
\begin{equation}
\label{eqn:Sigmax12}
\Sigma_\chi(x_1,x_2):=\left\{(g,t)\in G\times \mathrm{supp}(\chi)\,:\,x_1=\mu^\tau_g\circ \Gamma^\tau_{t}(x_2)\right\}\neq \emptyset.
\end{equation}
Let $t_1=t_1(x_1,x_2)\in \mathrm{supp}(\chi)$ be the unique element
whose existence is asserted in Lemma \ref{lem:unique t},
and choose 
$h_1\in G$ such that $(h_1,t_1)\in  \Sigma_\chi(x_1,x_2)$.
Let $G_{x_1}:=\left\{\kappa_l\,:\,l=1,\ldots,r_{x_1}\right\}\leqslant G$ denote the stabilizer subgroup of $x_1$. Then
\begin{equation}
\label{eqn:Sigmax12chi}
\Sigma_\chi(x_1,x_2)=\left\{
(\kappa_l\,h_1,t_1)\,:\,
l=1,\ldots,r_{x_1}
\right\}.
\end{equation}
\end{rem}

\subsection{The action in Heisenberg local coordinates}

As is the Introduction (recall the discussion surrounding
(\ref{eqn:rescaled coord 12})), NHLC's on $X^\tau$ centered
at $x$ will be denoted
in additive notation:
$y(\theta,\mathbf{v})=x+(\theta,\mathbf{v})$.
Given $x\in X^\tau$, we can find an open neighbourhood
$U^\tau\subseteq X^\tau$ of $x$ and 
a smoothly varying family of systems of NHLC's 
$y_{x'}(\theta,\mathbf{v})=x'+(\theta,\mathbf{v})$
centered at $x'\in U^\tau$. 
More explicitly, for some $\delta>0$ 
we have a smooth map
$\Upsilon:U^\tau\times (-\delta,\delta)\times B_{2d-2}(\mathbf{0},\delta)
\rightarrow X^\tau$
such that for every $x'\in U^\tau$ 
the partial function $\Upsilon(x',\cdot,\cdot)$ is a system of NHLC's
centered at $x'$. Hence
$$
y_{x'}(\theta,\mathbf{v})=\Upsilon (x',\theta,\mathbf{u})= x'+(\theta,\mathbf{u}).
$$
We refer to the notation and conventions in \cite{p24}
(see also \cite{cr1} and \cite{cr2}). 
In this section, we shall adapt some arguments
from \cite{pao-IJM} to express the action
of $G\times \mathbb{R}$ on $X^\tau$ in terms of
NHLC's.

Let us first consider the geodesic flow.
The following is a consequence of (105) of \cite{p24}.

\begin{lem}
\label{eqn:local expression geodesic flow}
Suppose $x\in X^\tau$, and let us choose Heisenberg local coordinates on $X^\tau$ centered at $x$. If $y=x+(\theta,\mathbf{u})$,
then 
$$
\Gamma^\tau_t(y)=
x+\Big(\theta-\tau\,t+R_3(\tau,t,\theta,\mathbf{u}),
\mathbf{u}+\mathbf{R}_2(\tau\,t,\theta,\mathbf{u} ) \Big).
$$
\end{lem}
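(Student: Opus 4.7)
The plan is to derive the claim as a direct consequence of the explicit local expression for $\upsilon^\tau_{\sqrt{\rho}}$ in NHLC's, together with a Taylor-integration of the resulting ODE. First I would use the Reeb identity $\upsilon^\tau_{\sqrt{\rho}}=-\tau\,\mathcal{R}^\tau$ from \eqref{eqn:reeb vct field}, together with equation (105) of \cite{p24}, which provides the local form of $\mathcal{R}^\tau$ in NHLC's centered at $x$. By the defining property \eqref{eqn:Reeb theta v H} of NHLC's, $\mathcal{R}^\tau(x)=\partial/\partial\theta|_x$; the structure of the approximate contact form $\alpha^\tau$ in these coordinates forces the local expression
$$
\mathcal{R}^\tau\bigl(x+(\theta,\mathbf{u})\bigr)=\frac{\partial}{\partial\theta}+S_2(\theta,\mathbf{u})\,\frac{\partial}{\partial\theta}+\mathbf{S}_1(\theta,\mathbf{u})\cdot\frac{\partial}{\partial\mathbf{u}},
$$
where $S_2$ vanishes to order $2$ and $\mathbf{S}_1$ to order $1$ at the origin. (The improved order in the $\partial_\theta$-component is the content built into the normality condition defining NHLC's, since $\alpha^\tau$ differs from $d\theta+\tfrac{1}{2}\omega_0(\mathbf{u},d\mathbf{u})$ by terms that vanish to second order at the origin.)

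Next I would feed this into the flow ODE $\dot y(t)=\upsilon^\tau_{\sqrt{\rho}}(y(t))=-\tau\,\mathcal{R}^\tau(y(t))$, with initial condition $y(0)=y=x+(\theta,\mathbf{u})$. Write $y(t)=x+(\Theta(t),\mathbf{U}(t))$; then
\begin{align*}
\dot\Theta(t)&=-\tau\bigl[1+S_2\bigl(\Theta(t),\mathbf{U}(t)\bigr)\bigr],\\
\dot{\mathbf{U}}(t)&=-\tau\,\mathbf{S}_1\bigl(\Theta(t),\mathbf{U}(t)\bigr),
\end{align*}
with $\Theta(0)=\theta$, $\mathbf{U}(0)=\mathbf{u}$. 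Integrating the zeroth-order terms gives the ansatz $\Theta(t)=\theta-\tau t+\varepsilon(t)$, $\mathbf{U}(t)=\mathbf{u}+\boldsymbol\eta(t)$, with $\varepsilon(0)=0$, $\boldsymbol\eta(0)=\mathbf{0}$.

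The core of the proof is then to bootstrap the orders of vanishing of $\varepsilon$ and $\boldsymbol\eta$ in the joint variables $(\tau t,\theta,\mathbf{u})$. A first pass gives $\varepsilon=O_2$ and $\boldsymbol\eta=O_1$; substituting back into the equations and using that $\mathbf{S}_1$ vanishes to order $1$ and $S_2$ to order $2$ at the origin, together with the fact that the explicit $\tau t$ contribution appears only in $\Theta(t)$, one sees that $\boldsymbol\eta(t)=\mathbf{R}_2(\tau t,\theta,\mathbf{u})$ (since $\dot{\mathbf U}$ is $O_1$ and vanishes at $t=0$ together with $(\theta,\mathbf{u})$), and plugging this improved estimate back into the first equation yields $\varepsilon(t)=R_3(\tau,t,\theta,\mathbf{u})$, because the integrand $S_2(\Theta,\mathbf U)$ is then of order $2$ in $(\theta-\tau t,\mathbf u+\boldsymbol\eta)$ and the additional integration in $t$ raises the order by one whenever $\tau t$ matters.

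The routine part is just the bookkeeping of orders in the Picard/Taylor iteration. The main obstacle I anticipate is precisely this sharpening from the naive $R_2$ to the asserted $R_3$ in the $\Theta$-component: this relies on the fact, specific to NHLC's, that the $\partial_\theta$-component of $\mathcal R^\tau-\partial_\theta$ vanishes to order $2$ (rather than $1$) at the origin, which is built into the defining normalization of NHLC's via the osculating form of $\alpha^\tau$; once that input is in hand, the integration argument closes without further difficulty.
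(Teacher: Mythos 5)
Your proposal is correct. Note, however, that the paper does not actually prove this lemma: it is stated as a direct consequence of equation (105) of \cite{p24}, so there is no internal argument to compare against, and your derivation is a self-contained reconstruction of what that citation encapsulates. Your route rests on three sound inputs: the identity $\upsilon^\tau_{\sqrt{\rho}}=-\tau\,\mathcal{R}^\tau$ from (\ref{eqn:reeb vct field}); the osculating normal form of $\alpha^\tau$ in NHLC's (this is Proposition 33 of \cite{p24}, quoted verbatim in the proof of Lemma \ref{lem:2nd order horizontal}, together with $\eta_0=R_2$ from Corollary 35 of \cite{p24}), which indeed forces the Reeb field to be $\partial_\theta$ plus a $\partial_\theta$-correction vanishing to second order and a horizontal correction vanishing to first order, since the $d\theta$-coefficient perturbation of $\alpha^\tau$ is $R_2$ and the nondegenerate part of $d\alpha^\tau$ in the $\mathbf{u}$-directions is the constant form $\omega_0$; and the Picard/Taylor bootstrap, where the prefactor $\tau\int_0^t$ supplies the extra factor of $\tau t$ that upgrades the integrated $S_2$-term to $R_3$ and the integrated $\mathbf{S}_1$-term to $\mathbf{R}_2$, exactly matching the statement (including the sign $\theta-\tau t$ coming from $\upsilon^\tau_{\sqrt{\rho}}=-\tau\mathcal{R}^\tau$). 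The only caveat is attributional: you ascribe the local form of $\mathcal{R}^\tau$ to (105) of \cite{p24}, which cannot be checked from the present text; but since you in fact derive that form from the normal form of $\alpha^\tau$ that the paper does quote, the argument stands on its own and, if anything, is more explicit than the paper's citation.
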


The convention for the meaning of $R_k$ and $\mathbf{R}_k$ 
is as explained in the discussion of (\ref{eqn:defn of M}).

To proceed, we need to relate systems of NHLC's centered at nearby
points. We shall adapt an argument in \cite{pao-IJM} to the present setting.

\begin{lem}
\label{lem:thetajvj}
For $(\theta_,\mathbf{u}_j)\sim (0,\mathbf{0})$,
\begin{eqnarray*}
x+(\theta_2,\mathbf{u}_2)
&=&\big( x+(\theta_1,\mathbf{u}_1)  \big)\\
&&+
\Big(\theta_2-\theta_1+\omega(\mathbf{u}_1,\mathbf{u}_2)+
R_3(\theta_j,\mathbf{u}_j),
\mathbf{u}_2-\mathbf{u}_1+\mathbf{R}_2(\theta_j,\mathbf{u}_j)
\Big).
\end{eqnarray*}

\end{lem}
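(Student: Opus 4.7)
The plan is to identify the smooth implicit function $F(\theta_1,\mathbf{u}_1,\theta_2,\mathbf{u}_2)$ defined (near the origin) by
\[
\Upsilon\bigl(x+(\theta_1,\mathbf{u}_1),\, F(\theta_1,\mathbf{u}_1,\theta_2,\mathbf{u}_2)\bigr)
\;=\;\Upsilon(x,\theta_2,\mathbf{u}_2)\;=\;x+(\theta_2,\mathbf{u}_2),
\]
where $\Upsilon$ is the smooth family of NHLCs introduced just before the lemma. Since $\Upsilon(x',\cdot)$ is a local diffeomorphism centered at $x'$, $F$ exists and is smooth by the implicit function theorem; the lemma is the computation of its second-order Taylor expansion. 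My approach is thus to expand $F$ in the four small variables up to order two and to pin down the coefficients by exploiting the intrinsic properties of NHLCs recorded in (\ref{eqn:Reeb theta v H}) and Proposition 34 of \cite{p24}.

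First I would extract the zeroth- and first-order terms. When $(\theta_1,\mathbf{u}_1)=(0,\mathbf{0})$ the two NHLC systems coincide, forcing $F(0,\mathbf{0},\theta_2,\mathbf{u}_2)=(\theta_2,\mathbf{u}_2)$. By the normalization of NHLCs, the differential of $\Upsilon(x',\cdot)$ at the origin is the canonical identification $\mathbb{R}\oplus\mathbb{R}^{2d-2}\cong \mathcal{T}^\tau_{x'}\oplus\mathcal{H}^\tau_{x'}=T_{x'}X^\tau$, depending smoothly on $x'$; this pins the full linearization of $F$ to $(\theta_2-\theta_1,\mathbf{u}_2-\mathbf{u}_1)$. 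On the horizontal slot, the transition preserves $\mathcal{H}^\tau$ to first order, so the $\mathbf{w}$-component of $F$ cannot develop a linear cross-term and admits only a $\mathbf{R}_2$ error, as claimed.

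The heart of the argument is the quadratic cross-term in the vertical slot. Here I would pull back the contact form $\alpha^\tau$ through the two systems of NHLCs centered at $x$ and at $x+(\theta_1,\mathbf{u}_1)$ and use the normalized expression from Proposition 34 of \cite{p24}, namely $(y_{x'})^*(\alpha^\tau)=d\theta+\alpha_0+O_{\ge 2}$, where $\alpha_0$ is the standard Heisenberg 1-form built from $\omega_{x'}=\tfrac{1}{2}\Omega_{x'}|_{\mathcal{H}^\tau_{x'}}$ (cf.\ Remark \ref{rem:warning norm}). The matching of the two pullbacks, expanded up to order two in $(\theta_j,\mathbf{u}_j)$, forces the quadratic coupling between $\mathbf{u}_1$ and $\mathbf{u}_2$ in the $\alpha$-component of $F$ to be exactly $\omega(\mathbf{u}_1,\mathbf{u}_2)$, modulo an $R_3$ remainder. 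This is the same mechanism as in the computation of \cite{pao-IJM} that the authors explicitly cite as the model.

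The main obstacle is the bookkeeping: one has to rule out spurious quadratic contributions of the form $c\,\theta_1\theta_2$, $\theta_j\,\mathbf{u}_k$, or $\mathbf{u}_1\cdot\mathbf{u}_1,\,\mathbf{u}_2\cdot\mathbf{u}_2$ in the $\alpha$-component of $F$, beyond the single $\omega(\mathbf{u}_1,\mathbf{u}_2)$ term. These are eliminated by combining the NHLC normalization $\partial_\theta|_{x'}=\mathcal{R}^\tau(x')$ (so the vertical direction is intrinsically fixed at every center), the identity $(y_{x'})^*(\alpha^\tau)_0=d\theta$, and the symmetric role of the center variable $x'$ and the offset $(\theta,\mathbf{u})$ in $\Upsilon$; any would-be surviving quadratic term can be re-absorbed into $R_3$ by a direct Taylor expansion of $\Upsilon$ around $(\theta_1,\mathbf{u}_1)=(0,\mathbf{0})$.
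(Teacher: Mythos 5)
Your strategy is correct in substance, but it is genuinely different from the proof in the paper, so let me compare. The paper also writes $x+(\theta_2,\mathbf{u}_2)=\big(x+(\theta_1,\mathbf{u}_1)\big)+\big(\beta,\mathbf{B}\big)$ and reads off the linear parts, but it determines the quadratic term $\beta_2$ of the angular component \emph{spectrally} rather than geometrically: it evaluates the near-diagonal scaling asymptotics of $\Pi_{\chi,\lambda}$ (Theorem 1 of \cite{p24}) at the rescaled points $x_{j,\lambda}$ twice — once in the chart centered at $x$, once after rewriting $x_{2,\lambda}$ relative to $x_{1,\lambda}$ via the unknown transition — and matches the two leading exponents, which forces $-\imath\,\beta_2-\tfrac12\|\mathbf{u}_1-\mathbf{u}_2\|^2=\psi_2(\mathbf{u}_1,\mathbf{u}_2)$, i.e. $\beta_2=\omega_x(\mathbf{u}_1,\mathbf{u}_2)$, with no Taylor bookkeeping at all. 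Your route is the direct one (the ``Heisenberg translation'' computation, in the spirit of \cite{pao-IJM}): identify the transition map $F$ by the implicit function theorem, fix the joint-linear part by the two normalizations (the transition is the identity at zero center offset, and the center goes to the origin), and pin the mixed quadratic term by matching the pullbacks of the normal form of $\alpha^\tau$ in the two charts. This works, and it is worth noting why: both normal forms carry uncontrolled $R_2$ errors, but in the coefficient matching those errors only contribute at joint degree $\ge 2$ in $(\theta_1,\mathbf{u}_1,\theta_2,\mathbf{u}_2)$, so the joint-degree-one identity $\mathrm{d}_{(\theta_2,\mathbf{u}_2)}q=-\omega_x(\mathbf{u}_1,\mathrm{d}\mathbf{u}_2)$ (in the paper's convention $\omega=\tfrac12\Omega$) is untouched and determines the cross term, while the two normalizations kill the pure-point and pure-center quadratic terms. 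What each approach buys: the paper's argument is two lines long and recycles an already-proved kernel expansion, but it is indirect and only sees what survives in the exponent; yours is self-contained at the level of the contact/CR normal form and makes the Heisenberg-group structure of the transition transparent, at the cost of the degree bookkeeping you describe.

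One small correction to your last paragraph: a genuinely surviving \emph{quadratic} term could not be ``re-absorbed into $R_3$'' — degree-two monomials cannot be hidden in a third-order remainder. What actually happens is that each such candidate term vanishes: pure terms in $(\theta_2,\mathbf{u}_2)$ because the transition is the identity when $(\theta_1,\mathbf{u}_1)=(0,\mathbf{0})$; pure terms in $(\theta_1,\mathbf{u}_1)$ because $F(\theta_1,\mathbf{u}_1;\theta_1,\mathbf{u}_1)=0$; and the mixed terms other than $\omega(\mathbf{u}_1,\mathbf{u}_2)$ by the joint-degree-one matching of the coefficient one-forms, where neither chart's $R_2$ error can interfere. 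With that phrasing fixed, your argument is complete.
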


\begin{proof}
We can write
$$
x+(\theta_2,\mathbf{u}_2)=
\big( x+(\theta_1,\mathbf{u}_1)  \big)+
\big(\beta(\theta_j,\mathbf{u}_j),\mathbf{B}(\theta_j,
\mathbf{u}_j)\big)
$$
for certain smooth functions $\beta$ (real-valued) and
$\mathbf{B}$ (vector-valued), vanishing at the origin.
Let us expand $\beta$ and $\mathbf{B}$:
$$
\beta=\beta_1+\beta_2+R_3,\quad
\mathbf{B}=\mathbf{B}_1+\mathbf{R}_2,
$$
where $\beta_1$ and $\mathbf{B}_1$ are linear, and $\beta_2$ is homogeneous of degree $2$.
It is easily seen that 
\begin{equation}
\label{eqn:1st order terms}
\beta_1(\theta_j,\mathbf{u}_j)=\theta_2-\theta_1,\quad
\mathbf{B}_1(\theta_j,
\mathbf{u}_j)=\mathbf{u}_2-\mathbf{u}_1;
\end{equation}
let us then determine $\beta_2$.

Let us choose $\chi\in \mathcal{C}^\infty_0\big([-\epsilon,\epsilon]\big)$
for some sufficiently small $\epsilon>0$. Keeping
$\theta_j, \mathbf{u}_j$ fixed, for $\lambda>0$
let $x_{j,\lambda}$ be as in (\ref{eqn:xx12})
and apply Theorem 1 in \cite{p24}.

On the one hand, 
for $\lambda\rightarrow +\infty$ we have
\begin{eqnarray}
\label{eqn:asympt main 1}
\lefteqn{  \Pi_{\chi,\lambda}(x_{1,\lambda},x_{2,\lambda}) }\\
&=&\frac{\chi(0)}{\sqrt{2\,\pi}}\cdot 
\left( \frac{\lambda}{2\,\pi\,\tau}  \right)^{d-1}\cdot 
e^{\frac{1}{\tau}\,\left[\imath\,\sqrt{\lambda}\,(\theta_1-\theta_2)
+\psi_2(\mathbf{u}_1,\mathbf{u}_2)\right]}
\cdot \left[1+O\left(\lambda^{-1/2}\right)\right].\nonumber
\end{eqnarray}

On the other hand, 
by 
(\ref{eqn:1st order terms}) we have
\begin{eqnarray}
x_{2,\lambda}&=&x_{1,\lambda}+
\left(\frac{1}{\sqrt{\lambda}}\,\left(\theta_2-\theta_1
+\frac{1}{\sqrt{\lambda}}\,\beta_2(\theta_j,\mathbf{u}_j)+
\sqrt{\lambda}\,\beta_3\left( \frac{\theta_j}{\sqrt{\lambda}},\,
\frac{\mathbf{u}_j}{\sqrt{\lambda}} \right)\right),\right.\nonumber\\
&&\left.
\frac{1}{\sqrt{\lambda}} \,(\mathbf{u}_2-\mathbf{u}_1 )+
\mathbf{R}_3\left(\frac{\theta_j}{\sqrt{\lambda}},
\frac{\mathbf{u}_j}{\sqrt{\lambda}}   \right)\right)
\end{eqnarray}

Hence, again by Theorem 1 in \cite{p24},
\begin{eqnarray}
\label{eqn:asympt main 2}
\lefteqn{  \Pi_{\chi,\lambda}(x_{1,\lambda},x_{2,\lambda}) }\\
&=&\frac{\chi(0)}{\sqrt{2\,\pi}}\cdot 
\left( \frac{\lambda}{2\,\pi\,\tau}  \right)^{d-1}\cdot 
e^{\frac{1}{\tau}\,\left[\imath\,\sqrt{\lambda}\,(\theta_1-\theta_2)
-\imath\,\beta_2(\theta_j,\mathbf{u}_j)
-\frac{1}{2}\,\|\mathbf{u}_1-\mathbf{u}_2\|^2
\right]}
\cdot \left[1+O\left(\lambda^{-1/2}\right)\right].\nonumber
\end{eqnarray}

Thus
$$
-\imath\,\beta_2(\theta_j,\mathbf{u}_j)
-\frac{1}{2}\,\|\mathbf{u}_1-\mathbf{u}_2\|^2=\psi_2(\mathbf{u}_1,\mathbf{u}_2)=-\imath\,\omega_x(\mathbf{u}_1,\mathbf{u}_2)
-\frac{1}{2}\,\|\mathbf{u}_1-\mathbf{u}_2\|^2.
$$
The claim follows.
\end{proof}

\begin{cor}
\label{cor:change of HLCs}
Under the same assumptions,
\begin{eqnarray*}
\lefteqn{\Big(x+(\theta,\mathbf{u})\Big)+\big(\beta,\mathbf{B}\big)   }\\
&=&x+\Big(\theta+\beta-\omega_x(\mathbf{u},\mathbf{B}) +R_3,
\mathbf{u}+\mathbf{B}+\mathbf{R}_2  \Big),
\end{eqnarray*}
where $R_3=R_3(\theta,\beta,
\mathbf{u},\mathbf{B})$, $\mathbf{R}_2=\mathbf{R}_2(\theta,\beta,
\mathbf{u},\mathbf{B})$.
\end{cor}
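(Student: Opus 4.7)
The plan is to obtain Corollary \ref{cor:change of HLCs} as a formal inversion of the identity in Lemma \ref{lem:thetajvj}. That lemma already expresses the NHLC-displacement from $x+(\theta_1,\mathbf{u}_1)$ to $x+(\theta_2,\mathbf{u}_2)$ in additive form, so the task here is only to read off the inverse: given a ``target'' displacement $(\beta,\mathbf{B})$ applied at $x+(\theta,\mathbf{u})$, solve for the unique $(\theta_2,\mathbf{u}_2)$ near $(\theta,\mathbf{u})$ such that $x+(\theta_2,\mathbf{u}_2)=\bigl(x+(\theta,\mathbf{u})\bigr)+(\beta,\mathbf{B})$.

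First, I would set $(\theta_1,\mathbf{u}_1):=(\theta,\mathbf{u})$ in Lemma \ref{lem:thetajvj}, obtaining the system
\[
\beta=\theta_2-\theta+\omega_x(\mathbf{u},\mathbf{u}_2)+R_3(\theta,\mathbf{u},\theta_2,\mathbf{u}_2),
\qquad
\mathbf{B}=\mathbf{u}_2-\mathbf{u}+\mathbf{R}_2(\theta,\mathbf{u},\theta_2,\mathbf{u}_2),
\]
whose linearization at the origin is the identity, so the implicit function theorem produces a unique smooth solution $(\theta_2,\mathbf{u}_2)$ as a function of $(\theta,\mathbf{u},\beta,\mathbf{B})$. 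From the second equation I get $\mathbf{u}_2=\mathbf{u}+\mathbf{B}+\mathbf{R}_2'(\theta,\mathbf{u},\beta,\mathbf{B})$ with $\mathbf{R}_2'$ vanishing to second order at the origin, which is precisely the claimed $\mathbf{u}$-component of the corollary.

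For the $\theta$-component I substitute this expression for $\mathbf{u}_2$ into the first equation. Using the bilinearity of $\omega_x$ together with the antisymmetry identity $\omega_x(\mathbf{u},\mathbf{u})=0$, the term $\omega_x(\mathbf{u},\mathbf{u}_2)$ splits as
\[
\omega_x(\mathbf{u},\mathbf{u}_2)=\omega_x(\mathbf{u},\mathbf{B})+\omega_x(\mathbf{u},\mathbf{R}_2'),
\]
and since $\mathbf{u}$ is first order while $\mathbf{R}_2'$ is second order, the second summand is third order in $(\theta,\mathbf{u},\beta,\mathbf{B})$. Absorbing this contribution, together with the original $R_3$, into a new third-order remainder $R_3''$ yields
$\theta_2=\theta+\beta-\omega_x(\mathbf{u},\mathbf{B})+R_3''$, which is the asserted $\theta$-component.

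The only subtle point is the bookkeeping of orders of vanishing: one must verify that all terms absorbed into $R_3''$ and $\mathbf{R}_2'$ genuinely vanish to the claimed order as functions of the four arguments $(\theta,\beta,\mathbf{u},\mathbf{B})$ (rather than, say, only as functions of $(\theta_2,\mathbf{u}_2)$ via the implicitly defined change of variables). This follows because the change of variables $(\theta_2,\mathbf{u}_2)\leftrightarrow(\beta,\mathbf{B})$ (at fixed $(\theta,\mathbf{u})$) has differential equal to the identity at the origin, hence preserves the order of vanishing; this is really the only step where a little care is required, but no serious obstacle is expected.
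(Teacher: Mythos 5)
Your proposal is correct and is essentially the argument the paper intends: the corollary is stated without a separate proof precisely because it is the inversion of Lemma \ref{lem:thetajvj}, which you carry out by solving the system for $(\theta_2,\mathbf{u}_2)$, using $\omega_x(\mathbf{u},\mathbf{u})=0$ to isolate $-\omega_x(\mathbf{u},\mathbf{B})$ and absorbing the remaining terms into $R_3$ and $\mathbf{R}_2$. Your remark that the substitution preserves the order of vanishing (since the implicitly defined change of variables fixes the origin with identity differential) is exactly the bookkeeping needed, so no gap remains.
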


Let us now express the $G$-action near a point $x\in X^\tau$ in NHLC's.
We consider both the case of a fixed $g\in G_x$ (the stabilizer subgroup of $x$), and that of a 1-parameter
subgroup of $G$ generated by a given $\xi\in \mathfrak{g}$.
Let us premise a remark. If $g\in G_x$, then 
$\mathrm{d}_x\mu^\tau_g:T_xX^\tau\rightarrow T_xX^\tau$ preserves both the
vertical and horizontal tangent vector bundles at $x$; furthermore, 
$\mathrm{d}_x\mu^\tau_g\big( \mathcal{R}^\tau(x)  \big)=\mathcal{R}^\tau(x)$.
If $\mathbf{v}\in \mathcal{H}^\tau (x)$, we shall simplify notation and write
\begin{equation}
\label{eqn:vg}
\mathbf{v}_{g}:=\mathrm{d}_x\mu^\tau_g\big( \mathbf{v} \big).
\end{equation}

\begin{lem}
\label{lem:G action in local HCs}
Suppose $x\in X^\tau$ and choose a system of NHLC's centered at $x$.
Then the following holds.

\begin{enumerate}
\item If $\kappa\in G_x$, 
$$
\mu^\tau_{\kappa^{-1}}\big(x+(\theta,\mathbf{v})\big)=
x+\Big(\theta+R_3(\theta,\mathbf{v}),\mathbf{v}_{\kappa^{-1}}+\mathbf{R}_2(\theta,\mathbf{v})
\Big).
$$
\item If $\xi\sim 0\in \mathfrak{g}$ and
$(\theta,\mathbf{v})\sim (0,\mathbf{0})\in \mathbb{R}\times \mathbb{R}^{2d-2}$, 
then 
$$
\mu^\tau_{e^{-\xi}}\big(x+(\theta,\mathbf{v})\big)=
x+\Big(\theta
+\varphi^\xi(x)+\omega_x\big(\xi_{X^\tau}^\sharp(x),\mathbf{v}\big) 
+R_3,
\mathbf{v}-\xi^\sharp_{X^\tau}(x)+\mathbf{R}_2\Big),
$$
where 
$R_3=R_3(\theta,\mathbf{v},\xi)$, 
$\mathbf{R}_2=\mathbf{R}_2(\theta,\mathbf{v},\xi)$.
\end{enumerate}

\end{lem}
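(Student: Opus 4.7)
The plan is to prove both parts in parallel, extending the strategy used in Lemma~\ref{lem:thetajvj}: read off the linear part of the expansion from the $G$-equivariant geometry of $X^\tau$, then pin down the quadratic corrections using the $G$-invariance of the Szeg\H{o} projector paired with the scaling asymptotics of $\Pi^\tau_{\chi,\lambda}$ from Theorem~1 of \cite{p24}.

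For part (1), since $\kappa\in G_x$ the map $\mu^\tau_{\kappa^{-1}}$ fixes $x$, and in NHLCs at $x$ it has the form $(\theta,\mathbf{v})\mapsto(\Theta,\mathbf{V})$ vanishing at the origin. By Notation~\ref{notn:stabilizer action}, its differential at $x$ fixes $\mathcal{R}^\tau(x)=\partial/\partial\theta|_x$ and acts unitarily on $\mathcal{H}^\tau_x$ as $\mathbf{v}\mapsto\mathbf{v}_{\kappa^{-1}}$, so the Taylor expansion reads $\Theta=\theta+\Theta_2+R_3$ and $\mathbf{V}=\mathbf{v}_{\kappa^{-1}}+\mathbf{R}_2$ for a quadratic form $\Theta_2$. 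To prove $\Theta_2\equiv 0$, I would apply the $G$-invariance $\Pi^\tau_{\chi,\lambda}(x_{1,\lambda},x_{2,\lambda})=\Pi^\tau_{\chi,\lambda}(\mu^\tau_{\kappa^{-1}}(x_{1,\lambda}),\mu^\tau_{\kappa^{-1}}(x_{2,\lambda}))$ with $x_{j,\lambda}:=x+(\theta_j/\sqrt\lambda,\mathbf{u}_j/\sqrt\lambda)$. Expanding the RHS via Theorem~1 of \cite{p24} after substituting $\mu^\tau_{\kappa^{-1}}(x_{j,\lambda})=x+(\tilde\theta_j/\sqrt\lambda,\tilde{\mathbf{u}}_j/\sqrt\lambda)$ with $\tilde\theta_j=\theta_j+\Theta_2(\theta_j,\mathbf{u}_j)/\sqrt\lambda+O(\lambda^{-1})$ and $\tilde{\mathbf{u}}_j=(\mathbf{u}_j)_{\kappa^{-1}}+O(\lambda^{-1/2})$, one obtains the same leading Gaussian factor as the LHS (using unitarity of $\mathbf{u}\mapsto\mathbf{u}_{\kappa^{-1}}$ to preserve $\psi_2$), multiplied by the extra phase $\exp\{\imath[\Theta_2(\theta_1,\mathbf{u}_1)-\Theta_2(\theta_2,\mathbf{u}_2)]/\tau\}$. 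Equating the two asymptotics forces this phase to equal $1$; since $\Theta_2$ is a quadratic polynomial vanishing at the origin, $\Theta_2\equiv 0$.

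For part (2), set $x':=\mu^\tau_{e^{-\xi}}(x)=x+(\beta(\xi),\mathbf{B}(\xi))$ in NHLCs at $x$. Differentiating $t\mapsto\mu^\tau_{e^{-t\xi}}(x)$ at $t=0$ and invoking Corollary~\ref{cor: the moment map on tubes} gives $\beta(\xi)=\varphi^\xi(x)+O(|\xi|^2)$ and $\mathbf{B}(\xi)=-\xi^\sharp_{X^\tau}(x)+O(|\xi|^2)$. Using the smoothly varying family of NHLCs, the differential $\mathrm{d}_x\mu^\tau_{e^{-\xi}}\colon T_xX^\tau\to T_{x'}X^\tau$ is a contact CR isomorphism sending $\mathcal{R}^\tau(x)\mapsto\mathcal{R}^\tau(x')$ and acting unitarily between horizontals, so its matrix in NHLC bases has block form $\mathrm{diag}(1,A(\xi))$ with $A(0)=I$; hence in NHLCs centered at $x'$,
\begin{equation*}
\mu^\tau_{e^{-\xi}}\bigl(x+(\theta,\mathbf{v})\bigr)=x'+\bigl(\theta+O_2(\theta,\mathbf{v};\xi),\,A(\xi)\mathbf{v}+O_2(\theta,\mathbf{v};\xi)\bigr),
\end{equation*}
where the $O_2$-remainders are of order $\ge 2$ in $(\theta,\mathbf{v})$ and vanish identically at $\xi=0$ (so that each monomial they contain carries positive $\xi$-degree). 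Translating back via Corollary~\ref{cor:change of HLCs} yields
\begin{equation*}
\mu^\tau_{e^{-\xi}}\bigl(x+(\theta,\mathbf{v})\bigr)=x+\bigl(\beta(\xi)+\theta-\omega_x(\mathbf{B}(\xi),A(\xi)\mathbf{v})+R_3,\,\mathbf{B}(\xi)+A(\xi)\mathbf{v}+\mathbf{R}_2\bigr);
\end{equation*}
substituting the expansions of $\beta,\mathbf{B},A$ and collecting by total degree in $(\theta,\mathbf{v},\xi)$ produces the claimed formula.

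The main obstacle will be to upgrade $\beta(\xi)=\varphi^\xi(x)+O(|\xi|^2)$ to $\beta(\xi)=\varphi^\xi(x)+O(|\xi|^3)$, since any surviving pure $|\xi|^2$ term in the $\theta$-component would not fit into $R_3(\theta,\mathbf{v},\xi)$. I would establish this by directly computing $\ddot\theta(0)$ for the curve $c(t):=\mu^\tau_{e^{-t\xi}}(x)$ from the ODE $\dot c=-\xi_{X^\tau}(c)$, using the explicit normal form of $\alpha^\tau$ at $x$ provided by Proposition~34 of \cite{p24} together with the identity $\mathrm{d}\alpha^\tau|_x=\omega_x|_{\mathcal{H}^\tau_x}$. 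The resulting expression reduces to a linear combination of $\xi_{X^\tau}(\varphi^\xi)|_x$ and $\omega_x(\xi^\sharp_{X^\tau}(x),\xi^\sharp_{X^\tau}(x))$, both of which vanish---the first by the Hamiltonian identity $\{\varphi^\xi,\varphi^\xi\}=0$, the second by antisymmetry of $\omega_x$.
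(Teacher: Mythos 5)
Your proposal is correct, but it is organized quite differently from the paper's proof, so a comparison is worth recording. For part (1) the paper argues in one line: composing the given NHLC system with $\mu^\tau_{\kappa^{-1}}$ yields another system of NHLC's centered at $x$, and by the comparison of such systems in \S 3.3.5 of \cite{p24} their angular coordinates agree to third order; you instead re-run the kernel-invariance/scaling-asymptotics trick of Lemma \ref{lem:thetajvj} to kill the quadratic angular term $\Theta_2$. Your argument is valid (the invariance of $\Pi^\tau_{\chi,\lambda}$ under $\mu^\tau_\kappa\times\mu^\tau_\kappa$ together with the unitarity of $\mathrm{d}_x\mu^\tau_{\kappa^{-1}}$ on $\mathcal{H}^\tau_x$ forces $e^{\imath[\Theta_2(\theta_1,\mathbf{u}_1)-\Theta_2(\theta_2,\mathbf{u}_2)]/\tau}=1$, hence $\Theta_2\equiv 0$), but it is heavier than the structural argument available. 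For part (2) the paper splits $\xi_{X^\tau}=\xi^\sharp_{X^\tau}-\varphi^\xi\,\mathcal{R}^\tau$ (Corollary \ref{cor: the moment map on tubes}), uses that the two flows commute, and then invokes stored normal forms: the angular coordinate of a horizontal curve is $R_3(t)$ (Lemma \ref{lem:2nd order horizontal}, \S 3.3.7 of \cite{p24}), the change-of-center formula of Lemma \ref{lem:thetajvj}, and the local expression of the Reeb-direction flow (Lemma \ref{eqn:local expression geodesic flow}). You instead Taylor-expand the coordinate representation of $\mu^\tau_{e^{-\xi}}$ directly, transport back with Corollary \ref{cor:change of HLCs}, and isolate the only dangerous term, a possible pure $|\xi|^2$ contribution to the angular coordinate of $\xi\mapsto\mu^\tau_{e^{-\xi}}(x)$, which you propose to kill by computing $\ddot\theta(0)$ along $\dot c=-\xi_{X^\tau}(c)$ with the normal form of $\alpha$. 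That step, which you rightly single out as the crux, does work: since $\langle\alpha,\dot c\rangle=\varphi^\xi(c(t))$, the normal form of $\alpha$ gives $\dot\theta(t)=\varphi^\xi\big(c(t)\big)+O(t^2)$ (the first-order correction is an $\omega$-pairing of the velocity with itself, killed by antisymmetry), whence $\ddot\theta(0)=-\xi_{X^\tau}(\varphi^\xi)(x)=0$; these are exactly the two cancellations implicit in the paper's route. What the paper's factorization buys is that all second-order bookkeeping is delegated to previously established lemmas; what your route buys is a more self-contained argument that avoids the commutation $[\xi^\sharp_{X^\tau},\varphi^\xi\mathcal{R}^\tau]=0$ and the horizontal-flow normal form, at the cost of the explicit $\ddot\theta(0)$ computation, which in your write-up is still only sketched and should be carried out along the lines just indicated.
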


\begin{rem}
If $\mathbf{v}\in \mathcal{H}^\tau (x)$, then
$\omega_x\big(\xi_{X^\tau}^\sharp(x),\mathbf{v}\big) 
=\omega_x\big(\xi_{X^\tau}(x),\mathbf{v}\big)$.
\end{rem}

\begin{proof}
[Proof of 1]
Composing the given system of NHLC's with $\mu^\tau_{\kappa^{-1}}$ yields another system of
NHLC's on $X^\tau$ centered at $x$. Hence, by the discussion in 
\S 3.3.5 of \cite{p24}, the angular coordinates of the two systems only 
differ to third order.
\end{proof}

\begin{proof}
[Proof of 2.]
With the notation of Corollary \ref{cor: the moment map on tubes},
since $\mu^\tau$ and the geodesic flow commute
$$
\left[\xi^\sharp_{X^\tau},\varphi^\xi\,\mathcal{R}^\tau\right]
=\left[\xi^\sharp_{X^\tau},\mathcal{R}^\tau\right]
=0.
$$
Let us denote by $\Upsilon_{t,V}$ the flow at time $t$ of 
a smooth vector field
$V$ on $X^\tau$. 
Then, if $\mu^\tau_{t,\xi} := \Upsilon_{t,\xi_{X^\tau}}$, for any $t\in \mathbb{R}$:
$$
\mu^\tau_{-t,\xi}=\Upsilon_{t,\varphi^\xi\,
\mathcal{R}}\circ \Upsilon_{-t,\xi^\sharp_{X^\tau}}:X^\tau\rightarrow
X^\tau.
$$
For any $x'\in X^\tau$, the curve $t\mapsto \Upsilon_{-t,\xi^\sharp_{X^\tau}}(x)$
is horizontal, since $\xi^\sharp_{X^\tau}$ is tangent to 
$\mathcal{H}^\tau$ (here \lq horizontal\rq\, is the sense 
of (\ref{eqn:dicrect sum vert hor})).
By \S 3.3.7 of \cite{p24},
\begin{eqnarray*}
\lefteqn{\Upsilon_{-t,\xi^\sharp_{X^\tau}}\big(x+(\theta,\mathbf{v})\big)}\\
&=&\big(x+(\theta,\mathbf{v})\big)+\Big(R_3(t),-t\,\xi^\sharp_{X^\tau}(x)+
\mathbf{R}_2(t)\Big).
\end{eqnarray*}
In view of Lemma \ref{lem:thetajvj}, this may be rewritten
\begin{eqnarray*}
\lefteqn{ \Upsilon_{-t,\xi^\sharp_{X^\tau}}\big(x+(\theta,\mathbf{v})\big)  }\\
&=&x+\Big(\theta+t\,\omega_x(\mathbf{v}, \xi^\sharp_{X^\tau}(x) )+R_3,
\mathbf{v}-t\,\xi^\sharp_{X^\tau}(x)+\mathbf{R}_2\Big),
\end{eqnarray*}
where we abridge $R_3=R_3(\theta,t,\mathbf{v})$, 
and similarly for $\mathbf{R}_2$.
Then by Lemma \ref{eqn:local expression geodesic flow}
\begin{eqnarray}
\lefteqn{\mu^\tau_{-t,\xi}\big(x+(\theta,\mathbf{v})\big)
}\\
&=&\Upsilon_{t,\varphi^\xi\,
\mathcal{R}}\Big(x+\Big(\theta+t\,\omega_x(\mathbf{v}, \xi^\sharp_{X^\tau}(x) )+R_3,
\mathbf{v}-t\,\xi^\sharp_{X^\tau}(x)+\mathbf{R}_2\Big)\Big)\nonumber\\
&=&
x+\Big(\theta+t\,\omega_x(\mathbf{v}, \xi^\sharp_{X^\tau}(x) )
+t\,\varphi^\xi \big(x+(\theta,\mathbf{v})\big)
+R_3,
\mathbf{v}-t\,\xi^\sharp_{X^\tau}(x)+\mathbf{R}_2\Big)
\nonumber\\
&=&x+\Big(\theta+t\,\omega_x(\mathbf{v}, \xi^\sharp_{X^\tau}(x) )
+t\,\varphi^\xi(x)+2\,t\,\omega_x\big(\xi_{X^\tau}^\sharp,\mathbf{v}\big) 
+R_3,
\mathbf{v}-t\,\xi^\sharp_{X^\tau}(x)+\mathbf{R}_2\Big)
\nonumber\\
&=&x+\Big(\theta
+t\,\varphi^\xi(x)+t\,\omega_x\big(\xi_{X^\tau}^\sharp,\mathbf{v}\big) 
+R_3,
\mathbf{v}-t\,\xi^\sharp_{X^\tau}(x)+\mathbf{R}_2\Big).
\nonumber
\end{eqnarray}

\end{proof}

\begin{cor}
\label{cor:LHC action Ztau}
In the situation and with the notation 
of Lemma \ref{lem:G action in local HCs}, 
if in addition $x\in Z^\tau$ then
$$
\mu^\tau_{e^{-\xi}}\big(x+(\theta,\mathbf{v})\big)=
x+\Big(\theta
+\omega_x\big(\xi_{X^\tau}^\sharp(x),\mathbf{v}\big) 
+R_3,
\mathbf{v}-\xi^\sharp_{X^\tau}(x)+\mathbf{R}_2\Big).
$$
\end{cor}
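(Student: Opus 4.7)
The plan is to read off the corollary as an immediate specialization of statement 2 of Lemma \ref{lem:G action in local HCs} to the case $x \in Z^\tau$. The only difference between the two expressions is the term $\varphi^\xi(x)$ in the angular component, so the entire task reduces to showing that this term vanishes when $x \in Z^\tau$.

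First I would recall that, by definition \eqref{eqn:defn di Ztau}, $Z^\tau = \Phi^{-1}(0) \cap X^\tau$, so any $x \in Z^\tau$ satisfies $\Phi(x) = 0 \in \mathfrak{g}^\vee$. By Corollary \ref{cor: the moment map on tubes}, the component functions of the moment map are precisely $\varphi^\xi = \langle \Phi, \xi\rangle = -\alpha(\xi_{\tilde{M}^\tau})$; therefore
\begin{equation*}
\varphi^\xi(x) = \langle \Phi(x), \xi\rangle = 0 \qquad \forall\,\xi \in \mathfrak{g}.
\end{equation*}

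Next I would substitute $\varphi^\xi(x) = 0$ into the formula provided by part 2 of Lemma \ref{lem:G action in local HCs}, namely
\begin{equation*}
\mu^\tau_{e^{-\xi}}\big(x+(\theta,\mathbf{v})\big) = x + \Big(\theta + \varphi^\xi(x) + \omega_x\big(\xi^\sharp_{X^\tau}(x), \mathbf{v}\big) + R_3,\ \mathbf{v} - \xi^\sharp_{X^\tau}(x) + \mathbf{R}_2\Big),
\end{equation*}
which directly yields the stated simplification. Since the remainders $R_3 = R_3(\theta, \mathbf{v}, \xi)$ and $\mathbf{R}_2 = \mathbf{R}_2(\theta, \mathbf{v}, \xi)$ are unchanged (they depend only on the smooth structure of the action, not on whether $x \in Z^\tau$), nothing else needs to be done. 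There is no real obstacle here — the corollary is essentially a one-line consequence of the defining property of $Z^\tau$ combined with Corollary \ref{cor: the moment map on tubes}.
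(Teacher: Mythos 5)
Your proposal is correct and matches the paper's intent: the corollary is stated without proof precisely because it is the immediate specialization of statement 2 of Lemma \ref{lem:G action in local HCs}, using that $x\in Z^\tau=\Phi^{-1}(0)\cap X^\tau$ forces $\varphi^\xi(x)=\langle\Phi(x),\xi\rangle=0$ for all $\xi\in\mathfrak{g}$ by Corollary \ref{cor: the moment map on tubes}. Nothing further is needed.
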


We shall need to localize our computations in $X^\tau$ 
near points
$x\in Z^\tau$ and in $G$ near $G_x$.
Let $G_x=\{\kappa_1,\ldots,\kappa_r\}$, and for each $l=1,\ldots,r$
let us parametrize $G$ in the neighbourhood of $\kappa_l$ by setting
\begin{equation}
\label{eqn:Glocal par}
g=e^{\xi}\,\kappa_l,\quad\text{where}\quad
\xi\sim 0\in \mathfrak{g}.
\end{equation}
Furthermore, we shall further abridge notation (\ref{eqn:vg}),
and set
\begin{equation}\label{eqn:vl}
    \mathbf{v}^{(l)}:=\mathbf{v}_{k_l^{-1}}.
\end{equation}
Then, by Corollary \ref{cor:LHC action Ztau},
\begin{eqnarray}
\label{eqn:local action lxi}
\mu^\tau_{g^{-1}}\big( x+(\theta,\mathbf{v})  \big)   
&=&\mu^\tau_{\kappa_l^{-1}}\circ \mu^\tau_{e^{-\xi}}
\big( x+(\theta,\mathbf{v})  \big)\\
&=&\mu^\tau_{\kappa_l^{-1}}\left(x+\Big(\theta
+\omega_x\big(\xi_{X^\tau}^\sharp(x),\mathbf{v}\big) 
+R_3,
\mathbf{v}-\xi^\sharp_{X^\tau}(x)+\mathbf{R}_2\Big)    \right).
\nonumber
\end{eqnarray}
Applying 1. of Lemma \ref{lem:G action in local HCs}, we obtain
(with $g$ as in (\ref{eqn:Glocal par}))
\begin{eqnarray}
\label{eqn:action g thetav}
\lefteqn{\mu^\tau_{g^{-1}}\big( x+(\theta,\mathbf{v})  \big)}\\
&=&x+\Big(\theta
+\omega_x\big(\xi_{X^\tau}^\sharp(x),\mathbf{v}\big) 
+R_3,
\mathbf{v}^{(l)}-\xi^\sharp_{X^\tau}(x)^{(l)}+\mathbf{R}_2\Big).
\nonumber
\end{eqnarray}

\begin{rem}
For any $\xi\in \mathfrak{g}$ and $l=1,\ldots,r$,
$$\xi^\sharp_{X^\tau}(x)^{(l)}=\mathrm{Ad}_{\kappa_l^{-1}}(\xi)_{X^\tau}(x).$$
\end{rem}

\subsection{$\psi^\tau$ near orbit intersections}

Let us now assume that $x_1,x_2\in Z^\tau$ are such that
$x_1^G\cap x_2^\chi\neq \emptyset$, i.e.
$\Sigma_\chi(x_1,x_2)\neq \emptyset$ (recall (\ref{eqn:Sigmax12})).
Thus, assuming $|\mathrm{supp}(\chi)|$ is sufficiently small, 
$\Sigma_\chi(x_1,x_2)$ is as in (\ref{eqn:Sigmax12chi}).
By Corollary \ref{cor:unique t1chi},
$x_1^G\cap x_2^\chi=\{x_{12}\}$, where 
$$x_{12}:=\mu^\tau_{h_1^{-1}}(x_1)=\Gamma^\tau_{t_1}(x_2).$$

We shall need to expand the phase $\psi^\tau$
in (\ref{eqn:szego as FIO}) near 
$(x_{12},x_{12})$ and $(x_2,x_2)$.
We assume given systems of NHLC's on $X^\tau$ centered at
$x_1$ and $x_2$.
The choice of $h_1$ then uniquely determines
NHLC's on $X^\tau$ centered at $x_{12}$, by
the condition that $\mu^\tau_{h_1^{-1}}$ be locally represented by
the identity from a neighbourhood of $x_1$ to one of $x_{12}$.

\subsubsection{$\psi^\tau$ near $(x_{12},x_{12})$}

In the neighbourhood of $\kappa_l\,h_1$ 
(recall (\ref{eqn:Sigmax12chi})),
we can parameterize $G$ by setting
$$
g:=e^\xi\,\kappa_l\,h_1,\quad\text{where}\quad
\xi\sim 0\in \mathfrak{g}.
$$
Making use of (\ref{eqn:action g thetav}), we obtain
\begin{eqnarray}
\label{eqn:mutaugxih1kl}
\lefteqn{\mu^\tau_{g^{-1}}\big( x_1+(\theta_1,\mathbf{v}_1)  \big)=
\mu^\tau_{h_1^{-1}}\circ \mu^\tau_{\kappa_l^{-1}\,e^{-\xi}}
\big( x_1+(\theta_1,\mathbf{v}_1)\big)
}\\
&=&\mu^\tau_{h_1^{-1}}\left( x_1+\Big(\theta_1
+\omega_x\big(\xi_{X^\tau}^\sharp(x),\mathbf{v}_1\big) 
+R_3,
\mathbf{v}_1^{(l)}
-\xi^\sharp_{X^\tau}(x)^{(l)}+\mathbf{R}_2\Big)  \right)\nonumber\\
&=&x_{12}+\Big(\theta_1
+\omega_x\big(\xi_{X^\tau}^\sharp(x),\mathbf{v}_1\big) 
+R_3,
\mathbf{v}^{(l)}_1-\xi^\sharp_{X^\tau}(x)^{(l)}+\mathbf{R}_2\Big).
\nonumber
\end{eqnarray}

In the neighbourhood of $(x_{12},x_{12})$, 
we shall consider pairs of the form 
$$
\left(\mu^{\tau}_{g^{-1}}\big(x_1+(\theta_1,\mathbf{v}_1\big),
x_{12}+\big(\theta,\mathbf{v}\big)\right),\quad
g=e^\xi\,\kappa_l\,h_1;
$$
the first entry is given by (\ref{eqn:mutaugxih1kl}).
By the discussion in \S \ref{sctn:szego}
(see also Proposition 47 of \cite{p24}), we then have:

\begin{eqnarray}
\label{eqn:local phase at x12}
\lefteqn{\imath\,\psi^\tau\left(\mu^{\tau}_{g^{-1}}\big(x_1+(\theta_1,\mathbf{v}_1)\big),
x_{12}+\big(\theta,\mathbf{v}\big)\right)}\\
&=&  \imath\, \psi^\tau \left( x_{12}+\Big(\theta_1
+\omega_x\big(\xi_{X^\tau}^\sharp(x),\mathbf{v}_1\big) 
+R_3,
\mathbf{v}^{(l)}_1
-\xi^\sharp_{X^\tau}(x)^{(l)}+\mathbf{R}_2\Big)           , x_{12}+\big(\theta,\mathbf{v}\big)\right)                  \nonumber\\
&=& \imath\,\Big(\theta_1-\theta
+\omega_x\big(\xi_{X^\tau}^\sharp(x),\mathbf{v}_1\big)\Big)
-\frac{1}{4\,\tau^2}\,\Big(\theta_1-\theta
+\omega_x\big(\xi_{X^\tau}^\sharp(x),\mathbf{v}_1\big)\Big)^2 \nonumber\\
&&+\psi_2\left( \mathbf{v}^{(l)}_1
-\xi^\sharp_{X^\tau}(x)^{(l)},\mathbf{v}   \right)                                          
+R_3 \nonumber\\
&=&\imath\,\Big(\theta_1-\theta
+\omega_x\big(\xi_{X^\tau}^\sharp(x),\mathbf{v}_1\big)\Big)
-\frac{1}{4\,\tau^2}\,(\theta_1-\theta
)^2 +\psi_2\left( \mathbf{v}^{(l)}_1
-\xi^\sharp_{X^\tau}(x)^{(l)},\mathbf{v}   \right)                                          
+R_3, \nonumber
\end{eqnarray}
where 
$R_3=R_3(\theta_1,\theta,\mathbf{v}_1,\mathbf{v},\xi)$.

\subsubsection{$\psi^\tau$ near $(x_2,x_2)$}\label{subsctn: psi near (x2,x2)}
\label{sctn:psitaux1x2}

In the neighbourhood of $(x_2,x_2)$, we shall consider points of the
form 
\begin{equation}
\label{eqn:nearx2x2}
\left( \Gamma^\tau_{-t_1-t}\big(x_{12}+(\theta,\mathbf{v})\big),
x_2+(\theta_2,\mathbf{v}_2)
\right),
\end{equation}
where $t\sim 0$; recall that $\Gamma^\tau_{-t_1}(x_{12})=x_2$.

We aim to express the first entry
in (\ref{eqn:nearx2x2}) in NHLC's on $X^\tau$ centered at $x_2$.
Since $\alpha^\tau$ is invariant by 
the geodesic flow, $\Gamma^\tau_t$ preserves the
vertical and horizontal tangent bundles. In particular, for any
$s\in \mathbb{R}$ and $\mathbf{v}\in \mathcal{H}^\tau_{x_{12}}$ 
with some abuse of notation we have
\begin{equation}
\label{eqn:differential geo flow}
\mathrm{d}_{x_{12}}\Gamma^\tau_{-t_1}\left(s\,\mathcal{R}^\tau
(x_{12})+\mathbf{v})\right)
=s\,\mathcal{R}^\tau
(x_{2})+B\,\mathbf{v},
\end{equation}
where the symplectic matrix $B$ is as in (\ref{eqn:defn of M12}).

\begin{lem}
\label{lem:Mt1transl}
Under the previous assumptions,
\begin{eqnarray*}
\Gamma^\tau_{-t_1}\big(x_{12}+(\theta,\mathbf{v})\big)
&=&x_2+\big(\theta+R_3(\theta,\mathbf{v}),
B\mathbf{v}+\mathbf{R}_2(\theta,\mathbf{v})\big).
\end{eqnarray*} 

\end{lem}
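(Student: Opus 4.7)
The plan is to Taylor-expand the map $(\theta,\mathbf{v}) \mapsto \Gamma^\tau_{-t_1}\bigl(x_{12}+(\theta,\mathbf{v})\bigr)$ at the origin in the chosen NHLC's at $x_{12}$ (source) and $x_2$ (target), and verify the claimed structure order by order. Since $\Gamma^\tau_{-t_1}(x_{12}) = x_2$ by the definition of $x_{12}$ in (\ref{eqn:defn di x12}), I can write
\[
\Gamma^\tau_{-t_1}\bigl(x_{12}+(\theta,\mathbf{v})\bigr) = x_2 + \bigl(\beta(\theta,\mathbf{v}),\,\mathbf{B}(\theta,\mathbf{v})\bigr)
\]
for smooth $\beta$ and $\mathbf{B}$ vanishing at the origin; the goal is to show $\beta(\theta,\mathbf{v}) = \theta + R_3(\theta,\mathbf{v})$ and $\mathbf{B}(\theta,\mathbf{v}) = B\,\mathbf{v} + \mathbf{R}_2(\theta,\mathbf{v})$.

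First I would compute the linear parts of $(\beta,\mathbf{B})$ by evaluating $\mathrm{d}_{x_{12}}\Gamma^\tau_{-t_1}$ in the NHLC frames dictated by (\ref{eqn:Reeb theta v H}). Since $\Gamma^\tau$ is the flow of $\upsilon^\tau_{\sqrt{\rho}} = -\tau\,\mathcal{R}^\tau$, the differential preserves $\mathcal{R}^\tau$ at every point; moreover, as the Reeb flow always preserves the contact form, it also preserves $\mathcal{H}^\tau = \ker(\alpha^\tau)$. Together with (\ref{eqn:differential geo flow}) (which is exactly the definition of $B$), this identifies the differential at the origin as $(\theta,\mathbf{v}) \mapsto (\theta, B\mathbf{v})$, so that $\beta_1(\theta,\mathbf{v}) = \theta$ and $\mathbf{B}_1(\theta,\mathbf{v}) = B\mathbf{v}$. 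The claim for $\mathbf{B}$ is thereby settled, as only accuracy modulo $\mathbf{R}_2$ is asserted.

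The main obstacle is to rule out \emph{quadratic} terms in $\beta$: writing $\beta = \theta + \beta_2(\theta,\mathbf{v}) + R_3$ with $\beta_2$ homogeneous of degree two, I must show $\beta_2 \equiv 0$. My preferred route is to exploit the commutativity $\Gamma^\tau_s \circ \Gamma^\tau_{-t_1} = \Gamma^\tau_{-t_1} \circ \Gamma^\tau_s$, valid for all $s \sim 0$. Applying Lemma \ref{eqn:local expression geodesic flow} in NHLC's at $x_2$ on the left-hand side and at $x_{12}$ on the right-hand side yields, for small $s$, a functional identity whose first component reads
\[
\beta(\theta,\mathbf{v}) \;-\; \tau s \;+\; R_3^{(2)}\bigl(s,\beta,\mathbf{B}\bigr)
\;=\; \beta\bigl(\theta - \tau s + R_3^{(1)}(s,\theta,\mathbf{v}),\; \mathbf{v} + \mathbf{R}_2^{(1)}(s,\theta,\mathbf{v})\bigr),
\]
with $R_3^{(i)}$ vanishing to third and $\mathbf{R}_2^{(1)}$ to second order at the origin. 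Matching the pure quadratic coefficient in $(\theta,\mathbf{v})$ on both sides (which receives no contribution from the $R_3^{(i)}$ terms) annihilates $\beta_2$.

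An alternative, more intrinsic, route that I would carry out in parallel as a sanity check is to invoke $(\Gamma^\tau_{-t_1})^*\alpha^\tau = \alpha^\tau$ together with the normal-form expression of $\alpha^\tau$ in NHLC's (Proposition 34 of \cite{p24}), which has the shape $\alpha^\tau = \mathrm{d}\theta + \tfrac12 \omega_x(\mathbf{v},\mathrm{d}\mathbf{v}) + (\text{vanishing to higher order})$; pulling back via $(\theta,\mathbf{v}) \mapsto (\beta,\mathbf{B})$ and matching the $\mathrm{d}\theta$-coefficient to second order in $(\theta,\mathbf{v})$ again forces the Hessian of $\beta$ at the origin to vanish, completing the proof.
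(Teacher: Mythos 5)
Your identification of the linear part is correct, and framing the problem as showing that the quadratic part $\beta_2$ of the angular component vanishes is the right reduction. But neither of your two routes, as stated, kills all of $\beta_2$. Write $\beta_2(\theta,\mathbf{v})=a\,\theta^2+\theta\,\ell(\mathbf{v})+q(\mathbf{v})$ with $\ell$ linear and $q$ quadratic. In your commutativity identity the second-order part (in $(s,\theta,\mathbf{v})$ jointly) of the two sides is $\beta_2(\theta,\mathbf{v})$ on the left and $\beta_2(\theta-\tau s,\mathbf{v})$ on the right, since all the $R^{(i)}$ remainders only contribute at third order; matching therefore gives $\partial_\theta\beta_2\equiv 0$, i.e. $a=0$ and $\ell=0$, but it imposes no constraint at all on $q(\mathbf{v})$. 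This is a structural limitation, not a bookkeeping issue: a map of the form $(\theta,\mathbf{v})\mapsto\big(\theta+q(\mathbf{v})+\ldots,\,B\mathbf{v}+\ldots\big)$ commutes with the $\theta$-translations of Lemma \ref{eqn:local expression geodesic flow} to the relevant order, so commutativity with the geodesic flow alone can never exclude the purely horizontal quadratic term — and that term is exactly the nontrivial content of the Lemma.

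Your second route is where the missing information lives, but as stated it suffers from the same deficiency: $q$ enters $\mathrm{d}\beta_2$ only through the $\mathrm{d}\mathbf{v}$-components (as $\partial_{v_i}q\,\mathrm{d}v_i$), so matching only the $\mathrm{d}\theta$-coefficient of $F^*\alpha^\tau=\alpha^\tau$ again yields only $a=\ell=0$, not the vanishing of the full Hessian. To close the argument you must compare the entire one-forms at linear coefficient order: the horizontal part of the pullback contributes $\omega_0(B\mathbf{v},B\,\mathrm{d}\mathbf{v})=\omega_0(\mathbf{v},\mathrm{d}\mathbf{v})$ (symplecticity of $B$ is used here), and the identity then forces $\mathrm{d}\beta_2\equiv 0$, hence $\beta_2=0$. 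Equivalently — and this is the paper's route — test the $\alpha^\tau$-invariance on horizontal curves: $t\mapsto x_{12}+(0,t\mathbf{v})$ is horizontal to second order, so its image under $\Gamma^\tau_{-t_1}$ is horizontal to second order at $x_2$, and Lemma \ref{lem:2nd order horizontal} then says its angular coordinate is $R_3(t)$, i.e. $q(\mathbf{v})=\beta_2(0,\mathbf{v})=0$. The paper implements this by writing $x_{12}+(\theta,\mathbf{v})=\Gamma^\tau_{-\theta/\tau}\big(x_{12}+(R_3,\mathbf{v}+\mathbf{R}_2)\big)$, upgrading the angular remainder after flowing by $-t_1$ from second to third order via Lemma \ref{lem:2nd order horizontal}, and only then commuting the flows as you do.
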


Before proving Lemma \ref{lem:Mt1transl},
let us give a definition.

\begin{defn}
\label{defn:horizontal degree k}
Let $\gamma:(-a,a)\rightarrow X^\tau$ be a smooth curve 
defined for some $a>0$ and set $\gamma(0)=x$. Let 
$k\ge 1$ be an integer. The $\gamma$ will be 
said to be \textit{horizontal to $k$-th order at $x$}
if $\left\langle \alpha,\dot{\gamma}\right\rangle=
O\left(t^k\right)$ for $t\sim 0$.

\end{defn}

\begin{lem}
\label{lem:2nd order horizontal}
In the situation of Definition \ref{defn:horizontal degree k},
the following conditions are equivalent:

\begin{enumerate}
\item $\gamma$ is horizontal to second order at $x$, that is,
$\left\langle \alpha,\dot{\gamma}\right\rangle=
O\left(t^2\right)$ for $t\sim 0$;
\item given any system of NHLC's on $X^\tau$ centered at $x$, 
for an appropriate $\mathbf{v}\in \mathcal{H}^\tau_x$ we have
$$
\gamma (t)=x+\big(R_3(t),t\,\mathbf{v}+\mathbf{R}_2(t)
\big).
$$
\end{enumerate}

\end{lem}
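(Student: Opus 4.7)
I would work in a fixed system of NHLC's on $X^\tau$ centered at $x$, write $\gamma(t)=x+(\theta(t),\mathbf{u}(t))$ with $\theta(0)=0$ and $\mathbf{u}(0)=\mathbf{0}$, and Taylor expand
$$
\theta(t)=a_1\,t+a_2\,t^2+R_3(t),\qquad \mathbf{u}(t)=t\,\mathbf{v}+\mathbf{R}_2(t),
$$
where $\mathbf{v}:=\dot{\mathbf{u}}(0)\in\mathcal{H}^\tau_x$ by (\ref{eqn:Reeb theta v H}). Under this parametrization, condition (2) is precisely the requirement that $a_1=a_2=0$, the vector $\mathbf{v}$ being arbitrary in $\mathcal{H}^\tau_x$. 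Hence the Lemma reduces to showing that $\langle\alpha^\tau,\dot\gamma\rangle=O(t^2)$ if and only if the linear and quadratic Taylor coefficients of $\theta(t)$ vanish.

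\textbf{Key input.} I would invoke the local expression of $\alpha^\tau$ in NHLC's provided by Proposition 34 of \cite{p24}: one may write
$$
\alpha^\tau=A(\theta,\mathbf{u})\,d\theta+B(\theta,\mathbf{u})\,d\mathbf{u},
$$
with $A(0,0)=1$, $B(0,0)=0$, and such that the $\mathbf{u}$-linear part of $B$ at the origin is the skew bilinear form $\mathbf{u}\mapsto \tfrac{1}{2}\omega_0(\mathbf{u},\cdot)$ on $\mathcal{H}^\tau_x$. Substituting,
$$
\bigl\langle\alpha^\tau,\dot\gamma(t)\bigr\rangle=A\!\bigl(\theta(t),\mathbf{u}(t)\bigr)\,\dot\theta(t)+B\!\bigl(\theta(t),\mathbf{u}(t)\bigr)\,\dot{\mathbf{u}}(t).
$$
Evaluation at $t=0$ gives $a_1$ (since $A(0,0)=1$ and $B(0,0)=0$). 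Differentiating once and evaluating at $t=0$, and using $\mathbf{u}(0)=\mathbf{0}$ together with the skew-symmetry of the $\mathbf{u}$-linear part of $B$, the cross term contributes $\tfrac{1}{2}\omega_0(\mathbf{v},\mathbf{v})=0$; the surviving contribution is $\ddot\theta(0)+c\,a_1$ for some constant $c$ coming from $dA$ at the origin. Thus the requirement $\langle\alpha^\tau,\dot\gamma\rangle=O(t^2)$ is equivalent to the simultaneous vanishing of $a_1$ and $\ddot\theta(0)=2a_2$, i.e.\ to $a_1=a_2=0$; this establishes (1)~$\iff$~(2) in both directions.

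\textbf{Main obstacle.} The only non-trivial ingredient is the skew-symmetry of the first-order correction to $\alpha^\tau$ in NHLC's, which ensures that the potential $O(t)$ contribution $\omega_0(\mathbf{v},\mathbf{v})$ vanishes identically so that $\ddot\theta(0)$ is isolated. This is the normal-form content of Proposition 34 of \cite{p24}, which I would cite as a black box; everything else is a routine Taylor expansion.
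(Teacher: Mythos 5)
Your proposal is correct and takes essentially the same route as the paper's proof: both write the curve in NHLC's, invoke the approximate normal form of the contact form in those coordinates (the paper uses Proposition 33 of \cite{p24} together with Corollary 35 to restrict from $\tilde{M}$ to $X^\tau$, you quote the restricted normal form directly), and the whole equivalence reduces to the skew-symmetry cancellation $\omega_0(\mathbf{v},\mathbf{v})=0$, which isolates $\dot{\theta}$ up to $O(t^2)$ terms. The only differences are cosmetic — the paper pulls back $\alpha$ term by term along $\gamma$ instead of Taylor-expanding $\langle\alpha^\tau,\dot{\gamma}\rangle$ at $t=0$, and the precise constant in the linear part of your $B$ (your $\tfrac{1}{2}\omega_0$ versus the paper's normalization) is immaterial since only skewness is used.
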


\begin{proof}
NHLC's $(\theta,z')$ on $X^\tau$ centered at $x$ are induced,
under projection and restriction, by NHLC's
$(z_0,z_1,\ldots,z_{d-1})$, on $\tilde{M}$ adapted to $X^\tau$ 
and centered at $x$ (see \S \ref{sctn:szego});
here $z_0=\theta+\imath\,\eta_0$ (with $\theta$ and $\eta_0$ real). 
By Proposition 33 of \cite{p24}, 
\begin{eqnarray*}
\alpha&=&\mathrm{d}\theta_0+\frac{1}{2\,\imath}\,\left[\frac{1}{2\,\tau^2}\,
\left( \overline{z}_0\,\mathrm{d}z_0-z_0\,\mathrm{d}\overline{z}_0  \right)
+\overline{z}'\cdot \mathrm{d}z'-z'\cdot \mathrm{d}\overline{z}'\right]\\
&&+R_2(z,\overline{z}).
\end{eqnarray*}
If $\gamma(t)=\big(\theta(t),z'(t)\big)$ in local coordinates on $X^\tau$, then $\gamma (t)=\big(\theta (t)+\imath\,\eta_0(t),z'(t)\big)$
in the corresponding NHLC's on $\tilde{M}$.
In particular, $z_0(t)=\theta(t)+\imath\,\eta_0(t)$.

By Corollary 35 of \cite{p24}, $\eta_0 (t)=R_2(t)$. 
Hence 
$$
\gamma^*\left( \overline{z}_0\,\mathrm{d}z_0-z_0\,\mathrm{d}\overline{z}_0  \right)=2\,\imath\,\gamma^*(\theta\,\mathrm{d}\eta_0-\eta_0\,\mathrm{d}\theta)=R_2(t).
$$
If $z'(t)=t\,w+\mathbf{R}_2(t)$, then 
\begin{eqnarray*}
\lefteqn{\gamma^*\left(\overline{z}'\cdot \mathrm{d}z'-z'\cdot \mathrm{d}\overline{z}' \right)}\\
&=&\left[\left(t\,\overline{w}+\mathbf{R}_2(t)\right)\cdot
\big(w+R_1(t)\big)-\left(t\,w+\mathbf{R}_2(t)\right)\cdot
\big(\overline{w}+R_1(t)\big)\right]\,\mathrm{d}t\\
&=&R_2(t).
\end{eqnarray*}
Hence,
$\gamma^*(\alpha)=\theta'(t)\,\mathrm{d}t+R_2(t)$.
Thus, $\left\langle \alpha,\dot{\gamma}\right\rangle=
R_2(t)$ if and only if $\theta'(t)=R_2(t)$ if and only if $\theta(t)=
R_3(t)$.

\end{proof}

\begin{proof}
[Proof of Lemma \ref{lem:Mt1transl}]
By Lemma \ref{eqn:local expression geodesic flow}, we have
$$
x_{12}+(\theta,\mathbf{v})=
\Gamma^\tau_{-\theta/\tau}\Big(x_{12}+\big(R_3(\theta,\mathbf{v}),
\mathbf{v}+\mathbf{R}_2(\theta,\mathbf{v})\big)\Big),
$$
for certain functions
$R_3$ and $\mathbf{R}_2$ (with the usual conventions 
about vanishing orders, and with $R_j$, $\mathbf{R}_j$ allowed to
vary from line to line).
By (\ref{eqn:differential geo flow}), we can write
$$
\Gamma_{-t_1}\Big(x_{12}+\big(R_3(\theta,\mathbf{v}),
\mathbf{v}+\mathbf{R}_2(\theta,\mathbf{v})\big)\Big)
=x_2+\big(R_2^{(1)}(\theta,\mathbf{v}),B\mathbf{v}+\mathbf{R}^{(1)}_2(\theta,\mathbf{v})\big),
$$
for certain functions vanishing to the indicated orders.

By Lemma \ref{lem:2nd order horizontal},
the smooth curve $\gamma:(-a,a)\rightarrow X^\tau$, defined for some
$a\ge 1$ by
$$
\gamma (t):=x_{12}+\big(R_3(t\,\theta,t\,\mathbf{v}),
\mathbf{v}+\mathbf{R}_2(t\,\theta,t\,\mathbf{v})\big),
$$
is horizontal to second order at $x_{12}$.
Since the geodesic flow $\Gamma^\tau$ preserves $\alpha^\tau$,
$\Gamma^\tau_{-t_1}\circ \gamma$ is horizontal to second order
at $x_2$. 
Hence
$R_2^{(1)}(t\,\theta,t\,\mathbf{v})=O\left(t^3\right)$,
and therefore $R_2^{(1)}(\theta,\mathbf{v})$ really vanishes to 
third order at $(0,\mathbf{0})$; we shall accordingly replace it by 
$R_3^{(1)}(\theta,\mathbf{v})$.

Therefore,
\begin{eqnarray*}
\Gamma^\tau_{-t_1}\big(x_{12}+(\theta,\mathbf{v})\big)&=&
\Gamma^\tau_{-t_1}\circ\Gamma^\tau_{-\theta/\tau}\Big(x_{12}+\big(R_3(\theta,\mathbf{v}),
\mathbf{v}+\mathbf{R}_2(\theta,\mathbf{v})\big)\Big)\\
&=&\Gamma^\tau_{-\theta/\tau}\circ\Gamma^\tau_{-t_1}\Big(x_{12}+\big(R_3(\theta,\mathbf{v}),
\mathbf{v}+\mathbf{R}_2(\theta,\mathbf{v})\big)\Big)                                  \\
&=&\Gamma^\tau_{-\theta/\tau}\left( x_2+\big(R_3^{(1)}(\theta,\mathbf{v}),B\mathbf{v}+\mathbf{R}^{(1)}_2(\theta,\mathbf{v})\big)   \right)\\
&=&x_2+\Big(\theta+R_3^{(2)}(\theta,\mathbf{v}),B\mathbf{v}+\mathbf{R}^{(2)}_2(\theta,\mathbf{v})\big),  
\end{eqnarray*} 
where in the last equality 
we have made use of Lemma \ref{eqn:local expression geodesic flow}.
\end{proof}

Using one more time Lemma \ref{eqn:local expression geodesic flow}, we
obtain the following upshot.

\begin{cor}
\label{cor:Gamma t t1}
With the previous notation,
\begin{eqnarray*}
\lefteqn{\Gamma^\tau_{-t-t_1}\Big(x_{12}+(\theta,\mathbf{v})\Big)}\\
&=&x_2+\Big(  \theta+\tau\,t+R_3(t,\theta,\mathbf{v}),
B\mathbf{v}+\mathbf{R}_2(t,\theta,\mathbf{v})\Big).
\end{eqnarray*}

\end{cor}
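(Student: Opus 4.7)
The plan is to exploit the one-parameter group property of the geodesic flow and compose the two lemmas already established in this subsection. Specifically, I would write $\Gamma^\tau_{-t-t_1} = \Gamma^\tau_{-t}\circ \Gamma^\tau_{-t_1}$ and treat the two factors in turn, which is precisely what the phrase "using one more time Lemma \ref{eqn:local expression geodesic flow}" suggests.

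First I would apply Lemma \ref{lem:Mt1transl} directly to $x_{12}+(\theta,\mathbf{v})$, obtaining
\[
\Gamma^\tau_{-t_1}\bigl(x_{12}+(\theta,\mathbf{v})\bigr)
= x_2 + \bigl(\theta + R_3(\theta,\mathbf{v}),\; B\mathbf{v} + \mathbf{R}_2(\theta,\mathbf{v})\bigr),
\]
thereby expressing the image in the NHLC's centered at $x_2$ that were fixed in \S\ref{subsctn: psi near (x2,x2)}. Then I would feed this point into the local expression for $\Gamma^\tau_{-t}$ provided by Lemma \ref{eqn:local expression geodesic flow} at $x_2$, reading that lemma with $t$ replaced by $-t$ so that the angular coordinate is translated by $+\tau t$ rather than $-\tau t$, and the $\mathbf{u}$-coordinate remains unchanged to first order.

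The substitution yields
\[
\Gamma^\tau_{-t}\Bigl(x_2 + \bigl(\theta + R_3(\theta,\mathbf{v}),\, B\mathbf{v} + \mathbf{R}_2(\theta,\mathbf{v})\bigr)\Bigr)
= x_2 + \bigl(\theta + \tau t + S_3,\; B\mathbf{v} + \mathbf{S}_2\bigr),
\]
where $S_3$ and $\mathbf{S}_2$ are obtained by substituting the remainder terms from Lemma \ref{lem:Mt1transl} into the remainder terms of Lemma \ref{eqn:local expression geodesic flow}, and then absorbing the leftover $R_3(\theta,\mathbf{v})$ and $\mathbf{R}_2(\theta,\mathbf{v})$ themselves. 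The only thing left to check is that $S_3$ vanishes to third order and $\mathbf{S}_2$ to second order, now jointly in $(t,\theta,\mathbf{v})$. This is a bookkeeping step: a function vanishing to order $k$ in $(\theta,\mathbf{v})$ alone \emph{a fortiori} vanishes to order $k$ in $(t,\theta,\mathbf{v})$, while the contribution from Lemma \ref{eqn:local expression geodesic flow}, originally of the form $R_3(\tau,t,\theta',\mathbf{u}')$ and $\mathbf{R}_2(\tau t,\theta',\mathbf{u}')$ with $(\theta',\mathbf{u}')$ themselves small of order $\ge 1$, retains the desired vanishing after composition.

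I expect no substantive obstacle here: the assertion really is an \emph{upshot} of combining two already proved local formulas via the flow semigroup law, and the only content is tracking the vanishing orders of the nested remainders — the explicit linear contribution $\tau t$ is peeled off before any such composition is performed.
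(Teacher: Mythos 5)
Your proposal is correct and coincides with the paper's own (one-line) argument: the corollary is obtained by writing $\Gamma^\tau_{-t-t_1}=\Gamma^\tau_{-t}\circ\Gamma^\tau_{-t_1}$, applying Lemma \ref{lem:Mt1transl} and then Lemma \ref{eqn:local expression geodesic flow} at $x_2$ (with $t$ replaced by $-t$), exactly as the phrase ``using one more time Lemma \ref{eqn:local expression geodesic flow}'' indicates. Your bookkeeping of the nested remainders, using that $(\theta',\mathbf{u}')$ vanishes to first order in $(\theta,\mathbf{v})$ so the composed remainders retain orders $3$ and $2$ in $(t,\theta,\mathbf{v})$, is precisely the content left implicit in the paper.
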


Invoking again Proposition 47 of \cite{p24}
(see also \S \ref{sctn:szego} above), we conclude that

\begin{eqnarray}
\label{eqn:psi at nearx2x2}
\lefteqn{
\imath\,\psi^\tau\left( \Gamma^\tau_{-t_1-t}\big(x_{12}+(\theta,\mathbf{v})\big),
x_2+(\theta_2,\mathbf{v}_2)
\right)}\\
&=&\imath\,\psi^\tau\left(x_2+\Big(  \theta+\tau\,t+R_3(t,\theta,\mathbf{v}),
B\mathbf{v}+\mathbf{R}_2(t,\theta,\mathbf{v})\Big) ,
x_2+(\theta_2,\mathbf{v}_2)    \right)\nonumber\\
&=& \imath\,(  \theta+\tau\,t-\theta_2)
-\frac{1}{4\,\tau^2}   \,(  \theta+\tau\,t-\theta_2)^2
+\psi_2\big(B\mathbf{v},\mathbf{v}_2\big)+R_3(\theta,\mathbf{v},
\theta_2,\mathbf{v}_2).
 \nonumber
\end{eqnarray}

\section{Proof of Theorem \ref{thm:main 1}
(case of $\Pi^\tau_{\chi,\nu,\lambda}$)}

\subsection{Preamble}\label{sctn:preamble1}

In this section, we shall prove Theorem \ref{thm:main 1}
for $\Pi^\tau_{\chi,\nu,\lambda}$; the formal changes 
to the argument needed for 
$P^\tau_{\chi,\nu,\lambda}$ will be described in \S 
\ref{sctn:complexified eigenf lapl}.
Before giving the proof, we shall lay down
some preliminaries and pieces of notation. 


\begin{enumerate}

\item With $\mathfrak{D}^\tau_{\sqrt{\rho}}$ as in (\ref{eqn:DsqrtrhoToeplitz}), we shall denote by
$$
U^\tau_{\sqrt{\rho}}(t):=e^{\imath\,t\,\mathfrak{D}^\tau_{\sqrt{\rho}}}
\qquad (t\in \mathbb{R})
$$
the $1$-parameter group of unitary Toeplitz operators generated by 
$\mathfrak{D}^\tau_{\sqrt{\rho}}$.

\item The distributional kernel of $\Pi^\tau_{\chi,\lambda}$
in (\ref{eqn:Pi chi lambda}) is
related to that of $U^\tau_{\sqrt{\rho}}(t)$ by
\begin{equation}
\label{eqn:PitaunuFIO}
\Pi^\tau_{\chi,\lambda}(x,y)=
\frac{1}{\sqrt{2\,\pi}}\,\int_{-\infty}^{+\infty}
e^{-\imath\,\lambda\,t}\,\chi (t)\,U^\tau_{\sqrt{\rho}}
(t;x,y)\,\mathrm{d}t.
\end{equation}

\item For $t\in \mathbb{R}$, let us denote by 
$\Pi^\tau_{-t}:L^2(X^\tau)\rightarrow L^2(X_\tau)$ the operator 
having distributional kernel
$$
\Pi^\tau_{-t}(x,y):=\Pi^\tau\left(\Gamma^\tau_{-t}(x),y\right).
$$

\item Following Zelditch (see e.g.
\cite{z97} and \cite{z20}),
there exists a zeroth order pseudodifferential operator
$P^\tau_t$ on $X^\tau$ (depending smoothly on $t$), such that 
\begin{equation}
\label{eqn:Ptautcomp}
U^\tau_{\sqrt{\rho}}(t)\sim \Pi^\tau\circ P^\tau_t\circ \Pi^{\tau}_{-t},
\end{equation}
where $\sim$ stands for \lq equal up to smoothing operators\rq. 
More precisely, there exists a classical polyhomogeneous symbol
of the form
\begin{equation}
\label{eqn:semiclassical symbol}
\sigma^\tau_t(x,r)\sim
\sum_{j=0}^{+\infty}\sigma^\tau_{t,j}(x)\,r^{-j},
\end{equation}
such that 
$$P^\tau_t\sim \sigma^\tau_t\left(x,D^\tau_{\sqrt{\rho}}\right).$$
The leading order term in (\ref{eqn:semiclassical symbol})
(equivalently, the principal symbol of $P^\tau_t$)
can be described, up to a unitary factor, as
follows. 

\item 
Let us set $J^\tau_t:=\mathrm{d}\Gamma^\tau_t\circ J\circ \mathrm{d}\Gamma^\tau_{-t}$. Then $J^\tau_t$ is a new CR structure on $X^\tau$,
with corresponding Hardy space
$$
H(X^\tau)_t:={\Gamma^\tau_{-t}}^*\big(H(X^\tau)\big),
$$
and corresponding Szeg\H{o} kernel
$$
\tilde{\Pi}^\tau_t:={\Gamma^\tau_{-t}}^*\circ \Pi^\tau\circ {\Gamma^\tau_{t}}^*.
$$
Thus, $\tilde{\Pi}^\tau_0=\Pi^\tau$; furthermore,
the distributional kernel of $\tilde{\Pi}^\tau_t$ is
$$
\tilde{\Pi}^\tau_t(x,y):=
\left(\Gamma^\tau_{-t}\times \Gamma^\tau_{-t}\right)^*
\left( \Pi^\tau\right) (x,y) 
=\Pi^\tau\left(\Gamma^\tau_{-t}(x),\Gamma^\tau_{-t}(y)  \right).
$$

\item 
For every $x\in X^\tau$, the vacuum states at $x$ associated to $J$ and $J_t$,
denoted $\sigma_{J}^{(x)}$ and $\sigma_{J_t}^{(x)}$, 
are Gaussian functions on
the horizontal tangent space at $x$. 
Their $L^2$-pairing 
$\langle \sigma_J^{(x)},\sigma_{J_t}^{(x)}\rangle$ varies smoothly with $x$ and $t$ and is nowhere vanishing. 
Furthermore, there is a smooth function $\theta^\tau_t(x)$ such that 
\begin{equation}
\label{eqn:correcting factor tau t}
\sigma^\tau_{t,0}(x)=e^{\imath\,\theta^\tau_t(x)}\cdot 
\langle \sigma_J^{(x)},\sigma_{J_t}^{(x)}\rangle^{-1}.
\end{equation}

\item Given the usual description of $\Pi^\tau$ as an FIO with complex
phase of positive type recalled in section \ref{sctn:szego}, we have
$$
\Pi^\tau_{-t}(x,y)\sim
\int_0^{+\infty}
e^{\imath\,v\,\psi^\tau\left(\Gamma^\tau_{-t}(x),y  \right)}\,
s^\tau\left(\Gamma^\tau_{-t}(x),y ,v\right)\,\mathrm{d}v.
$$
It follows that the Schwartz kernel of 
$\mathcal{P}^\tau_t:=P^\tau_t\circ \Pi^\tau_{-t}$ in (\ref{eqn:Ptautcomp}) is given by
\begin{eqnarray}
\label{eqn:mathcalPtau}
\mathcal{P}^\tau_t(x,y)\sim 
\int_0^{+\infty}
e^{\imath\,v\,\psi^\tau\left(\Gamma^\tau_{-t}(x),y  \right)}\,
r^\tau_t\left(x,y ,v\right)\,\mathrm{d}v
\end{eqnarray}
where 
\begin{equation}
\label{eqn:rttauexp}
r^\tau_t\left(x,y ,v\right)\sim
\sum_{j\ge 0}v^{d-1-j}\,r_{t,j}^\tau(x,y),
\end{equation}
and
\begin{equation}
\label{eqn:lead r tau t 0}
r_{t,0}^\tau(x,y)=
\sigma_{t,0}^\tau (x)\cdot s^\tau_0\left(\Gamma^\tau_{-t}(x),y\right).
\end{equation}
\end{enumerate}

\subsection{The proof}

We can now attack the proof of Theorem \ref{thm:main 1}.
Some of the arguments are an equivariant version of others
in \cite{p24}, but we try to make the exposition
reasonably self-contained.
We shall divide the statement of the Theorem in two parts, and
prove each part separately:

\begin{enumerate}
\item $\Pi_{\chi,\nu,\lambda}^\tau(x_1,x_2)=O\left(\lambda^{-\infty}\right)$
uniformly for
$
\mathrm{dist}_{X^\tau}(x_1,x_2^{G\times \mathrm{supp}(\chi)}
)
\ge C\,\lambda^{\epsilon'-\frac{1}{2}}
$.
\item $
\Pi_{\chi,\nu,\lambda}^\tau(x_1,x_2)=O\left(\lambda^{-\infty}\right)$
uniformly for 
$$\max\left\{\mathrm{dist}_{X^\tau}\left(x_1,Z^\tau\right),
\mathrm{dist}_{X^\tau}\left(x_2,Z^\tau\right)\right\}
\ge C\,\lambda^{\epsilon'-\frac{1}{2}}.
$$

\end{enumerate}

\begin{proof}
[Proof of Theorem \ref{thm:main 1}, Part 1.]
By (\ref{eqn:Pi chi nu lambda projector}), 
we have for $x_1,\,x_2\in X^\tau$:
\begin{eqnarray}
\label{eqn:1st expression}
\lefteqn{\Pi^\tau_{\chi,\nu,\lambda}(x_1,x_2)}\\
&=&\dim(\nu)\,
\int_G\,\Xi_\nu 
\left( g^{-1} 
\right)\,\Pi_{\chi,\lambda}
\left( 
\mu^\tau_{g^{-1}}(x_1),x_2 
\right)   
\,\mathrm{d}V_G(g).\nonumber
\end{eqnarray}
In view of (\ref{eqn:PitaunuFIO}), we may reformulate 
(\ref{eqn:1st expression}) as
\begin{eqnarray}
\label{eqn:2nd expression}
\lefteqn{\Pi^\tau_{\chi,\nu,\lambda}(x_1,x_2)}\\
&=&\frac{\dim(\nu)}{\sqrt{2\,\pi}}\,
\int_G\,\mathrm{d}V_G(g)\,\int_{-\infty}^{+\infty}\,\mathrm{d}t\,
\left[\Xi_\nu 
\left( g^{-1} 
\right)\,
e^{-\imath\,\lambda\,t}\,
\chi (t)\,  
U^\tau_{\sqrt{\rho}}\left(t;\mu^\tau_{g^{-1}}(x_1),x_2\right)
\right].\nonumber
\end{eqnarray}
Therefore, given (\ref{eqn:Ptautcomp}), for 
$\lambda\rightarrow +\infty$ we have
\begin{eqnarray}
\label{eqn:3rd expression}
\lefteqn{\Pi^\tau_{\chi,\nu,\lambda}(x_1,x_2)}\\
&\sim&\frac{\dim(\nu)}{\sqrt{2\,\pi}}\,
\int_G\,\mathrm{d}V_G(g)\,\int_{-\infty}^{+\infty}\,\mathrm{d}t
\,
\left[\Xi_\nu 
\left( g^{-1} 
\right)\,
e^{-\imath\,\lambda\,t}\,
\chi (t)\,  
\left(\Pi^\tau\circ P^\tau_t\circ \Pi^\tau_{-t}    \right)
\left(\mu^\tau_{g^{-1}}(x_1),x_2\right)
\right];\nonumber
\end{eqnarray}
here, $\sim$ means \lq has the same asymptotics as\rq.

The singular support of $\left(\Pi^\tau\circ P^\tau_t\circ \Pi^\tau_{-t}    \right)$ is the set of pairs 
$(x,y)\in X^\tau\times X^\tau$ such that
$x=\Gamma^\tau_t(y)$. Hence, $\left(\Pi^\tau\circ P^\tau_t\circ \Pi^\tau_{-t}    \right)$ is smooth at $\left(\mu^\tau_{g^{-1}}(x_1),x_2\right)$
unless $x_1=\mu^\tau_g\circ \Gamma^\tau_t(x_2)$.

Suppose then that $x_1\not\in x_2^{G\times \mathrm{supp}(\chi)}$.
Then the function 
$$
t\mapsto \chi(t)\cdot 
\int_G\,\mathrm{d}V_G(g)
\,
\left[\Xi_\nu 
\left( g^{-1} 
\right)\,
\left(\Pi^\tau\circ P^\tau_t\circ \Pi^\tau_{-t}    \right)
\left(\mu^\tau_{g^{-1}}(x_1),x_2\right)
\right]
$$
is smooth and compactly supported. Hence its Fourier
transform is of rapid decrease.

We conclude the following. Let us define
$$
\mathcal{K}^{G\times \mathrm{supp}(\chi)}:=\left\{
(x_1,x_2)\in X^\tau\times X^\tau\,:\,x_1\in x_2^{G\times \mathrm{supp}(\chi)}\right\},
$$
a compact subset of $X^\tau\times X^\tau$.

\begin{lem}
\label{lem:rapid decreas 1}
Let $X'\Subset X^\tau\times X^\tau$ be any open neighbourhood
of $\mathcal{K}^{G\times \mathrm{supp}(\chi)}$. Then
$$
\Pi^\tau_{\chi,\nu,\lambda}(x_1,x_2)=O\left(\lambda
^{-\infty}\right)\quad
\text{as}\quad \lambda\rightarrow +\infty
$$
uniformly for $(x_1,x_2)\not\in X'$.
\end{lem}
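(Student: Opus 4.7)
The plan is to leverage the representation (\ref{eqn:3rd expression}), which asymptotically identifies $\Pi^\tau_{\chi,\nu,\lambda}(x_1,x_2)$ with an oscillatory integral in $t$ of the form
$$
\frac{\dim(\nu)}{\sqrt{2\pi}}\int_{-\infty}^{+\infty} e^{-\imath\lambda t}\,F(t;x_1,x_2)\,\mathrm{d}t,
$$
where
$$
F(t;x_1,x_2):=\chi(t)\int_G \Xi_\nu(g^{-1})\,\bigl(\Pi^\tau\circ P^\tau_t\circ \Pi^\tau_{-t}\bigr)\bigl(\mu^\tau_{g^{-1}}(x_1),x_2\bigr)\,\mathrm{d}V_G(g),
$$
the difference with the true kernel being $O(\lambda^{-\infty})$ uniformly (hence harmlessly) by the smoothing remainder in Zelditch's dynamical Toeplitz identity (\ref{eqn:Ptautcomp}). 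The goal is then to show that $F$ is jointly smooth in $(t,x_1,x_2)$ with every $t$-derivative uniformly bounded on $\mathbb{R}\times (X^\tau\times X^\tau\setminus X')$; once this is done, $N$-fold integration by parts in $t$ yields $O(\lambda^{-N})$ uniformly, and $N$ is arbitrary.

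The key geometric input is that the Schwartz kernel of $\Pi^\tau\circ P^\tau_t\circ \Pi^\tau_{-t}$ is singular exactly on the locus $\{(y,z)\in X^\tau\times X^\tau\,:\,y=\Gamma^\tau_t(z)\}$. This is visible from the FIO representation (\ref{eqn:mathcalPtau}): since $\psi^\tau$ is of positive type, with $\Im(\psi^\tau)$ bounded below by a positive multiple of the squared distance from the diagonal, stationary phase in the $v$-integral produces a smooth kernel off the graph of $\Gamma^\tau_t$. Consequently the map
$$
(g,t,x_1,x_2)\longmapsto \bigl(\Pi^\tau\circ P^\tau_t\circ \Pi^\tau_{-t}\bigr)\bigl(\mu^\tau_{g^{-1}}(x_1),x_2\bigr)
$$
is jointly smooth on the complement of the closed set $\{x_1=\mu^\tau_g\circ\Gamma^\tau_t(x_2)\}$, with derivatives varying continuously there.

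By definition of $\mathcal{K}^{G\times\mathrm{supp}(\chi)}$, for $(x_1,x_2)\notin X'$ no $(g,t)\in G\times\mathrm{supp}(\chi)$ solves $x_1=\mu^\tau_g\circ\Gamma^\tau_t(x_2)$; since $X'$ is open and $\mathcal{K}^{G\times\mathrm{supp}(\chi)}$ is compact, the complement $X^\tau\times X^\tau\setminus X'$ is closed in a compact manifold, hence compact, and its product with $G\times\mathrm{supp}(\chi)$ is uniformly separated from the singular locus above. Hence the integrand is smooth on the compact product $G\times\mathrm{supp}(\chi)\times(X^\tau\times X^\tau\setminus X')$, with all partial derivatives uniformly bounded there. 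Integration in $g$ against the smooth weight $\Xi_\nu(g^{-1})\,\mathrm{d}V_G(g)$ and multiplication by $\chi(t)$ preserve these bounds, so $F$ is smooth in $t$ with every $\partial_t^N F$ uniformly bounded on the compact parameter set. Iterated integration by parts in $t$ then produces the uniform $O(\lambda^{-N})$ estimate for every $N$.

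The main (and really only) technical point that needs care is the $\sim$ in (\ref{eqn:Ptautcomp}), i.e., controlling the smoothing remainder's contribution to $\Pi^\tau_{\chi,\nu,\lambda}(x_1,x_2)$. But this is routine: a smoothing kernel $R_t(y,z)$ depending smoothly on $t\in \mathrm{supp}(\chi)$ yields, after averaging against $\chi(t)e^{-\imath\lambda t}\Xi_\nu(g^{-1})$ and integrating over $G\times\mathbb{R}$, a Fourier transform in $t$ of a jointly smooth compactly supported function, which is $O(\lambda^{-\infty})$ uniformly in $(x_1,x_2)$ over the compact manifold $X^\tau\times X^\tau$. Combined with the preceding integration-by-parts argument, this completes the proof of Lemma \ref{lem:rapid decreas 1}.
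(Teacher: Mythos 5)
Your proposal is correct and follows essentially the same route as the paper: both identify the singular support of $\Pi^\tau\circ P^\tau_t\circ \Pi^\tau_{-t}$ as the graph $\{x_1=\mu^\tau_g\circ\Gamma^\tau_t(x_2)\}$, conclude that off $\mathcal{K}^{G\times\mathrm{supp}(\chi)}$ the $g$-averaged integrand is a smooth compactly supported function of $t$, and obtain the uniform $O(\lambda^{-\infty})$ bound from rapid decay of its Fourier transform (your integration by parts in $t$), with uniformity via compactness of $G\times\mathrm{supp}(\chi)\times\left(X^\tau\times X^\tau\setminus X'\right)$. Your explicit treatment of the smoothing remainder in (\ref{eqn:Ptautcomp}) and of the uniformity is exactly what the paper leaves implicit.
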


We may thus assume that $x_1$ belongs to an arbitrarily small
neighbourhood of $x_2^{G\times \mathrm{supp}(\chi)}$.
We may rewrite (\ref{eqn:3rd expression}) as follows:

\begin{eqnarray}
\label{eqn:4th expression}
\lefteqn{\Pi^\tau_{\chi,\nu,\lambda}(x_1,x_2)}\\
&\sim&\frac{\dim(\nu)}{\sqrt{2\,\pi}}\,
\int_G\,\mathrm{d}V_G(g)\,\int_{-\infty}^{+\infty}\,\mathrm{d}t
\,\int_{X^\tau}\,\mathrm{d}V_{X^\tau}(y)\nonumber\\
&&\left[\Xi_\nu 
\left( g^{-1} 
\right)\,
e^{-\imath\,\lambda\,t}\,
\chi (t)\,  \Pi^\tau \left(\mu^\tau_{g^{-1}}(x_1),y\right)
\mathcal{P}^\tau_t
\left(y,x_2\right)
\right],\nonumber
\end{eqnarray}
where $\mathcal{P}^\tau_t$ is as in (\ref{eqn:mathcalPtau}).

The singular support of $\Pi^\tau$ is the diagonal in
$X^\tau\times X^\tau$ \cite{bs}. Hence, only a negligible contribution to
the asymptotics of (\ref{eqn:4th expression}) is lost,
if integration in $y$ is restricted to a small neighbourhood
of $x_1^G$. More precisely, let $\varrho_1(g,\cdot)$ be a cut-off function, smoothly varying with $g$, 
identically equal to $1$ sufficiently close to
$\mu^\tau_{g^{-1}}(x_1)$, 
but vanishing identically outside a small open neighbourhood 
of the point. Then only a rapidly decreasing contribution to
the asymptotics is lost, if the integrand in 
(\ref{eqn:4th expression}) is multiplied by $\varrho_1(g,y)$.
Similarly, the singular support of 
$\mathcal{P}^\tau_t$ in (\ref{eqn:mathcalPtau})
is the set of pairs $(x',x'')$ with 
$x'=\Gamma^\tau_t(x'')$. Again, we conclude that the
asymptotics of (\ref{eqn:4th expression}) will be unaltered, 
if the integrand is further multiplied by $\varrho_2(t,y)$, 
where $\varrho_2(t,\cdot)$ varies smoothly with $t$,
 is supported on a small neighbourhood of
$\Gamma^\tau_t(x_2)$, and identically equal to $1$
sufficiently close to it.

Given this, the pairs $\big(\mu^\tau_{g^{-1}}(x_1),y\big)$
and $\big(\Gamma^\tau_{-t}(y),x_2\big)$ belong to small
neighbourhoods of the diagonal. Therefore, on the domain of
integration we may replace $\Pi^\tau$ and $\mathcal{P}^\tau_t$
by the representations as FIO's with complex phase \cite{bs},
perhaps at the cost of losing a negligible contribution to
the asymptotics.

Thus, as $\lambda\rightarrow +\infty$,
\begin{eqnarray}
\lefteqn{\Pi^\tau_{\chi,\nu,\lambda}(x_1,x_2)}\\
&\sim&\frac{\dim(\nu)}{\sqrt{2\,\pi}}\,
\int_G\,\mathrm{d}V_G(g)\,\int_{-\infty}^{+\infty}\,\mathrm{d}t
\,\int_{X^\tau}\,\mathrm{d}V_{X^\tau}(y)\,
\int_0^{+\infty}\,\mathrm{d}u\,
\int_0^{+\infty}\,\mathrm{d}v\nonumber\\
&&\left[\Xi_\nu 
\left( g^{-1} 
\right)\,
e^{-\imath\,\lambda\,t}\,
\chi (t)\,  e^{\imath\,u\,\psi^\tau\left(\mu^\tau_{g^{-1}}(x_1),y\right)+\imath\,v\,\psi^\tau\left(\Gamma^\tau_{-t}(y),x_2\right)}
\right.\nonumber\\
&&\left. 
\varrho_1(g,y)\,\varrho_2(t,y)\,
s^\tau\left(\mu^\tau_{g^{-1}}(x_1),y,u\right)\,
r^\tau_t\left(y,x_2,v\right)
\right],\nonumber
\end{eqnarray}
where $r^\tau_t$ is as in (\ref{eqn:rttauexp}).
Let us now operate the rescaling
$u\mapsto \lambda\,u,\,v\mapsto \lambda\,v$:
\begin{eqnarray}
\label{eqn:5th expression}
\lefteqn{\Pi^\tau_{\chi,\nu,\lambda}(x_1,x_2)}\\
&\sim&\lambda^2\,\frac{\dim(\nu)}{\sqrt{2\,\pi}}\,
\int_G\,\mathrm{d}V_G(g)\,\int_{-\infty}^{+\infty}\,\mathrm{d}t
\,\int_{X^\tau}\,\mathrm{d}V_{X^\tau}(y)\,
\int_0^{+\infty}\,\mathrm{d}u\,
\int_0^{+\infty}\,\mathrm{d}v\nonumber\\
&&\left[\Xi_\nu 
\left( g^{-1} 
\right)\,
\chi (t)\,  e^{\imath\,\lambda\,\left[u\,\psi^\tau\left(\mu^\tau_{g^{-1}}(x_1),y\right)+v\,\psi^\tau
\left(\Gamma^\tau_{-t}(y),x_2\right)-t\right]}
\right.\nonumber\\
&&\left. 
\varrho_1(g,y)\,\varrho_2(t,y)\,
s^\tau\left(\mu^\tau_{g^{-1}}(x_1),y,\lambda\,u\right)\,
r^\tau_t\left(y,x_2,\lambda\,v\right)
\right].\nonumber
\end{eqnarray}

Let us set 
\begin{equation}
\label{eqn:fasePsi'}
\Psi(x_1,x_2;g,t,y,u,v):=
u\,\psi^\tau\left(\mu^\tau_{g^{-1}}(x_1),y\right)+v\,\psi^\tau
\left(\Gamma^\tau_{-t}(y),x_2\right)-t.
\end{equation}

The next step will be to argue that integration in
$u$ and $v$ may be restricted to certain compact 
neighbourhoods of $1/\tau$ in $\mathbb{R}_+$.

\begin{prop}
\label{prop:compact u and v}
There exist cut-off functions $f_1,\,f_2\in \mathcal{C}^\infty_c(\mathbb{R})$, identically equal to $1$ near $1/\tau$,
such that the asymptotics of (\ref{eqn:5th expression}) are
unaltered, if the integral is multiplied by 
$f_1(v)\cdot f_2(u)$.

\end{prop}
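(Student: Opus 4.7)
The plan is to show that outside a bounded neighborhood of $u = v = 1/\tau$ the oscillatory integrand of (\ref{eqn:5th expression}) contributes only $O(\lambda^{-\infty})$, by iterated non-stationary phase in the $t$ and $y$ variables. Writing
\[
1 = f_2(u)\,f_1(v) \;+\; f_2(u)\,\bigl(1 - f_1(v)\bigr) \;+\; \bigl(1 - f_2(u)\bigr),
\]
I shall split the integrand into three pieces and treat the last two by integration by parts, with $f_1, f_2 \in \mathcal{C}^\infty_c(\mathbb{R})$ suitable cut-offs identically $1$ near $1/\tau$.

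\textbf{Step 1: localization in $v$.} Expressing $y$ in NHLC's centered at $x_2$ and combining the expansion of $\psi^\tau$ from Theorem \ref{thm: phase expression in HLC} with Lemma \ref{eqn:local expression geodesic flow}, a direct calculation will give
\[
\partial_t \psi^\tau\bigl(\Gamma^\tau_{-t}(y), x_2\bigr) = \tau + O\bigl(\mathrm{dist}_{X^\tau}(\Gamma^\tau_{-t}(y), x_2)\bigr),
\]
whence $\partial_t \Psi = v\tau - 1 + O\bigl(\mathrm{dist}_{X^\tau}(\Gamma^\tau_{-t}(y), x_2)\bigr)$ up to a purely imaginary error of the same order. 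Taking $\varrho_2$ with sufficiently small support and choosing $f_1$ identically $1$ in a small neighborhood of $1/\tau$, on the support of $(1 - f_1(v))\,\varrho_2$ we will obtain $|\partial_t \Psi| \gtrsim \max\{1,\,v\}$: bounded below by a positive constant on any bounded portion of $v$-values away from $1/\tau$, and $\gtrsim v\tau/2$ for $v \gg 1/\tau$. Iterated integration by parts in $t$, with boundary terms killed by the compact support of $\chi$, then produces a factor $\lambda^{-N}(\partial_t \Psi)^{-N}$ times derivatives of the amplitude. The $(\lambda v)^{d-1}$ polynomial growth of $r^\tau_t$ is dominated by the $v^{-N}$ decay from the denominators for $N > d$, so the resulting $v$-integral converges uniformly in $\lambda$, giving a bound $O(\lambda^{-N})$ for any $N$.

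\textbf{Step 2: localization in $u$.} It then remains to control the piece on which $v$ lies in the support of $f_1$ (near $1/\tau$) while $u$ ranges outside the support of $f_2$. I shall introduce NHLC's on $X^\tau$ centered at $y_0 := \mu^\tau_{g^{-1}}(x_1)$ and write $y = y_0 + (\theta_y, \mathbf{w})$. Theorem \ref{thm: phase expression in HLC} will yield
\[
\imath\,\psi^\tau\bigl(y_0,\, y_0 + (\theta_y, \mathbf{w})\bigr) = -\imath\theta_y - \tfrac{1}{4\tau^2}\theta_y^2 - \tfrac{1}{2}\|\mathbf{w}\|^2 + R_3,
\]
so $\partial_{\theta_y}\psi^\tau(y_0, y)\big|_{y = y_0} = -1$. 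For the second summand, since $\mathrm{d}\Gamma^\tau_{-t}$ preserves the Reeb direction, an analogous expansion near $\Gamma^\tau_{-t}(y_0) \approx x_2$ will give $\partial_{\theta_y}\psi^\tau(\Gamma^\tau_{-t}(y), x_2)\big|_{y = y_0} = 1 + O\bigl(\mathrm{dist}_{X^\tau}(\Gamma^\tau_{-t}(y_0), x_2)\bigr)$. Hence
\[
\partial_{\theta_y}\Psi = -u + v + O\bigl(\|(\theta_y, \mathbf{w})\| + \mathrm{dist}_{X^\tau}(\Gamma^\tau_{-t}(y_0), x_2)\bigr),
\]
which on the (small) supports of $\varrho_1$, $\varrho_2$ and $f_1$ will be bounded away from zero whenever $u$ lies outside a suitable neighborhood of $1/\tau$. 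Choosing $f_2$ identically $1$ near $1/\tau$ with sufficiently small support, iterated IBP in $\theta_y$ (boundary terms vanishing by compact support of $\varrho_1\varrho_2$) delivers $O(\lambda^{-N})$ for any $N$, the $(\lambda u)^{d-1}$ amplitude growth being absorbed by the same symbolic balancing as in Step 1 (now in the $u$-variable, with $v$ confined to a compact set by $f_1$).

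\textbf{Main difficulty.} The chief technical burden will be uniform control of symbol norms after iterated IBP: one must check that differentiating the classical amplitudes $s^\tau(\cdot,\cdot,\lambda u)$ and $r^\tau_t(\cdot,\cdot,\lambda v)$ together with the denominators $(\partial_t\Psi)^{-1}$, $(\partial_{\theta_y}\Psi)^{-1}$ preserves the size estimates underpinning absolute convergence. The key balancing, valid uniformly on the supports in question, is that each additional $t$-derivative of $\Psi$ contributes at most one factor of $v$ while each inverse power of $\partial_t \Psi \sim v\tau$ supplies a factor of $v^{-1}$; an analogous balancing governs the $\theta_y$-derivatives in Step 2. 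This bookkeeping is routine but essential, to ensure that the gain of $\lambda^{-1}$ per IBP genuinely beats the polynomial growth of the amplitude in both $u$ and $v$.
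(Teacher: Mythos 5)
Your proposal is correct and follows essentially the same route as the paper: you first localize $v$ near $1/\tau$ by repeated integration by parts in $t$, using $\partial_t\Psi=v\tau-1+O(\delta)$, and then, with $v$ so confined, localize $u$ by integration by parts in the Reeb (angular) coordinate of $y$ in NHLC's centered at $\mu^\tau_{g^{-1}}(x_1)$, using $\partial_\theta\Psi=-u+v+O(\epsilon+\delta)$, exactly as in the paper's two-step argument. The only cosmetic remark is that the decomposition you display at the outset should be read as $1=f_1f_2+(1-f_1)+f_1(1-f_2)$, which is in fact what your Steps 1 and 2 implement.
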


For notational simplicity, 
the cut-off $f_1(v)\cdot f_2(u)$ will be implicitly absorbed
in the amplitude.

\begin{proof}
As remarked, by the previous reductions,
on the domain of integration
$\big(\mu^\tau_{g^{-1}}(x_1),y\big)$
and $\left(\Gamma^\tau_{-t}(y,)x_2\right)$ now belong to 
a small
neighbourhood of the diagonal of, 
say, tubular radius $\delta>0$. 
Hence, in local coordinates we have
\begin{eqnarray}
\label{eqn:psidiff diagonal}
\mathrm{d}_{\big(\mu^\tau_{g^{-1}}(x_1),\, y\big)}\psi^\tau
&=&\left(\alpha^\tau_{\mu^\tau_{g^{-1}}(x_1)},
-\alpha^\tau_{\mu^\tau_{g^{-1}}(x_1)}\right)+O(\delta),\\
\mathrm{d}_{\left(\Gamma^\tau_{-t}(y),\, x_2\right)}\psi^\tau
&=&\left(\alpha^\tau_{x_2},
-\alpha^\tau_{x_2}\right)+O(\delta).\nonumber
\end{eqnarray}

Localizing the computation near some 
$\left(\tilde{g},\tilde{t}\right)$, we shall set
$\tilde{x}:=\mu^\tau_{\tilde{g}^{-1}}(x_1)$ 
(so that there is a naturally induced system of NHLC's
centered at $\tilde{x}$)
and
$$
g:=\tilde{g}\,e^\xi,\quad
y=\tilde{x}+(\theta,\mathbf{v}),\quad t=\tilde{t}+a.
$$
In local coordinates we get:
$$
\Gamma^\tau_{-t}(y)=\Gamma^\tau_{-a}
\circ \Gamma_{-\tilde{t}}(y)=\Gamma^\tau_{-a}(x_2)+O(\delta).
$$
Therefore, we conclude from 
Lemma \ref{eqn:local expression geodesic flow} that
$$
\partial_t\Psi=v\,\tau-1+O(\delta).
$$

Since the variable $t$ is compactly supported, it is legitimate to 
integrate by parts in $t$, and we conclude that the contribution
to the asymptotics of (\ref{eqn:5th expression}) of the locus
where $0<v\ll 1/\tau$ or $v\gg 1/\tau$ is rapidly decreasing.

More precisely, we conclude the following.

\begin{lem}
\label{lem:red in v}
The asymptotics of (\ref{eqn:5th expression}) are unchanged, if
the integrand is multiplied by $f_1(v)$, where
$f_1\in \mathcal{C}^\infty_c(\mathbb{R}_+)$ is identically
equal to $1$ on a suitable neighbourhood of $1/\tau$.
\end{lem}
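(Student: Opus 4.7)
The plan is to exploit the computation $\partial_t\Psi = v\tau - 1 + O(\delta)$ derived just above the lemma statement and perform iterated integration by parts in $t$ on the portion of (\ref{eqn:5th expression}) where $v$ lies away from $1/\tau$, gaining a factor of $\lambda^{-1}$ at each step while keeping $u,y,g$ as ``parameters''.

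Concretely, I would fix $f_1\in\mathcal{C}^\infty_c(\mathbb{R}_+)$ identically equal to $1$ on an open neighbourhood $I_0$ of $1/\tau$ and with support contained in a slightly larger bounded interval $I_1\ni 1/\tau$. Shrinking the diameter $\delta$ that governs the cut-offs $\varrho_1,\varrho_2$, on the support of $1-f_1(v)$ one has
$$
|\partial_t\Psi|\ \ge\ c\,(1+v),
$$
for some $c>0$ uniform in the remaining variables: the linear term $v\,\tau$ dominates $\partial_t\Psi$ for large $v$, while on the bounded portion of $\mathbb{R}_+\setminus I_0$ the quantity $v\tau-1$ stays bounded away from zero. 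Using the identity $e^{\imath\lambda\Psi}=(\imath\lambda\,\partial_t\Psi)^{-1}\,\partial_t e^{\imath\lambda\Psi}$ and the fact that $\chi(t)\,\varrho_2(t,y)$ is compactly supported in $t$, I would integrate by parts $N$ times. Since $t$ does not appear in the large arguments $\lambda u,\lambda v$ of the classical symbols $s^\tau$ and $r^\tau_t$, each $t$-differentiation produces only bounded factors multiplying inverse powers of $\partial_t\Psi$; the net effect is that the $(1-f_1(v))$-portion of the integrand acquires an overall factor of $\lambda^{-N}$ times a smooth expression of size $O\bigl((1+v)^{-N}\bigr)$.

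Finally, for $N$ sufficiently large the resulting iterated integral in $u,v,y,g$ converges absolutely: the polyhomogeneous symbols $s^\tau(\cdot,\cdot,\lambda u)$ and $r^\tau_t(\cdot,\cdot,\lambda v)$ are of order $d-1$, hence contribute at most a polynomial factor $\lambda^{2(d-1)}$ times a $v$-integrable function (once $N\ge d+1$). Letting $N\to\infty$ then yields the $O(\lambda^{-\infty})$ bound asserted by the lemma. The main delicate point is the uniformity of the lower bound $|\partial_t\Psi|\ge c(1+v)$ as $v\to\infty$; without the linear-in-$v$ growth supplied by $v\,\tau$, the polynomial growth of the symbols in $\lambda v$ could not be tamed and the strategy would break down.
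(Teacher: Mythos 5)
Your argument is essentially the paper's own proof of this step: both exploit the compact $t$-support coming from $\chi$ (and $\varrho_2$) together with the lower bound on $\partial_t\Psi=v\,\tau-1+O(\delta)$ away from $v=1/\tau$, and then integrate by parts repeatedly in $t$ to gain powers of $\lambda^{-1}$, with the linear growth $v\,\tau$ providing the $v$-decay needed to dominate the polynomial growth of the symbols at infinity. Your write-up just makes explicit the uniformity in $v$ and the absolute convergence for $N$ large, which the paper leaves implicit, so the proposal is correct and follows the same route.
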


In the following, to simplify notation the cut-off $f_1(v)$
will be absorbed in the amplitude of (\ref{eqn:5th expression}).

Let us adopts NHLC's centered at 
$\tilde{x}:=\mu^\tau_{\tilde{g}^{-1}}(x_1)$, and write
$$
y=\tilde{x}+(\theta,\mathbf{v}),
$$
where $\lVert(\theta,\mathbf{v})\rVert=O(\delta)$.
Then
\begin{eqnarray}
\label{eqn:1st summand loc coor}
\lefteqn{
\imath\,\psi^\tau\left(\mu^\tau_{g^{-1}}(x_1),y\right)
=\imath\,\psi^\tau\left(\mu^\tau_{e^{-\xi}}\left(\tilde{x}\right),
\tilde{x}+(\theta,\mathbf{v})
\right)}\\
&=&
\imath\,\psi^\tau\left(
\tilde{x}+\Big(\langle\Phi(\tilde{x}),\xi\rangle
+R_3(\xi),
-\xi^\sharp_{X^\tau}(\tilde{x})+\mathbf{R}_2(\xi)\Big),\tilde{x}+(\theta,\mathbf{v})
\right)\nonumber\\
&=&\imath\,\big(\langle\Phi(\tilde{x}),\xi\rangle-\theta  \big)
-\frac{1}{4\,\tau^2}\,\big(\langle\Phi(\tilde{x}),\xi\rangle-\theta  \big)^2
-\frac{1}{2}\,\left\| \xi^\sharp_{X^\tau}(\tilde{x}) \right\|^2
+R_3(\xi,\theta).
\nonumber
\end{eqnarray}

Let us fix a system of NHLC's at 
$\Gamma^\tau_{-\tilde{t}}(\tilde{x})$.
Furthermore, arguing as in Lemma \ref{lem:Mt1transl}, for a suitable symplectic matrix $M_{\tilde{t}}$,
we obtain
\begin{eqnarray}
\label{eqn:ytansfGamma}
\Gamma^\tau_{-a-\tilde{t}}(y)
&=&\Gamma^\tau_{-a}\circ \Gamma^\tau_{-\tilde{t}}
\Big(\tilde{x}+(\theta,\mathbf{v}\big)\Big)\\
&=&\Gamma^\tau_{-a}\left(\Gamma^\tau_{-\tilde{t}}(\tilde{x})+
\Big(\theta+R_3(\theta), M_{\tilde{t}}\,\mathbf{v}+
\mathbf{R}_2(\theta,\mathbf{v})\Big)\right)
\nonumber
\\
&=&\Gamma^\tau_{-\tilde{t}}(\tilde{x})+
\Big(\theta+\tau\,a+R_3(\theta,a), M_{\tilde{t}}\,\mathbf{v}+
\mathbf{R}_2(\theta,\mathbf{v},a)\Big)
\nonumber
\end{eqnarray}

By assumption
$\Gamma^\tau_{-\tilde{t}}(\tilde{x})
=x_2+\big(R_1(\delta),\mathbf{R}_1(\delta)\big)$. 
Hence by Corollary \ref{cor:change of HLCs}
\begin{eqnarray}
\label{eqn:change tilde x2}
\lefteqn{
\Gamma^\tau_{-a-\tilde{t}}(y)=\Gamma^\tau_{-\tilde{t}}(\tilde{x})+
\Big(\theta+\tau\,a+R_3(\theta,a), M_{\tilde{t}}\,\mathbf{v}+
\mathbf{R}_2(\theta,\mathbf{v},a)\Big)
}\\
&=&\Big( x_2+\big(R_1(\delta),\mathbf{R}_1(\delta)\big) \Big)+
\Big(\theta+\tau\,a+R_3(\theta,a), M_{\tilde{t}}\,\mathbf{v}+
\mathbf{R}_2(\theta,\mathbf{v},a)\Big)\nonumber\\
&=&x_2+\Big(\theta+\tau\,a+R_3(\theta,a)+R_1(\delta),
M_{\tilde{t}}\,\mathbf{v}+\mathbf{R}_1(\delta)+
\mathbf{R}_2(\theta,\mathbf{v},a)
\Big).
\nonumber
\end{eqnarray}

Therefore,
\begin{eqnarray}
\label{eqn:second term psi}
\lefteqn{
\imath\,\psi^\tau\left(\Gamma_{-a-\tilde{t}}(y),x_2\right)}\\
&=&\imath\,(\theta+\tau\,a)-\frac{1}{4\,\tau^2}\,(\theta+\tau\,a)^2
-\frac{1}{2}\,\|M_{\tilde{t}}\,\mathbf{v}\|^2+R_2(\delta,a).
\end{eqnarray}
Given that $|a|,\,|\theta|\le C\,(\epsilon+\delta)$, by (\ref{eqn:fasePsi'})
we conclude that
\begin{eqnarray}
\label{eqn:derivata rel a}
\partial _\theta\Psi&=&-u+v+O(\epsilon+\delta).
\end{eqnarray}

The variable $\theta$ is also compactly supported,
and $v$ ranges in a certain neighbourhood of $1/\tau$;
therefore, upon choosing $\epsilon$ and $\delta$
suitably small, we conclude that the contribution to the
asymptotics of the locus where $0< u \ll 1/\tau$ or
$u\gg 1/\tau$ is also negligible. In other words, the 
asymptotics of (\ref{eqn:5th expression}) are unaltered,
if the integrand is multiplied by a cut-off function of the
form $f_2(u)$, where $f_2\in \mathcal{C}^\infty_c(\mathbb{R}_+)$
is identically equal to $1$ on an appropriate neighbourhood of
$1/\tau$.

The proof of Proposition \ref{prop:compact u and v} is complete.

\end{proof}

By Corollary 1.3 in \cite{bs}, there exists a constant
$C^\tau>0$ such that for any $x',x''\in X^\tau$ 
$$
\left|\psi^\tau(x',x'')\right|\ge 
\Im \left( \psi^\tau(x',x'') \right)
\ge C^\tau\,\mathrm{dist}_{X^\tau}(x',x'')^2.
$$

Let $\rho_\lambda'=\rho_\lambda'(x_1;g,y)$
be a smooth function
identically equal to $1$ where
$\mathrm{dist}_{X^\tau}\left(\mu^\tau_{g^{-1}}(x_1),y\right)
< a^\tau\,\lambda^{\epsilon'-1/2}$
and vanishes where $\mathrm{dist}_{X^\tau}\left(\mu^\tau_{g^{-1}}(x_1),y\right)
> A^\tau\,\lambda^{\epsilon'-1/2}$, for certain constants
$A^\tau>a^\tau>0$. 
Similarly, let  
$\rho_\lambda''=\rho_\lambda''(x_2;t,y)$ 
be a smooth function
identically equal to $1$ where
$\mathrm{dist}_{X^\tau}\left(\Gamma^\tau_{-t}(y,)x_2\right)
< b^\tau\,\lambda^{\epsilon'-1/2}$
and vanishes where $\mathrm{dist}_{X^\tau}\left(\Gamma^\tau_{-t}(y,)x_2\right)
> B^\tau\,\lambda^{\epsilon'-1/2}$, for certain constants
$B^\tau>b^\tau>0$.

\begin{lem}
\label{lem:g y t shrink}
The asymptotics of (\ref{eqn:5th expression}) are unaltered, if the
integrand is further multiplied by 
$\rho_\lambda'(x_1;g,y)\,\rho_\lambda''(x_2;t,y)$.

\end{lem}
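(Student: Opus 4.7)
The plan is to exploit the positivity of $\Im(\psi^\tau)$ from \cite{bs}, together with the fact that after the rescaling $u \mapsto \lambda u$, $v\mapsto \lambda v$ in (\ref{eqn:5th expression}) we have acquired a factor of $\lambda$ in the exponent, to show that the integrand decays exponentially fast outside a shrinking neighbourhood of the two diagonals.

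Concretely, by Corollary 1.3 in \cite{bs} there is $C^\tau>0$ with
\[
\Im\big(\psi^\tau(x',x'')\big)\ge C^\tau\,\mathrm{dist}_{X^\tau}(x',x'')^2
\quad\forall\,x',x''\in X^\tau.
\]
By Proposition \ref{prop:compact u and v} we may assume $u,v\in [1/(2\tau),2/\tau]$, say, so after the rescaling the real part of the exponent in (\ref{eqn:5th expression}) is
\[
\Re\!\left(\imath\,\lambda\,\Psi\right)
=-\lambda\,u\,\Im\big(\psi^\tau(\mu^\tau_{g^{-1}}(x_1),y)\big)
-\lambda\,v\,\Im\big(\psi^\tau(\Gamma^\tau_{-t}(y),x_2)\big)
\le 0,
\]
and
\[
\Re\!\left(\imath\,\lambda\,\Psi\right)\le -K^\tau\,\lambda\,
\Big[\mathrm{dist}_{X^\tau}\big(\mu^\tau_{g^{-1}}(x_1),y\big)^2
+\mathrm{dist}_{X^\tau}\big(\Gamma^\tau_{-t}(y),x_2\big)^2\Big]
\]
for some $K^\tau>0$.

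On the locus cut out by $1-\rho'_\lambda(x_1;g,y)\,\rho''_\lambda(x_2;t,y)$, at least one of the two distances is $\ge a^\tau\,\lambda^{\epsilon'-1/2}$ (or $\ge b^\tau\,\lambda^{\epsilon'-1/2}$), so that
\[
\Re\!\left(\imath\,\lambda\,\Psi\right)\le -K'\,\lambda\cdot \lambda^{2\epsilon'-1}
=-K'\,\lambda^{2\epsilon'},
\]
for some $K'>0$, uniformly in the remaining variables $(g,t,y,u,v)$ (which range in a bounded set). The amplitude is polynomially bounded in $\lambda$ by (\ref{eqn:rttauexp}) and the classical estimates on $s^\tau$, and all other ingredients ($\Xi_\nu$, the cut-offs $\varrho_1,\varrho_2,f_1,f_2$, $\chi$, the densities) are uniformly bounded. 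Consequently the contribution of the complement of $\{\rho'_\lambda\rho''_\lambda\equiv 1\}$ to (\ref{eqn:5th expression}) is $O(\lambda^N\,e^{-K'\,\lambda^{2\epsilon'}})=O(\lambda^{-\infty})$.

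The only mildly delicate point is bookkeeping: one has to make sure that the cut-offs $\rho'_\lambda,\rho''_\lambda$ may indeed be inserted without affecting the previous localizations (compactness of $g$, $t$, and of $y$ within the supports of $\varrho_1,\varrho_2$), but this is automatic since the new cut-offs only further shrink these supports. Once this is verified, the above $O(\lambda^{-\infty})$ bound yields the Lemma.
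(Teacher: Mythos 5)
Your argument is correct, but it takes a different route from the paper's. You exploit the \emph{positivity} of the phase: since $\Im\psi^\tau\ge C^\tau\,\mathrm{dist}_{X^\tau}^2$ and, after Proposition \ref{prop:compact u and v}, $u,v$ are confined to compact subsets of $\mathbb{R}_+$ bounded away from $0$, on the support of $1-\rho'_\lambda\rho''_\lambda$ the factor $e^{\imath\lambda\Psi}$ has modulus $\le e^{-K'\lambda^{2\epsilon'}}$, which beats the polynomial growth $O(\lambda^{2d})$ of the rescaled amplitude over the compact domain of integration. The paper instead runs a non-stationary-phase argument: on $\operatorname{supp}(1-\rho'_\lambda)$ it uses the same Boutet de Monvel--Sj\"ostrand estimate in the form $\left|\tfrac{1}{\lambda}\psi^\tau\big(\mu^\tau_{g^{-1}}(x_1),y\big)^{-1}\right|\le D^\tau\lambda^{-2\epsilon'}$ and iteratively integrates by parts in the compactly supported variable $u$ (and similarly for $\rho''_\lambda$), gaining $\lambda^{-2\epsilon'}$ per step. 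Both proofs rest on the same ingredients (Corollary 1.3 of \cite{bs} and the prior localization of $u,v$ near $1/\tau$); yours is more elementary in that it avoids integration by parts and any control of $u$-derivatives of the symbols, at the cost of needing $u,v$ bounded \emph{below} and uniform polynomial bounds on $s^\tau$, $r^\tau_t$, which indeed hold here since they are classical symbols of order $d-1$ and the remaining variables range over compact sets. Your bookkeeping remark about the new cut-offs only shrinking the existing supports is also accurate, so the proposal stands as a valid alternative proof.
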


\begin{proof}
On the support of
$1-\rho_\lambda'$, we can write 
$$
e^{\imath\,\lambda\,u\,\psi^\tau\left(\mu^\tau_{g^{-1}}(x_1),y\right)}=
-\frac{\imath}{\lambda}\,\psi^\tau\left(\mu^\tau_{g^{-1}}(x_1),y\right)^{-1}
\,\partial_u\left( e^{\imath\,\lambda\,u\,\psi^\tau\left(\mu^\tau_{g^{-1}}(x_1),y\right)}   \right).
$$
Furthermore, for some constant $D^\tau>0$
$$
\left|\frac{\imath}{\lambda}\,\psi^\tau\left(\mu^\tau_{g^{-1}}(x_1),y\right)^{-1}
\right|\le D^\tau
\lambda^{-2\,\epsilon'}.
$$
Hence, by iteratively integrating by parts in the compactly
supported variable $u$ we conclude that the contribution to
the asymptotics of the locus where 
$\rho_\lambda '\neq 1$ is $O\left(\lambda^{-\infty}\right)$.

The argument for $\rho''_\lambda$ is similar.
\end{proof}

Let us choose $r>1$ such that
$$
\mathrm{dist}_{X^\tau}(x',x'')
\le 
r\,
\mathrm{dist}_{X^\tau}\left(\Gamma^\tau_t(x'),\Gamma^\tau_t(x'')\right)
\quad \forall \,x',\,x''\in X^\tau,\quad -t\in \mathrm{supp}(\chi).
$$
On the support of the product $\chi (t)\cdot 
\rho_\lambda'(x_1;g,y)\,\rho_\lambda''(x_2;t,y)$, we have
\begin{eqnarray}
\label{eqn:estimate xgt dist}
\mathrm{dist}_{X^\tau}
\left(x_1,\Gamma^\tau_{t}\circ\mu^\tau_{g}(x_2)\right)&=&\mathrm{dist}_{X^\tau}
\left(\mu^\tau_{g^{-1}}(x_1),\Gamma^\tau_{t}(x_2)\right)
\\
&\le&\mathrm{dist}_{X^\tau}
\left(\mu^\tau_{g^{-1}}(x_1),y\right)+\mathrm{dist}_{X^\tau}
\left(y,\Gamma^\tau_{t}(x_2)\right)\nonumber\\
&\le&r\,\left[  \mathrm{dist}_{X^\tau}
\left(\mu^\tau_{g^{-1}}(x_1),y\right) +
\mathrm{dist}_{X^\tau}
\left(\Gamma^\tau_{-t}(y),x_2\right)  \right]\nonumber\\
&=&\lambda^{\epsilon'-1/2}\cdot r\,\left(A^\tau+B^\tau\right).
\nonumber
\end{eqnarray}

Given any $C>0$ and $\epsilon'>0$, on the other hand, 
for $0<\epsilon''<\epsilon'$ and $\lambda\gg 0$ one has
$$
\lambda^{\epsilon''-1/2}\cdot r\,\left(A^\tau+B^\tau\right)<
C\,\lambda^{\epsilon'-1/2}.
$$
This completes the proof of Part 1.
of Theorem \ref{thm:main 1}.

\end{proof}

\begin{proof}
[Proof of Theorem \ref{thm:main 1}, Part 2.]
By the arguments in the proof of Part 1., integration
in (\ref{eqn:5th expression})
may be restricted to the locus where 
$\mathrm{dist}_{X^\tau}
\left(\mu^\tau_{g^{-1}}(x_1),y\right)\le A\,\lambda^{\epsilon'-1/2}$.
Hence we may refine (\ref{eqn:psidiff diagonal}) to
\begin{eqnarray}
\label{eqn:psidiff diagonal1}
\mathrm{d}_{\big(\mu^\tau_{g^{-1}}(x_1),y\big)}\psi^\tau
&=&\left(\alpha^\tau_{\mu^\tau_{g^{-1}}(x_1)},
-\alpha^\tau_{\mu^\tau_{g^{-1}}(x_1)}\right)+O\left(\lambda^{\epsilon'-\frac{1}{2}} \right).
\end{eqnarray}
Working locally near a given $g_0\in G$, we can write
$g=g_0\,e^\xi$, where $\xi\in \mathfrak{g}$.
In view of Corollary \ref{cor: the moment map on tubes} and (\ref{eqn:fasePsi'}), 
we conclude that on the domain of integration
$$
\partial_\xi\Psi=u\,\langle\Phi(x_1),\xi\rangle+O\left(\lambda^{\epsilon'-\frac{1}{2}} \right)
$$
(recall Notation \ref{notn:moment map}).
In other words,
$$
\partial_g\Psi=u\,\Phi(x_1)+O\left(\lambda^{\epsilon'-\frac{1}{2}} \right).
$$
Since $u$ is now bounded away from zero, we conclude by
iteratively integrating by parts in $g$ that the contribution to
the asymptotics of
the locus where 
$\|\Phi(x_1)\|\ge C\,\lambda^{\epsilon'-\frac{1}{2}}$
is negligible. Given that by assumption 
$0$ is a regular value of $\Phi$,
we conclude that
$$
\Pi^\tau_{\chi,\nu,\lambda}(x_1,x_2)
=O\left(\lambda^{-\infty}\right),
$$
uniformly for $\mathrm{dist}_{X^\tau}(x_1,Z^\tau)\ge 
C\,\lambda^{\epsilon'-1/2}$. The similar statement for
$x_2$ in the following way: set $\chi_{-}(t):= \chi(-t)$, so that $\overline{\hat{\chi}}=\widehat{\chi_-}$, then by definition we have:
\begin{eqnarray}\label{eqn:conjugate swap}
\overline{\Pi^\tau_{\chi,\nu,\lambda}(x_{1},x_{2})}&=&
\sum_j\overline{\hat{\chi}(\lambda-\lambda_j)}\,\sum_k
\rho^{(\nu)}_{j,k}(x_{2})\cdot \overline{\rho^{(\nu)}_{j,k}(x_{1})}\nonumber\\
&=&\sum_j\widehat{\chi_-}(\lambda-\lambda_j)\,
\sum_k
\rho^{(\nu)}_{j,k}(x_{2})\cdot \overline{\rho^{(\nu)}_{j,k}(x_{1})}\nonumber\\
&=&\Pi^\tau_{\chi_-,\nu,\lambda}(x_{2},x_{1}).
\end{eqnarray}
Applying a similar argument to $\Pi^\tau_{\chi_{-},\nu,\lambda}(x_2,x_1)$ yields the same conclusion for $x_2$.
\end{proof}

\section{Scaling asymptotics for $\Pi^\tau_{\chi,\nu,\lambda}$}

In this section, we shall prove Theorems 
\ref{thm:main 3} (for $\Pi^\tau_{\chi,\nu,\lambda}$), 
\ref{thm:main 2}, and \ref{thm:diagonal case}; the adaptations required
for $P_{\chi,\nu,\lambda}$ will be dealt with in 
\S \ref{sctn:complexified eigenf lapl}.
Before discussing the proofs, let us interject some recalls
and remarks.

If $x_1\in Z^\tau$ and $x_1\in x_2^{G\times \mathrm{supp}(\chi)}$, 
then $x_2\in Z^\tau$ and
the action of $G\times \mathbb{R}$ is locally free at
both $x_1$ and $x_2$; furthermore, 
$x_1^G\cap x_2^{\mathrm{supp}(\chi)}=\{x_{12}\}$ for a unique point $x_{12}$ (Corollary \ref{cor:unique t1chi}),
and $\Sigma_\chi(x_1,x_2)$ is as in (\ref{eqn:Sigmax12chi}).
Let us fix NHLC's at $x_1$ and $x_2$.
Given $h_1$ as in (\ref{eqn:Sigmax12chi}), we obtain from
the NHLC's at $x_1$ a system of NHLC's at $x_{12}=
\mu^\tau_{h_1^{-1}}(x_1)$, simply by composing with 
$\mu^\tau_{h_1}$.

We shall first set the common stage for the
proofs, and then specialize the argument for each Theorem separately.

\begin{rem}
\label{rem:omega0hlcs}
Having chosen (normal) Heisenberg local coordinates
at $x_1$ and $x_2$, in the
following computations we may replace $\omega_{x_j}$ (restricted
to $\mathcal{H}^\tau_{x_j}$) by the standard symplectic form 
$\omega_0$ on $\mathbb{R}^{2d-2}$ (see
(63) of \cite{p24}).
\end{rem}

\subsection{General arguments for the scaling asymptotics
of $\Pi^\tau_{\chi,\nu,\lambda}$}

With the aim to study the asymptotics of 
$\Pi^\tau_{\chi,\nu,\lambda}(x_{1,\lambda},x_{2,\lambda})$,
with $x_{j,\lambda}$ as in (\ref{eqn:rescaled coord 12}),
we start from 
(\ref{eqn:5th expression}), with $x_{j,\lambda}$ in place of
$x_j$. The following arguments will eventually depend on an
application of the Stationary Phase Lemma, and all the previously
introduced cut-offs are identically
equal to $1$ near the stationary point. With abuse of notation,
we shall 
occasionally implicitly 
absorb the cut-offs in the amplitude,
unless it is relevant for the argument to do otherwise.
Thus we may write
\begin{eqnarray}
\label{eqn:6th expression}
\lefteqn{\Pi^\tau_{\chi,\nu,\lambda}(x_{1,\lambda},x_{2,\lambda})}\\
&\sim&\lambda^2\,\frac{\dim(\nu)}{\sqrt{2\,\pi}}\,
\int_G\,\mathrm{d}V_G(g)\,\int_{-\infty}^{+\infty}\,\mathrm{d}t
\,\int_{X^\tau}\,\mathrm{d}V_{X^\tau}(y)\,
\int_0^{+\infty}\,\mathrm{d}u\,
\int_0^{+\infty}\,\mathrm{d}v\nonumber\\
&&\left[e^{\imath\,\lambda\,\Psi(x_{1,\lambda},x_{2,\lambda};g,t,y,u,v)}\,
\Xi_\nu 
\left( g^{-1} 
\right)\,
\chi (t)\,  
%
s^\tau\left(\mu^\tau_{g^{-1}}(x_{1,\lambda}),y,\lambda\,u\right)\,
r^\tau_t\left(y,x_{2,\lambda},\lambda\,v\right)
\right].\nonumber
\end{eqnarray}
where  
\begin{equation}
\label{eqn:fasePsi'lambda}
\Psi(x_{1,\lambda},x_{2,\lambda};g,t,y,u,v)=
u\,\psi^\tau\left(\mu^\tau_{g^{-1}}(x_{1,\lambda}),y\right)
+v\,\psi^\tau
\left(\Gamma^\tau_{-t}(y),x_{2,\lambda}\right)-t.
\end{equation}
By the previous reductions, integration has been reduced to a 
locus where 
$$
\max\left\{\mathrm{dist}_{X^\tau}
\left( \mu^\tau_{g^{-1}}(x_{1,\lambda}),y\right),
\mathrm{dist}_{X^\tau}
\left( \Gamma^\tau_{-t}(y),x_{2,\lambda} \right)\right\}
=O\left(\lambda^{\epsilon'-\frac{1}{2}}\right);
$$
hence, we also have
$$
\mathrm{dist}_{X^\tau}
\left( \mu^\tau_{g^{-1}}(x_{1}),
\Gamma^\tau_t(x_{2}) \right)
=O\left(\lambda^{\epsilon'-\frac{1}{2}}\right).
$$
Since the action of $G\times \mathbb{R}$ on $Z^\tau$ 
is locally free, this entails that (once the previous cut-offs have been taken into account) $(g,t)$ ranges in a neighbourhood
of radius $O\left(\lambda^{\epsilon'-1/2}\right)$ of the set
$\Sigma_\chi(x_1,x_2)$ in (\ref{eqn:Sigmax12})
and (\ref{eqn:Sigmax12chi}). We can reformulate this 
slightly more explicitly as follows.

Let 
$\gamma^{\mathfrak{g}}:
\mathfrak{g}\rightarrow \mathbb{R}$ 
denote a bump function
supported in a small neighbourhood of $0\in \mathfrak{g}$,
and identically equal to $1$ sufficiently
close to $0$. 
Similarly, let 
$\gamma^\mathbb{R}:
\mathbb{R}\rightarrow \mathbb{R}$ denote a compactly 
supported bump function compactly supported on a small
neighbourhood of $0\in \mathbb{R}$, and identically equal
to $1$ sufficiently close to $0$. 
We conclude that the asymptotics of 
$\Pi^\tau_{\chi,\nu,\lambda}(x_{1,\lambda},x_{2,\lambda})$
are unaltered, if the integrand in (\ref{eqn:6th expression})
is further multiplied by the expression 
\begin{equation}
\label{eqn:new cut off RG}
\gamma^\mathbb{R}\left(\lambda^{-\epsilon'+1/2}\,
(t-t_1)\right)\cdot \sum_{l=1}^r\gamma^{\mathfrak{g}}\left(
\lambda^{-\epsilon'+1/2}\,\log_G\left(
g\,h_1^{-1}\,\kappa_l^{-1}\right)\right),
\end{equation}
where $\log_G=\exp_G^{-1}$ is defined on some neighbourhood
of $e_G$.
The cut-off (\ref{eqn:new cut off RG}) plays the same role as
the product $\chi (t)\cdot 
\rho_\lambda'(x_1;g,y)\,\rho_\lambda''(x_2;t,y)$ preceding
(\ref{eqn:estimate xgt dist}), and may replace it in
the integrand.


If we multiply the integrand in (\ref{eqn:6th expression})
by the cut-off in (\ref{eqn:new cut off RG}), we obtain
\begin{equation}
\label{eqn:Pitau sum over l}
\Pi^\tau_{\chi,\nu,\lambda}(x_{1,\lambda},x_{2,\lambda})
\sim \sum_{l=1}^r\Pi^\tau_{\chi,\nu,\lambda}(x_{1,\lambda},x_{2,\lambda})_l
\end{equation}
where $\Pi^\tau_{\chi,\nu,\lambda}(x_{1,\lambda},x_{2,\lambda})_l$
is given by right hand side of
(\ref{eqn:6th expression}), with the integrand
multiplied by the $l$-th summand in (\ref{eqn:new cut off RG}).

We are thus reduced to computing the asymptotics of each
summand in (\ref{eqn:Pitau sum over l}). To this end,
let us make the change of variables
$$
t\mapsto t_1+t,\,g=e^\xi\,\kappa_l\,h_1,
$$
where $(t,\xi)$ now ranges in a neighbourhood
of the origin in $\mathbb{R}\times \mathfrak{g}$.
The $l$-th summand in (\ref{eqn:new cut off RG}) now takes the 
form 
\begin{equation}
\label{eqn:new cut off RG lth term}
\gamma^{\mathbb{R}}\left(\lambda^{-\epsilon'+1/2}\,
t\right)\cdot \gamma^{\mathfrak{g}}\left(
\lambda^{-\epsilon'+1/2}\,\xi\right).
\end{equation}

On the current domain of integration we have
\begin{eqnarray*}
\mathrm{dist}_{X^\tau}\left(\mu^\tau_{g^{-1}}(x_1),x_{12}\right)
&=&\mathrm{dist}_{X^\tau}\left(\mu^\tau_{h_1^{-1}}\circ\mu^\tau_{\kappa_l^{-1}}\circ\mu^\tau_{e^{-\xi}}(x_1),
\mu^\tau_{h_1^{-1}}(x_1)\right)\\
&=&
O\left(\lambda^{\epsilon'-\frac{1}{2}}\right),
\end{eqnarray*}
and $\mathrm{dist}_{X^\tau}\left(\mu^\tau_{g^{-1}}(x_1),y\right)
=O\left(\lambda^{\epsilon'-\frac{1}{2}}\right)$, whence also
$\mathrm{dist}_{X^\tau}\left(y,x_{12}\right)
=O\left(\lambda^{\epsilon'-\frac{1}{2}}\right)$.
We can then express $y$ in NHLC's at $x_{12}$ as 
$$
y=x_{12}+\left(\theta,\mathbf{u}\right),
$$
where $\|(\theta,\mathbf{u})\|\le C'\,\lambda^{\epsilon'-\frac{1}{2}}$ for some constant $C'>0$.
By (67) and (117) of 
\cite{p24},
\begin{equation}
\label{eqn:volume term HLCs}
\mathrm{d}V_{X^\tau}(y)=\mathcal{V}(\theta,\mathbf{u})\,
\mathrm{d}\theta\,\mathrm{d}\mathbf{u},
\quad 
\mathcal{V}(0,\mathbf{0})=\frac{2^{d-1}}{\tau}.
\end{equation}

Let us operate the rescaling
\begin{equation}
\label{eqn:rescaling all}
(\theta,\,\mathbf{u},\,\xi,\,t)\mapsto
\left(\frac{\theta}{\sqrt{\lambda}},
\frac{\mathbf{u}}{\sqrt{\lambda}},
\,\frac{\xi}{\sqrt{\lambda}},\,\frac{t}{\sqrt{\lambda}}\right);
\end{equation}
and accordingly rewrite the parametrization of $y$ as
\begin{equation}
\label{eqn:y rescaled}
y_\lambda (\theta,\mathbf{u}):=x_{12}
+\left( \frac{\theta}{\sqrt{\lambda}}, \frac{\mathbf{u}}{\sqrt{\lambda}} \right).
\end{equation}
In the rescaled variables, the cut-off 
(\ref{eqn:new cut off RG lth term}) takes the form
\begin{equation}
\label{eqn:new cut off RG lth term resc}
\gamma^\mathbb{R}\left(\lambda^{-\epsilon'}\,
t\right)\cdot \gamma^{\mathfrak{g}}\left(
\lambda^{-\epsilon'}\,\xi\right),
\end{equation}
so that integration in $(\xi,t)$ is now over a ball centered 
at the origin and radius $O\left(\lambda^{\epsilon'}\right)$
in $\mathfrak{g}\times \mathbb{R}$.
Similarly, since prior to rescaling 
$(\theta,\mathbf{v})$ range in a shrinking ball centered at
the origin of radius $O\left(\lambda^{\epsilon'-1/2}\right)$,
the rescaled variables will range in an expanding ball 
of radius $O\left(\lambda^{\epsilon'}\right)$ in
$\mathbb{R}\times \mathbb{R}^{2d-2}$.

Let $\mathrm{d}\xi$ denote the Lebesgue measure on
$\mathfrak{g}$ associated to a bi-invariant metric on
$G$ whose Riemannian density is the Haar measure.
We obtain
\begin{eqnarray}
\label{eqn:6th expression lth summand}
\lefteqn{\Pi^\tau_{\chi,\nu,\lambda}(x_{1,\lambda},x_{2,\lambda})_l}\\
&=&\lambda^{2-d-d_G/2}\,\frac{\dim(\nu)}{\sqrt{2\,\pi}}\,
\int_{\mathfrak{g}}\,\mathrm{d}\xi\,
\int_{-\infty}^{+\infty}\,\mathrm{d}t
\,\int_{-\infty}^{+\infty}\,\mathrm{d}\theta\,
\int_{\mathbb{R}^{2d-2}}\,\mathrm{d}\mathbf{u}\,
\int_0^{+\infty}\,\mathrm{d}u\,
\int_0^{+\infty}\,\mathrm{d}v\nonumber\\
&&\left[e^{\imath\,\lambda\,\Psi_\lambda(x_j,
\theta_1,\mathbf{v}_1,\theta_2,\mathbf{v}_2,
t,\xi,\theta,\mathbf{v},u,v)_l}\,
\mathcal{A}_\lambda(
x_j,\theta_1,\mathbf{v}_1,\theta_2,\mathbf{v}_2,
t,\xi,\theta,\mathbf{u},u,v)_l
\right],\nonumber
\end{eqnarray}
where, recalling (\ref{eqn:fasePsi'}),
\begin{eqnarray}
\label{eqn:Psilambda}
\lefteqn{\Psi_\lambda(x_j,
\theta_1,\mathbf{v}_1,\theta_2,\mathbf{v}_2,
t,\xi,\theta,\mathbf{u},u,v)_l   }\\
&:=&\Psi\left(x_{1,\lambda},x_{2,\lambda};e^{\xi/\sqrt{\lambda}}\,\kappa_l\,h_1,t_1+\frac{t}{\sqrt{\lambda}},
y_\lambda (\theta,\mathbf{u}),u,v\right),
\nonumber
\end{eqnarray}
\begin{eqnarray}
\label{eqn:Alambda}
\lefteqn{\mathcal{A}_\lambda(
x_j,
\theta_1,\mathbf{v}_1,\theta_2,\mathbf{v}_2,
t,\xi,\theta,\mathbf{u},u,v) _l  }\\
&:=&
\Xi_\nu 
\left( h_1^{-1}\,\kappa_l^{-1}\, e^{-\xi/\sqrt{\lambda}}
\right)\,
\chi \left(t_1+\frac{t}{\sqrt{\lambda}}\right)
\cdot \mathcal{V}\left(\frac{\theta}{\sqrt{\lambda}},
\frac{\mathbf{u}}{\sqrt{\lambda}}\right)
\nonumber\\
&&\cdot 
s^\tau\left(\mu^\tau_{(e^{\xi/\sqrt{\lambda}}\kappa_l h_1)^{-1}}(x_{1,\lambda}),y_\lambda (\theta,\mathbf{u}),\lambda\,u\right)\,
r^\tau_t\big(y_\lambda (\theta,\mathbf{u}),x_{2,\lambda},\lambda\,v\big)
\nonumber\\
&&\cdot \tilde{\gamma}_\lambda (t,\theta,\xi,\mathbf{u})\cdot 
f_1(v)\cdot f_2(u);
\nonumber
\end{eqnarray}
here $\mathcal{V}$ is as in (\ref{eqn:volume term HLCs}),
and we have collected in $\tilde{\gamma}$
the cut-offs in the rescaled variables (which may be assumed to be
all of the form (\ref{eqn:new cut off RG lth term resc})); finally,
$f_1$ and $f_2$ are as in Proposition
\ref{prop:compact u and v}.

The next step will be to expand (\ref{eqn:Psilambda}) in
descending powers of $\lambda$.
Recalling (\ref{eqn:fasePsi'lambda}), we shall first expand the
individual summands. In view of (\ref{eqn:local phase at x12}),
\begin{eqnarray}
\label{eqn:1st summand Psi expd}
\lefteqn{ \imath\,u\,\psi^\tau\left(\mu^\tau_{(e^{\xi/\sqrt{\lambda}}\kappa_l h_1)^{-1}}(x_{1,\lambda}),y_\lambda (\theta,\mathbf{u})\right)  }\\
&=&\frac{\imath}{\sqrt{\lambda}}\,u\,(\theta_1-\theta
)\nonumber\\
&&
+\frac{1}{\lambda}\,u\,\left[ \imath\,\omega_{x_1}\big(\xi_{X^\tau}(x_1),\mathbf{v}_1\big) -\frac{1}{4\,\tau^2}\,(\theta_1-\theta
)^2  +\psi_2\left( \mathbf{v}^{(l)}_1
-\xi_{X^\tau}(x_1)^{(l)},\mathbf{u}   \right)                                           \right]\nonumber\\
&&+u\,R_3\left(
\frac{\theta_1}{\sqrt{\lambda}},\frac{\mathbf{v}_1}{\sqrt{\lambda}},
\frac{\theta}{\sqrt{\lambda}},\frac{\mathbf{u}}{\sqrt{\lambda}},
\frac{t}{\sqrt{\lambda}},\frac{\xi}{\sqrt{\lambda}}   \right); \nonumber
\end{eqnarray}
in the following, we shall replace 
$\omega_{x_j}$ by $\omega_0$ (Remark \ref{rem:omega0hlcs}), 
and abridge $k$-th order 
remainder terms such as the 
one on the latter line of (\ref{eqn:1st summand Psi expd}) by
the short-hand $R_k(\bullet/\sqrt{\lambda})$.

Regarding the second summand in (\ref{eqn:fasePsi'lambda}), in view of (\ref{eqn:psi at nearx2x2}) we have
\begin{eqnarray}
\label{eqn:2nd summand Psi expd}
\lefteqn{
\imath\,v\,\psi\left( \Gamma^\tau_{-t_1-t/\sqrt{\lambda}}\left(x_{12}+\left(\frac{\theta}{\sqrt{\lambda}},\frac{\mathbf{u}}{\sqrt{\lambda}}\right)\right),
x_2+\left(\frac{\theta_2}{\sqrt{\lambda}},
\frac{\mathbf{v}_2}{\sqrt{\lambda}}\right)
\right)}\\
&=& \frac{\imath}{\sqrt{\lambda}}\,v\,(  \theta+\tau\,t-\theta_2)\nonumber\\
&&+\frac{v}{\lambda}\,\left[
-\frac{1}{4\,\tau^2}   \,(  \theta+\tau\,t-\theta_2)^2
+\psi_2\big(B\mathbf{u},\mathbf{v}_2\big)\right]
+R_3\left(\frac{\bullet}{\sqrt{\lambda}}\right).
 \nonumber
\end{eqnarray}

The upshot is an expansion for (\ref{eqn:Psilambda}) of the form
\begin{eqnarray}
\label{eqn:Psi expanded}
\lefteqn{\imath\,\lambda\, \Psi_\lambda(
\theta_1,\mathbf{v}_1,\theta_2,\mathbf{v}_2,
t,\xi,\theta,\mathbf{u},u,v)_l }\\
&=&-\imath\,\lambda\,t_1 +\imath\,\sqrt{\lambda}\,
\Psi^\tau_{\theta_1,\theta_2}(t,v,\theta,u)
+\mathcal{S}(x,\theta_1,\mathbf{v}_1,\theta_2,\mathbf{v}_2,t,v,\theta,u,
\mathbf{u},\xi)_l\nonumber\\
&&+\lambda\,R_3\left(\frac{\bullet}{\sqrt{\lambda}}\right),
 \nonumber
\end{eqnarray}
where:
\begin{equation}
\label{eqn:Psitheta12}
\Psi^\tau_{\theta_1,\theta_2}(t,v,\theta,u):=
u\,(\theta_1-\theta
)+v\,(  \theta+\tau\,t-\theta_2)  -t  ,
\end{equation}
\begin{eqnarray}
\label{eqn:Stheta12v12}
\lefteqn{\mathcal{S}(\theta_1,\mathbf{v}_1,\theta_1,\mathbf{v}_1,t,v,\theta,u,
\mathbf{u},\xi)_l}\\
&:=&u\,\left[ \imath\,\omega_0\big(\xi_{X^\tau}(x_1),\mathbf{v}_1\big) -\frac{1}{4\,\tau^2}\,(\theta_1-\theta
)^2  +\psi_2\left( \mathbf{v}^{(l)}_1
-\xi_{X^\tau}(x_1)^{(l)},\mathbf{u}   \right)                                           \right]\nonumber\\
&&+v\,\left[
-\frac{1}{4\,\tau^2}   \,(  \theta+\tau\,t-\theta_2)^2
+\psi_2\big(B\mathbf{u},\mathbf{v}_2\big)\right].
\nonumber
\end{eqnarray}

Let us set
\begin{eqnarray}
\label{eqn:amplitude Blambda}
\lefteqn{
\mathcal{B}_\lambda(x_j, \theta_1,\mathbf{v}_1,\theta_2,\mathbf{v}_2,t,v,\theta,u,
\mathbf{u},\xi)_l}\\
&:=&
e^{\mathcal{S}(\theta_1,\mathbf{v}_1,\theta_2,\mathbf{v}_2;t,v,\theta,u,
\mathbf{u},\xi)_l}\,
e^{\lambda\,R_3\left(\frac{\bullet}{\sqrt{\lambda}}\right)}
\cdot\mathcal{A}_\lambda(
\theta_1,\mathbf{v}_1,\theta_2,\mathbf{v}_2,
t,\xi,\theta,\mathbf{u},u,v)_l .\nonumber
\end{eqnarray}

Then (\ref{eqn:6th expression lth summand})
may be rewritten as
\begin{eqnarray}
\label{eqn:7th expression lth summand}
\lefteqn{\Pi^\tau_{\chi,\nu,\lambda}(x_{1,\lambda},x_{2,\lambda})_l}\\
&=&e^{-\imath\,\lambda\,t_1}\cdot\lambda^{2-d-d_G/2}\,\frac{\dim(\nu)}{\sqrt{2\,\pi}}
\cdot \int_{\mathfrak{g}}\,\mathrm{d}\xi\,
\int_{\mathbb{R}^{2d-2}}\,\mathrm{d}\mathbf{u}\,\big[
I_\lambda (x_j,\theta_1,\mathbf{v}_1,\theta_2,\mathbf{v}_2,\xi,\mathbf{u})_l\big],
\nonumber
\end{eqnarray}
where
\begin{eqnarray}
\label{eqn:defn di Ilambda}
\lefteqn{
I_\lambda (x_j,\theta_1,\mathbf{v}_1,\theta_2,\mathbf{v}_2,\xi,\mathbf{u})_l}
\\
&:=&
\int_{-\infty}^{+\infty}\,\mathrm{d}t
\,\int_{-\infty}^{+\infty}\,\mathrm{d}\theta\,
\int_0^{+\infty}\,\mathrm{d}u\,
\int_0^{+\infty}\,\mathrm{d}v\nonumber\\
&&\left[e^{\imath\,\sqrt{\lambda}\,
\Psi^\tau_{\theta_1,\theta_2}(t,v,\theta,u)}\cdot 
\mathcal{B}_\lambda
(x_j,\theta_1,\mathbf{v}_1,\theta_2,\mathbf{v}_2,t,v,\theta,u,
\mathbf{u},\xi)_l
\right].\nonumber
\end{eqnarray}

We can now pair Taylor expansion 
in the rescaled variables in (\ref{eqn:Alambda}) and
in the factor $e^{\lambda\,R_3\left(\bullet/\sqrt{\lambda}\right)}$
with the asymptotic expansions of the classical symbols $s^\tau$ and $r^\tau_t$. 
The same arguments leading to the asymptotic expansion (142) 
of \cite{p24} in the action-free case yields a similar expansion 
in the present setting.
Before stating it, let us make the following remarks regarding
the leading order terms in $s^\tau$ and $r^\tau_t$ in NHLC's.

First, by Theorem \ref{thm: phase expression in HLC}, in NHLC's at $x_{12}$
we have
\begin{eqnarray}
\label{eqn:leading stau0}
s^\tau_0\left(\mu^\tau_{(\kappa_l h_1)^{-1}}(x_{1}),
x_{12}\right)=s^\tau_0\left(x_{12},
x_{12}\right)=\frac{\tau}{(2\,\pi)^d}.
\end{eqnarray}
Second, recalling (\ref{eqn:lead r tau t 0}),
\begin{eqnarray}
\label{eqn:lead r tau t 0 1}
r_{t_1,0}^\tau(x_{12},x_{2}\big)&=&
\sigma_{t_1,0}^\tau (x_{12})\cdot 
s^\tau_0\left(\Gamma^\tau_{-t_1}(x_{12}),x_2\right)
\nonumber\\
&=&\sigma_{t_1,0}^\tau (x_{1})\cdot 
s^\tau_0\left(x_2,x_2\right)=\sigma_{t_1,0}^\tau (x_{1})\cdot 
\frac{\tau}{(2\,\pi)^d},
\end{eqnarray}
where the front factor is as in (\ref{eqn:correcting factor tau t}).
We then have the following.
\begin{lem}
\label{lem:asympt-exp-B-lambda}
As $\lambda\rightarrow +\infty$, there is an asymptotic
expansion 
\begin{eqnarray*}
\lefteqn{\mathcal{B}_\lambda(x_j,\theta_1,\mathbf{v}_1,\theta_2,\mathbf{v}_2,t,v,\theta,u,
\mathbf{u},\xi)_l}\\
&\sim&e^{\mathcal{S}(\theta_1,\mathbf{v}_1,\theta_2,\mathbf{v}_2,t,v,\theta,u,
\mathbf{u},\xi)_l}\cdot 
\chi (t_1)\cdot \overline{\Xi_\nu 
\left(\kappa_l\, h_1\right)}\cdot\frac{2^{d-1}}{\tau}\cdot 
\lambda^{2\,(d-1)}\cdot 
(u\,v)^{d-1}\cdot \frac{\tau^2}{(2\,\pi)^{2d}}\\
&&\cdot\sigma_{t_1,0}^\tau (x_{1})\cdot \beta \left(\lambda^{-\epsilon'}\,(t,\xi,\theta,\mathbf{u})\right)
\cdot f_1(u)\cdot f_2(v)\\
&&\cdot \left[   
1+\sum_{k\ge 1}\lambda^{-k/2}\,B_k(u,v,\theta_1,\mathbf{v}_1,\theta_2,\mathbf{v}_2,t,v,\theta,u,
\mathbf{u},\xi)_l\right],
\end{eqnarray*}
where $B_k(u,v,\cdot)$ is a polynomial in the rescaled variables,
of degree $\le 3\,k$ and parity $k$.
\end{lem}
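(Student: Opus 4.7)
The strategy is to Taylor expand each of the factors entering the definition (\ref{eqn:amplitude Blambda}) of $\mathcal{B}_\lambda$ in the rescaled variables, combine the expansions, and keep track of the homogeneity properties of the remainders. The starting point is the observation that $\mathcal{B}_\lambda$ is the product of the elementary piece $e^{\mathcal{S}}\cdot f_1(u)\cdot f_2(v)\cdot \tilde{\gamma}_\lambda(\ldots)$ (carried unchanged into the expansion) with four sources of $\lambda$-dependence: the exponential correction $e^{\lambda R_3(\bullet/\sqrt{\lambda})}$; the smooth prefactors $\Xi_\nu(h_1^{-1}\kappa_l^{-1}e^{-\xi/\sqrt{\lambda}})\cdot\chi(t_1+t/\sqrt{\lambda})\cdot\mathcal{V}(\theta/\sqrt{\lambda},\mathbf{u}/\sqrt{\lambda})$; the Szeg\H{o} symbol $s^\tau$ evaluated at $\lambda u$; and the dynamical symbol $r^\tau_t$ evaluated at $\lambda v$.

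First I would treat the classical symbols. By (\ref{eqn:szego as FIO}) together with the definition (\ref{eqn:rttauexp}) of $r^\tau_t$, we have
\begin{equation*}
s^\tau\bigl(\mu^\tau_{\bullet}(x_{1,\lambda}), y_\lambda, \lambda u\bigr)\sim \sum_{j\ge 0}(\lambda u)^{d-1-j}\,s^\tau_j\bigl(\mu^\tau_{\bullet}(x_{1,\lambda}), y_\lambda\bigr),
\end{equation*}
and analogously for $r^\tau_t$. Taylor expanding each $s^\tau_j$ and $r^\tau_{t,j}$ in the rescaled arguments around the pair $\bigl(\mu^\tau_{(\kappa_l h_1)^{-1}}(x_1), x_{12}\bigr)$ resp.\ $(x_{12}, x_2)$ produces factors $\lambda^{-a/2}$ coming with polynomials of degree $a$ and parity $a$ in the rescaled variables. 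The leading contributions are then fixed by (\ref{eqn:leading stau0}) and (\ref{eqn:lead r tau t 0 1}), giving the prefactor $(u v)^{d-1}\lambda^{2(d-1)}\sigma^\tau_{t_1,0}(x_1)\cdot \tau^2/(2\pi)^{2d}$.

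Next I would expand the smooth scalar factors in the rescaled variables: $\Xi_\nu(h_1^{-1}\kappa_l^{-1}e^{-\xi/\sqrt{\lambda}})=\overline{\Xi_\nu(\kappa_l h_1)}+\sum_{k\ge 1}\lambda^{-k/2}\,P_k(\xi)$ where $P_k$ is a polynomial of degree $k$ and parity $k$; similarly $\chi(t_1+t/\sqrt{\lambda}) = \chi(t_1)+\lambda^{-1/2}\chi'(t_1)t+\cdots$ and $\mathcal{V}(\theta/\sqrt{\lambda},\mathbf{u}/\sqrt{\lambda})=2^{d-1}/\tau+\sum_{k\ge 1}\lambda^{-k/2}Q_k(\theta,\mathbf{u})$ with polynomials of degree and parity $k$, by (\ref{eqn:volume term HLCs}). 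The central, and most delicate, factor is $e^{\lambda R_3(\bullet/\sqrt{\lambda})}$: since $R_3$ vanishes to third order, $\lambda R_3(\bullet/\sqrt{\lambda})$ is a formal series $\sum_{k\ge 1}\lambda^{-k/2}\,T_{k+2}(\cdot)$ where $T_{k+2}$ is a polynomial of degree $k+2$ and parity $k$ (in the rescaled variables). Exponentiating produces an asymptotic expansion $\sum_{k\ge 0}\lambda^{-k/2}E_k$, where $E_k$ is a polynomial of degree $\le 3k$ and parity $k$ (each factor from a product of up to $k$ terms of the form $T_{k_i+2}$ with $\sum k_i=k$ contributes degree $\sum(k_i+2)\le 3k$ and parity $k$).

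Multiplying these expansions together and using that polynomial degrees add while parities add modulo~$2$ yields a joint expansion of the form claimed, with $k$-th term a polynomial in $(u,v,\theta_1,\mathbf{v}_1,\theta_2,\mathbf{v}_2,t,\theta,\mathbf{u},\xi)$ of degree $\le 3k$ and parity $k$; the $k=0$ contribution is the explicit prefactor stated in the Lemma. The main technical obstacle is the bookkeeping for the term $e^{\lambda R_3(\bullet/\sqrt{\lambda})}$, which is what forces the degree bound to be $3k$ rather than $k$; this is handled by a Faà di Bruno–type count together with the observation that the parities of the $T_{k_i+2}$ factors add coherently to give overall parity $k$. All uniformity in the rescaled variables (which range in a ball of radius $O(\lambda^{\epsilon'})$ with $\epsilon'<1/6$) follows from the standard polynomial-in-rescaled-variables majorants, exactly as in the action-free case treated in \cite{p24}.
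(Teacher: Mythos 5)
Your proposal is correct and follows essentially the same route as the paper, which obtains the expansion by pairing Taylor expansion in the rescaled variables of the smooth factors and of $e^{\lambda R_3(\bullet/\sqrt{\lambda})}$ with the polyhomogeneous expansions of $s^\tau$ and $r^\tau_t$, identifying the leading constants via (\ref{eqn:leading stau0}) and (\ref{eqn:lead r tau t 0 1}), and using exactly your degree-$\le 3k$, parity-$k$ bookkeeping (with $\epsilon'<1/6$ ensuring the expansion is genuine). The only small point to make explicit is that the $t$-dependence of $r^\tau_t$ (through $\sigma^\tau_{t,0}$, etc.) must also be Taylor expanded around $t_1$ after the substitution $t\mapsto t_1+t/\sqrt{\lambda}$, but this is subsumed in your expansion in the rescaled variables and does not affect the degree/parity count.
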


The latter is indeed an asymptotics expansion for $\epsilon'\in 
(0,1/6)$.

Integration in $u$ and $v$ is compactly supported. Furthermore,
in view of (\ref{eqn:Psitheta12})
$$
\partial _u\Psi^\tau_{\theta_1,\theta_2}(t,v,\theta,u)=
\theta_1-\theta,\quad 
\partial _v\Psi^\tau_{\theta_1,\theta_2}(t,v,\theta,u)=
  \theta+\tau\,t-\theta_2.
$$
Hence the partial differential 
$\partial_{u,v}\Psi^\tau_{\theta_1,\theta_2}$ satisfies 
\begin{eqnarray*}
\left\| \partial_{u,v}\Psi^\tau_{\theta_1,\theta_2}(t,v,\theta,u)  \right\|
&\ge&C_\tau\,
\left\|\left(\theta-\theta_1,
t-\frac{\theta_2-\theta_1}{\tau}\right)\right\|.
\end{eqnarray*}
For any given 
$\delta>0$,
by iteratively integrating by parts in $(u,v)$ on
the locus where $\left\|\big(\theta-\theta_1,
(\theta_2-\theta_1)/\tau\big)\right\|\ge \delta$, one introduces at
each step a factor $O\left(\lambda^{-1/2}\right)$. 
On the other hand, the radius 
of domain of
integration in (\ref{eqn:defn di Ilambda}) 
grows like $\lambda^{\epsilon'}$; furthermore, once divided by
$\lambda^{2\,(d-1)}$ the amplitude
$\mathcal{B}_\lambda$
remains bounded 
on the latter domain by Lemma \ref{lem:asympt-exp-B-lambda} and 
(\ref{eqn:Stheta12v12}).
One then has the following.

\begin{lem}
\label{lem:compact theta t}
Only a rapidly decreasing contribution to the asymptotics of
(\ref{eqn:defn di Ilambda}) - and 
(\ref{eqn:7th expression lth summand}) - is lost, if integration 
in $(\theta,t)$ is restricted to a fixed and arbitrarily small
neighbourhood of $\big(\theta_1,
(\theta_2-\theta_1)/\tau\big)$.
\end{lem}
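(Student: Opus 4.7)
The plan is to introduce a radial cut-off in $(\theta,t)$ and prove that the complementary contribution is $O(\lambda^{-\infty})$ via iterated integration by parts in the compactly supported variables $u,v$. Let $(\theta^\ast,t^\ast) := (\theta_1,(\theta_2-\theta_1)/\tau)$ be the critical point of $\Psi^\tau_{\theta_1,\theta_2}$ in $(\theta,t)$ (obtained by solving $\partial_u\Psi^\tau=\partial_v\Psi^\tau=0$), and fix a smooth cut-off $\beta_\delta\in\mathcal{C}^\infty_c(\mathbb{R}^2)$ that is $\equiv 1$ on the ball $B_{\delta/2}(\theta^\ast,t^\ast)$ and vanishes outside $B_\delta(\theta^\ast,t^\ast)$. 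I would write $I_\lambda = I_\lambda^{in} + I_\lambda^{out}$ according to the splitting $1 = \beta_\delta + (1-\beta_\delta)$, and show $I_\lambda^{out} = O(\lambda^{-\infty})$.

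The key geometric fact is that the linear map $(\theta,t)\mapsto(\theta_1-\theta,\theta+\tau t-\theta_2)$ has determinant $-\tau\neq 0$, so on $\mathrm{supp}(1-\beta_\delta)$ there is a constant $c_\tau>0$ with
\[
\big\|\partial_{u,v}\Psi^\tau_{\theta_1,\theta_2}\big\|^2
= (\theta_1-\theta)^2 + (\theta+\tau t-\theta_2)^2 \ge c_\tau\,\delta^2.
\]
Accordingly I would introduce the first-order differential operator
\[
L := \frac{1}{\imath\sqrt{\lambda}}\cdot
\frac{(\theta_1-\theta)\,\partial_u+(\theta+\tau t-\theta_2)\,\partial_v}{(\theta_1-\theta)^2+(\theta+\tau t-\theta_2)^2},
\]
so that $L\big(e^{\imath\sqrt{\lambda}\Psi^\tau_{\theta_1,\theta_2}}\big)=e^{\imath\sqrt{\lambda}\Psi^\tau_{\theta_1,\theta_2}}$. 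Since the cut-offs $f_1(v)$, $f_2(u)$ make integration in $(u,v)$ compactly supported, iterating the transpose $L^t$ $N$ times (no boundary contribution) yields
\[
I_\lambda^{out}=\int e^{\imath\sqrt{\lambda}\Psi^\tau_{\theta_1,\theta_2}}\,(L^t)^N\!\Big[(1-\beta_\delta)\,\mathcal{B}_\lambda\Big]\,\mathrm{d}t\,\mathrm{d}\theta\,\mathrm{d}u\,\mathrm{d}v,
\]
with each application of $L^t$ producing a factor $\lambda^{-1/2}$ times a rational expression in $(\theta_1-\theta,\theta+\tau t-\theta_2)$ whose denominator is bounded below on $\mathrm{supp}(1-\beta_\delta)$.

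The final step is to estimate $(L^t)^N[(1-\beta_\delta)\mathcal{B}_\lambda]$ in $L^1$ over the relevant domain. By Lemma \ref{lem:asympt-exp-B-lambda}, $\mathcal{B}_\lambda$ and its $(u,v)$-derivatives are bounded by $\lambda^{2(d-1)}$ times a polynomial in the rescaled variables (which are themselves $O(\lambda^{\epsilon'})$ on the support of $\tilde{\gamma}_\lambda$), while the factor $e^{\mathcal{S}}$ has real part $\le 0$ thanks to the negative semidefiniteness of $\Psi^\tau$ built into $\psi_2$. The effective domain of integration in $(\theta,t,\xi,\mathbf{u})$ has volume $O(\lambda^{C\epsilon'})$ for some fixed $C$; combined with the outer factor $\lambda^{2-d-d_G/2}$ in (\ref{eqn:7th expression lth summand}), this gives a bound
\[
|I_\lambda^{out}|\le C_N\,\lambda^{-N/2}\,\lambda^{K+C\epsilon'}
\]
for constants $K,C$ independent of $N$. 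Since $\epsilon'<1/6<1/2$, we can choose $N$ arbitrarily large and conclude $I_\lambda^{out}=O(\lambda^{-\infty})$.

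The main obstacle I anticipate is bookkeeping the behaviour of the $\lambda$-dependent cut-off $\tilde{\gamma}_\lambda$ under differentiation in $(u,v)$: since $\tilde{\gamma}_\lambda$ depends on $(t,\theta,\xi,\mathbf{u})$ rather than $(u,v)$, differentiation in $(u,v)$ leaves it untouched, which is why the argument goes through cleanly; one must, however, verify that the derivatives of $1-\beta_\delta$ (also independent of $u,v$) and of the rational coefficients of $L^t$ are uniformly bounded on $\mathrm{supp}(1-\beta_\delta)$ independently of $\lambda$. Once these bounds are in place, the classical stationary-phase type integration-by-parts estimate closes the argument.
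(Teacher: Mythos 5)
Your proposal is correct and follows essentially the same route as the paper: both exploit that $\partial_u\Psi^\tau_{\theta_1,\theta_2}=\theta_1-\theta$ and $\partial_v\Psi^\tau_{\theta_1,\theta_2}=\theta+\tau t-\theta_2$ are bounded below, off a $\delta$-neighbourhood of $\big(\theta_1,(\theta_2-\theta_1)/\tau\big)$, and integrate by parts repeatedly in the compactly supported variables $(u,v)$, gaining a factor $O(\lambda^{-1/2})$ per step that beats the $O(\lambda^{\epsilon'})$ growth of the integration domain and the polynomially bounded (after dividing by $\lambda^{2(d-1)}$) amplitude $\mathcal{B}_\lambda$. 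Your extra care about the $(u,v)$-independence of the cut-offs and the sign of $\Re(\mathcal{S})$ just makes explicit what the paper leaves implicit.
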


Leaving a corresponding cut-off function in $(\theta,t)$ implicit,
we may now study the asymptotics of $I_\lambda (x_j,\theta_1,\mathbf{v}_1,\theta_2,\mathbf{v}_2,\xi,\mathbf{u})_l$ 
in (\ref{eqn:defn di Ilambda})
using the stationary phase Lemma.
The phase (\ref{eqn:Psilambda}) has already been considered in \cite{p24} (where it is denoted $\Upsilon^\tau$).
By Lemma 64 of the same paper, 
(\ref{eqn:Psitheta12}) has a unique stationary point
$P_s=(t_s,v_s,\theta_s,u_s)$, given by
\begin{equation}
\label{eqn:stationary Ps}
P_s=\left(
\frac{\theta_2-\theta_1}{\tau},\frac{1}{\tau},\theta_1,\frac{1}{\tau}
\right);
\end{equation}
furthermore, the Hessian determinant and signature at the
critical point are, respectively, $\tau^2$ and $0$.
Arguing as in \S 4 of 
\cite{p24} (in particular, as in the derivation 
of (147) in \textit{loc. cit.})
we then obtain an asymptotic expansion for 
$I_\lambda (x_j,\theta_1,\mathbf{v}_1,\theta_2,\mathbf{v}_2,\xi,\mathbf{u})_l$ in (\ref{eqn:defn di Ilambda})
of the following form:
\begin{eqnarray}
\label{eqn:asyIlambda}
\lefteqn{
I_\lambda (x_j,\theta_1,\mathbf{v}_1,\theta_2,\mathbf{v}_2,\xi,\mathbf{u})_l}\\
&\sim&
\left(\frac{2\,\pi}{\sqrt{\lambda}}  \right)^2\cdot \frac{1}{\tau}
\cdot e^{\imath\,\sqrt{\lambda}\,\frac{\theta_1-\theta_2}{\tau}}
\cdot 
e^{\frac{1}{\tau}\,\left[\imath\,\omega_{0}\big(\xi_{X^\tau}(x_1),\mathbf{v}_1\big)+\psi_2\left( \mathbf{v}^{(l)}_1
-\xi_{X^\tau}(x_1)^{(l)},\mathbf{u}   \right)                                          +\psi_2\big(B\mathbf{u},\mathbf{v}_2\big)  \right]}
\nonumber\\
&&\cdot \sigma_{t_1,0}^\tau (x_{1})\cdot
\chi (t_1)\cdot \overline{\Xi_\nu 
\left(\kappa_l\, h_1\right)}\cdot\frac{2^{d-1}}{\tau}\cdot 
\left(\frac{\lambda}{\tau}\right)^{2\,(d-1)}
\cdot \frac{\tau^2}{(2\,\pi)^{2d}}\nonumber\\
&&\cdot \beta' \left(\lambda^{-\epsilon'}\,(\xi,\mathbf{u})\right)
\cdot \left[   
1+\sum_{k\ge 1}\lambda^{-k/2}\,F_k^\tau(\theta_1,\mathbf{v}_1,
\theta_2,\mathbf{v}_2;
\mathbf{u},\xi)_l\right]\nonumber\\
&=&e^{\imath\,\sqrt{\lambda}\,\frac{\theta_1-\theta_2}{\tau}}
\cdot \frac{\lambda^{2d-3}}{(2\,\pi^2\,\tau^2)^{d-1}}
\cdot \sigma_{t_1,0}^\tau (x_{1})\cdot
\chi (t_1)\cdot \overline{\Xi_\nu 
\left(\kappa_l\, h_1\right)}\cdot 
e^{\mathcal{S}(\theta_1,\mathbf{v}_1,\theta_2,\mathbf{v}_2;
\mathbf{u},\xi)_l}\nonumber\\
&&\cdot \beta' \left(\lambda^{-\epsilon'}\,(\xi,\mathbf{u})\right)
\cdot \left[   
1+\sum_{k\ge 1}\lambda^{-k/2}\,F_k(\theta_1,\mathbf{v}_1,\theta_2,\mathbf{v}_2;
\mathbf{u},\xi)_l\right]\nonumber
\end{eqnarray}
where
\begin{eqnarray}
\label{eqn:defn di Sc}
\lefteqn{ \mathcal{S}(\theta_1,\mathbf{v}_1,\theta_2,\mathbf{v}_2;
\mathbf{u},\xi)_l }\\
&:=&\frac{1}{\tau}\,\left[\imath\,\omega_{0}\big(\xi_{X^\tau}(x_1),\mathbf{v}_1\big)+\psi_2\left( \mathbf{v}^{(l)}_1
-\xi_{X^\tau}(x_1)^{(l)},\mathbf{u}   \right)                                          +\psi_2\big(B\mathbf{u},\mathbf{v}_2\big)  \right],
\nonumber
\end{eqnarray}
$\beta'$ is a cut-off identically equal to $1$ near the origin, and 
$F_k(\cdot)_l$ is a polynomial in the indicated variables,
of degree $\le 3\,k$ and parity $k$ 
(and implicitly depending on $x_1,\,x_2$).

More precisely, let us set
$$
L:=\frac{1}{\tau}\,\left(\frac{\partial^2}{\partial t\,\partial\,u}    
+\frac{\partial^2}{\partial t\,\partial\,v} 
\right)-\frac{\partial^2}{\partial \theta\,\partial\,u}.
$$
Then,
when applying 
the Stationary Phase Lemma to (\ref{eqn:defn di Ilambda}),
the $k$-th summand in the asymptotic expansion of the amplitude in 
Lemma \ref{lem:asympt-exp-B-lambda} will yield an asymptotic expansion
whose $j$-th term (where $j\ge 0$) is a multiple of
\begin{eqnarray*}
\lefteqn{ \frac{4\,\pi^2}{\tau}\,\sigma_{t_1,0}^\tau (x_{1})\cdot\chi (t_1)\cdot \overline{\Xi_\nu 
\left(\kappa_l\, h_1\right)}\cdot\frac{2^{d-1}}{\tau}\cdot 
\lambda^{2d-3-\frac{k+j}{2}} \cdot \frac{\tau^2}{(2\,\pi)^{2d}} }\\
&&\left.L^j\left(e^{\mathcal{S}(\theta_1,\mathbf{v}_1,\theta_2,\mathbf{v}_2,t,v,\theta,u,
\mathbf{u},\xi)_l}\cdot 
(u\,v)^{d-1}\cdot B_k(u,v,\theta_1,\mathbf{v}_1,\theta_2,\mathbf{v}_2,t,v,\theta,u,
\mathbf{u},\xi)_l\right)\right|_{P_s}.
\end{eqnarray*}
Using that $\mathcal{S}$ is homogeneous of degree two in the rescaled variables, one can check
inductively that the resulting expansion has the stated form.

Again in view of the cut-offs and the exponential, the
asymptotic expansion (\ref{eqn:asyIlambda}) 
may be integrated term by term. Thus
\begin{eqnarray}
\label{eqn:integrale Iuxi}
\lefteqn{ \int_{\mathfrak{g}}\,\mathrm{d}\xi\,
\int_{\mathbb{R}^{2d-2}}\,\mathrm{d}\mathbf{u}\,\big[
I_\lambda (\theta_1,\mathbf{v}_1,\theta_2,\mathbf{v}_2,\xi,\mathbf{u})_l\big]
 }\\
 &\sim&e^{\imath\,\sqrt{\lambda}\,\frac{\theta_1-\theta_2}{\tau}}
\cdot \frac{\lambda^{2d-3}}{(2\,\pi^2\,\tau^2)^{d-1}}
\cdot \sigma_{t_1,0}^\tau (x_{1})\cdot
\chi (t_1)\cdot \overline{\Xi_\nu 
\left(\kappa_l\, h_1\right)}
\nonumber\\
&&
\cdot \sum_{k=0}^{+\infty}\lambda^{-k/2}
\mathcal{I}_k(\theta_1,\mathbf{v}_1,\theta_2,\mathbf{v}_2)_l,
\nonumber
\end{eqnarray}
where
\begin{eqnarray}
\label{eqn:defn di mathcalIl}
\lefteqn{\mathcal{I}_k(\theta_1,\mathbf{v}_1,\theta_2,\mathbf{v}_2)_l}\\
&:=&\int_{\mathfrak{g}}\,\mathrm{d}\xi\,
\int_{\mathbb{R}^{2d-2}}\,\mathrm{d}\mathbf{u}\,\left[
e^{\mathcal{S}(\theta_1,\mathbf{v}_1,\theta_2,\mathbf{v}_2;
\mathbf{u},\xi)_l}
\cdot  F_k(\theta_1,\mathbf{v}_1,\theta_2,\mathbf{v}_2;
\mathbf{u},\xi)_l\right];\nonumber
\end{eqnarray}
we have set $F_0=1$.

Let us first consider the leading order terms in (\ref{eqn:integrale Iuxi})
and in (\ref{eqn:7th expression lth summand}),
bearing in mind the direct sum decompositions of 
$T_xX^\tau$ induced by $\alpha^\tau$ at any $x\in X^\tau$, and 
by $\mu^\tau$ at any $x\in Z^\tau$ (see (\ref{eqn:dicrect sum vert hor})
and Definition \ref{defn:vth dec Ztau}).

Since in NHLC's we are
unitarily identifying 
$\mathcal{H}^\tau_{x_1}\cong \mathcal{H}^\tau_{x_{12}}\cong \mathbb{R}^{2d-2}$ (recall Notation \ref{rem:NHLC euclid decomp} above), we have
\begin{eqnarray*}
\int_{\mathbb{R}^{2d-2}}\,\mathrm{d}\mathbf{u}&=&
\int_{\mathbb{R}^{d_G}_v}\,\mathrm{d}\mathbf{u}^v\,
\int_{\mathbb{R}^{d_G}_t}\,\mathrm{d}\mathbf{u}^t\,
\int_{\mathbb{R}^{2d-2-2d_G}_h}\,\mathrm{d}\mathbf{u}^h.
\end{eqnarray*}

Furthermore, let us make the following remarks on
the geodesic flow $\Gamma^\tau_t:X^\tau\rightarrow X^\tau$:

\begin{enumerate}
\item $\Gamma^\tau_t$ preserves $\alpha$ and commutes with $\mu^\tau$,
hence it preserves the vector bundle decompositions (\ref{eqn:dicrect sum vert hor})
on $X^\tau$ and in Definition \ref{defn:vth dec Ztau} on $Z^\tau$;

\item for any $\xi\in \mathfrak{g}$ and $t\in \mathbb{R} $,
the induced 
vector field $\xi_{X^\tau}$
is self-correlated under $\Gamma^\tau_t$.

\end{enumerate}

Let us consider the case $k=0$ in 
(\ref{eqn:defn di mathcalIl}).  
We have (Remark \ref{rem:omega0hlcs}): 
\begin{eqnarray}
\label{eqn:defn di mathcalI0}
\lefteqn{\mathcal{I}_0(\theta_1,\mathbf{v}_1,\theta_2,\mathbf{v}_2)_l}\\
&=&
\int_{\mathbb{R}^{2d-2}}\,\mathrm{d}\mathbf{u}\,\int_{\mathfrak{g}}\,\mathrm{d}\xi\,\left[
e^{\mathcal{S}(\theta_1,\mathbf{v}_1,\theta_2,\mathbf{v}_2;
\mathbf{u},\xi)_l}
\right]\nonumber\\
&=&\int_{\mathbb{R}^{2d-2}}\,\mathrm{d}\mathbf{u}\,\int_{\mathfrak{g}}\,\mathrm{d}\xi\,\left[
e^{\frac{1}{\tau}\,\left[\imath\,\omega_0\big(\xi_{X^\tau}(x_1),\mathbf{v}_1\big)+\psi_2\left( \mathbf{v}^{(l)}_1
-\xi_{X^\tau}(x_1)^{(l)},\mathbf{u}   \right)                                          +\psi_2\big(B\mathbf{u},\mathbf{v}_2\big)  \right]}
\right].\nonumber
\end{eqnarray}

Let us set 
\begin{equation}
\label{eqn:rescale vj}
\mathbf{v}_j(\tau):=\frac{1}{\sqrt{\tau}}\,\mathbf{v}_j,
\end{equation}
and perform the change of coordinates
$$
\xi\rightarrow\sqrt{\tau}\,\xi,\quad\mathbf{u}
\rightarrow\sqrt{\tau}\,\mathbf{u}.
$$ 
We obtain
\begin{eqnarray}
\label{eqn:defn di mathcalI01}
\lefteqn{\mathcal{I}_0(\theta_1,\mathbf{v}_1,\theta_2,\mathbf{v}_2)_l}\\
&=&
\tau^{d-1+d_G/2}\,\int_{\mathbb{R}^{2d-2}}\,\mathrm{d}\mathbf{u}\,\int_{\mathfrak{g}}\,\mathrm{d}\xi\,\left[
e^{\imath\,\omega_0\big(\xi_{X^\tau}(x_1),\mathbf{v}_1(\tau)\big)+\psi_2\left( \mathbf{v}_1(\tau)^{(l)}
-\xi_{X^\tau}(x_1)^{(l)},\mathbf{u}   \right)                                          +\psi_2\big(B\mathbf{u},\mathbf{v}_2(\tau)\big)  }
\right].\nonumber
\end{eqnarray}
We have
\begin{eqnarray}
\label{eqn:exponentSl}
\lefteqn{\imath\,\omega_0\big(\xi_{X^\tau}(x_1),\mathbf{v}_1(\tau)\big)+\psi_2\left( \mathbf{v}_1(\tau)^{(l)}
-\xi_{X^\tau}(x_1)^{(l)},\mathbf{u}   \right)                                          +\psi_2\big(B\mathbf{u},\mathbf{v}_2(\tau)\big)}
\nonumber
\\
&=&\psi_2\big(B\mathbf{u},\mathbf{v}_2(\tau)\big)
-\imath\,\omega_x\left( \mathbf{v}_1(\tau)^{(l)}
,\mathbf{u}   \right)\nonumber\\
&&-\frac{1}{2}\,\left\| \mathbf{v}^t_1(\tau)^{(l)}
-\mathbf{u}^t \right\|^2  
-\frac{1}{2}\,\left\| \mathbf{v}^h_1(\tau)^{(l)}
-\mathbf{u}^h \right\|^2    \nonumber\\
&&   +\imath\,\omega_0\left(\xi_{X^\tau}(x_1)^{(l)},
\mathbf{v}_1^t(\tau)^{(l)}+\mathbf{u}^t\right)
-\frac{1}{2}\,
\left\| \xi_{X^\tau}(x_1)^{(l)}+\mathbf{u}^v  \right\|^2               
.
\end{eqnarray}

We first compute the $\xi$-integral. We can transfer the
integral over $\mathfrak{g}$ to $T_{x_1}^vX^\tau
\cong \mathbb{R}^{d_G}_v$, the tangent space to 
$x_1^G$. If $r=|G_{x_1}|$, the action map $G\rightarrow G\cdot x_1$
is $r_{x_1}:1$ ($r_{x_1}=|G_{x_1}|$).
We then have the replacement 
$$
\int_{\mathfrak{g}}\,\mathrm{d}\xi\quad
\text{by}\quad 
\frac{1}{r_{x_1}\cdot V_{eff}(x_1)}\cdot \int_{T_{x_{1}}^vX^\tau}\mathrm{d}\mathbf{w}^v=\frac{1}{r_{x_1}\cdot V_{eff}(x_1)}\cdot 
\int_{\mathbb{R}^{d_G}_v}\mathrm{d}\mathbf{a}.
$$
Then
\begin{eqnarray}
\label{eqn:xi computint}
\lefteqn{\int_{\mathfrak{g}} e^{\imath\,\omega_0\left(\xi_{X^\tau}(x_1)^{(l)},
\mathbf{v}_1^t(\tau)^{(l)}+\mathbf{u}^t\right)
-\frac{1}{2}\,
\left\| \xi_{X^\tau}(x_1)^{(l)}+\mathbf{u}^v  \right\|^2 } 
\,\mathrm{d}\xi  }\\
&=&\frac{1}{r_{x_1}\cdot V_{eff}(x_1)}\cdot \int_{\mathbb{R}^{d_G}}
e^{\imath\,\omega_0\left(\mathbf{a}^{(l)},
\mathbf{v}_1^t(\tau)^{(l)}+\mathbf{u}^t\right)
-\frac{1}{2}\,
\left\| \mathbf{a}^{(l)}+\mathbf{u}^v  \right\|^2}
\mathrm{d}\mathbf{a}\nonumber\\
&=&   \frac{(2\,\pi)^{d_G/2}}{r_{x_1}\cdot V_{eff}(x_1)}\cdot
e^{-\imath\,\omega_0\left( \mathbf{u}^v,\mathbf{v}_1^t(\tau)^{(l)}+\mathbf{u}^t  \right)
-\frac{1}{2}\,\left\|\mathbf{v}_1^t(\tau)^{(l)}+\mathbf{u}^t \right\|^2}.
\nonumber
\end{eqnarray}

We can insert (\ref{eqn:xi computint}) in (\ref{eqn:defn di mathcalI01})
and obtain
\begin{eqnarray}
\label{eqn:defn di mathcalI02}
\lefteqn{\mathcal{I}_0(\theta_1,\mathbf{v}_1,\theta_2,\mathbf{v}_2)_l}\\
&=& \frac{(2\,\pi)^{d_G/2}}{r\cdot V_{eff}(x_1)}\cdot
\tau^{d-1+d_G/2}\,\int_{\mathbb{R}^{2d-2}}\,
e^{A(\mathbf{v}_1(\tau),\mathbf{v}_2(\tau),\mathbf{u})_l  }\,\mathrm{d}\mathbf{u}
,\nonumber
\end{eqnarray}
where 
\begin{eqnarray}
\label{eqn:defn di Al}
A(\mathbf{v}_1(\tau),\mathbf{v}_2(\tau),\mathbf{u})_l
 &=&
\psi_2\big(B\mathbf{u},\mathbf{v}_2(\tau)\big)
-\imath\,\omega_0\left( \mathbf{v}_1(\tau)^{(l)}
,\mathbf{u}   \right)\\
&&-\frac{1}{2}\,\left\| \mathbf{v}^t_1(\tau)^{(l)}
-\mathbf{u}^t \right\|^2  
-\frac{1}{2}\,\left\| \mathbf{v}^h_1(\tau)^{(l)}
-\mathbf{u}^h \right\|^2    \nonumber\\
&&   -\imath\,\omega_0\left( \mathbf{u}^v,\mathbf{v}_1^t(\tau)^{(l)}+\mathbf{u}^t  \right)
-\frac{1}{2}\,\left\|\mathbf{v}_1^t(\tau)^{(l)}+\mathbf{u}^t \right\|^2\nonumber\\
&=&-\left\|\mathbf{v}_1^t(\tau)\right\|^2
-\left\|\mathbf{u}^t\right\|^2
-\imath\,\omega_0\left( \mathbf{v}^h_1(\tau)^{(l)}
,\mathbf{u}^h   \right) 
-\frac{1}{2}\,\left\| \mathbf{v}^h_1(\tau)^{(l)}
-\mathbf{u}^h \right\|^2\nonumber\\
&&-\imath\,\omega_0\left(\mathbf{u}^v,\mathbf{u}^t\right)
+\psi_2\big(B\mathbf{u},\mathbf{v}_2(\tau)\big).\nonumber
\end{eqnarray}

\subsection{The leading order term in the action free case}
\label{sctn:action-free}

In this section, we shall determine the leading order term in
in Theorem \ref{thm:main 3}. To avoid repetitions, we shall 
later give a general argument for the lower order terms covering the
general equivariant case.

\begin{proof}
[Determination of the leading order term in Theorem \ref{thm:main 3}]
In the action-free case, we have $\mathbf{v}_j=\mathbf{v}_j^h$, and
the suffix $l$ may be omitted. Thus
\begin{eqnarray}
\label{eqn:defn di mathcalI02actionfree}
\mathcal{I}_0(\theta_1,\mathbf{v}_1,\theta_2,\mathbf{v}_2)
= 
\tau^{d-1}\,\int_{\mathbb{R}^{2d-2}}\,
e^{F(\mathbf{v}_1(\tau),\mathbf{v}_2(\tau),\mathbf{u})  }\,\mathrm{d}\mathbf{u}
,
\end{eqnarray}
where 
\begin{eqnarray*}
F\big(\mathbf{v}_1(\tau),\mathbf{v}_2(\tau),\mathbf{u}\big)
=\psi_2\big(\mathbf{v}_1(\tau),\mathbf{u}\big)
+\psi_2\big(B\mathbf{u},\mathbf{v}_2(\tau)\big).\nonumber
\end{eqnarray*}
%
%

As is Definition \ref{defn:sympl compl mat}, following \cite{fo}, 
let us write
\begin{equation}
\label{eqn:APQ}
B^{-1}_c=
\begin{pmatrix}
P&Q\\
\overline{Q}&\overline{P}
\end{pmatrix},
\end{equation}
where $P$ is invertible and $\|P\mathbf{v}\|\ge \|\mathbf{v}\|$
for every $\mathbf{v}\in \mathbb{C}^d$.
Furthermore, let us choose a metaplectic lift
$\tilde{B}^{-1}$ of $B^{-1}$, and denote by $K_{1,\tilde{B}^{-1}}$ the 
integral kernel of the metaplectic representation of 
$\tilde{B}^{-1}$. With the present normalizations
(here 
$\omega_0=(\imath/2)\,
\sum_j\mathrm{d}z_j\wedge\mathrm{d}\overline{z}_j$), 
it follows from
the discussion in \S 4 of \cite{zz18} (and Ch. 4 of \cite{fo})
that, with the notation of Definition \ref{defn:quadratice form}, $K_{1,\tilde{B}^{-1}}: 
\mathbb{C}^{d-1}\times \mathbb{C}^{d-1}\rightarrow \mathbb{C}$
is
given by
\begin{eqnarray}
\label{eqn:Pi1ZZ}
K_{1,\tilde{B}^{-1}}(Z,W)
:=
\pi^{-(d-1)}\cdot \det(P)^{-1/2}\cdot\exp
\left[\Psi_{B_c^{-1}}(Z,W)\right],
\end{eqnarray}
the square root being well-defined at the metaplectic level.
We may alternatively view $K_{1,\tilde{B}^{-1}}$ as defined on
$K_{1,\tilde{B}^{-1}}:\mathbb{R}^{2d-2}\times \mathbb{R}^{2d-2}
\rightarrow \mathbb{C}$.

On the other hand, let $\Pi_1$ be the integral kernel of the
level-$1$ Szeg\H{o} kernel on $(\mathbb{C}^{d-1},\omega_0)$ with the
standard polarization. 
Explicitly, this is given by
$$
\Pi_1(Z,W):=\pi^{-(d-1)}\,e^{\psi_2(Z,W)}.
$$
where $\psi_2$ is as in Definition \ref{defn:psi2}
and Notation \ref{notn:psi2}
(there is some abuse of language here, since the Szeg\H{o} kernel
is really defined on $X\times X$, where 
$X\cong \mathbb{C}^{d-1}\times S^1$ is an appropriate 
unit circle bundle on $\mathbb{C}^{d-1}$, and $\Pi_1$
above is in fact its pull-back under the map
$Z\mapsto (Z,1)$). Again, we can equivalently view
$\Pi_1$ as defined on $\mathbb{R}^{2d-2}\times \mathbb{R}^{2d-2}$.

The relation between $K_{1,\tilde{B}^{-1}}$
and $\Pi_1$ is described in Proposition 4.4 of \cite{zz18}
(building on the theory of \cite{dau80}).
Namely, if $Z,\,W\in \mathbb{C}^{d-1}$ 
correspond to $\mathbf{v},\,\mathbf{w}\in \mathbb{R}^{2d-2}$, then
\begin{eqnarray}
\label{eqn:ZZ18KePi}
K_{1,\tilde{B}^{-1}}(Z,W)
&=&K_{1,\tilde{B}^{-1}}(\mathbf{v},\mathbf{w})\\
&=&\det(P^*)^{1/2}\,
\int_{\mathbb{R}^{2d-2}}\,\Pi_{1}(\mathbf{v},B^{-1}\mathbf{u})\,
\Pi_1(\mathbf{u},\mathbf{w})\,\mathrm{d}\mathbf{u}.
\nonumber
\end{eqnarray}
Given (\ref{eqn:Pi1ZZ}) and (\ref{eqn:ZZ18KePi}), we may rewrite
(\ref{eqn:defn di mathcalI02actionfree}) as
\begin{eqnarray}
\label{eqn:defn di mathcalI03}
\mathcal{I}_0(\theta_1,\mathbf{v}_1,\theta_2,\mathbf{v}_2)
&=& 
\tau^{d-1}\,\int_{\mathbb{R}^{2(d-1)}}\,
e^{\psi_2(\mathbf{v}_1(\tau),\mathbf{u})
+\psi_2(B\mathbf{u},\mathbf{v}_2(\tau))}\,
\mathrm{d}\mathbf{u}\nonumber\\
&=&\pi^{2(d-1)}\,\tau^{d-1}\,\int_{\mathbb{C}^{d-1}}\,
\Pi_1\big(\mathbf{v}_1(\tau),\mathbf{u}\big)\,
\Pi_1\big(B\mathbf{u},\mathbf{v}_2(\tau)\big)
\,\mathrm{d}\mathbf{u}\nonumber\\
&=&\pi^{2(d-1)}\,\tau^{d-1}\,\int_{\mathbb{C}^{d-1}}\,
\Pi_1\big(\mathbf{v}_1(\tau),B^{-1}\mathbf{u}\big)\,
\Pi_1\big(\,\mathbf{u},\mathbf{v}_2(\tau)\big)
\,\mathrm{d}\mathbf{u}\nonumber\\
&=&\pi^{2(d-1)}\,\tau^{d-1}\,\det(P^*)^{-1/2}\,
K_{1,\tilde{B}_c^{-1}}\big(\mathbf{v}_1(\tau),\mathbf{v}_2(\tau)\big)\nonumber\\
&=&(\pi\,\tau)^{d-1}\,|\det(P)|^{-1}\,
\cdot\exp
\left[\Psi_{B^{-1}}\big(\mathbf{v}_1(\tau),
\mathbf{v}_2(\tau)\big)\right].
\end{eqnarray}

Using (\ref{eqn:defn di mathcalI03}), we obtain that to leading
order (\ref{eqn:integrale Iuxi}) is
\begin{eqnarray}
\label{eqn:integrale Iuxi action-free}
\lefteqn{ \int_{\mathfrak{g}}\,\mathrm{d}\xi\,
\int_{\mathbb{R}^{2d-2}}\,\mathrm{d}\mathbf{u}\,\big[
I_\lambda (\theta_1,\mathbf{v}_1,\theta_2,\mathbf{v}_2,\xi,\mathbf{u})_l\big]
 }\\
 &\asymp&e^{\imath\,\sqrt{\lambda}\,\frac{\theta_1-\theta_2}{\tau}}
\cdot \frac{\lambda^{2d-3}}{(2\,\pi\,\tau)^{d-1}}
\cdot \sigma_{t_1,0}^\tau (x_{1})\cdot
\chi (t_1)
\nonumber\\
&&
\cdot |\det(P)|^{-1}\,
\cdot\exp
\left[\frac{1}{\tau}\,\Psi_{B^{-1}}
\big(\mathbf{v}_1,
\mathbf{v}_2\big)\right]
\nonumber
\end{eqnarray}

\begin{lem}
\label{lem:product sigma detP}
Let the unitary factor $e^{\imath\,\theta^\tau_{t}(x)}$ be 
as in (\ref{eqn:correcting factor tau t}). Then
$$
\sigma_{t_1,0}^\tau (x_{1})\cdot\left|\det(P)\right|^{-1}=
e^{\imath\,\theta^\tau_{t_1}(x_1)}.
$$

\end{lem}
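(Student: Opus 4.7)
The plan is to reduce the identity, via (\ref{eqn:correcting factor tau t}), to the purely real Gaussian pairing
\[
\langle \sigma_J^{(x_1)}, \sigma_{J_{t_1}}^{(x_1)}\rangle = |\det P|^{-1}.
\]
Indeed, by (\ref{eqn:correcting factor tau t}) we have $\sigma^\tau_{t_1,0}(x_1) = e^{\imath\theta^\tau_{t_1}(x_1)}\cdot\langle \sigma_J^{(x_1)}, \sigma_{J_{t_1}}^{(x_1)}\rangle^{-1}$, so the lemma is equivalent to showing that this pairing evaluates to the real quantity $|\det P|^{-1}$.

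Next, I will describe the two vacuum states explicitly in normal Heisenberg local coordinates at $x_1$. In the action-free setting $x_1=x_{12}$, and by (\ref{eqn:differential geo flow}) and Lemma \ref{lem:Mt1transl}, the restriction of $\mathrm{d}_{x_1}\Gamma^\tau_{-t_1}$ to $\mathcal{H}^\tau_{x_1}$ is represented by the symplectic matrix $B$ in the chosen NHLCs; since $J_{t_1}=\mathrm{d}\Gamma^\tau_{t_1}\circ J\circ \mathrm{d}\Gamma^\tau_{-t_1}$, the complex structure $J_{t_1}$ on $\mathcal{H}^\tau_{x_1}\cong (\mathbb{R}^{2d-2},\omega_0, J_0)$ coincides with $B^{-1} J_0 B$. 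The $L^2$-normalized Gaussian coherent states therefore take the form
\[
\sigma_J^{(x_1)}(\mathbf{u})=\pi^{-(d-1)/2}\,e^{-\frac{1}{2}\|\mathbf{u}\|^2},\qquad \sigma_{J_{t_1}}^{(x_1)}(\mathbf{u})=\pi^{-(d-1)/2}\,e^{-\frac{1}{2}\|B\mathbf{u}\|^2},
\]
each being the model level-one Szeg\H{o} coherent state at the origin for the respective complex structure. Consequently, the $L^2$-pairing reduces to the explicit Gaussian integral
\[
\langle \sigma_J^{(x_1)}, \sigma_{J_{t_1}}^{(x_1)}\rangle=\pi^{-(d-1)}\int_{\mathbb{R}^{2d-2}}e^{-\frac{1}{2}\|\mathbf{u}\|^2-\frac{1}{2}\|B\mathbf{u}\|^2}\,\mathrm{d}\mathbf{u}.
\]

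Finally, I will evaluate this integral by exploiting the computation already performed in the derivation of (\ref{eqn:defn di mathcalI03}), specialized to $\mathbf{v}_1=\mathbf{v}_2=0$. Rewriting the exponent as $\psi_2(\mathbf{u},0)+\psi_2(B\mathbf{u},0)$ and recognizing the integrand as the composition of two level-one Szeg\H{o} kernels on $\mathbb{C}^{d-1}$, Proposition 4.4 of \cite{zz18} (equation (\ref{eqn:ZZ18KePi})) combined with the metaplectic kernel formula (\ref{eqn:Pi1ZZ}) yields the value $\pi^{d-1}|\det P|^{-1}$; multiplication by the prefactor $\pi^{-(d-1)}$ coming from the normalization of the vacuum states delivers precisely $|\det P|^{-1}$. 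The main delicacy will be the careful bookkeeping of constants, in particular verifying that the $\pi^{-(d-1)}$ arising from the $L^2$-normalization of the vacuum states exactly cancels the $\pi^{d-1}$ produced by the metaplectic integral at the origin, and that any residual unitary phase is faithfully absorbed into $e^{\imath\theta^\tau_{t_1}(x_1)}$; the metaplectic Gaussian integral itself requires no fresh calculation, being already furnished by the derivation surrounding (\ref{eqn:defn di mathcalI03}).
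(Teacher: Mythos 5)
Your overall strategy diverges from the paper's at the decisive step. Both you and the paper first use (\ref{eqn:correcting factor tau t}) to reduce the Lemma to the identity $\langle \sigma_J^{(x_1)},\sigma_{J_{t_1}}^{(x_1)}\rangle=|\det P|^{-1}$; but the paper then simply quotes the Daubechies-based formula of Lemma 3.3 of \cite{z20}, $\langle \sigma_J^{(x_1)},\sigma_{J_{t_1}}^{(x_1)}\rangle=2^{d-1}/\left|\det\big(A'+D'+\imath\,(B'-C')\big)\right|$, together with the block identity $A'+D'+\imath\,(B'-C')=2P$ from (54) of \cite{z20} (equivalently Proposition 4.17 of \cite{fo}), whereas you attempt to recompute the pairing from an explicit description of the two vacuum states as $L^2$-normalized \emph{real} Gaussians. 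Your Gaussian arithmetic is in fact consistent: $\pi^{-(d-1)}\int e^{-\frac12\|\mathbf{u}\|^2-\frac12\|B\mathbf{u}\|^2}\,\mathrm{d}\mathbf{u}=2^{d-1}/\sqrt{\det\left(I+B^\dagger B\right)}$, and since $\det\left(I+B^\dagger B\right)=|\det(2P)|^2$ for symplectic $B$, this equals $|\det P|^{-1}$, in agreement both with the specialization $\mathbf{v}_1=\mathbf{v}_2=0$ of (\ref{eqn:defn di mathcalI03}) and with the value the paper extracts from \cite{z20}. Your identification of $J_{t_1}$ with $B^{-1}J_0B$ on $\mathcal{H}^\tau_{x_1}$ via (\ref{eqn:differential geo flow}) and Lemma \ref{lem:Mt1transl} is also correct.

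The genuine gap is the unproven assertion that $\sigma_{J_{t_1}}^{(x_1)}(\mathbf{u})=\pi^{-(d-1)/2}e^{-\frac12\|B\mathbf{u}\|^2}$. This is exactly the nontrivial content of the Lemma, and it is not a mere bookkeeping of phases: under the equally natural reading in which the $J_{t_1}$-vacuum is the squeezed state $\mu(\tilde B)\,\sigma_J^{(x_1)}$ produced by the metaplectic representation inside the fixed model space (the object whose Bargmann expression involves $\overline{Q}P^{-1}$, cf. (\ref{eqn:Pi1ZZ}) and \cite{dau80}, \cite{zz18}), the exponent acquires a quadratic oscillatory part and the $L^2$ overlap has modulus $|\det P|^{-1/2}$, not $|\det P|^{-1}$; so the choice of convention changes the modulus, which cannot be "absorbed into $e^{\imath\theta^\tau_{t_1}(x_1)}$" as you suggest. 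To close the gap you must pin down which objects $\sigma_J^{(x)},\sigma_{J_t}^{(x)}$ actually enter (\ref{eqn:correcting factor tau t}) — either by citing their description in \cite{z97}, \cite{z20} (which is in effect what the paper's proof does), or by arguing directly that the model Szeg\H{o} kernels for the two linear compatible complex structures on $\left(\mathcal{H}^\tau_{x_1},\omega_0\right)$ are written in a common trivialization (the adapted symplectic potentials of all linear compatible structures coincide with the $\mathrm{Sp}$-invariant primitive), so that their coherent states at the origin are precisely the two real Gaussians you pair. Without one of these justifications, your argument assumes at the outset a statement equivalent to the one being proved.
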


\begin{proof}
[Proof of Lemma \ref{lem:product sigma detP}]
Let $B$ be as in (\ref{eqn:defn of M12}). In view of (\ref{eqn:APQ})
and the computations in Ch. 4 of \cite{fo} (especially Proposition 4.17) we have 
$$
B_c=
\begin{pmatrix}
P^*&-Q^\dagger\\
-Q^*&P^\dagger
\end{pmatrix}.
$$
Let us write $B$ in square block form:
$$
B=\begin{pmatrix}
A'&B'\\
C'&D'
\end{pmatrix}.
$$
It follows from the computation in the proof of Lemma
3.3 of \cite{z20} (based on \cite{dau80}) that,
with the notation in (\ref{eqn:correcting factor tau t}),
\begin{eqnarray}
\label{eqn:calcolo di sigma-sigma}
\langle \sigma_J^{(x)},\sigma_{J_{t_1}}^{(x_1)}\rangle
&=&\frac{2^{d-1}}{\left|\det\big( A'+D'+\imath\,(B'-C')   \big)\right|}
\nonumber\\
&=&\frac{2^{d-1}}{\left|\det\big( 2\,P   \big)\right|}=
\frac{1}{|\det(P)|},
\end{eqnarray}
where on the last line we have made use of (54) of \cite{z20}.
The claim follows.
\end{proof}

Thus, in the action-free case,
going back to (\ref{eqn:7th expression lth summand}) we obtain
that to leading order
\begin{eqnarray}
\label{eqn:7th expression lth summand action free}
\lefteqn{\Pi^\tau_{\chi,\nu,\lambda}(x_{1,\lambda},x_{2,\lambda})}\\
&\asymp&e^{-\imath\,\lambda\,t_1}\cdot\lambda^{2-d}\,
\frac{1}{\sqrt{2\,\pi}}
\cdot e^{\imath\,\sqrt{\lambda}\,\frac{\theta_1-\theta_2}{\tau}}
\cdot \frac{\lambda^{2d-3}}{(2\,\pi\,\tau)^{d-1}}
\cdot \sigma_{t_1,0}^\tau (x_{1})\cdot
\chi (t_1)
\nonumber\\
&&
\cdot |\det(P)|^{-1}\,
\cdot\exp
\left[\frac{1}{\tau}\,\Psi_{B^{-1}}
\big(\mathbf{v}_1,
\mathbf{v}_2\big)\right]
\nonumber\\
&=&e^{-\imath\,\lambda\,t_1}\cdot\frac{\chi (t_1)}{\sqrt{2\,\pi}}
\cdot\left( \frac{\lambda}{2\,\pi\,\tau}  \right)^{d-1}
\cdot e^{\imath\,\theta^\tau_{t_1}(x_1)}\nonumber\\
&&\cdot \exp
\left\{\frac{1}{\tau}\,\left[\imath\,\sqrt{\lambda}\,(\theta_1-\theta_2)+\Psi_{B^{-1}}
\big(\mathbf{v}_1,
\mathbf{v}_2\big)\right]\right\},
\end{eqnarray}
as claimed (with $\theta^\tau(x_1,x_2)=\theta^\tau_{t_1}(x_1)$).

\end{proof}

\subsection{Proof of Theorem \ref{thm:main 2}}

Let us return to the general equivariant setting.

\begin{proof}
[Proof of statement 1 of Theorem \ref{thm:main 2}]
We are assuming $\mathbf{v}_j=\mathbf{v}_j^t$.
As in Definition \ref{defn:vth dec Ztau}, let us decompose 
$
\mathbf{u}=\mathbf{u}^{h}+\mathbf{u}^t+\mathbf{u}^v
$.
Then (\ref{eqn:defn di Al}) reduces to 
\begin{eqnarray}
\label{eqn:defn di Altonly}
A\big(\mathbf{v}_1(\tau),\mathbf{v}_2(\tau),\mathbf{u}\big)_l
 &=&-\left\|\mathbf{v}_1^t(\tau)\right\|^2
 -\left\|\mathbf{u}^t\right\|^2\\
&& -\frac{1}{2}\,\left\|\mathbf{u}^h \right\|^2
-\imath\,\omega_0\left(\mathbf{u}^v,\mathbf{u}^t\right)
+\psi_2\big(B\mathbf{u},\mathbf{v}_2^t(\tau)\big);\nonumber
\end{eqnarray}
in particular, in this case
$A(\mathbf{v}_1,\mathbf{v}_2,\mathbf{u})_l$
is independent of $l$.
Hence (\ref{eqn:defn di mathcalI02}) may be rewritten 
\begin{eqnarray}
\label{eqn:defn di mathcalI02l}
\lefteqn{\mathcal{I}_0(\theta_1,\mathbf{v}_1,\theta_2,\mathbf{v}_2)_l}\\
&=& \frac{(2\,\pi)^{d_G/2}}{r\cdot V_{eff}(x_1)}\cdot
\tau^{d-1+d_G/2}\,e^{-\left\|\mathbf{v}_1^t(\tau)\right\|^2}\cdot 
A_{x_1,x_2}\left(\mathbf{v}_2^t(\tau)\right),
\nonumber
\end{eqnarray}
where $A_{x_1,x_2}\left(\mathbf{v}_2^t(\tau)\right)$ is independent of 
$\mathbf{v}_1(\tau)$
(and $l$).

Let $A_\chi:\mathfrak{X}_\chi^\tau\rightarrow \mathbb{C}$ be as in Definition \ref{eqn:defn di Achi}.

\begin{lem}
Let $\chi_-(t):=\chi(-t)$.
With the previous hypothesis and notation, 

\begin{eqnarray*}
\overline{e^{\imath\,\theta^\tau_{t_1}(x_1)}\cdot
A_\chi(x_1,x_2)}
&=&e^{-\imath\,\theta^\tau_{-t_1}(x_1)}\cdot
A_{\chi_-}(x_2,x_1),\\
e^{-\left\|\mathbf{v}_1^t(\tau)\right\|^2}\cdot 
A_{x_1,x_2}(\mathbf{v}_2^t(\tau))&=&e^{-\left\|\mathbf{v}_1^t(\tau)\right\|^2
-\left\|\mathbf{v}_2^t(\tau)\right\|^2}\cdot A_\chi(x_1,x_2).
\end{eqnarray*}
\end{lem}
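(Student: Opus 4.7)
The first identity is a reality/swap symmetry that follows from the conjugate-swap relation (\ref{eqn:conjugate swap}), namely $\overline{\Pi^\tau_{\chi,\nu,\lambda}(x_{1,\lambda},x_{2,\lambda})} = \Pi^\tau_{\chi_-,\nu,\lambda}(x_{2,\lambda},x_{1,\lambda})$. The plan is to apply Theorem \ref{thm:main 2}, case 1, to both sides at $\theta_j=0$ and $\mathbf{v}_j^t=0$ and equate leading-order coefficients. On the left, conjugation transforms $e^{-\imath\lambda t_1}\,e^{\imath\theta^\tau_{t_1}(x_1)}\,\mathcal{F}_\chi(x_1,x_2)\,\mathcal{B}_\nu(x_1,x_2)_l$ into its complex conjugate. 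On the right, the corresponding parameters for $(x_2,x_1,\chi_-)$ are time $-t_1$, matrix $B$ replaced by $B^{-1}$ (with $|\det P|$ unchanged since $(B^{-1})_c = (B_c)^{-1}$), stabilizer elements mapping via $g_l\mapsto g_l^{-1}$ (so $\Xi_\nu(g_l^{-1}) = \overline{\Xi_\nu(g_l)}$), and $V_{eff}$ preserved under the swap by its $G$- and $\Gamma^\tau$-invariance. The phase factors $e^{\pm\imath\lambda t_1}$ and scalar prefactors cancel, and isolating single summands in the $l$-sum (using linear independence of characters) yields the identity. The appearance of $\theta^\tau_{-t_1}(x_1)$ rather than $\theta^\tau_{-t_1}(x_2)$ on the right follows from combining the $G$-invariance of $e^{\imath\theta^\tau_t(\cdot)}$ with the cocycle induced by $U^\tau_{\sqrt\rho}(t)$ being a one-parameter group.

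For the second identity, the plan is a direct Gaussian integration. Using $\psi_2(a,b) = h_0(a,b)-\tfrac{1}{2}\|a\|^2-\tfrac{1}{2}\|b\|^2$ to expand $\psi_2(B\mathbf{u},\mathbf{v}_2^t(\tau))$ in (\ref{eqn:defn di Altonly}), the integral factors as
\[
A_{x_1,x_2}(\mathbf{v}_2^t(\tau)) = e^{-\frac{1}{2}\|\mathbf{v}_2^t(\tau)\|^2}\int_{\mathbb{R}^{2d-2}} e^{Q_0(\mathbf{u}) + h_0(B\mathbf{u},\mathbf{v}_2^t(\tau))}\,d\mathbf{u},
\]
where $Q_0(\mathbf{u}) := -\|\mathbf{u}^t\|^2 - \tfrac{1}{2}\|\mathbf{u}^h\|^2 - \imath\,\omega_0(\mathbf{u}^v,\mathbf{u}^t) - \tfrac{1}{2}\|B\mathbf{u}\|^2$ is the exponent defining $A_\chi$. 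Completing the square by a shift $\mathbf{u}=\tilde{\mathbf{u}}+\mathbf{u}_\ast$, with $\mathbf{u}_\ast = \mathbf{u}_\ast(\mathbf{v}_2^t(\tau))$ chosen to kill the linear term, leaves the integral equal to $A_\chi$ times a quadratic exponential factor $e^{C(\mathbf{v}_2^t(\tau))}$. The identity then reduces to the verification $C(\mathbf{v}_2^t(\tau)) = -\tfrac{1}{2}\|\mathbf{v}_2^t(\tau)\|^2$, which combined with the prefactor produces the net factor $e^{-\|\mathbf{v}_2^t(\tau)\|^2}$.

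The main obstacle is this last verification: it is a matrix identity involving the block structure of $B_c$ together with the restriction that $\mathbf{v}_2^t$ lies in $J_0(\mathbb{R}^{d_G}_v)$. Under this constraint, the Hermitian pairing $h_0(B\cdot,\mathbf{v}_2^t(\tau))$ aligns with the imaginary symplectic mixing $-\imath\,\omega_0(\mathbf{u}^v,\mathbf{u}^t)$ in $Q_0$, forcing the value of $C(\mathbf{v}_2^t(\tau))$. A cleaner conceptual route, avoiding the explicit linear algebra, is to combine the first identity with (\ref{eqn:conjugate swap}) and re-run the stationary-phase derivation of $\mathcal{I}_0$ starting from $\overline{\Pi^\tau_{\chi,\nu,\lambda}(x_{1,\lambda},x_{2,\lambda})} = \Pi^\tau_{\chi_-,\nu,\lambda}(x_{2,\lambda},x_{1,\lambda})$: this swaps the roles of $\mathbf{v}_1^t$ and $\mathbf{v}_2^t$ and explicitly extracts the factor $e^{-\|\mathbf{v}_2^t(\tau)\|^2}$, which when compared with the direct form (\ref{eqn:defn di mathcalI02l}) forces the claimed factorization.
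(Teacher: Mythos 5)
Your ``cleaner conceptual route'' is essentially the paper's own proof: the paper conjugates the expansion, applies it to the swapped data $(x_2,x_1,\chi_-)$ (so $t_1\mapsto -t_1$, $B\mapsto B^{-1}$, hence $P\mapsto P^*$ and $|\det P|$, $V_{eff}$ unchanged, with the characters conjugated), equates the two leading terms, and notes that the $\mathbf{v}_1^t$- and $\mathbf{v}_2^t$-dependences separate, so that $e^{\|\mathbf{v}^t\|^2}A_{x_1,x_2}(\mathbf{v}^t)$ is constant in $\mathbf{v}^t$; setting $\mathbf{v}^t=\mathbf{0}$ identifies the constant with $A_\chi(x_1,x_2)$ and both identities follow at once. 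Your direct Gaussian route is left incomplete precisely at the matrix verification you flag (that computation is what the symmetry argument is designed to bypass), and two small repairs: cite the intermediate expansion (\ref{eqn:defn di mathcalI02l}) rather than Theorem \ref{thm:main 2} itself, whose case 1 is what this lemma is serving, and no appeal to ``linear independence of characters'' is needed to isolate the $l$-summands, since each $\Pi^\tau_{\chi,\nu,\lambda}(\cdot,\cdot)_l$ is separately defined by a localized cut-off (alternatively, compare at the trivial $\nu$). Finally, the comparison actually produces the unitary factor $e^{\imath\,\theta^\tau_{-t_1}(x_2)}$, exactly as in your computation; the $G$-invariance/cocycle step you invoke to convert it into $\theta^\tau_{-t_1}(x_1)$ is not carried out in the paper either and is peripheral to how the lemma is used downstream.
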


\begin{proof}
Let $\chi_-(t)=\chi(-t)$ then
$\overline{\hat{\chi}}=\widehat{\chi_-}$ and
$\chi_-\in \mathcal{C}^\infty_c\big([-t_0-\epsilon,-t_0+\epsilon] \big)$. Referring to (\ref{eqn:Sigmax12}),
\begin{equation}
\label{eqn:Sigmax12-}
\Sigma_{\chi_-}(x_2,x_1)=\Sigma_{\chi}(x_1,x_2)^{-1},
\end{equation}
where inversion is meant in the group $G\times \mathbb{R}$.

The computation in \eqref{eqn:conjugate swap} yields:
\begin{equation}\label{eqn:conjugate swap 2}
    \overline{\Pi^\tau_{\chi,\nu,\lambda}(x_{1,\lambda},x_{2,\lambda})} = \Pi^\tau_{\chi_-,\nu,\lambda}(x_{2,\lambda},x_{1,\lambda}).
\end{equation}

By assumption, $G_{x_2}=h_1^{-1}\,G_{x_1}\,h_1$.
Suppose $G_{x_1}=\left\{\kappa_1,\ldots,\kappa_r   \right\}$;
then 
$$G_{x_2}=\left\{h_1^{-1}\,\kappa_1\,h_1,\ldots, h_1^{-1}\,\kappa_r\,h_1\right\}=
\left\{\tilde{\kappa}_1,\ldots, \tilde{\kappa}_r\right\},
$$
where $\tilde{\kappa}_j:=h_1^{-1}\,\kappa_j^{-1}\,h_1$.
For every $l=1,\ldots,r$,
$$
\overline{\Xi_\nu \left(\tilde{\kappa}_l\,h_1^{-1}  \right)}=
\overline{\Xi_\nu \left(h_1^{-1}\,\kappa_l^{-1}\,h_1\,h_1^{-1}  \right)}
=\overline{\Xi_\nu \left(h_1^{-1}\,\kappa_l^{-1}  \right)}
=\Xi_\nu \left(\kappa_l\,h_1  \right).
$$

Let us apply (\ref{eqn:7th expression lth summand}),
(\ref{eqn:integrale Iuxi}), and 
(\ref{eqn:defn di mathcalI02}) with $x_{1,\lambda}$ and $x_{2,\lambda}$
swapped and $\chi$ replaced by $\chi_-$.
Then $t_1$ is replaced with $-t_1$, and
$B$ with $B^{-1}$; hence
$P$ gets replaced with $P^*$ in (\ref{eqn:APQ}) (see
the proof of Proposition 4.17 of \cite{fo}).
Therefore, 
$\left|\sigma_{-t_1,0}^\tau (x_{2})\right|=
\left| \sigma_{t_1,0}^\tau (x_{1})  \right|$.
More precisely, by Lemma \ref{lem:product sigma detP},
\begin{equation}
\label{eqn:sigma theta P}
\sigma_{t_1,0}^\tau (x_{1})=
e^{\imath\,\theta^\tau_{t_1}(x_1)}\cdot 
\left|\det(P)\right|,
\quad
\sigma_{-t_1,0}^\tau (x_{2})=
e^{\imath\,\theta^\tau_{-t_1}(x_2)}\cdot 
\left|\det(P)\right|.
\end{equation}
Then on the one hand we have that to leading order
\begin{eqnarray}
\label{eqn:7th expression lth summand chi-}
\lefteqn{\Pi^\tau_{\chi_-,\nu,\lambda}(x_{2,\lambda},x_{1,\lambda})_l}\\
&\asymp&e^{\imath\,\lambda\,t_1}\cdot
\left(\frac{\lambda}{2\,\pi\,\tau}\right)^{d-1-d_G/2}\,\frac{\dim(\nu)}{\sqrt{2\,\pi}}
\cdot e^{\imath\,\sqrt{\lambda}\,\frac{\theta_2-\theta_1}{\tau}}
\cdot e^{\imath\,\theta^\tau_{-t_1}(x_2)}\cdot 
\left|\det(P)\right|
\nonumber\\
&&\cdot
\chi (t_1)\cdot \Xi_\nu 
\left(\kappa_l\, h_1\right)\cdot \frac{1}{r\cdot V_{eff}(x_1)}\cdot
e^{-\left\|\mathbf{v}_2^t(\tau)\right\|^2}\cdot 
\frac{A_{x_2,x_1}\left(\mathbf{v}_1^t(\tau)\right)}{\pi^{d-1}}.
\nonumber
\end{eqnarray}
On the other hand, given (\ref{eqn:conjugate swap 2}), taking the complex conjugate of the expansion for $\Pi^\tau_{\chi,\nu,\lambda}(x_{1,\lambda},x_{2,\lambda})_l$ we also have
\begin{eqnarray}
\label{eqn:7th expression lth summand conj}
\lefteqn{
\overline{
\Pi^\tau_{\chi,\nu,\lambda}(x_{1,\lambda},x_{2,\lambda})_l}
}\\
&\asymp&e^{\imath\,\lambda\,t_1}\cdot
\left(\frac{\lambda}{2\,\pi\,\tau}\right)^{d-1-d_G/2}\,\frac{\dim(\nu)}{\sqrt{2\,\pi}}
\cdot e^{\imath\,\sqrt{\lambda}\,\frac{\theta_2-\theta_1}{\tau}}
\cdot e^{-\imath\,\theta^\tau_{t_1}(x_1)}\cdot 
\left|\det(P)\right|\nonumber\\
&&\cdot
\chi (t_1)\cdot \Xi_\nu 
\left(\kappa_l\, h_1\right)\cdot \frac{1}{r\cdot V_{eff}(x_1)}\cdot
e^{-\left\|\mathbf{v}_1^t(\tau)\right\|^2}\cdot 
\frac{\overline{A_{x_1,x_2}\left(\mathbf{v}_2^t(\tau)\right)}}{\pi^{d-1}}.\nonumber
\end{eqnarray}

Therefore,
$$
e^{-\left\|\mathbf{v}_2^t(\tau)\right\|^2}\cdot 
A_{x_2,x_1}(\mathbf{v}_1^t(\tau))\cdot e^{\imath\,\theta^\tau_{-t_1}(x_2)}=e^{-\left\|\mathbf{v}_1^t(\tau)\right\|^2}\cdot 
\overline{e^{\imath\,\theta^\tau_{t_1}(x_1)}\cdot A_{x_1,x_2}(\mathbf{v}_2^t(\tau))}.
$$
If 
$$
A_\chi'(x_1,x_2):=
e^{\left\|\mathbf{v}^t\right\|^2}\cdot 
 A_{x_1,x_2}(\mathbf{v}^t),
$$ 
we conclude that $A_\chi(x_1,x_2)$ is 
independent of $\mathbf{v}^t$, and this implies the
stated equalities.
Setting 
$\mathbf{v}_j=\mathbf{0}$, one obtains 
$A'_\chi(x_1,x_2)=A_\chi(x_1,x_2)$.

\end{proof}

Returning to (\ref{eqn:7th expression lth summand}), 
again in view of (\ref{eqn:sigma theta P})
we obtain that
to leading order
\begin{eqnarray}
\label{eqn:8th expression lth summand}
\lefteqn{
\Pi^\tau_{\chi,\nu,\lambda}(x_{1,\lambda},x_{2,\lambda})_l}
\\
&\asymp&\frac{e^{-\imath\,\lambda\,t_1}}{\sqrt{2\,\pi}}\,
\left( \frac{\lambda}{2\,\pi\,\tau}  \right)^{d-1-d_G/2}
\cdot e^{\imath\,\theta^\tau_{t_1}(x_1)}
\cdot \mathcal{F}_{\chi}(x_1,x_2)
\cdot \mathcal{B}_\nu(x_1,x_2)_l\nonumber\\
&&\cdot e^{\imath\,\sqrt{\lambda}\,\frac{1}{\tau}
\,(\theta_1-\theta_2)
-\frac{1}{\tau}\left(\|\mathbf{v}_1^t\|^2
+\|\mathbf{v}_2^t\|^2\right)},\nonumber
\end{eqnarray}
where
$\mathcal{F}_{\chi}(x_1,x_2)$ and 
$\mathcal{B}_\nu(x_1,x_2)_l$
are as in Definition \ref{defn:FchiBnu}
(here $g_l=\kappa_l\, h_1$).

Let us now consider the lower order terms in the
asymptotic expansion of 
(\ref{eqn:7th expression lth summand}), hence in 
(\ref{eqn:integrale Iuxi}).
For each $k$, 
arguing as in (\ref{eqn:defn di mathcalI01}) and
(\ref{eqn:exponentSl}) one verifies that
$\mathcal{I}_k(\theta_1,\mathbf{v}_1,\theta_2,\mathbf{v}_2)_l$ in (\ref{eqn:defn di mathcalIl})
is a linear combination of integrals of the form
\begin{eqnarray}
\label{eqn:I_k general summand}
\lefteqn{\theta_1^a\,\mathbf{v}_1^A\,\theta_2^{a'}\,
\mathbf{v}_2^{A'}}\nonumber\\
&&\cdot \int_{\mathbb{R}^{2d-2}}\,\mathrm{d}\mathbf{u}\,
\left[\mathbf{u}^C\,e^{
\frac{1}{\tau}\,\left[
\psi_2\big(B\mathbf{u},\mathbf{v}_2\big)
-\imath\,\omega_0\left( \mathbf{v}_1^{(l)}
,\mathbf{u}   \right)-\frac{1}{2}\,\left\| (\mathbf{v}^t_1)^{(l)}
-\mathbf{u}^t \right\|^2  
-\frac{1}{2}\,\left\| (\mathbf{v}^h_1)^{(l)}
-\mathbf{u}^h \right\|^2  \right] 
}\right.\nonumber\\
&&\left.\cdot \int_{\mathfrak{g}}\,\mathrm{d}\xi\,\left[
e^{\imath\,\omega_0\left(\xi_{X^\tau}(x_1)^{(l)},
(\mathbf{v}_1^t)^{(l)}+\mathbf{u}^t\right)
-\frac{1}{2}\,
\left\| \xi_{X^\tau}(x_1)^{(l)}+\mathbf{u}^v  \right\|^2}
\cdot  
\xi^D\right]\right],
\end{eqnarray}
where $a,\,a'\ge 0$, $A,A',C,D\ge 0$ are multi-indexes and 
$a+a'+|A|+|A'|+|C|+|D|$ has the same parity as $k$
and is $\le 3\,k$.

Let us consider the inner $\xi$-integral in (\ref{eqn:I_k general summand}). 
Arguing as for (\ref{eqn:xi computint}), we can transfer integration
to $T^vX^\tau\cong \mathbb{R}^{d_G}$, and conclude that, up to
some multiplicative constant factor, the $\xi$-integral
is given by 
\begin{eqnarray}
\label{eqn:sumD'D''}
\lefteqn{ 
\int_{\mathbb{R}^{d_G}}
\mathbf{a}^D\,
e^{\frac{1}{\tau}\left[\imath\,\omega_0\left(\mathbf{a}^{(l)},
(\mathbf{v}_1^t)^{(l)}+\mathbf{u}^t\right)
-\frac{1}{2}\,
\left\| \mathbf{a}^{(l)}+\mathbf{u}^v  \right\|^2\right]}
\mathrm{d}\mathbf{a}
}\\
&=&\sum_{D'+D''=D}\beta_{D',D''}\cdot \mathbf{u}^{D''}\,
e^{-\imath\,\omega_0\left(\mathbf{u}^v, (\mathbf{v}_1^t)^{(l)}+\mathbf{u}^t \right)}
\int_{\mathbb{R}^{d_G}}
\mathbf{b}^{D'}\,
e^{\frac{1}{\tau}\left[\imath\,\omega_0\left(\mathbf{b},
(\mathbf{v}_1^t)^{(l)}+\mathbf{u}^t\right)
-\frac{1}{2}\,
\left\| \mathbf{b} \right\|^2\right]}
\mathrm{d}\mathbf{a},\nonumber
\end{eqnarray}
for certain $\beta_{D',D''}\in \mathbb{C}$.

On the other hand, for any $\mathbf{r}\in \mathbb{R}^{d_G}$
for certain constants we have
\begin{eqnarray}
\label{eqn:general equality}
\int_{\mathbb{R}^{d_G}}\,\mathbf{b}^{D'}\,
e^{\imath\,\mathbf{b}\cdot \mathbf{r}
-\frac{1}{2}\,\|\mathbf{b}\|^2}
\,\mathbf{d}\mathbf{b}
=\sum_{L\ge 0}c_L\mathbf{r}^{L}\,e^{-\frac{1}{2}\,\|\mathbf{r}\|^2},
\end{eqnarray}
where $c_L\neq 0$ only if $L\le D'$, and 
$|L|$ and $|D'|$ have the same parity.
We conclude that (\ref{eqn:sumD'D''}) can be rewritten
in the form
\begin{equation}
\label{eqn:sumD'D''1}
\sum_{D'+L}\tilde{\beta}_{D',L}\cdot \mathbf{u}^{D''}\,
\mathbf{v}_1^L
e^{-\imath\,\omega_0\left(\mathbf{u}^v, (\mathbf{v}_1^t)^{(l)}+\mathbf{u}^t \right)-\frac{1}{2}\,\left\|(\mathbf{v}_1^t)^{(l)}+\mathbf{u}^t\right\|^2},
\end{equation}
where $\tilde{\beta}_{D',L}\neq 0$ only if $D'+L\le D$ and
$|D'|+|L|$ has the same parity as $|D|$.

If we insert (\ref{eqn:sumD'D''1}) in (\ref{eqn:I_k general summand}), assuming that
$\mathbf{v}_j=\mathbf{v}_j^t$,
we conclude that the latter is in turn a linear combination of integrals of the form
\begin{eqnarray}
\label{eqn:Ik general summand noxi}
\lefteqn{\theta_1^a\,\mathbf{v}_1^A\,\theta_2^{a'}\,
\mathbf{v}_2^{A'}\,e^{-\left\| \mathbf{v}^t_1\right\|^2}}\\
&&\cdot \int_{\mathbb{R}^{2d-2}}\,\mathrm{d}\mathbf{u}\,
\left[\mathbf{u}^C\,e^{
\frac{1}{\tau}\,\left[
\psi_2\big(B\mathbf{u},\mathbf{v}_2^t\big)
-\imath\,\omega_0\left( \mathbf{u}^v
,\mathbf{u}^t   \right)
-\left\|\mathbf{u}^t \right\|^2  
-\frac{1}{2}\,\left\|\mathbf{u}^h \right\|^2  \right] 
}\right],\nonumber
\end{eqnarray}
where $a+a'+|A|+|A'|+|C|$ has the same parity as $k$,
and is $\le 3\,k$. 

Let us define $\mathcal{L}:\mathbb{R}^{2d-2}\rightarrow \mathbb{C}$
by setting
$$
\mathcal{L}(\mathbf{v}):=
\int_{\mathbb{R}^{2d-2}}\,\mathrm{d}\mathbf{u}\,
\left[\mathbf{u}^C\,e^{
\frac{1}{\tau}\,\left[
\psi_2\big(B\mathbf{u},\mathbf{v}\big)
-\imath\,\omega_0\left( \mathbf{u}^v
,\mathbf{u}^t   \right)
-\left\|\mathbf{u}^t \right\|^2  
-\frac{1}{2}\,\left\|\mathbf{u}^h \right\|^2  \right] 
}\right].
$$ 
The exponent in the integrand has the form
$$
-\left\langle\mathbf{u}, \frac{1}{\tau}J_0 B^{-1} J_0\mathbf{v}\right\rangle+\imath\,\left\langle\mathbf{u},
 \frac{1}{\tau}\, J_0\,B^{-1}\,\mathbf{v}\right\rangle-
\frac{1}{2}\,\langle\mathbf{u},R\,\mathbf{u}\rangle,
$$
where $R$ is a certain complex symmetric matrix with positive definite real part.
It follows that $\mathcal{L}(\mathbf{v})$ is a linear 
combination of terms of the form
$$
\mathbf{v}^D\,e^{-\frac{1}{2}\,\left\langle\mathbf{v},\tilde{R}^{-1}\,
\mathbf{v}\right\rangle},
$$
where $\tilde{R}$ is another complex symmetric matrix with $\Re(\tilde{R})\gg0$, $|D|\le |C|$ and $|D|$, $|C|$ have the same parity.
If we finally insert $\mathbf{v}=\mathbf{v}_2^t$, we obtain
the claimed statement.

\end{proof}

\begin{rem}
\label{rem:complete main 3}
The same arguments for the lower order term can be applied to the
action-free case (with minor modifications), thus completing the proof
of Theorem \ref{thm:main 3}.
\end{rem}

\begin{proof}
[Proof of Statement 2 of Theorem \ref{thm:main 2}]
In this case, (\ref{eqn:defn di Al}) is
\begin{eqnarray}
\label{eqn:defn di Altht}
\lefteqn{
A(\mathbf{v}_1(\tau),\mathbf{v}_2(\tau),\mathbf{u})_l
}\\
 &=&-\left\|\mathbf{v}_1^t(\tau)\right\|^2-\frac{1}{2}\,\left\| \mathbf{v}^h_1(\tau)\right\|^2-\frac{1}{2}\,\left\|\mathbf{v}_2^t(\tau)  \right\|^2
\nonumber\\
&&-\imath\,\omega_0\left( \mathbf{v}^h_1(\tau)^{(l)}
,\mathbf{u}^h   \right)+g_0\left(\mathbf{v}^h_1(\tau)^{(l)},\mathbf{u}^h   \right) -\imath\,\omega_0\left(B\mathbf{u},\mathbf{v}_2^t(\tau)\right)
+g_0(B\mathbf{u},\mathbf{v}_2^t(\tau)\big)
\nonumber\\
&&-\left\|\mathbf{u}^t\right\|^2
-\frac{1}{2}\,\left\|\mathbf{u}^h \right\|^2
-\imath\,\omega_0\left(\mathbf{u}^v,\mathbf{u}^t\right)
-\frac{1}{2}\,\left\| B\mathbf{u}\right\|^2.
\nonumber\\
&=&-\left\|\mathbf{v}_1^t(\tau)\right\|^2-\frac{1}{2}\,\left\| \mathbf{v}^h_1(\tau)\right\|^2-\frac{1}{2}\,\left\|\mathbf{v}_2^t(\tau)  \right\|^2
-\imath\,\langle\mathbf{V}(\tau),\,A_l\,\mathbf{u}\rangle
-\frac{1}{2}\,\mathbf{u}^t\,R\,\mathbf{u},
\nonumber
\end{eqnarray}
where $\mathbf{V}(\tau)^\dagger=\begin{pmatrix}
\mathbf{v}_1^t(\tau)^\dagger&\mathbf{v}_1^h(\tau)^\dagger&\mathbf{v}_2^t(\tau)^\dagger
\end{pmatrix}$, and $A_l$ and $R$ are complex matrices, with $R=R^\dagger$ 
and $\Re (R)\gg 0$.
Thus we may rewrite (\ref{eqn:defn di mathcalI02}) as
\begin{eqnarray}
\label{eqn:defn di mathcalI02tht}
\mathcal{I}_0(\theta_1,\mathbf{v}_1,\theta_2,\mathbf{v}_2)_l
&=&\frac{(2\,\pi)^{d_G/2}}{r\cdot V_{eff}(x_1)}\cdot
\tau^{d-1+d_G/2}\,e^{-\left\|\mathbf{v}_1^t(\tau)\right\|^2-\frac{1}{2}\,\left\| \mathbf{v}^h_1(\tau)\right\|^2-\frac{1}{2}\,\left\|\mathbf{v}_2^t(\tau)  \right\|^2}\nonumber\\
&&\cdot \int_{\mathbb{R}^{2d-2}}\,
e^{-\imath\,\langle\mathbf{V}(\tau),\,A_l\,\mathbf{u}\rangle
-\frac{1}{2}\,\mathbf{u}^t\,R\,\mathbf{u}}
\,\mathrm{d}\mathbf{u}\nonumber\\
&=&\frac{(2\,\pi)^{d_G/2}}{r\cdot V_{eff}(x_1)}\cdot
\tau^{d-1+d_G/2}\,\frac{(2\,\pi)^{d-1}}{\mathrm{det}(R)^{\frac{1}{2}}}\,e^{-\frac{1}{2}\,\langle
\mathbf{V}(\tau), D_l\,\mathbf{V}(\tau)\rangle},
\end{eqnarray}
for a certain matrix $D=D^\dagger$. The ambiguity in the choice of the square root of the determinant of the matrix $R$ is resolved using analytic continuation arguments, as in Appendix A of \cite{dau80}. 
Therefore, after performing the integral, in place of
(\ref{eqn:8th expression lth summand}) we obtain 
\begin{eqnarray}
\label{eqn:9th expression lth summand tht}
\lefteqn{
\Pi^\tau_{\chi,\nu,\lambda}(x_{1,\lambda},x_{2,\lambda})_l}
\\
&\asymp&\frac{e^{-\imath\,\lambda\,t_1}}{\sqrt{2\,\pi}}\,
\left( \frac{\lambda}{2\,\pi\,\tau}  \right)^{d-1-d_G/2}
\cdot\mathrm{det}(R)^{-\frac{1}{2}}
\cdot e^{\imath\,\theta^\tau_{t_1}(x_1)}\cdot 
\mathcal{F}_{\chi}(x_1,x_2)
\cdot \mathcal{B}_\nu(x_1,x_2)_l\nonumber\\
&&\cdot e^{\imath\,\sqrt{\lambda}\,\frac{1}{\tau}
\,(\theta_1-\theta_2)
-\frac{1}{2}\,\langle
\mathbf{V}(\tau), D_l\,\mathbf{V}(\tau)\rangle}.
\end{eqnarray}

Furthermore, for $\theta_j=0$ and $\mathbf{V}=\mathbf{0}$ 
we obtain an asymptotic expansion for $\Pi_{k,\nu,\lambda}(x_1,x_2)$
which must agree with the one obtained setting 
$\theta_j=0$, $\mathbf{v}_j=\mathbf{v}_j^t=\mathbf{0}$ in the previous case.
Hence $\det (R)=1$.

In order to verify that $\Re(D_l)\gg 0$, let us consider the
special case where $\nu$ is the
trivial representation and $\theta_j=0$.
Then $\mathcal{B}_\nu(x_1,x_2)_l=1$ and
so
\begin{eqnarray}
\label{eqn:10th expression lth summand tht triv}
\lefteqn{
\Pi^\tau_{\chi,\nu,\lambda}(x_{1,\lambda},x_{2,\lambda})}
\nonumber
\\
&\sim&\frac{e^{-\imath\,\lambda\,t_1}}{\sqrt{2\,\pi}}\,
\left( \frac{\lambda}{2\,\pi\,\tau}  \right)^{d-1-d_G/2}
\cdot e^{\imath\,\theta^\tau_{t_1}(x_1)}\cdot \mathcal{F}_{\chi}(x_1,x_2)
\cdot 
\sum_{l=1}^{r_{x_1}}
e^{
-\frac{1}{2}\,\langle
\mathbf{V}(\tau), D_l\,\mathbf{V}(\tau)\rangle}
\nonumber\\
&&\cdot \left[1+O\left(\lambda^{-1/2}\right)\right]
\end{eqnarray}
uniformly for $\|\mathbf{V}\|\le C\,\lambda^{\epsilon'}$.

On the other hand, in view of Definition \ref{defn:vth dec Ztau}, we have
\begin{eqnarray*}
\label{eqn:estimate dist to Ztau}
%
\delta'_\lambda&:=&\max\left\{\mathrm{dist}_{X^\tau}\left(x_{1,\lambda},Z^\tau\right),
\mathrm{dist}_{X^\tau}\left(x_{2,\lambda},Z^\tau\right)\right\}
\nonumber\\
&\ge& \frac{1}{2\,\sqrt{\lambda}}\,\max\left\{\left\|\mathbf{v}_1^t
\right\|
,
\left\|\mathbf{v}_2^t\right\|\right\},
\end{eqnarray*}
and
\begin{eqnarray*}
\delta''_\lambda&:=&\mathrm{dist}_{X^\tau}\left(x_{1,\lambda},x_{2,\lambda}^{G\times \mathrm{supp}(\chi)}
\right)\nonumber\\
&\ge& \mathrm{dist}_{X^\tau}\left(x_{1,\lambda},x_{2,\lambda}^{G}\right)\ge \frac{1}{2\,\sqrt{\lambda}}\,\left\|\mathbf{v}_1^h\right\|.
\end{eqnarray*}
Let us set $\delta_\lambda:=\max\{\delta'_\lambda,\,
\delta''_\lambda\}$. If $\|\mathbf{V}\|=\lambda^{\epsilon'}$
for some $\epsilon'\in (0,1/6)$,
we conclude that $\delta_\lambda\ge C\,\lambda^{\epsilon-1/2}$,
and therefore 
$\Pi^\tau_{\chi,\nu,\lambda}(x_{1,\lambda},x_{2,\lambda})=
O\left(\lambda^{-\infty}\right)$ by Theorem \ref{thm:main 1}.
However, this will be true only if every summand  
in (\ref{eqn:10th expression lth summand tht triv}) is rapidly
decreasing, i.e. if $\Re(D_l)\gg 0$ for every $l$.

The remaining arguments are a repetition of previous ones.

\end{proof}

\subsection{Proof of Theorem \ref{thm:diagonal case}}

We now consider near-diagonal scaling asymptotics, thus with
$x_1=x_2=x$, and assume 
$\chi\in \mathcal{C}^\infty\big( (-\epsilon,\epsilon)\big)$
for some $\epsilon>0$ suitably small; then $t_1=0$,
$x_{12}=x$, and  $B=I_{2d-2}$ (the identity matrix)
in (\ref{eqn:defn of M12}).
Furthermore, we may
take $h_1=e$ in (\ref{eqn:Sigmax12chi}).
We adopt the notation (\ref{eqn:xx12}).

\begin{proof}
[Proof of Theorem \ref{thm:diagonal case}]
The exponent (\ref{eqn:defn di Al}) becomes
\begin{eqnarray}
\label{eqn:defn di Alt=0}
\lefteqn{
A(\mathbf{v}_1(\tau),\mathbf{v}_2(\tau),\mathbf{u})_l
}\\
 &=&-\left\|\mathbf{v}_1^t(\tau)\right\|^2
-\left\|\mathbf{u}^t\right\|^2
-\imath\,\omega_0\left( \mathbf{v}^h_1(\tau)^{(l)}
,\mathbf{u}^h   \right) 
-\frac{1}{2}\,\left\| \mathbf{v}^h_1(\tau)^{(l)}
-\mathbf{u}^h \right\|^2\nonumber\\
&&-\imath\,\omega_0\left(\mathbf{u}^v,\mathbf{u}^t\right)
+\psi_2\big(\mathbf{u},\mathbf{v}_2(\tau)\big)\nonumber\\
&=&A(\mathbf{v}_1(\tau),\mathbf{v}_2(\tau),\mathbf{u})^h_l+
A(\mathbf{v}_1(\tau),\mathbf{v}_2(\tau),\mathbf{u})^{vt}_l,
\nonumber
\end{eqnarray}
where
\begin{eqnarray}
\label{eqn:Achih}
\lefteqn{A(\mathbf{v}_1(\tau),\mathbf{v}_2(\tau),\mathbf{u})^h_l:=
-\imath\,\omega_0\left( \mathbf{v}^h_1(\tau)^{(l)}-
\mathbf{v}^h_2(\tau)
,\mathbf{u}^h   \right) }\\
&&-\frac{1}{2}\,\left\| \mathbf{v}^h_2(\tau)\right\|^2
-\frac{1}{2}\,\left\| \mathbf{v}^h_1(\tau)^{(l)}\right\|^2
-\left\|\mathbf{u}^h \right\|^2+
g_0\left(\mathbf{v}^h_1(\tau)^{(l)}+\mathbf{v}^h_2(\tau), 
\mathbf{u}^h \right),\nonumber
\end{eqnarray}
\begin{eqnarray}
\label{eqn:Achivt}
\lefteqn{A(\mathbf{v}_1(\tau),\mathbf{v}_2(\tau),\mathbf{u})^{vt}_l:=
-\left\|\mathbf{v}_1^t(\tau)\right\|^2
-\left\|\mathbf{u}^t\right\|^2}\\
&&-\imath\,\omega_0\left(\mathbf{u}^v,\mathbf{u}^t\right)
-\frac{1}{2}\,\left\|\mathbf{u}^v\right\|^2
-\omega_0\left(\mathbf{u}^v,\mathbf{v}_2^t(\tau)  \right)
-\frac{1}{2}\,\left\|\mathbf{u}^t-\mathbf{v}_2^t(\tau)\right\|^2,\nonumber
\end{eqnarray}

By standard Gaussian integrations, we obtain
\begin{eqnarray}
\label{eqn:integrale Ah}
\lefteqn{\int_{\mathbb{R}^{2\,(d-1-d_G)}}
e^{A(\mathbf{v}_1(\tau),\mathbf{v}_2(\tau),\mathbf{u})^h_l}\,\mathrm{d}\mathbf{u}^h}\nonumber\\
&=&e^{-\frac{1}{2}\,\left\| \mathbf{v}^h_2(\tau)\right\|^2
-\frac{1}{2}\,\left\| \mathbf{v}^h_1(\tau)^{(l)}\right\|^2
-\imath\,\omega_0\left(\mathbf{v}^h_1(\tau)^{(l)}, \mathbf{v}^h_2(\tau) \right)+\frac{1}{4}\,\left\| \mathbf{v}^h_1(\tau)^{(l)}
+\mathbf{v}^h_2(\tau)
\right\|^2}\nonumber\\
&&\cdot 
\int_{\mathbb{R}^{2\,(d-1-d_G)}}e^{-\|\mathbf{a}\|^2-\imath\,\omega_0\big(\mathbf{v}_1^h(\tau)^{(l)}-\mathbf{v}_2(\tau),\mathbf{a}\big)}\,\mathrm{d}\mathbf{a}
\nonumber\\
&=&\pi^{d-1-d_G}\,e^{\psi_2\left(\mathbf{v}^h_1(\tau)^{(l)}, \mathbf{v}^h_2(\tau) \right)},
\end{eqnarray}
\begin{eqnarray}
\label{eqn:intAvt}
\lefteqn{\int_{\mathbb{R}^{2\,d_G}}
e^{A(\mathbf{v}_1(\tau),\mathbf{v}_2(\tau),\mathbf{u})^{vt}_l}\,\mathrm{d}\mathbf{u}^v\,\mathrm{d}\mathbf{u}^t=e^{-\left\|\mathbf{v}_1^t(\tau)\right\|^2}}\\
&&\cdot\int_{\mathbb{R}^{d_G}}\,\mathrm{d}\mathbf{u}^t\,
\left[ 
e^{-\left\|\mathbf{u}^t\right\|^2-\frac{1}{2}\,
\left\|\mathbf{u}^t-\mathbf{v}_2^t(\tau)\right\|^2}
\int_{\mathbb{R}^{d_G}}e^{-\imath\,\omega_0\left(\mathbf{u}^v,\mathbf{u}^t+\mathbf{v}_2^t(\tau)\right)-\frac{1}{2}\,\left\|\mathbf{u}^v\right\|^2}
\,\mathrm{d}\mathbf{u}^v
\right]\nonumber\\
&=&(2\,\pi)^{d_G/2}\,e^{-\left\|\mathbf{v}_1^t(\tau)\right\|^2}
\cdot\int_{\mathbb{R}^{d_G}}\,e^{-\left\|\mathbf{u}^t\right\|^2-\frac{1}{2}\,
\left\|\mathbf{u}^t-\mathbf{v}_2^t(\tau)\right\|^2
-\frac{1}{2}\,
\left\|\mathbf{u}^t+\mathbf{v}_2^t(\tau)\right\|^2
}\,\mathrm{d}\mathbf{u}^t\\
&=&\pi^{d_G}\,e^{-\left\|\mathbf{v}_1^t(\tau)\right\|^2
-\left\|\mathbf{v}_2^t(\tau)\right\|^2}.
\nonumber
\end{eqnarray}

Hence, recalling (\ref{eqn:rescale vj}), (\ref{eqn:defn di mathcalI02}) 
reduces to
\begin{eqnarray}
\label{eqn:defn di mathcalI02t0=0diag}
\lefteqn{\mathcal{I}_0(\theta_1,\mathbf{v}_1,\theta_2,\mathbf{v}_2)_l}\\
&=& \frac{(2\,\pi)^{d_G/2}}{r\cdot V_{eff}(x_1)}\cdot
\tau^{d-1+d_G/2}\,\int_{\mathbb{R}^{2d-2}}\,
e^{A(\mathbf{v}_1(\tau),\mathbf{v}_2(\tau),\mathbf{u})_l  }\,\mathrm{d}\mathbf{u}
\nonumber\\
&=&\frac{(2\,\pi)^{d_G/2}}{r\cdot V_{eff}(x_1)}\cdot
\tau^{d-1+d_G/2}\,\pi^{d-1}\,e^{\frac{1}{\tau}\,\left[
-\left\|\mathbf{v}_1^t\right\|^2
-\left\|\mathbf{v}_2^t\right\|^2+
\psi_2\left((\mathbf{v}^h_1)^{(l)}, \mathbf{v}^h_2 \right)\right]}.
\nonumber
\end{eqnarray}
Since $\sigma^\tau_{0,0}=1$ by (\ref{eqn:sigma theta P}), 
the leading order term in
(\ref{eqn:integrale Iuxi}) becomes
\begin{eqnarray}
\label{eqn:integrale Iuxit=0diag}
\lefteqn{ \int_{\mathfrak{g}}\,\mathrm{d}\xi\,
\int_{\mathbb{R}^{2d-2}}\,\mathrm{d}\mathbf{u}\,\big[
I_\lambda (\theta_1,\mathbf{v}_1,\theta_2,\mathbf{v}_2,\xi,\mathbf{u})_l\big]
 }\\
 &\sim&e^{\imath\,\sqrt{\lambda}\,\frac{\theta_1-\theta_2}{\tau}}
\cdot \frac{\lambda^{2d-3}}{(2\,\pi^2\,\tau^2)^{d-1}}
\cdot \sigma_{0,0}^\tau (x_{1})\cdot
\chi (0)\cdot \overline{\Xi_\nu 
\left(\kappa_l\right)}
\nonumber\\
&&
\cdot \frac{(2\,\pi)^{d_G/2}}{r\cdot V_{eff}(x_1)}\cdot
\tau^{d-1+d_G/2}\,\pi^{d-1}\,e^{\frac{1}{\tau}\,\left[
-\left\|\mathbf{v}_1^t\right\|^2
-\left\|\mathbf{v}_2^t\right\|^2+
\psi_2\left((\mathbf{v}^h_1)^{(l)}, \mathbf{v}^h_2 \right)\right]}
\nonumber\\
&=&e^{\frac{1}{\tau}\,\left[\imath\,\sqrt{\lambda}\,
(\theta_1-\theta_2)
-\left\|\mathbf{v}_1^t\right\|^2
-\left\|\mathbf{v}_2^t\right\|^2+
\psi_2\left((\mathbf{v}^h_1)^{(l)}, \mathbf{v}^h_2 \right)\right]}
\cdot \frac{\lambda^{2d-3}}{(2\,\pi\,\tau)^{d-1-d_G/2}}
\cdot \frac{\chi (0)\cdot \overline{\Xi_\nu 
\left(\kappa_l\right)}}{r\cdot V_{eff}(x_1)}.\nonumber
\end{eqnarray}
Inserting (\ref{eqn:integrale Iuxit=0diag})
in (\ref{eqn:7th expression lth summand}),
we finally obtain that to leading order
\begin{eqnarray}
\label{eqn:7th expression lth summand diago}
\Pi^\tau_{\chi,\nu,\lambda}(x_{1,\lambda},x_{2,\lambda})_l
&\asymp&\left( \frac{\lambda}{2\,\pi\,\tau}\right)^{d-1-d_G/2}\,\frac{\dim(\nu)}{\sqrt{2\,\pi}}\cdot 
\frac{\chi (0)\cdot \overline{\Xi_\nu 
\left(\kappa_l\right)}}{r\cdot V_{eff}(x_1)}\nonumber\\
&&\cdot e^{\frac{1}{\tau}\,\left[\imath\,\sqrt{\lambda}\,
(\theta_1-\theta_2)
-\left\|\mathbf{v}_1^t\right\|^2
-\left\|\mathbf{v}_2^t\right\|^2+
\psi_2\left((\mathbf{v}^h_1)^{(l)}, \mathbf{v}^h_2 \right)\right]},
\nonumber
\end{eqnarray}


The argument for the lower order terms is the same as for
Theorem \ref{thm:main 2}.

\end{proof}

\section{Scaling asymptotics for $P_{k,\nu,\lambda}$}

\label{sctn:complexified eigenf lapl}

The previous techniques may be applied to the
asymptotics of the complexifications of the equivariant eigenfunctions
of the positive Laplacian operator $\Delta$ on $(M,\kappa)$
(recall Theorem \ref{thm:holo extension}).
In this section we describe the necessary adaptations to the
previous arguments.

As in (\ref{eqn:eigenvalue laplacian}),
let the $\mu_j$'s be the distinct
eigenvalues of $\sqrt{\Delta}$.
Let us choose for every $j$ a real orthonormal
basis $(\varphi_{j,k})_{k=1}^{\ell_j'}$ of the eigenspace $W_j$ of $\mu_j$.
Similarly, for every $j$ such that
the equivariant component 
$W_{j,\nu}$ of $W_{j}$ is non-zero 
let 
$(\varphi_{j,\nu,k})_{k=1}^{\ell_{j,\nu}'}$ be
a real orthonormal basis of $W_{j,\nu}$.

Let us recall the following basic facts; a detailed discussion
may be found in \cite{z12}, \cite{z13}, \cite{z20}.

\begin{enumerate}
\item The wave operator 
$U(t)=e^{\imath\,t\,\sqrt{\Delta}}:L^2(M)\rightarrow L^2(M)$
at time $t\in \mathbb{R}$
is the unitary operator with distributional kernel
$$
U(t)(m,n)=\sum_{j=1}^{+\infty}
e^{\imath\,\mu_j\,t}\,\sum_{k=1}^{\ell'_j}
\,\varphi_{j,k}(m)\cdot \varphi_{j,k}(n).
$$
\item For every $\tau>0$, the distributional kernel
$$
U(\imath\,\tau)(m,n):=\sum_{j=1}^{+\infty}
e^{-\mu_j\,\tau}\,\sum_{k=1}^{\ell'_j}
\,\varphi_{j,k}(m)\cdot \varphi_{j,k}(n)
$$
is globally real-analytic on $M	\times M$; 
there exists $\tau_1\in (0,\tau_0]$ such that if 
$\tau\in (0,\tau_1]$ then $U(\imath\,\tau)$
admits a
holomorphic extension on $M^\tau\times M$ in the first variable,
$$
\widetilde{U}(\imath\,\tau)(x,n)=\sum_{j=1}^{+\infty}
e^{-\mu_j\,\tau}\,\sum_{k=1}^{\ell'_j}
\,\widetilde{\varphi}_{j,k}(x)\cdot \varphi_{j,k}(n).
$$
\item For $\tau\in (0,\tau_1)$,
the restriction of $\widetilde{U}(\imath\,\tau)$
to $X^\tau\times M$ is the distributional kernel of an operator
\begin{equation}
\mathfrak{P}^\tau:\mathcal{C}^\infty(M)\rightarrow \mathcal{O}(X^\tau),
\label{eqn:defn Ptau}
\end{equation}
which is a Fourier integral operator with complex phase 
of degree $-(d-1)/4$; 
\item  
$\mathfrak{P}
^\tau$ controls the complexification of the eigenfunctions
$\widetilde{\varphi}_{j,k}$:
leaving restriction to $X^\tau$ implicit,
$$
\widetilde{\varphi}_{j,k}=e^{\tau\,\mu_j}\,\mathfrak{P}
^\tau(\varphi_{j,k}).
$$ 
\item For every $s\in \mathbb{R}$, 
$\mathfrak{P}^\tau$ determines a
continuous isomorphism of Sobolev spaces 
$$
\mathfrak{P}^\tau:W^s(M)\rightarrow \mathcal{O}^{s+\frac{d-1}{4}}(X^\tau),
$$
where $W^s(M)$
is the $s$-th Sobolev space of $M$, and $\mathcal{O}^{k}(X^\tau)$ is the intersection
of the $k$-th Sobolev space of $X^\tau$ with the space of
CR (generalized) functions on $X^\tau$.

\item The composition 
$U_\mathbb{C}(t+2\,\imath\,\tau):=\mathfrak{P}
^\tau\circ U(t)\circ {\mathfrak{P}
^\tau}^*$
is a Fourier integral operator with complex phase of degree 
$-(d-1)/2$ on $X^\tau$, whose distributional kernel
admits the spectral description 
\begin{eqnarray}
\label{eqn:UC_spectral}
U_\mathbb{C}(t+2\,\imath\,\tau)(x,y)&=&
\sum_j e^{(-2\,\tau+\imath\,t)\,\mu_j}\,\sum_{k=1}^{\ell'_j}\,
\widetilde{\varphi}_{j,k}(x)\,\overline{\widetilde{\varphi}_{j,k}(y)}
\\
&=&\sum_j e^{\imath\,t\,\mu_j}\,\sum_{k=1}^{\ell'_j}\,
\mathfrak{P}
^\tau\left(\widetilde{\varphi}_{j,k}\right)(x)\,
\overline{
\mathfrak{P}
^\tau(\varphi)_{j,k}(y)};\nonumber
\end{eqnarray}
we shall also use the notation
$U_\mathbb{C}(t+2\,\imath\,\tau,x,y)=
U_\mathbb{C}(t+2\,\imath\,\tau)(x,y)$.

\item For every $s\in \mathbb{R}$, 
$U_\mathbb{C}(t+2\,\imath\,\tau)$
determines a continuous isomorphism
of CR Sobolev spaces
$$
U_\mathbb{C}(t+2\,\imath\,\tau):\mathcal{O}^s(X^\tau)\rightarrow
\mathcal{O}^{s+\frac{d-1}{2}}(X^\tau).
$$
\end{enumerate}

For the following computations, we need the
description (due to Zelditch) of of the
complexified Poisson-wave operators
$U_\mathbb{C}(t+2\,\imath\,\tau)$ as a \lq dynamical
Toeplitz operator\rq. The latter relies in turn
on the study of the composition
$$\mathfrak{Q}^\tau:=
{\mathfrak{P}^\tau}^*
\circ \mathfrak{P}^\tau:\mathcal{C}^\infty(M)\rightarrow \mathcal{C}^\infty(M),$$
where $\mathfrak{P}^\tau$
is as in (\ref{eqn:defn Ptau}).
As explained in \S 3 of \cite{z07},
$\mathfrak{Q}^\tau$ is an elliptic 
pseudodifferential operator
on $M$, of degree $-(d-1)/2$, and its principal symbol 
$\sigma(\mathfrak{Q}^\tau)$
appears in the 
description of $U_\mathbb{C}(t+2\,\imath\,\tau)$ 
in terms of
dynamical Toeplitz operators (see \cite{z12}, \cite{z14}, \cite{z20}).

Since ${\mathfrak{P}^\tau}^*$ depends on the
choice of volume form on $X^\tau$, so does 
$\mathfrak{Q}^\tau$.
In this section, we review the computation of $\sigma(\mathfrak{Q}^\tau)$, which was
carried out by Zelditch in \S 3 of \cite{z07}, in light
of the choices that we have adopted (see also the discussion in \cite{p24}).
We follow the general heuristic strategy in \cite{z07}: first consider the Euclidean case,
and then reduce the general case to the latter.

Thus we first assume that $M=\mathbb{R}^d$ with the
standard metric, and $X^\tau\cong \mathbb{R}^d\times S^{d-1}_\tau$
(where $S^{d-1}_\tau=\tau\,S^{d-1}$ is the sphere of radius $\tau$ centered at the
origin).

In the real domain, the wave kernel at time $t$ for 
$\mathbb{R}^d$
with the standard metric is 
\begin{equation}
\label{eqn:wave kernel time t}
E_t(x,y):=\frac{1}{(2\,\pi)^d}\,\int_{\mathbb{R}^d}
\,e^{\imath\,(t\,\|\xi\|+\langle\xi,x-y\rangle)}\,
\mathrm{d}\xi.
\end{equation}
This may be analytically continued to the complex domain
in $t$ and $x$ by replacing
$t$ with $t+\imath\,\tau$ for $\tau>0$ and 
$x$ with $\zeta=x+\imath\,p\in \mathbb{C}^d$.
In particular, for $t=0$ we obtain the kernel
\begin{equation}
\label{eqn:Etau}
\mathfrak{E}^\tau(\zeta,y)=
\frac{1}{(2\,\pi)^d}\,\int_{\mathbb{R}^d}
\,e^{-\tau\,\|\xi\|-
\langle\xi,p\rangle
+\imath\,\langle\xi,x-y\rangle}\,
\mathrm{d}\xi,
\end{equation}
which is absolutely convergent and holomorphic
in $\zeta$ on the locus
where $\|p\|<\tau$ (which plays the role of $M^\tau$).
The distributional kernel of $\mathfrak{P^\tau}$
is given by the restriction of 
(\ref{eqn:Etau}) to the locus where $\|p\|=\tau$
(which plays the role of $X^\tau$).

With respect to the given volume forms,
$(\mathfrak{P}^\tau)^*$ is represented
by the integral kernel
$$
(\mathfrak{P}^\tau)^*(y,\zeta)=
\overline{\mathfrak{P}^\tau (\zeta,y)}=
\frac{1}{(2\,\pi)^d}\,\int_{\mathbb{R}^d}
\,e^{-\tau\,\|\xi\|-
\langle\xi,p\rangle
-\imath\,\langle\xi,x-y\rangle}\,
\mathrm{d}\xi,
$$
where $\zeta=x+\imath\,p$.

Therefore $ \mathfrak{Q}^\tau$
is represented by the operator kernel
\begin{eqnarray}
\label{eqn:operator kernel comp}
\mathfrak{Q}^\tau
(x,y)
=\int_{X^\tau}(\mathfrak{P}^\tau)^*(x,\zeta)\,
\,\mathfrak{P}^\tau(\zeta,y)\,\mathrm{d}V_{X^\tau}(\zeta).
\end{eqnarray}
Let us write $\zeta=x'+\imath\,\tau\,\omega$, where $x'\in \mathbb{R}^d$
and $\omega\in S^{d-1}$. Then 
$\mathrm{d}V_{X^\tau}(x')=\tau^{d-1}\,\mathrm{d}x'\,
\mathrm{d}\omega$.
We obtain
\begin{eqnarray}
\label{eqn:Qtauxy}
\lefteqn{\mathfrak{Q}^\tau
(x,y)}\\
&=&\frac{\tau^{d-1}}{(2\,\pi)^{2\,d}}
\,\int_{\mathbb{R}^d}\,\mathrm{d}x'\,\int_{S^{d-1}}\mathrm{d}\omega
\left[ (\mathfrak{P}^\tau)^*(x,\zeta)\,
\,\mathfrak{P}^\tau(\zeta,y)\,     \right]
\nonumber\\
&=&\frac{\tau^{d-1}}{(2\,\pi)^{2\,d}}
\,\int_{\mathbb{R}^d}\,\mathrm{d}x'\,\int_{S^{d-1}}\mathrm{d}\omega
\,\int_{\mathbb{R}^d}\,
\mathrm{d}\xi_1\,\int_{\mathbb{R}^d}\,
\mathrm{d}\xi_2
\nonumber\\
&&\left[
e^{-\tau\,(\|\xi_1\|+\|\xi_2\|)-
\tau\,\langle\xi_1+\xi_2,\omega\rangle
+\imath\,\langle\xi_2,x'-y\rangle
-\imath\,\langle\xi_1,x'-x\rangle
}\,
\right]\nonumber\\
&=&\frac{\tau^{d-1}}{(2\,\pi)^{2\,d}}
\,\int_{\mathbb{R}^d}\,\mathrm{d}x'\,\int_{S^{d-1}}\mathrm{d}\omega
\,\int_{\mathbb{R}^d}\,
\mathrm{d}\xi_1\,\int_{\mathbb{R}^d}\,
\mathrm{d}\xi_2
\nonumber\\
&&\left[
e^{-\tau\,(\|\xi_1\|+\|\xi_2\|)-
\tau\,\langle\xi_1+\xi_2,\omega\rangle
+\imath\,\langle\xi_2-\xi_1,x'\rangle
+\imath\,\langle\xi_1,x\rangle-\imath\,\langle
\xi_2,y\rangle
}\,
\right].\nonumber
\end{eqnarray}
Using the distributional identity
$$
\frac{1}{(2\,\pi)^d}
\,\int_{\mathbb{R}^d}\,
e^{\imath\,\langle\xi_2-\xi_1,x'\rangle}\,
\mathrm{d}x'=\delta (\xi_2-\xi_1),
$$
we can rewrite (\ref{eqn:Qtauxy}) as
\begin{eqnarray}
\label{eqn:Qtauxy1}
\mathfrak{Q}^\tau
(x,y)
=
\frac{\tau^{d-1}}{(2\,\pi)^{d}}
\,\int_{\mathbb{R}^d}\,e^{\imath\,\langle\xi,x-y\rangle}
\left[ \int_{S^{d-1}} e^{-2\,\tau\,\|\xi\|-
2\,\tau\,\langle\xi,\omega\rangle
}\, \mathrm{d}\omega  \right]
\mathrm{d}\xi.
\end{eqnarray}
We aim to evaluate the inner integral asymptotically for
$\xi\rightarrow \infty$. To this end, we set
$\xi=\lambda\,\eta$, where $\lambda>0$ and $\eta\in S^{d-1}$,
and let $\lambda\rightarrow +\infty$.
We obtain
\begin{eqnarray}
\label{eqn:inner integral}
\int_{S^{d-1}} e^{-2\,\tau\,\|\xi\|-
2\,\tau\,\langle\xi,\omega\rangle
}\, \mathrm{d}\omega  
&=&\int_{S^{d-1}} e^{\imath \lambda\cdot \Psi_\eta^\tau (\omega)
}\, \mathrm{d}\omega,
\end{eqnarray}
where
$$
\Psi_\eta^\tau (\omega):=
\imath\,2\,\tau\,\big(1+
\langle\eta,\omega\rangle\big).
$$
Thus $\Psi_\eta^\tau$ is purely imaginary, and
$\Im\left(\Psi_\eta^\tau\right)\ge 0$. 
Furthermore, $\Im\left(\Psi_\eta^\tau\right)$ vanishes only at
$\omega=-\eta$. Hence, without altering the asymptotics for
$\lambda\rightarrow +\infty$, we may replace integration over
$S^{d-1}$ by integration over an arbitrarily small open neighbourhood 
$S_\eta\subset S^{d-1}$
of $-\eta$.

Furthermore, given the form of
$\Psi_\eta^\tau:S^{d-1}\rightarrow \mathbb{R}$, there is
no loss of generality in assuming that $\eta$ is the last vector
of the standard basis of $\mathbb{R}^d$, that is,
$\eta=\begin{pmatrix}
\mathbf{0}&1
\end{pmatrix}^\dagger$. 
Thus any $\omega\in S_\eta$ may be written
$\omega=\begin{pmatrix}
\mathbf{u}&-\sqrt{1-\|\mathbf{u}\|^2}
\end{pmatrix}^\dagger$,
where $\mathbf{u}\in \mathbb{R}^{d-1}$ ranges in a small
neighbourhood of the origin.
Then
$
\mathrm{d}\omega=
\mathcal{V}(\mathbf{u})\,\mathrm{d}\mathbf{u}
$ on $S_\eta$
where $\mathcal{V}(\mathbf{0})=1$.
For $\mathbf{u}\sim \mathbf{0}$,
\begin{eqnarray}
\label{eqn:phase near critical pt}
\Psi_\eta^\tau (\omega)=
\imath\,2\,\tau\,\left(1-\sqrt{1-\|\mathbf{u}\|^2}\right)
=\imath\,\tau\,\left(\|\mathbf{u}\|^2+R_3(\mathbf{u})\right).
\end{eqnarray}
Thus there is a unique critical point
$\mathbf{u}=\mathbf{0}$
(that is, $\omega=-\eta$), with Hessian matrix
$
H(\Psi_\eta^\tau)=2\,\imath\,\tau\,I_{d-1}$.

Thus
$$
\sqrt{\det\left(\frac{\lambda\,H(\Psi_\eta^\tau)}{2\pi\,\imath}   \right)}
=\left(\frac{\lambda\tau}{\pi}\right)^{\frac{d-1}{2}}.
$$

Applying the Lemma of stationary phase, we obtain for
(\ref{eqn:inner integral}) an aymptotic expansion in descending powers of $\lambda=\|\xi\|$, with leading order term
$(\pi/\lambda\tau)^{(d-1)/2}$. In view of (\ref{eqn:Qtauxy1})
the principal symbol of $\mathfrak{Q}^\tau$ is therefore
\begin{equation}
\label{eqn:Qtau euclidean case}
\sigma(\mathfrak{Q}^\tau)=\tau^{d-1}\,
\left(\frac{\pi}{\lambda\,\tau}\right)
^{\frac{d-1}{2}}=
\left(\frac{\pi\,\tau}{\|\xi\|}\right)
^{\frac{d-1}{2}}.
\end{equation}

Before considering the general case, let us premise a remark considering scalar rescalings of a metric.
Suppose $X=X^\tau$ and consider the operator
$\mathfrak{P}^\tau:\mathcal{C}^\infty(M)\rightarrow \mathcal{O}(X)$.
To emphasize the role of the metric, let us write 
$\mathfrak{P}^\tau=\mathfrak{P}^\tau_\kappa$, $X=X^\tau_\kappa$. 

When $\kappa$ is replaced by
$\kappa^\lambda:= \lambda^2\,\kappa$ for some $\lambda\in \mathbb{R}_+$,
we have $X=X^{\lambda\,\tau}_{\kappa^\lambda}$.
If $\Delta_\kappa$ and $\Delta_{\kappa^\lambda}$ are the Laplacians
for $\kappa$ and $\kappa^\lambda$, respectively, then 
$\Delta_{\kappa^\lambda}=\lambda^{-2}\,\Delta_\kappa$. Hence, 
$$
e^{-\tau\,\sqrt{\Delta_\kappa}}=e^{-\lambda\,\tau\,\sqrt{\Delta_{\kappa^\lambda}}}.
$$

Thus it makes sense to denote $\mathfrak{P}^\tau_\kappa$ as $\mathfrak{P}^X$,
without reference to a specific rescaling of $\kappa$.

On the other hand, the rescaling affects the adjoint operator, since
it modifies the volume form on $X$.
Let $\mathrm{vol}_{X,\kappa}$ and let $\mathrm{vol}_{X,\kappa^\lambda}$
be the volume forms on $X$ viewed as
$X^\tau_\kappa$ and $X^{\lambda\,\tau}_{\kappa^\lambda}$.
Similarly, let $\mathrm{vol}_{M,\kappa}$ and 
$\mathrm{vol}_{M,\kappa^\lambda}$
be the Riemannian volume forms on $M$ associated to $\kappa$ and
$\kappa^\lambda$, respectively.
Then
\begin{equation}
\label{eqn:rescaled volume forms}
\mathrm{vol}_{M,\kappa^\lambda}=\lambda^d\,\mathrm{vol}_{M,\kappa},
\quad 
\mathrm{vol}_{X,\kappa^\lambda}=\lambda^{2d-1}\,\mathrm{vol}_{X,\kappa}.
\end{equation}
Let $({\mathfrak{P}}^X)^*_\kappa$ and 
$({\mathfrak{P}}^X)^*_{\kappa^\lambda}$
be the adoints of $\mathfrak{P}^X$ with respect to 
$\kappa$ and $\kappa^\lambda$ (that is, using the 
pairs of volume forms $(\mathrm{vol}_{M,\kappa},\mathrm{vol}_{X,\kappa})$,
$(\mathrm{vol}_{M,\kappa^\lambda},\mathrm{vol}_{X,\kappa^\lambda})$,
respectively).
One concludes from (\ref{eqn:rescaled volume forms}) that
\begin{equation*}
({\mathfrak{P}}^X)^*_{\kappa^\lambda}=\lambda^{d-1}\,
({\mathfrak{P}}^X)^*_{\kappa}.
\end{equation*}
Since $X=X^\tau_\kappa=X^{\lambda\,\tau}_{\kappa^\lambda}$,
we obtain 
\begin{equation}
\label{eqn:rescaling Qtau}
\mathfrak{Q}^{\lambda\,\tau}_{\kappa^\lambda}=
\lambda^{d-1}\,
\mathfrak{Q}^\tau_\kappa;
\end{equation}
therefore the same relation holds between the respective
principal symbols.
Since $\mathfrak{Q}^\tau_\kappa$ has degree
$-(d-1)/2$,
we conclude that its principal symbol 
has the form
\begin{equation}
\label{eqn:homog prop sigma}
\sigma(\mathfrak{Q}^\tau_\kappa)(m,\xi)=
c_m(\xi)\,\left(\frac{\tau\,\pi}{\|\xi\|}  \right)
^{\frac{d-1}{2}},
\end{equation}
where $c_m(\xi)$ is homogeneous of degree
$0$ in $\xi$.

Let us consider a general real-analytic
$(M,\kappa)$ and fix $m\in M$. Let us choose a real-analytic
coordinate chart $\varphi:B\rightarrow U$ centered at $m$ that is isometric at the origin. Thus, $B\subseteq \mathbb{R}^d$
is an open neighborhood of the origin (say, an open ball
centered at $\mathbf{0}$), $U$ an open neighbourhood of
$m$ in $M$, and $\varphi$ a real-analytic diffeomorphism
such that $\mathrm{d}_\mathbf{0}\varphi$ is a linear isometry
between $(\mathbb{R}^d,g_{st})$ (where $g_{st}$ is the
standard Euclidean product) and $(T_m M,\kappa_x)$.
The pull-back metric 
$\varphi^*(\kappa)$ admits a convergent power series
expansion 
$$
\varphi^*(\kappa)_{\mathbf{x}}=
g_{st}+\sum_{|I|\ge 1}x^I\,g_I \qquad
(\mathbf{x}\in B),
$$
where the $g_I$'s are fixed symmetric 2-tensors.
Here $x^I=x_1^{i_1}\cdots x_d^{i_d}$ if
$\mathbf{x}=\begin{pmatrix}
x_1&\ldots&x_d
\end{pmatrix}^\dagger$.

For some sufficiently
small $\epsilon>0$ let $U_\epsilon:=\varphi\big(B_d(\mathbf{0},
\epsilon )  \big)$ denote be the image of the open ball of
radius $\epsilon$. Since the singular support of
$\mathfrak{Q}^\tau$ is the diagonal, the computation of
the principal symbol of $\mathfrak{Q}^\tau$ at
$(x,\xi)$ may be localized
to $U_\epsilon$, meaning that in local coordinates 
it is given by the
leading order term of the asymptotic expansion  for
$\xi\rightarrow \infty$ of the integral
$$
\int_{B_d(\mathbf{0},
\epsilon )}e^{\imath\,\langle\xi,\mathbf{y}\rangle}\,
\mathfrak{Q}^\tau(\mathbf{0},\mathbf{y})\,
\rho(\epsilon^{-1}\,\mathbf{y})\,
\mathcal{V}(\mathbf{y})\,\mathrm{d}\mathbf{y},
$$
where $\rho$ is a suitable fixed cut-off function identically equal to
$1$ near the origin, and $\mathcal{V}(\mathbf{y})\,\mathrm{d}\mathbf{y}$ is the pull-back by $\varphi$ of the Riemannian
density on $M$ 
(here we occasionally blend intrinsic and local coordinate notation).
If we pull this back by the dilation 
$\rho_\epsilon: \mathbf{x}\in 
B_d(\mathbf{0},1)\mapsto \mathbf{y}:=\epsilon\,\mathbf{x}\in  B_d(\mathbf{0},\epsilon)$, we obtain the symbol at
$(0,\epsilon\,\xi)$ of the corresponding $\mathfrak{Q}^\tau$
referred to the metric
\begin{equation}
\label{eqn:rescaling metric epsilon}
(\varphi\circ \rho_\epsilon)^*(\kappa)=\epsilon^2\,g_\epsilon,
\quad \text{where}\quad g_\epsilon:=
g_{st}+\sum_{|I|\ge 1}\epsilon^{|I|}\,
x^I\,g_I.
\end{equation}

Now $g_\epsilon$ is a real-analytic Riemannian
metric on $B_d(\mathbf{0},1)$, it is defined for sufficiently 
small
$\epsilon$ and depends real analytically
on $\epsilon$; hence the same holds of the corresponding operators
$\mathfrak{Q}_{g_\epsilon}^\tau$ and their principal symbols. By (\ref{eqn:Qtau euclidean case}), 
(\ref{eqn:homog prop sigma}), 
and (\ref{eqn:rescaling metric epsilon})
we conclude that
$$
\sigma\left(\mathfrak{Q}_{g_\epsilon}^\tau\right)=
\left(\frac{\pi\,\tau}{\|\xi\|}  \right)^{\frac{d-1}{2}}\cdot
\big(1
+\epsilon\,F_m(\xi)\big),
$$
where $F_m$ is homogeneous of degree $0$ in $\xi$.

On the other hand,
in view of (\ref{eqn:rescaling Qtau})
on $U_\epsilon$ we have 
$$
\sigma\left(\mathfrak{Q}^\tau_{(\varphi\circ \rho_\epsilon)^*(\kappa)}\right)(m,\epsilon\,\xi)
=\epsilon^{d-1}\,\sigma\left(\mathfrak{Q}^{\tau/\epsilon}_{g_\epsilon}\right)(x,\epsilon\,\xi)=
\left(\frac{\pi\,\tau}{\|\xi\|}  \right)^{\frac{d-1}{2}}\cdot
\big(1
+\epsilon\,F_m(\xi)\big).
$$
Since the result must be independent of $\epsilon$,
we conclude that $F_m=0$.

Summing up, we conclude the following:

\begin{lem} 
\label{lem:princ symbol Qtau}
The principal symbol of $\mathfrak{Q}^\tau$ is
$$
\sigma(\mathfrak{Q}^\tau)(m,\xi)=
\left( \frac{\pi\,\tau}{\|\xi\|} \right)^{\frac{d-1}{2}}.
$$
\end{lem}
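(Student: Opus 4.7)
The plan is to prove the lemma by reducing to the Euclidean prototype and pinning down the remaining ambiguity through a metric-rescaling argument. Since the singular support of $\mathfrak{Q}^\tau$ is the diagonal, its principal symbol at $(m,\xi)$ is determined by the germ of the metric near $m$, so one can localize in an adapted real-analytic chart centered at $m$.

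First I would handle the flat prototype explicitly. With $M=\mathbb{R}^d$ and $X=\mathbb{R}^d\times S^{d-1}_\tau$, both $\mathfrak{P}^\tau$ and its adjoint admit Fourier representations via the complexified wave kernel $\mathfrak{E}^\tau$. Writing out $\mathfrak{Q}^\tau(x,y)=\int_X (\mathfrak{P}^\tau)^*(x,\zeta)\,\mathfrak{P}^\tau(\zeta,y)\,\mathrm{d}V_{X^\tau}(\zeta)$ and parametrizing $\zeta=x'+\imath\,\tau\omega$, integration in $x'$ produces $\delta(\xi_2-\xi_1)$ and collapses the expression to a single Fourier integral in $\xi$ whose amplitude is the spherical integral $\int_{S^{d-1}}e^{-2\tau\|\xi\|-2\tau\langle\xi,\omega\rangle}\,\mathrm{d}\omega$. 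For $\xi=\lambda\eta$ with $\|\eta\|=1$, the phase is purely imaginary and vanishes only at $\omega=-\eta$, with Hessian $2\imath\tau I_{d-1}$; stationary phase yields the leading factor $(\pi/\lambda\tau)^{(d-1)/2}$, hence $\sigma(\mathfrak{Q}^\tau)(m,\xi)=(\pi\tau/\|\xi\|)^{(d-1)/2}$.

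Next I would combine the scalar rescaling law $\mathfrak{Q}^{\lambda\tau}_{\kappa^\lambda}=\lambda^{d-1}\mathfrak{Q}^\tau_\kappa$ (which follows from the behavior of the volume forms under $\kappa\mapsto\lambda^2\kappa$) with the order $-(d-1)/2$ of the operator to force the symbol in the general setting to have the form $\sigma(\mathfrak{Q}^\tau_\kappa)(m,\xi)=c_m(\xi)\,(\pi\tau/\|\xi\|)^{(d-1)/2}$ with $c_m$ homogeneous of degree zero. To show $c_m\equiv 1$, I would pick a real-analytic chart isometric at $m$, so that $\varphi^*\kappa=g_{st}+\sum_{|I|\ge 1}x^I g_I$, and apply the dilation $\rho_\epsilon:\mathbf{x}\mapsto\epsilon\mathbf{x}$ to obtain a real-analytic family $g_\epsilon=g_{st}+O(\epsilon)$. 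Real-analytic dependence of $\mathfrak{Q}^\tau$ on the metric gives $\sigma(\mathfrak{Q}^\tau_{g_\epsilon})=(\pi\tau/\|\xi\|)^{(d-1)/2}(1+\epsilon F_m(\xi)+\cdots)$, while the rescaling law $\sigma(\mathfrak{Q}^\tau_{(\varphi\circ\rho_\epsilon)^*\kappa})(m,\epsilon\xi)=\epsilon^{d-1}\sigma(\mathfrak{Q}^{\tau/\epsilon}_{g_\epsilon})(m,\epsilon\xi)$ forces the answer to be independent of $\epsilon$, so $F_m=0$ and inductively all higher coefficients vanish.

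The main obstacle will be in the last step: one must justify the real-analyticity of the principal symbol as a function of the metric parameter $\epsilon$ and check carefully that the two rescalings at play—the conformal change $\kappa\mapsto\epsilon^2 g_\epsilon$ which shifts $\tau$ to $\tau/\epsilon$, and the coordinate dilation $\rho_\epsilon$ acting on the cotangent fiber—combine consistently so that the independence-of-$\epsilon$ conclusion is legitimate. Once those bookkeeping issues are settled, the Euclidean computation ($\epsilon=0$) anchors the constant and gives $c_m\equiv 1$.
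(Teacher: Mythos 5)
Your proposal is correct and follows essentially the same route as the paper: the explicit Euclidean computation (Fourier representation of $\mathfrak{P}^\tau$ and its adjoint, the $\delta(\xi_2-\xi_1)$ collapse, stationary phase on $S^{d-1}$ at $\omega=-\eta$ with Hessian $2\imath\tau I_{d-1}$), followed by the scalar rescaling law $\mathfrak{Q}^{\lambda\tau}_{\kappa^\lambda}=\lambda^{d-1}\mathfrak{Q}^\tau_\kappa$ to pin the symbol down to $c_m(\xi)(\pi\tau/\|\xi\|)^{(d-1)/2}$, and the dilation/real-analytic-family argument with $g_\epsilon$ forcing $c_m\equiv 1$. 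The bookkeeping point you flag (compatibility of the conformal rescaling of $(\kappa,\tau)$ with the coordinate dilation on the fiber) is exactly how the paper closes the argument, so no gap remains.
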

 
Going over the arguments in \S 2, \S 3 and especially 
\S 4 of \cite{z14}
or \S 5, \S 6 and especially \S 7 of \cite{z20}
in light of Lemma \ref{lem:princ symbol Qtau},
one obtains the following description of 
$U_\mathbb{C}(t+2\,\imath\,\tau)$
as a dynamical Toeplitz operator.

\begin{enumerate}
    \item Let us denote by $\Pi^\tau_{-t}$ the zeroth order 
Fourier integral operator on $X^\tau$ having distributional kernel 
$$
\Pi^\tau_{-t}(x,y):=\Pi^\tau\left(\Gamma^\tau_{-t}(x),y\right);
$$
then on 
$X^\tau$ there exist a smoothly varying pseudodifferential operator 
$Q^\tau_t$ of degree 
$-(d-1)/2$
and a smoothly varying operator $R^\tau_t$ with $\mathcal{C}^\infty$ kernel such that
\begin{equation}
\label{eqn:U_C compos}
U_\mathbb{C}(t+2\,\imath\,\tau)=\Pi^\tau\circ Q^\tau_t\circ \Pi^\tau_{-t}
+R^\tau_t.
\end{equation}

\item  In a conic neighoburhood of the symplectic cone $\Sigma^\tau$,
$Q^\tau_t$ admits the following microlocal description. Let us set
$D^\tau_{\sqrt{\rho}}:=\imath\,\upsilon_{\sqrt{\rho}}$. Then there exists
a polyhomogeneous classical symbol on $X^\tau\times \mathbb{R}_+$,
of the form
\begin{equation}
\label{eqn:classical symbol poisson wave}
\gamma^\tau_t (x,r)\sim \sum_{j\ge 0}\gamma^\tau_{t,j}(x)\,r^{-\frac{d-1}{2}-j},
\end{equation}
such that $Q^\tau_t\sim \gamma^\tau_t \big(x,D^\tau_{\sqrt{\rho}}\big)$.

\item Similarly to 
(\ref{eqn:correcting factor tau t}),
the leading coefficient in (\ref{eqn:classical symbol poisson wave}) is 
\begin{equation}
\label{eqn:leading coeff poisson}
\gamma^\tau_{t,0}(x)=
(\pi\,\tau)^{\frac{d-1}{2}}\cdot 
e^{\imath\,\tilde{\theta}^\tau_t(x)}\cdot 
\langle \sigma_J^{(x)},\sigma_{J_t}^{(x)}\rangle^{-1}
\end{equation}
for a certain smooth function $\tilde{\theta}^\tau_t:X^\tau\rightarrow
\mathbb{R}$. The additional factor $\tau^{\frac{d-1}{2}}$
with respect to the computation of Zelditch (see e.g. \cite{z12})
is due to our choice of volume form, which affects the construction
of ${P^\tau}^*$ (see the discussion in \cite{p24}).

\end{enumerate}

\begin{proof}
[Proof of Theorem \ref{thm:complexified Poisson wave asy}]
We are interested in the asymptotics of the kernel
$$
P^\tau_{\chi,\lambda}(x,y):=
\sum_j\hat{\chi}(\lambda-\mu_j)\,
e^{-2\,\tau\,\mu_j}\,\sum_k \widetilde{\varphi}^\tau_{j,k}(x)\,
\overline{\widetilde{\varphi}^\tau_{j,k}(y)}
\qquad 
(x,y\in X^\tau),
$$
and more generally of its equivariant version
\begin{equation}
\label{eqn:equiv spectral proj unitint}
P^\tau_{\chi,\nu,\lambda}(x,y):=
\sum_j\hat{\chi}(\lambda-\mu_j)\,
e^{-2\,\tau\,\mu_j}\,\sum_k \widetilde{\varphi}^\tau_{j,\nu,k}(x)\,
\overline{\widetilde{\varphi}^\tau_{j,\nu,k}(y)}.
\end{equation}
The analysis run parallel to the one conducted for 
$\Pi^\tau_{\chi,\nu,\lambda}$. 

More precisely, the following analogue of  (\ref{eqn:PitaunuFIO})
describes the relation between $P^\tau_{\chi,\lambda}$
and $U_\mathbb{C}(t+2\,\imath\,\tau)$
in (\ref{eqn:UC_spectral})
is given by :
\begin{equation}
\label{eqn:PtaunuFIO}
P^\tau_{\chi,\lambda}(x,y)=
\frac{1}{\sqrt{2\,\pi}}\,\int_{-\infty}^{+\infty}
e^{-\imath\,\lambda\,t}\,\chi (t)\,
U_\mathbb{C}(t+2\,\imath\,\tau,x,y)\,\mathrm{d}t;
\end{equation}
furthermore, operatorially we have the analogue of 
(\ref{eqn:Pi chi nu lambda projector}):
\begin{equation}
\label{eqn:equivariant projection complex}
P^\tau_{\chi,\nu,\lambda}=
P_\nu\circ P^\tau_{\chi,\lambda}.
\end{equation}
Arrguing as in the proof of Theorem
\ref{thm:main 1}, 
with $U_\mathbb{C}(t+2\,\imath\,\tau,x,y)$ in place of 
$U^\tau_{\sqrt{\rho}}(t;x,y)$
and (\ref{eqn:U_C compos}) in place of (\ref{eqn:Ptautcomp}),
we obtain in place of (\ref{eqn:3rd expression}):
\begin{eqnarray}
\label{eqn:3rd expression Poisson}
\lefteqn{P^\tau_{\chi,\nu,\lambda}(x_1,x_2)}\\
&\sim&\frac{\dim(\nu)}{\sqrt{2\,\pi}}\,
\int_G\,\mathrm{d}V_G(g)\,\int_{-\infty}^{+\infty}\,\mathrm{d}t
\,
\left[\Xi_\nu 
\left( g^{-1} 
\right)\,
e^{-\imath\,\lambda\,t}\,
\chi (t)\,  
\left(\Pi^\tau\circ Q^\tau_t\circ \Pi^\tau_{-t}    \right)
\left(\mu^\tau_{g^{-1}}(x_1),x_2\right)
\right].\nonumber
\end{eqnarray}

The arguments in the proof of Theorem \ref{thm:main 1}
apply, except that the leading term of the amplitude
has been multiplied by a factor 
$(\pi\,\tau)^{(d-1)/2}\,(u\,\tau)^{-(d-1)/2}$ and the
unitary factor $e^{\imath\,\tilde{\theta}^\tau_t(x)}$
in (\ref{eqn:leading coeff poisson})
replaces $e^{\imath\,\theta^\tau_t(x)}$.
We have used that
the principal symbol of
$D^\tau_{\sqrt{\rho}}$ along $\Sigma^\tau$ (or, equivalently, of $\mathfrak{D}^\tau_{\sqrt{\rho}}$) is  
$$
\sigma(D^\tau_{\sqrt{\rho}})\left(x,v\,\alpha_x^\tau\right)=v\,\tau
\qquad (v>0).
$$
In view of the rescaling $u\mapsto \lambda\,u$, this entails a
further factor $\lambda^{-(d-1)/2}$.  
Furthermore, at the critical point
(\ref{eqn:stationary Ps}) of the phase the product
$u\,\tau=1$. Thus to leading order we obtain an extra overall factor 
$(\lambda/\pi\,\tau)^{-(d-1)/2}$.
\end{proof}

\section{Near-graph uniform asymptotic expansions}

Given $(x_1,x_2)\in Z^\tau\times Z^\tau$, we have defined 
$\Sigma_\chi(x_1,x_2)\subseteq G\times \mathrm{supp}(\chi)$ in
Remark \ref{rem:unique t1chi}.

\begin{lem}
\label{lem:continuity Sigma}
$\Sigma_\chi(x_1,x_2)$ has the following properties:

\begin{enumerate}
\item $\Sigma_\chi(x_1,x_2)\neq \emptyset$ if and only if $(x_1,x_2)\in 
\mathfrak{X}^\tau_\chi$;
\item for any $(x_1,x_2)\in Z^\tau\times Z^\tau$ and any
neighbourhood $S$ of $\Sigma_\chi(x_1,x_2)$ in $G\times \mathrm{supp}(\chi)$, 
there exists a neighbourhood
$Z'$ of $(x_1,x_2)$ in $Z^\tau\times Z^\tau$ such that 
$$
(x_1',x_2')\in Z'\quad\Rightarrow\quad \Sigma_\chi(x_1',x_2')
\subseteq S.
$$
\end{enumerate}
\end{lem}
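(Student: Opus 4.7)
The plan is as follows. Part (1) is essentially tautological: unpacking the definitions in Definition \ref{defn:concentration locus} and in (\ref{eqn:Sigmax12}), the non-emptiness of $\Sigma_\chi(x_1,x_2)$ means exactly the existence of a pair $(g,t)\in G\times \mathrm{supp}(\chi)$ with $x_1=\mu^\tau_g\circ \Gamma^\tau_t(x_2)$, which is precisely the condition $x_1\in x_2^{G\times \chi}$ defining $\mathfrak{X}^\tau_\chi$. So part (1) requires no real work beyond observing that both conditions encode the same set-theoretic statement.

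For part (2), I will argue by contradiction, exploiting the compactness of $G\times \mathrm{supp}(\chi)$ and the joint continuity of the evaluation map $(g,t,x)\mapsto \mu^\tau_g\circ \Gamma^\tau_t(x)$. Suppose the claim fails: then there exist an (without loss of generality, open) neighbourhood $S\supseteq \Sigma_\chi(x_1,x_2)$ in $G\times \mathrm{supp}(\chi)$, a sequence $(x_1^{(n)},x_2^{(n)})\to (x_1,x_2)$ in $Z^\tau\times Z^\tau$, and elements $(g_n,t_n)\in \Sigma_\chi(x_1^{(n)},x_2^{(n)})\setminus S$. Since $G\times \mathrm{supp}(\chi)$ is compact, after passing to a subsequence we may assume $(g_n,t_n)\to (g_\infty,t_\infty)\in G\times \mathrm{supp}(\chi)$; since $(G\times \mathrm{supp}(\chi))\setminus S$ is closed in this compact set, the limit $(g_\infty,t_\infty)$ still lies outside $S$. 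On the other hand, passing to the limit in the identity $x_1^{(n)}=\mu^\tau_{g_n}\circ \Gamma^\tau_{t_n}(x_2^{(n)})$ yields $x_1=\mu^\tau_{g_\infty}\circ \Gamma^\tau_{t_\infty}(x_2)$, so $(g_\infty,t_\infty)\in \Sigma_\chi(x_1,x_2)\subseteq S$, a contradiction.

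There is no real obstacle here: the two ingredients needed are the compactness of $G$ (from the standing assumption on $G$) and of $\mathrm{supp}(\chi)$ (which holds because $\chi\in \mathcal{C}^\infty_c(\mathbb{R})$), together with the smoothness of $\mu^\tau$ and $\Gamma^\tau$. The only point to be careful about is that one must take the limit inside the compact set $G\times \mathrm{supp}(\chi)$, so that the limit $t_\infty$ is automatically in $\mathrm{supp}(\chi)$ and the limiting pair is a legitimate candidate for membership in $\Sigma_\chi(x_1,x_2)$.
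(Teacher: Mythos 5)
Your proposal is correct and follows essentially the same route as the paper's proof: part (1) is dismissed as a restatement of the definitions, and part (2) is proved by contradiction via a sequence $(x_1^{(n)},x_2^{(n)})\to(x_1,x_2)$ with exceptional parameters $(g_n,t_n)$, using compactness of $G\times\mathrm{supp}(\chi)$ to extract a convergent subsequence and continuity of $(g,t,x)\mapsto\mu^\tau_g\circ\Gamma^\tau_t(x)$ to place the limit in $\Sigma_\chi(x_1,x_2)$, contradicting that it stays outside $S$. The only cosmetic difference is that the paper phrases "outside $S$" metrically (distance at least $\epsilon_0$ from $\Sigma_\chi(x_1,x_2)$) while you use closedness of the complement of $S$ in the compact set, which is equivalent.
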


\begin{proof}
The first statement is obvious by definition. If the second was false,
for any $j=1,2,\ldots$ there would exist  
$(y_j',y_j'')\in Z^\tau\times Z^\tau$ having distance $<1/j$ from
$(x_1,x_2)$ and $(g_j,t_j)\in 
\Sigma_\chi(y_j',y_j'')$ having distance from $\Sigma_\chi(x_1,x_2)$
no less than $ \epsilon_0$, for some fixed $\epsilon_0>0$.
By compactness, we may assume without loss that $g_j\rightarrow g_\infty
\in G$ and $t_j\rightarrow t_\infty$ in $\mathrm{supp}(\chi)$.
By continuity, 
$$
y_j'=\mu^\tau_{g_j}\circ \Gamma^\tau_{t_j}(y_j'')
\quad \Rightarrow\quad x_1=\mu^\tau_{g_\infty}\circ 
\Gamma^\tau_{t_\infty}(x_2)\quad\Rightarrow\quad 
(g_\infty,t_\infty)\in \Sigma_\chi(x_1,x_2).
$$
hence $(g_j,t_j)\rightarrow  (g_\infty,t_\infty)\in \Sigma_\chi(x_1,x_2)$, absurd.
\end{proof}

\begin{proof}
[Proof of Theorem \ref{thm:near-graph-unrescaled}]
To begin with, we consider the asymptotics at fixed points
(that is, with no rescaling). Let us choose 
$(x_1,x_2)\in \mathfrak{X}^\tau_\chi$, hence
satisfying (\ref{eqn:Sigmax12}): there exist
$(g,t)\in G\times \mathrm{supp}(\chi)$ such that
$x_1=\mu^\tau_g\circ \Gamma^\tau_{t}(x_2)$.
We may assume that the possible pairs $(g,t)$ can be listed as in
(\ref{eqn:Sigmax12chi}). 
We have
\begin{eqnarray}
\label{eqn:6th expression unresc}
\lefteqn{\Pi^\tau_{\chi,\nu,\lambda}(x_1,x_2)}\\
&\sim&\lambda^2\,\frac{\dim(\nu)}{\sqrt{2\,\pi}}\,
\int_G\,\mathrm{d}V_G(g)\,\int_{-\infty}^{+\infty}\,\mathrm{d}t
\,\int_{X^\tau}\,\mathrm{d}V_{X^\tau}(y)\,
\int_0^{+\infty}\,\mathrm{d}u\,
\int_0^{+\infty}\,\mathrm{d}v\nonumber\\
&&\left[\Xi_\nu 
\left( g^{-1} 
\right)\,
\chi (t)\,  e^{\imath\,\lambda\,
\left[u\,\psi^\tau\left(\mu^\tau_{g^{-1}}(x_1),y\right)+v\,\psi^\tau
\left(\Gamma^\tau_{-t}(y),x_2\right)-t\right]}
\right.\nonumber\\
&&\left. 
\varrho_1(g,y)\,\varrho_2(t,y)\,
s^\tau\left(\mu^\tau_{g^{-1}}(x_1),y,\lambda\,u\right)\,
r^\tau_t\left(y,x_2,\lambda\,v\right)
\right].\nonumber
\end{eqnarray}
Let us multiply the integrand 
in (\ref{eqn:6th expression unresc})
by the unrescaled cut-off
\begin{equation}
\label{eqn:new cut off RGunresc 1}
\gamma^\mathbb{R}\left(
t-t_1\right)\cdot \sum_{l=1}^{r_{x_1}}\gamma^{\mathfrak{g}}\left(
\log_G\left(
g\,h_1^{-1}\,\kappa_l^{-1}\right)\right),
\end{equation}
where $G_{x_1}=\{\kappa_1,\ldots,\kappa_{r_{x_1}}\}$. 
Thus integration in $G\times \mathbb{R}$ 
has been restricted to a small
but fixed neighbourhood of $\Sigma_\chi(x_1,x_2)$, and only a rapidly
decreasing contribution to the asymptotics 
of (\ref{eqn:6th expression unresc}) is lost.
The same will then be true for the asymptotics of
$\Pi^\tau_{\chi,\nu,\lambda}(x_1',x_2')$ for any 
$(x'_1,x_2')$ in a fixed small neighbourhood of $(x_1,x_2)$, as
$\Sigma_\chi(x_1',x_2')$ is then contained in a small
neighbourhood of $\Sigma_\chi(x_1,x_2)$ by Lemma 
\ref{lem:continuity Sigma}.

Thus, uniformly on an open neighbourhood of $(x_1,x_2)$ we have
$$
\Pi^\tau_{\chi,\nu,\lambda}(x_{1}',x_{2}')
\sim \sum_{l=1}^{r_{x_1}}\Pi^\tau_{\chi,\nu,\lambda}
(x_{1}',x_{2}')_l,
$$
where for each $l$ (with the change of variable 
$t\mapsto t_1+t$) we have 
\begin{eqnarray}
\label{eqn:6th expression lth summand unresc gen}
\lefteqn{\Pi^\tau_{\chi,\nu,\lambda}(x'_1,x'_2)_l}\\
&=&e^{-\imath\,\lambda\,t_1}\,\lambda^{2}\,\frac{\dim(\nu)}{\sqrt{2\,\pi}}\,
\int_{\mathfrak{g}}\,\mathrm{d}\xi\,
\int_{-\infty}^{+\infty}\,\mathrm{d}t
\,\int_{-\infty}^{+\infty}\,\mathrm{d}\theta\,
\int_{\mathbb{R}^{2d-2}}\,\mathrm{d}\mathbf{u}\,
\int_0^{+\infty}\,\mathrm{d}u\,
\int_0^{+\infty}\,\mathrm{d}v\nonumber\\
&&\left[e^{\imath\,\lambda\,\Psi_{x'_1,x'_2}(
t,v,\theta,u,\mathbf{v},\xi)_l}\,
\mathcal{H}_\lambda(x'_1,x'_2,
t,v,\theta,u,\mathbf{v},\xi)_l
\right].\nonumber
\end{eqnarray}
At $(x_1,x_2)$
we have
\begin{eqnarray}
\label{eqn:Psithetax1x2 unresc}
\lefteqn{\Psi_{x_1,x_2}(t,v,\theta,u,\mathbf{v},\xi)_l:=
-u\,\theta
+v\,(  \theta+\tau\,t)  -t  }
\nonumber\\
&&+\imath\, u\,\left[ 
 \frac{1}{4\,\tau^2}\,\theta^2  -\psi_2\left( 
-\xi_{X^\tau}(x_1)^{(l)},\mathbf{v}   \right)                                           \right]
+\imath\, v\,\left[
\frac{1}{4\,\tau^2}   \,(  \theta+\tau\,t)^2
+\frac{1}{2}\big\|B\mathbf{v}\big\|^2\right]
\nonumber\\
&&+R_3(\mathbf{v},\xi,\theta,t)
\end{eqnarray}
and 
\begin{eqnarray}
\label{eqn:Hlambda12}
\lefteqn{\mathcal{H}_\lambda(x_1,x_2,
t,v,\theta,u,\mathbf{v},\xi)_l  }\nonumber\\
&:=&
\Xi_\nu 
\left(h_1^{-1}\, \kappa_l^{-1}\, e^{-\xi}
\right)\,
\chi \left(t+t_1\right)
\cdot \mathcal{V}\left(\theta,
\mathbf{u}\right)\cdot \gamma(t,\theta,\xi,\mathbf{u})\cdot 
f_1(v)\cdot f_2(u)
\nonumber\\
&&\cdot 
s^\tau\left(\mu^\tau_{(e^{\xi}\kappa_l )^{-1}}(x),y_\lambda (\theta,\mathbf{u}),\lambda\,u\right)\,
r^\tau_t\big(y_\lambda (\theta,\mathbf{u}),x,\lambda\,v\big),
\end{eqnarray}
where $\gamma(t,\theta,\xi,\mathbf{u})$ 
is a product of unrescaled cut-offs and $f_1,\,f_2$ are as in Proposition \ref{prop:compact u and v}.
In particular, integration is restricted to a small neighbourhood
of the locus where $\theta=t=0$, $\mathbf{v}=\xi=0$, and
is compactly supported in $(u,v)$.

We have $\Im \Psi_{x_1,x_2}(t,v,\theta,u,\mathbf{v},\xi)_l\ge 0$, and
$\Im \Psi_{x_1,x_2}(t,v,\theta,u,\mathbf{v},\xi)_l= 0$ only if
$$
\theta=t=0,\quad \mathbf{v}=\mathbf{0}_{\mathbb{R}^{d_G}},\quad 
\xi=0_{\mathfrak{g}}.
$$
We have in addition a stationary point, if we also
impose $u=v=1/\tau$. Thus the only nonnegligible contribution
to the asymptotics may come from a neighourhood of
$$
P_0:=(t_0,v_0,\theta_0,u_0,\mathbf{v}_0,\xi_0)
=\left(0,\frac{1}{\tau},0,\frac{1}{\tau},\mathbf{0}_{\mathbb{R}^{d_G}},
0_{\mathfrak{g}}\right)
$$

Let ${D_{x_1}^{l}}$ denote the $(2d-2)\times d_G$ 
matrix representing the injective linear map
$\xi\mapsto \xi_{X^\tau}(x_1)^{(l)}$ with respect to the
given orthonormal basis in $\mathfrak{g}$ and 
$\mathcal{H}^\tau_{x_1}$.
At the critical point, the Hessian of $\Psi_{x_1,x_2}(\cdot)_l$ is
$$
H_{P_0}\big( \Psi_{x_1,x_2}(\cdot)_l \big)=\begin{pmatrix}
\imath/2&\tau &\imath/(2\,\tau^2) &0 &\mathbf{0}^\dagger&\mathbf{0}^\dagger\\
\tau& 0&1 &0& \mathbf{0}^\dagger&\mathbf{0}^\dagger   \\
\imath/(2\,\tau^2)& 1  & \imath/\tau^3&-1 &\mathbf{0}^\dagger &\mathbf{0}^\dagger  \\
0& 0  &-1 &0&\mathbf{0}^\dagger&\mathbf{0}^\dagger        \\
\mathbf{0} &\mathbf{0}&\mathbf{0} &\mathbf{0}&
\mathcal{A}  &\mathcal{B}   \\
\mathbf{0}&\mathbf{0}&\mathbf{0} &\mathbf{0}
&\mathcal{B}^\dagger&\mathcal{C}
\end{pmatrix}
$$
where
$$
\mathcal{A}:=\frac{\imath}{\tau}\,\left(I_{2d-2}
+B^\dagger\,B\right),
\quad 
\mathcal{B}:=\frac{\imath}{\tau}\,(I_{2d-2}+\imath\,J_0)\,{D_{x_1}^{l}},\quad
\mathcal{C}:=\frac{\imath}{\tau}\,{D_{x_1}^{l}}^\dagger\,{D_{x_1}^{l}}.
$$
Here $J_0$ is the standard complex structure on 
$\mathbb{C}^{d-1}\cong \mathbb{R}^{d-1}\oplus \mathbb{R}^{d-1}$.
Thus the Hessian matrix is in block diagonal form, and the
determinants of the two blocks are as follows.

First we have
$$
\det\left(\frac{1}{\imath}\,
\begin{pmatrix}
\imath/2&\tau&\imath/(2\,\tau^2)&0\\
\tau&0&1&0\\
\imath/(2\,\tau^2)&1&\imath/\tau^3&-1\\
0&0&-1&0
\end{pmatrix}\right)=\tau^2.
$$
As to the second block, we have
\begin{eqnarray*}
\lefteqn{
\det\left(\frac{1}{\imath}\, \begin{pmatrix}
\mathcal{A}&\mathcal{B}\\
\mathcal{B}^\dagger&\mathcal{C}
\end{pmatrix}\right)}\\
&=&
\det\left(\frac{1}{\imath}\,\begin{pmatrix}
\frac{\imath}{\tau}\,\left(I_{2d-2}
+B^\dagger\,B\right)&\frac{\imath}{\tau}\,(I_{2d-2}+\imath\,J_0)\,{D_{x_1}^{l}}\\
\frac{\imath}{\tau}\,
{D_{x_1}^{l}}^\dagger\,(I_{2d-2}-\imath\,J_0)&\frac{\imath}{\tau}\,
{D_{x_1}^{l}}^\dagger\,{D_{x_1}^{l}}
\end{pmatrix}\right)  
\\
&=&\frac{1}{\tau^{2d-2+d_G}}\,
\det\begin{pmatrix}
\left(I_{2d-2}
+B^\dagger\,B\right)&(I_{2d-2}+\imath\,J_0)\,{D_{x_1}^{l}}\\
{D_{x_1}^{l}}^\dagger\,(I_{2d-2}-\imath\,J_0)&
{D_{x_1}^{l}}^\dagger\,{D_{x_1}^{l}}
\end{pmatrix} 
\\
&=&\frac{1}{\tau^{2d-2+d_G}}\,
\det\begin{pmatrix}
\left(I_{2d-2}
+B^\dagger\,B\right)&(I_{2d-2}+\imath\,J_0)\,{D_{x_1}^{l}}\\
0&S
\end{pmatrix},
\end{eqnarray*}
where 
\begin{eqnarray*}
S&:=&
{D_x^{l}}^\dagger\,{D_{x_1}^{l}}-{D_{x_1}^{l}}^\dagger\,(I_{2d-2}-\imath\,J_0)\,
\left(I_{2d-2}
+B^\dagger\,B\right)^{-1}\,(I_{2d-2}+\imath\,J_0)\,{D_{x_1}^{l}}.
\end{eqnarray*}
Thus $S=S^\dagger$, and the real part of $S$ is
\begin{eqnarray*}
\Re(S)&:=&{D_{x_1}^{l}}^\dagger\,{D_x^{l}}-{D_x^{l}}^\dagger\,\left(I_{2d-2}
+B^\dagger\,B\right)^{-1}\,\,{D_x^{l}}\\
&&-{D_{x_1}^{l}}^\dagger\,J_0\,
\left(I_{2d-2}
+B^\dagger\,B\right)^{-1}\,J_0\,{D_{x_1}^{l}}.
\end{eqnarray*}
We have (since $B$ is a symplectic matrix)
\begin{eqnarray*}
\left(I_{2d-2}
+B^\dagger\,B\right)\,J_0&=& J_0+B^\dagger\,B\,J_0\\
&=&J_0+J_0\,\left(B^\dagger\,B\right)^{-1}\\
&=&J_0\,\left(I_{2d-2}
+\left(B^\dagger\,B\right)^{-1}\right).
\end{eqnarray*}
Hence, taking inverses we get
$$
J_0\,\left(I_{2d-2}
+B^\dagger\,B\right)^{-1}=
\left(I_{2d-2}
+\left(B^\dagger\,B\right)^{-1}\right)^{-1}\,J_0.
$$
Thus
\begin{eqnarray*}
\Re(S)&:=&{D_{x_1}^{l}}^\dagger\,{D_{x_1}^{l}}-{D_{x_1}^{l}}^\dagger\,\left(I_{2d-2}
+B^\dagger\,B\right)^{-1}\,\,{D_{x_1}^{l}}\\
&&-{D_{x_1}^{l}}^\dagger\,\left(I_{2d-2}
+\left(B^\dagger\,B\right)^{-1}\right)^{-1}\,J_0\,J_0\,{D_{x_1}^{l}}\\
&=&{D_{x_1}^{l}}^\dagger\,\left[I_{2d-2}-\left(I_{2d-2}
+B^\dagger\,B\right)^{-1}\right]\,\,{D_{x_1}^{l}}\\
&&+{D_{x_1}^{l}}^\dagger\,\left(I_{2d-2}
+\left(B^\dagger\,B\right)^{-1}\right)^{-1}\,{D_{x_1}^{l}},
\end{eqnarray*}
whence $\Re(S)\gg 0$.

Hence $P_0$ is a non-degenerate stationary point; 
by the complex stationary phase lemma of \cite{ms},
we obtain
an asymptotic expansion for $\Pi^\tau_{\chi,\nu,\lambda}(x_1,x_2)$ 
which must agree with our previous derivation.
Moreover, since the 
complex stationary phase holds with parameters, we can replace $(x_1,x_2)$ 
by a general $(x_1',x_2')$ varying in
some small open neighbourhood of $(x_1,x_2)$,
and obtain an
asymptotic expansions
\begin{equation}
    \label{eqn:general expansion unresc}
    \Pi^\tau_{\chi,\nu,\lambda}(x_1',x_2')\sim 
\sum_{l=1}^{r_{x_1}}\Pi^\tau_{\chi,\nu,\lambda}(x'_1,x_2')_l,
\end{equation}
but for some $l$ the corresponding stationary
point (accounting for the expansion of the $l$-th summand
in (\ref{eqn:general expansion unresc}))
might cease to be real; this happens
when $(x_1',x_2')\in \mathfrak{X}_\chi^\tau$ but $r_{x_1'}<r_{x_1}$, or when 
$(x_1',x_2')\not\in \mathfrak{X}_\chi^\tau$. In this case, 
$\Pi^\tau_{\chi,\nu,\lambda}(x_1',x_2')_l
=O\left(\lambda^{-\infty}\right)$.

In particular, let us now replace
$x_j$ by, say, $x_j':=x_j+(\theta_j,\mathbf{v}_j^t)$ 
for some nearby $x_j'=x_j+
(\theta_j,\mathbf{v}_j)$, with $(\theta_j,\mathbf{v}_j)$ suitably small.
By non-degeneracy, the critical point will vary smoothly, 
except that it
will generically move to the complex domain, which accounts for the exponential decay in the scaling asymptotics. Nonetheless, the asymptotic expansions will still hold, and if we pair this with the scaling asymptotics in the previous theorems, 
we obtain the statement of Theorem \ref{thm:near-graph-unrescaled}.

\end{proof}

\begin{rem}
\label{rem:remainder est}
In our situation, $R_3$ and 
$L_{\nu,l,s}\left(x_1,x_2;\cdot\right),\,
K_{\nu,l,s}\left(x_1,x_2;\cdot\right)$
are complex valued real-analytic functions (meaning that their real and imaginary components are real-analytic).
Hence there are expansions of the form
\begin{eqnarray*}
e^{\lambda\,R_3\left(\theta_1,\mathbf{v}_1^t,
\theta_2,\mathbf{v}_2^t\right)}&=&
\sum_{j\ge 0}\frac{\lambda^j}{j!}\,R_{3}\left(\theta_1,\mathbf{v}_1^t,
\theta_2,\mathbf{v}_2^t\right)^j\\
&=&\sum_{j\ge 0}\frac{\lambda^j}{j!}\,\left(
\sum_{a\ge 0}P_{3+a}\left(\theta_1,\mathbf{v}_1^t,
\theta_2,\mathbf{v}_2^t\right)\right)^j\\
&=&\sum_{j\ge 0}\lambda^j\,\sum_{a\ge 0}P_{3j+a}\left(\theta_1,\mathbf{v}_1^t,
\theta_2,\mathbf{v}_2^t\right)
\end{eqnarray*}
where $P_{k}\left(\cdot\right)$ is a generic homogeneous polynomial
of degree $k$ and, say,
$$
L_{\nu,l,s}\left(x_1,x_2;\theta_1,\mathbf{v}_1^t,
\theta_2,\mathbf{v}_2^t\right)=
\sum_{k\ge 0}L_{\nu,l,s,k}\left(x_1,x_2;\theta_1,\mathbf{v}_1^t,
\theta_2,\mathbf{v}_2^t\right),
$$
where each $L_{\nu,l,s,k}\left(x_1,x_2;\cdot\right)$
is a homogeneous polynomial of total degree $k$.
Upon rescaling, the product of these expansions
gives rise to terms of the form
\begin{eqnarray*}
\lefteqn{ \lambda^{j-s-\frac{k}{2}
-\frac{3j}{2}-\frac{a}{2}}\,P_{3j+a}\left(\theta_1,\mathbf{v}_1^t,
\theta_2,\mathbf{v}_2^t\right)\,L_{\nu,l,s,k}\left(x_1,x_2;\theta_1,\mathbf{v}_1^t,
\theta_2,\mathbf{v}_2^t\right)   }\\
&=&\lambda^{-\frac{1}{2}\,(j+2s+a +k )}\,
\tilde{P}_{k+3j+a}\left(\theta_1,\mathbf{v}_1^t,
\theta_2,\mathbf{v}_2^t\right)
\end{eqnarray*}
where $\tilde{P}_{k+3j+a}$ is homogeneous of total degree 
$3j+a+k$.
We obtain the same conclusions as before, that is general
terms of the form $\lambda^{-k/2}\,P_r$ with
$r\le 3\,k$ and $r-k$ even.
\end{rem}

\section{Applications}

\subsection{The equivariant Weyl law for 
$\mathfrak{D}^\tau_{\sqrt{\rho}}$}

\begin{proof}
[Proof of Theorem \ref{thm:weyl law geod}]
Let $\chi$ have sufficiently small support and satisfy
$\hat{\chi}>0$ and $\chi(0)>0$. 
Recalling (\ref{eqn:Pi chi nu lambda projector}),
\begin{eqnarray}
\label{eqn:weyl law integrand0}
\lefteqn{\sum_j \hat{\chi}(\lambda-\lambda_j)\,\dim H^\tau(X)_{j,\nu}}\\
&=&\sum_j \hat{\chi}(\lambda-\lambda_j)\,\int_{X^\tau}
\,\Pi^\tau_{j,\nu}(x,x)\,\mathrm{d}V_{X^\tau}(x)\nonumber\\
&=&\int_{X^\tau}
\,\left[\sum_j \hat{\chi}(\lambda-\lambda_j)\,
\Pi^\tau_{j,\nu}(x,x)\right]\,\mathrm{d}V_{X^\tau}(x)\nonumber\\
&=&\int_{X^\tau}\,
\Pi^\tau_{\chi,\nu,\lambda}(x,x)\,\mathrm{d}V_{X^\tau}(x).
\nonumber
\end{eqnarray}
Let us fix $C>0$ and $\epsilon'\in (0,1/6)$.
By Theorem \ref{thm:main 1}, we only loose a rapidly decreasing
contribution in (\ref{eqn:weyl law integrand0}), if
integration is restricted to a shrinking
tubular neighourhood of $Z^\tau$ of radius 
$C\,\lambda^{\epsilon'-\frac{1}{2}}$.
Furthermore, any such neighbourhood 
may be locally parametrized 
using smoothly varying systems of NHLC's centered 
at moving points $x\in Z^\tau$.

More precisely, in view of Remark 
\ref{rem:mutau decomp},
for any $x\in Z^\tau$ we may find an open neighbourhood 
$Z'\subseteq Z^\tau$ of $x$ and a smoothly varying family of
normal Heisenberg coordinates centered at points $x'\in Z'$, such that
the map
\begin{equation}
\label{eqn:local parametriz}
(x',\mathbf{v}^t)\in Z'\times \mathbb{R}^{d_G}\mapsto x'+\mathbf{v}^t\in X^\tau
\end{equation}
is a diffeomorphism onto a neighbourhood of $x$ in $X^\tau$ (here we
use normal Heisenberg coordinates at each $x'$ to identify
$\mathbb{R}^{d_G}\cong\mathbb{R}^{d_G}_t\cong T_{x'}^tX^\tau$).
In view of Corollary 35 of \cite{p24},
if $x=x'+\mathbf{v}^t$ then
\begin{equation}
\label{eqn:volume form Ztau v}
\mathrm{d}V_{X^\tau}(x)=2^{d_G/2}\,
\mathcal{V}(x'+\mathbf{v}^t)\,
\mathrm{d}V_{Z^\tau}(x')\,\mathrm{d}\mathbf{v}^t,
\end{equation}
where $\mathrm{d}V_{Z^\tau}$ is the Riemannian volume form
on $Z^\tau$ for the restricted metric,
and $\mathcal{V}(x')=1$ for any $x'\in Z'$.

If we pass to the rescaled local parametrization
\begin{equation}
\label{eqn:local param normal dspl}
x_\lambda(\mathbf{v}^t):=x'+\frac{\mathbf{v}^t}{\sqrt{\lambda}}
\qquad (x'\in Z',\,\mathbf{v}^t\in \mathbb{R}^{d_G}),
\end{equation}
then uniformly for
$\|\mathbf{v}^t\|\le C\,\lambda^{\epsilon'}$ 
there are asymptotic expansion
\begin{eqnarray}
\label{eqn:diadg exp equiv}
\Pi^\tau_{\chi,\nu,\lambda}\left(
x_\lambda(\mathbf{v}^t),x_\lambda(\mathbf{v}^t)\right)
&\sim&\frac{1}{\sqrt{2\,\pi}}\,\left( \frac{\lambda}{2\,\pi\,\tau}\right)^{d-1-d_G/2}
\cdot 
\frac{\dim(\nu)^2}{ V_{eff}(x_1)}\\
&&\cdot e^{-\frac{2}{\tau}\,
\left\|\mathbf{v}^t\right\|^2
}
\cdot \left[\chi (0)+\sum_{k\ge 1}\,\lambda^{-k/2}\,F_{k,\chi,\nu}
\left(x';\mathbf{v}^t\right)\right],
\nonumber
\end{eqnarray}
where $F_{k,\chi,\nu}(x';\cdot)$ is a polynomial of degree $\le 3\,k$ and 
parity $k$.

Let us choose an open cover of $Z^\tau$ by open sets $Z'_j$ as above,
and a subordinate partition of unity $\beta_j$ on $Z^\tau$;
we can then express the latter integral
in (\ref{eqn:weyl law integrand0}) as a sum of local contributions. 
For notational simplicity, we shall force notation and
leave the partition $\{\beta_j\}$ implicit.
We obtain:
\begin{eqnarray}
\label{eqn:weyl law integrand split}
\lefteqn{\sum_j \hat{\chi}(\lambda-\lambda_j)\,\dim H^\tau(X)_{j,\nu}}\\
&\sim&2^{d_G/2}\int_{Z^\tau}\,\mathrm{d}V_{Z^\tau}(x)
\left[\int_{\mathbb{R}^{d_G}}\,
\Pi^\tau_{\chi,\nu,\lambda}(x+\mathbf{v}^t,x+\mathbf{v}^t)\,
\,\mathcal{V}(x+\mathbf{v}^t)
\,\mathrm{d}\mathbf{v}^t\right]\nonumber\\
&=&\left(\frac{\lambda}{2}\right)^{-d_G/2}\,\int_{Z^\tau}\,\mathrm{d}V_{Z^\tau}(x)
\left[\int_{\mathbb{R}^{d_G}}\,
\Pi^\tau_{\chi,\nu,\lambda}
\left(x_\lambda(\mathbf{v}^t),
x_\lambda(\mathbf{v}^t)\right)\,
\,\mathcal{V}(x_\lambda(\mathbf{v}^t))
\,\mathrm{d}\mathbf{v}^t\right].
\nonumber
\end{eqnarray}
Integration in $\mathrm{d}\mathbf{v}^t$
is now over an expanding ball of radius 
$O\left( \lambda^{\epsilon'} \right)$ in $\mathbb{R}^{d_G}$.
By multiplying the asymptotic expansion of Theorem 
\ref{thm:diagonal case} with the Taylor expansion of
$\mathcal{V}(x_\lambda(\mathbf{v}^t))$, we obtain an asymptotic
expansion for the integrand in (\ref{eqn:weyl law integrand split})
of the form
\begin{eqnarray}
\label{eqn:diadg exp equiv prod}
\lefteqn{\Pi^\tau_{\chi,\nu,\lambda}
\left(x_\lambda(\mathbf{v}^t),
x_\lambda(\mathbf{v}^t)\right)\,
\,\mathcal{V}(x_\lambda(\mathbf{v}^t))}
\\
&\sim&\frac{1}{\sqrt{2\,\pi}}\,\left( \frac{\lambda}{2\,\pi\,\tau}\right)^{d-1-d_G/2}
\cdot 
\frac{\dim(\nu)^2}{V_{eff}(x)}\cdot e^{-\frac{2}{\tau}\,
\left\|\mathbf{v}^t\right\|^2
} \nonumber\\
&&
\cdot \left[\chi (0)+\sum_{k\ge 1}\,\lambda^{-k/2}\,F_{k,\nu}
\left(x;\mathbf{v}^t\right)\right],
\nonumber
\end{eqnarray}
where $F_{k,\nu}(x;\cdot)$ is a polynomial of degree $\le 3\,k$ and 
parity $k$. The expansion (\ref{eqn:diadg exp equiv prod}) 
may be integrated term by term; 
we obtain an asymptotic expansion 
for (\ref{eqn:weyl law integrand split}) of the form
\begin{eqnarray}
\label{eqn:weyl law integrand split 1 int glob}
\lefteqn{\sum_j \hat{\chi}(\lambda-\lambda_j)\,\dim H^\tau(X)_{j,\nu}}\\
&\sim&\frac{2^{d_G/2}}{\sqrt{2\,\pi}}\,\left( \frac{\lambda}{2\,\pi\,\tau}\right)^{d-1-d_G/2}\,\lambda^{-d_G/2}
\cdot \dim(\nu)^2\nonumber\\
&& \cdot
\left(
\int_{Z^\tau}\,\frac{1}{V_{eff}(x)}\,\mathrm{d}V_{Z^\tau}(x)
\right)\cdot 
\int_{\mathbb{R}^{d_G}}\,
e^{-\frac{2}{\tau}\,
\left\|\mathbf{v}^t\right\|^2
}\,\mathrm{d}\mathbf{v}^t 
\cdot \left[\chi (0)+\sum_{k\ge 1}\,\lambda^{-k}\,\mathfrak{f} _{k,\nu}
\right]
\nonumber\\
&=&\frac{1}{2^{d_G/2}\,\sqrt{2\,\pi}}\,\left( \frac{\lambda}{2\,\pi\,\tau}\right)^{d-1-d_G}
\cdot \dim(\nu)^2\cdot
\mathrm{vol}(Z^\tau/G)
\cdot \left[\chi (0)+\sum_{k\ge 1}\,\lambda^{-k}\,\mathfrak{f} _{k,\nu}\right].
\nonumber
\end{eqnarray}

As in \cite{p24}, we shall now follow a standard
argument from spectral analysis (see e.g. \cite{gs}).
Let 
$f_\lambda:\mathbb{R}\times \mathbb{R}\rightarrow [0,+\infty)$
be defined by
\begin{equation}
f_\lambda(s,t):=\hat{\chi}(t)\cdot H(\lambda-s-t),
\label{eqn:defn di flambda}
\end{equation}
where $H$ is the Heaviside function.
Let $\mathcal{L}$ be the Lebsesgue measure on $\mathbb{R}$,
and let $\mathcal{P}^\tau_{\nu}$ be the positive
measure on $\mathbb{R}$ given by
\begin{equation}
\label{eqn:defn Ttau}
\mathcal{Q}^\tau_{\nu}:=\sum_{j\ge 1}
\dim H^\tau(X)_{j,\nu}\,\delta_{\lambda_j};
\end{equation}
here $\delta_a$ is the delta measure at $a\in \mathbb{R}$.
Let us endow $\mathbb{R}\times \mathbb{R}$ with the product
meaure $\mathcal{P}^\tau_{\nu}\times \mathcal{L}$.
By the Fubini Theorem,
\begin{equation}
\label{eqn:Fubini}
\int_\mathbb{R}\,\mathrm{d}\mathcal{L}(t)\,
\left[\int_\mathbb{R}\,f_\lambda(s,t)\, 
\mathrm{d}\mathcal{Q}^\tau_{\nu}(s)
  \right]=\int_\mathbb{R}\,
\mathrm{d}\mathcal{Q}^\tau_{\nu}(s)
\left[\int_\mathbb{R}\,f_\lambda(s,t)\,\mathrm{d}\mathcal{L}(t)
  \right].
\end{equation}

The right hand side in (\ref{eqn:Fubini}) is
\begin{eqnarray}
\label{eqn:integr rhs}
\lefteqn{\int_\mathbb{R}\,
\mathrm{d}\mathcal{Q}^\tau_{\nu}(s)
\left[\int_\mathbb{R}\,f_\lambda(s,t)\,\mathrm{d}\mathcal{L}(t)
  \right]   }\\
  &=&  \int_\mathbb{R}\,
\mathrm{d}\mathcal{Q}^\tau_{\nu}(s)
\left[\int_{-\infty}^{\lambda-s}\,\hat{\chi}(t)\,\mathrm{d}\mathcal{L}(t)
  \right]=   \int_\mathbb{R}\,
\mathrm{d}\mathcal{Q}^\tau_{\nu}(s)
\left[\int_{-\infty}^{\lambda}\,\hat{\chi}(t-s)\,\mathrm{d}\mathcal{L}(t)
  \right]              \nonumber\\
  &=&\sum_j\dim H^\tau(X)_{j,\nu}\,
  \int_{-\infty}^{\lambda}\,\hat{\chi}(t-\lambda_j)\,\mathrm{d}\mathcal{L}(t)\nonumber\\
  &=&\int_{-\infty}^{\lambda}\,
  \left[\sum_j\,
  \hat{\chi}(t-\lambda_j)\cdot \dim H^\tau(X)_{j,\nu}\right]\,\mathrm{d}\mathcal{L}(t)
  \nonumber
  \end{eqnarray}
In view of (\ref{eqn:weyl law integrand split 1 int glob}), 
we conclude
that as $\lambda\rightarrow +\infty$
\begin{eqnarray}
\label{eqn:exp tot diag dx1}
\lefteqn{\int_\mathbb{R}\,
\mathrm{d}\mathcal{Q}^\tau_{\nu}(s)
\left[\int_\mathbb{R}\,f_\lambda(s,t)\,\mathrm{d}\mathcal{L}(t)
  \right]}\\
   &=&\frac{1}{2^{d_G/2}\,\sqrt{2\,\pi}}\,
\cdot \frac{\lambda}{d-d_G}\,   
   \left( \frac{\lambda}{2\,\pi\,\tau}\right)^{d-1-d_G}
\cdot \dim(\nu)^2\cdot
\mathrm{vol}(Z^\tau/G)\nonumber\\
&&
\cdot \left[\chi (0)+\sum_{k\ge 1}\,\lambda^{-k}\,F_{k,\nu}
\left(x\right)\right].
\nonumber
\end{eqnarray}

On the other hand, the left hand side in (\ref{eqn:Fubini})
is
\begin{eqnarray}
\label{eqn:Fubini lhs}
\lefteqn{\int_\mathbb{R}\,\mathrm{d}\mathcal{L}(t)\,
\left[\int_\mathbb{R}\,f_\lambda(s,t)\, 
\mathrm{d}\mathcal{Q}^\tau_{\nu}(s)
  \right]}\\
 &=&\int_\mathbb{R}\,\left[
 \sum_{j}\dim H^\tau(X)_{j,\nu}\,\,
 \hat{\chi}(t)\cdot H(\lambda-\lambda_j-t)
 \right]\,\mathrm{d}\mathcal{L}(t)\nonumber\\
 &=&\int_\mathbb{R}\,\left[
 \sum_{\lambda_j\le \lambda-t}\dim H^\tau(X)_{j,\nu}\,
 \right]\,
 \hat{\chi}(t)\,\mathrm{d}\mathcal{L}(t)\nonumber\\
 &=&\int_\mathbb{R}\,
 \mathcal{W}_\nu^\tau(\lambda-t)\,
 \hat{\chi}(t)
 \,\mathrm{d}\mathcal{L}(t)\nonumber\\
 &=&\sqrt{2\,\pi}\,\chi (0)\,\mathcal{W}_\nu^\tau(\lambda)
+\int_{-\infty}^{+\infty}\,\left[\mathcal{W}_\nu^\tau(\lambda-t)
-\mathcal{W}_\nu^\tau(\lambda)\right]\,\hat{\chi}(t)
 \,\mathrm{d}\mathcal{L}(t).\nonumber
\end{eqnarray}

An adaptation of the argument in the proof of Lemma 70 of
\cite{p24}, 
paired with (\ref{eqn:weyl law integrand split 1 int glob}), yields the estimate 
\begin{equation}
\label{eqn:integrsl estimate}
\int_{-\infty}^{+\infty}\,\left[\mathcal{W}_\nu^\tau(\lambda-t)
-\mathcal{W}_\nu^\tau(\lambda)\right]\,\hat{\chi}(t)
 \,\mathrm{d}\mathcal{L}(t)=O\left(\lambda^{d-1-d_G}   \right)
\end{equation}
for $\lambda\rightarrow +\infty$.
We conclude
\begin{eqnarray}
\label{eqn:weyl 10}
\mathcal{W}_\nu^\tau(\lambda)&=&
\frac{2^{d_G/2}}{2^{d-1}\,(2\,\pi)}\,
\cdot \frac{\lambda}{d-d_G}\,   
   \left( \frac{\lambda}{\pi\,\tau}\right)^{d-1-d_G}
\cdot \dim(\nu)^2\cdot
\mathrm{vol}(Z^\tau/G)\nonumber\\
&&
\cdot \left[1+O\left(  \lambda^{-1} \right)\right]\nonumber\\
&=&\frac{1}{2^{d_G/2}}\cdot 
\frac{\tau}{d-d_G}\cdot \left( \frac{\lambda}{2\,\pi\,\tau}  \right)^{d-d_G}
\cdot \dim(\nu)^2\cdot
\mathrm{vol}(Z^\tau/G)\nonumber\\
&&
\cdot \left[1+O\left(  \lambda^{-1} \right)\right]\nonumber
\end{eqnarray}

\end{proof}

\subsection{Pointwise estimates on eigenfunctions}

\label{sctn:pointwise estimates}

\begin{proof}
[Proof of Proposition \ref{prop:uniform diagonal estimate}]
If $x\not\in Z^\tau$, there exists an open neighbourhood 
$Y_x\subseteq X^\tau$ of $x$ such that 
$\Pi^\tau_{\chi,\nu,\lambda}(x',x')=O\left(\lambda^{-\infty}\right)$
for $\lambda\rightarrow +\infty$, uniformly for $x'\in Y_x$.
If $x\in X^\tau$, there exists similarly a neighbourhood $Y_x$ where
the asymptotic expansion of Theorem \ref{thm:near-graph-unrescaled}
implies $\Pi^\tau_{\chi,\nu,\lambda}(x',x')\le C_x\,\lambda^{d-1-d_G/2}$
for some constant $C_x$, uniformly for $x'\in Y_x$. 
The claim follows by the compactness of $X^\tau$.

\end{proof}

\subsection{Equivariant operator norm estimates}

We can now prove the equivariant version of the
operator norm estimate of Chang and Rabinowitz (\ref{eqn:cr norm estimate});
a similar statement may be proved with minor changes for 
$P^\tau_{\chi,\nu,\lambda}$.

\begin{proof}
[Proof of Theorem \ref{thm:norm operator estimate}]
By the 
Shur-Young inequality, 
for some positive constant $D^\tau_{p}$
\begin{equation}
\label{eqn:shur-young-abs}
\left\|\Pi^\tau_{\chi,\nu,\lambda}\right\|_{L^p\rightarrow L^q}\le 
D_{p}^\tau\,\left[
\sup_{y\in X^\tau}\int_{X^\tau}\left|\Pi^\tau_{\chi,\nu,\lambda}
(y',y)   \right|^r\,\mathrm{d}V_{X^\tau}(y')
\right]^{\frac{1}{r}}, 
\end{equation}
where
$$
\frac{1}{r}:=1-\frac{1}{p}+\frac{1}{q}.
$$

Let us choose $C>0$ and $\epsilon'\in (0,1/6)$.
By Theorem \ref{thm:main 1}, $\Pi^\tau_{\chi,\lambda}
(y',y)=O\left(\lambda^{-\infty}\right)$ uniformly for
$\mathrm{dist}_{X^\tau}\left(y,Z^\tau\right)\ge C\,\lambda^{\epsilon'-1/2}$.
Thus we may assume without loss of generality that 
$\mathrm{dist}_{X^\tau}\left(y,Z^\tau\right)\le C\,\lambda^{\epsilon'-1/2}$.
Any such $y$ may be written in the form 
\begin{equation}
\label{eqn:y param Ztau}
y=x+\frac{\mathbf{v}^t}{\sqrt{\lambda}},
\quad \text{where}\quad 
x\in Z^\tau,\quad \mathbf{v}^t\in T^t_x X^\tau,
\quad \|\mathbf{v}^t\|\le 2\,C\,\lambda^{\epsilon'}.
\end{equation}

Let $0<a<A$ be such that 
$$
a\,\mathrm{dist}_{X^\tau}\left(x',
x''\right)\le \mathrm{dist}_{X^\tau}\left(\Gamma^\tau_t(x'),
\Gamma^\tau_t(x'')\right)\le 
A\,\mathrm{dist}_{X^\tau}\left(x',
x''\right)$$
for all $x',x''\in X^\tau$ and $(g,t)\in G\times \mathrm{supp}(\chi)$.

\begin{lem}
Uniformly for $y$ as in (\ref{eqn:y param Ztau}) and 
\begin{equation}
\label{eqn:dist Xtau yxG}
\mathrm{dist}_{X^\tau}\left(y',x^{G\times \chi}\right)\ge 3\,A\,C\,\lambda^{\epsilon'-1/2}.
\end{equation}
we have $\Pi^\tau_{\chi,\lambda}
(y',y)=O\left(\lambda^{-\infty}\right)$.
\end{lem}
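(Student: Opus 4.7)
The plan is to reduce the claim to Theorem \ref{thm:main 1} applied to the pair $(y',y)$, by showing that the hypothesis on $\mathrm{dist}_{X^\tau}(y',x^{G\times\chi})$ forces an analogous lower bound on $\mathrm{dist}_{X^\tau}(y',y^{G\times\chi})$. Since $y$ is already close to $Z^\tau$ on the scale $\lambda^{\epsilon'-1/2}$ by (\ref{eqn:y param Ztau}), the condition on $\mathrm{dist}_{X^\tau}(y,Z^\tau)$ in Theorem \ref{thm:main 1} will not help us directly; the work lies entirely in transferring the separation from $x^{G\times\chi}$ to a separation from $y^{G\times\chi}$ by the triangle inequality.

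First I would record the comparison $\mathrm{dist}_{X^\tau}(y,x)\le C_0\,\lambda^{\epsilon'-1/2}$, with $C_0=2C(1+o(1))$. This follows from (\ref{eqn:y param Ztau}) because in the NHLC's centered at $x$ the Riemannian metric on $X^\tau$ agrees with the Euclidean one up to higher-order corrections (Proposition 34 of \cite{p24}), and $\|\mathbf{v}^t/\sqrt{\lambda}\|\le 2C\,\lambda^{\epsilon'-1/2}$.

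Next, for an arbitrary point $y^\ast=\tilde{\mu}^\tau_g\circ \Gamma^\tau_t(y)\in y^{G\times\chi}$ with $(g,t)\in G\times\mathrm{supp}(\chi)$, I would introduce the companion point $x^\ast:=\tilde{\mu}^\tau_g\circ \Gamma^\tau_t(x)\in x^{G\times\chi}$. Since $\tilde{\mu}^\tau_g$ is an isometry of $(X^\tau,\tilde{\kappa})$ (Proposition \ref{prop: basic facts on G action}) and $\Gamma^\tau_t$ is uniformly Lipschitz with constant $A$ for $t\in\mathrm{supp}(\chi)$ (by the choice of $A$ preceding the lemma), one obtains
\begin{equation*}
\mathrm{dist}_{X^\tau}(y^\ast,x^\ast)
=\mathrm{dist}_{X^\tau}\bigl(\Gamma^\tau_t(y),\Gamma^\tau_t(x)\bigr)
\le A\,\mathrm{dist}_{X^\tau}(y,x)\le A\,C_0\,\lambda^{\epsilon'-1/2}.
\end{equation*}
Combining this with the standing hypothesis $\mathrm{dist}_{X^\tau}(y',x^\ast)\ge \mathrm{dist}_{X^\tau}(y',x^{G\times\chi})\ge 3AC\,\lambda^{\epsilon'-1/2}$ and the triangle inequality,
\begin{equation*}
\mathrm{dist}_{X^\tau}(y',y^\ast)\ge
\mathrm{dist}_{X^\tau}(y',x^\ast)-\mathrm{dist}_{X^\tau}(x^\ast,y^\ast)
\ge (3C-C_0)\,A\,\lambda^{\epsilon'-1/2}.
\end{equation*}
For $\lambda$ large enough, $C_0<2C$, so the right-hand side is bounded below by $AC\,\lambda^{\epsilon'-1/2}$. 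Taking the infimum over $y^\ast\in y^{G\times\chi}$ yields $\mathrm{dist}_{X^\tau}(y',y^{G\times\chi})\ge AC\,\lambda^{\epsilon'-1/2}$, which by Theorem \ref{thm:main 1} gives the desired $\Pi^\tau_{\chi,\nu,\lambda}(y',y)=O(\lambda^{-\infty})$.

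The only mild subtlety will be fixing the constants coherently: one must choose the constant $3A\,C$ in (\ref{eqn:dist Xtau yxG}) precisely so that after absorbing the $(1+o(1))$ factor arising from the metric/NHLC comparison, a positive excess of order $\lambda^{\epsilon'-1/2}$ survives and can be fed into Theorem \ref{thm:main 1}. There is no substantive obstacle: the Lipschitz bound $A$ for $\Gamma^\tau_t$ on the compact set $\mathrm{supp}(\chi)$, the isometric nature of $\tilde{\mu}^\tau_g$, and the already established Theorem \ref{thm:main 1} supply all the required ingredients.
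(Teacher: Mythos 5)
Your argument is correct and is essentially the paper's own proof: both pass from $y^\ast=\tilde{\mu}^\tau_g\circ\Gamma^\tau_t(y)$ to the companion point $\tilde{\mu}^\tau_g\circ\Gamma^\tau_t(x)$, use that $\tilde{\mu}^\tau_g$ is an isometry and $\Gamma^\tau_t$ is Lipschitz with constant $A$ on $\mathrm{supp}(\chi)$, apply the triangle inequality to get $\mathrm{dist}_{X^\tau}\bigl(y',y^{G\times\chi}\bigr)\gtrsim A\,C\,\lambda^{\epsilon'-1/2}$, and then invoke Theorem \ref{thm:main 1}. The only difference is your explicit bookkeeping of the $(1+o(1))$ NHLC metric comparison, which the paper absorbs into the choice of constants; this is harmless.
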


\begin{proof}
If (\ref{eqn:dist Xtau yxG}) holds, 
then for any $(g,t)\in G\times \mathrm{supp}(\chi)$ 
\begin{eqnarray}
\lefteqn{\mathrm{dist}_{X^\tau}\left(y',\mu_g\circ \Gamma^\tau_t(y)\right)}
\nonumber\\
&\ge& \mathrm{dist}_{X^\tau}\left(y',\mu_g\circ \Gamma^\tau_t(x)\right)
-\mathrm{dist}_{X^\tau}\left(\mu_g\circ \Gamma^\tau_t(y),\mu_g\circ \Gamma^\tau_t(x)\right)\nonumber\\
&=&\mathrm{dist}_{X^\tau}\left(y',\mu_g\circ \Gamma^\tau_t(x)\right)
-\mathrm{dist}_{X^\tau}\left(\Gamma^\tau_t(y),
\Gamma^\tau_t(x)\right)\nonumber\\
&\ge&\mathrm{dist}_{X^\tau}\left(y',\mu_g\circ \Gamma^\tau_t(x)\right)
-2\,A\,C\,\lambda^{\epsilon'-\frac{1}{2}}\ge 
A\,C\,\lambda^{\epsilon'-\frac{1}{2}},
\end{eqnarray}
whence 
$\mathrm{dist}_{X^\tau}\left(y',x^{G\times \chi}\right)\ge 
A\,C\,\lambda^{\epsilon'-\frac{1}{2}}$.
The claim follows by Theorem \ref{thm:main 1}.
\end{proof}

Hence integration in (\ref{eqn:shur-young-abs}) may be restricted to 
the shrinking locus where (\ref{eqn:dist Xtau yxG}) is satisfied.
Any $y'\in X^\tau$ satisfying (\ref{eqn:dist Xtau yxG}) may 
in turn be written uniquely
(for a given choice of smoothly varying normal NHLC's)
in the form 
\begin{equation}
\label{eqn:param-resc-y'}
y'=\Gamma^\tau_t\circ \mu^\tau_{g}(x)
+\frac{\mathbf{u}^t+\mathbf{u}^h}{\sqrt{\lambda}},
\end{equation}
where $t\in \mathrm{supp}(\chi)$, $g\in G$ and 
$$
\mathbf{u}^t\in T^t_{\Gamma^\tau_t\circ \mu^\tau_{g}(x)}X^\tau,
\quad 
\mathbf{u}^h\in T^h_{\Gamma^\tau_t\circ \mu^\tau_{g}(x)}X^\tau,
\quad \left\|\mathbf{u}^t\right\|,\,\left\|\mathbf{u}^h\right\|
=O\left(\lambda^{\epsilon'}\right).
$$

In view of statement 3. of Theorem \ref{thm:main 2} and
of the previous considerations on uniformity,
there exist constants $a,\,C_\nu>0$ such that, uniformly 
for all
such choices,
\begin{eqnarray}
\label{eqn:estimate for Pi r}
\left|\Pi^\tau_{\chi,\nu,\lambda}
(y',y)   \right|^r&=&
\left|\Pi^\tau_{\chi,\nu,\lambda}
\left(\Gamma^\tau_t\circ \mu^\tau_{g}(x)
+\frac{\mathbf{u}^t+\mathbf{u}^h}{\sqrt{\lambda}},
x+\frac{\mathbf{v}^t}{\sqrt{\lambda}}  \right)   \right|^r
\nonumber\\
&\le&C_\nu\,\left( \frac{\lambda}{\tau}  \right)^{r\,(d-1-d_G/2)}\,
e^{-a\,\frac{r}{\tau}\left(\|\mathbf{u}^t\|+\|\mathbf{u}^h\|^2+
\|\mathbf{v}^t\|^2
\right)}
\end{eqnarray}

Let us set 
\begin{equation}
\label{eqn:defn di F r}
F_r(y):=\int_{X^\tau}\left|\Pi^\tau_{\chi,\nu,\lambda}
(y',y)   \right|^r\,\mathrm{d}V_{X^\tau}(y')
\qquad (y\in X^\tau).
\end{equation}

Using an unparametrized version of (\ref{eqn:param-resc-y'}),
that is, $y'=\Gamma^\tau_t\circ \mu^\tau_{g}(x)
+\mathbf{u}^t+\mathbf{u}^h$,
we have 
$\mathrm{d}V_{X^\tau}(y')=\mathcal{V}(g,t,\mathbf{u}^t,\mathbf{u}^h)
\,\mathrm{d}g\,\mathrm{d}t\,\mathrm{d}\mathbf{u}^t\,
\mathrm{d}\mathbf{u}^h$ for some positive function
$\mathcal{V}(g,t,\mathbf{u}^t,\mathbf{u}^h)$, and integration
in $\mathrm{d}\mathbf{u}^t\,
\mathrm{d}\mathbf{u}^h$ is on a shrinking ball of radius 
$O\left(\lambda^{\epsilon'-1/2}\right)$. Applying the 
rescaling in (\ref{eqn:param-resc-y'}) yields
$\mathrm{d}V_{X^\tau}(y')=\lambda^{-(d-1-d_G)-d_G/2}\,\mathcal{V}(g,t,\mathbf{0}^\dagger,\mathbf{0}^h)
\,\mathrm{d}g\,\mathrm{d}t\,\mathrm{d}\mathbf{u}^t\,
\mathrm{d}\mathbf{u}^h+L.O.T.$, where 
$L.O.T.$ denotes lower order terms in $\lambda$, and integration in
in $\mathrm{d}\mathbf{u}^t\,
\mathrm{d}\mathbf{u}^h$ is now on an expanding ball of radius 
$O\left(\lambda^{\epsilon'}\right)$.

Given this, we obtain from (\ref{eqn:estimate for Pi r}) that
uniformly for $x\in Z^\tau$ and $\mathbf{v}^t\in T^t_x X^\tau$ with 
$\|\mathbf{v}^t\|\le 2\,C\,\lambda^{\epsilon'}$ we have
(for some constant $D_\nu>0$)
\begin{eqnarray}
\label{eqn:estimate for Fr}
F_r\left( x+\frac{\mathbf{v}^t}{\sqrt{\lambda}}  \right)
&\le&D_{\nu,r}\,
\left( \frac{\lambda}{\tau}  \right)^{r\,\left(d-1-\frac{d_G}{2}\right)}\,
\left(\frac{\tau}{\lambda}\right)^{d-1-\frac{d_G}{2}}
\nonumber\\
&=&D_{\nu,r}\,
\left( \frac{\lambda}{\tau}  \right)^{(r-1)\,(d-1-d_G/2)}.
\end{eqnarray}

Hence, uniformly in $y\in X^\tau$ we have for some
constant $D_{\nu,r}'>0$ that
\begin{eqnarray*}
\left[
\sup_{y\in X^\tau}\int_{X^\tau}\left|\Pi^\tau_{\chi,\nu,\lambda}
(y',y)   \right|^r\,\mathrm{d}V_{X^\tau}(y')
\right]^{\frac{1}{r}}
&\le&D_{\nu,r}'\,
\left( \frac{\lambda}{\tau}  \right)
^{\left(1-\frac{1}{r}\right)\,\left(d-1-\frac{d_G}{2} \right)}\\
&=&D_{\nu,r}'\,
\left( \frac{\lambda}{\tau}  \right)
^{\left(\frac{1}{p}-\frac{1}{q}\right)\,\left(d-1-\frac{d_G}{2} \right)}.
\end{eqnarray*}

\end{proof}

\subsection{Proof of Proposition \ref{prop:weyl law integrand split 1 int glob 1} and Theorem \ref{thm:weyl poisson}}

\begin{proof}
[Proof of Proposition \ref{prop:weyl law integrand split 1 int glob 1}]
In the setting of Theorem \ref{thm:weyl law geod},
in place of (\ref{eqn:diadg exp equiv}) we have
\begin{eqnarray}
\label{eqn:diadg exp equiv poisson}
\lefteqn{P^\tau_{\chi,\nu,\lambda}\left(
x_\lambda(\mathbf{v}^t),x_\lambda(\mathbf{v}^t)\right)}
\\
&\sim&\frac{1}{\sqrt{2\,\pi}}\,
\left(\frac{1}{2}\right)^{d-1-d_G/2}
\left( \frac{\lambda}{\pi\,\tau}\right)^{(d-1-d_G)/2}
\cdot 
\frac{\dim(\nu)^2}{ V_{eff}(x_1)}\nonumber\\
&&\cdot e^{-\frac{2}{\tau}\,
\left\|\mathbf{v}^t\right\|^2
}
\cdot \left[\chi (0)+\sum_{k\ge 1}\,\lambda^{-k/2}\,F_{k,l,\nu}
\left(x;\mathbf{v}^t\right)\right],
\nonumber
\end{eqnarray}
where
is $x_\lambda(\mathbf{v}^t)$ is
as in (\ref{eqn:local param normal dspl}) and 
$F_{k,l,\nu}(x;\cdot)$ is a polynomial of degree $\le 3\,k$ and 
parity $k$. 
The statement of the Theorem follows by integration.
\end{proof}

\begin{proof}
[Proof of Theorem \ref{thm:weyl poisson}]
In analogy with (\ref{eqn:defn Ttau}), let us define
the positive measure
\begin{equation}
\label{eqn:defn tildeTtau}
\widetilde{\mathcal{T}}^\tau_{\nu}:=\sum_{j\ge 1}
\Lambda_{j,\nu}\,\delta_{\mu_j}
\end{equation}
where
$$
\Lambda_{j,\nu}:=e^{-2\,\tau\,\mu_j}\,\sum_k \left\|\widetilde{\varphi}^\tau_{j,\nu,k}\right\|_{L^2(X^\tau)}^2.
$$

With $f_\lambda$ as in (\ref{eqn:defn di flambda}),
we have as in (\ref{eqn:integr rhs})
\begin{eqnarray}
\label{eqn:integr rhs poisson}
\int_\mathbb{R}\,
\mathrm{d}\widetilde{\mathcal{T}}^\tau_{\nu}(s)
\left[\int_\mathbb{R}\,f_\lambda(s,t)\,\mathrm{d}\mathcal{L}(t)
  \right]   
  =  \int_{-\infty}^{\lambda}\,
  \left[\sum_j\,
  \hat{\chi}(t-\mu_j)\cdot \Lambda_{j,\nu}\right]\,\mathrm{d}\mathcal{L}(t).
  \end{eqnarray}
In view of Proposition \ref{prop:weyl law integrand split 1 int glob 1}, 
we conclude
that as $\lambda\rightarrow +\infty$
\begin{eqnarray}
\label{eqn:exp tot diag dx poisson}
\lefteqn{\int_\mathbb{R}\,
\mathrm{d}\widetilde{\mathcal{T}}^\tau_{\nu,\lambda}(s)
\left[\int_\mathbb{R}\,f_\lambda(s,t)\,\mathrm{d}\mathcal{L}(t)
  \right]}\\
   &=&\frac{1}{\sqrt{2\pi}}\,
   \frac{1}{2^{d-1-d_G/2}}\,\left( \frac{\lambda}{\pi\,\tau}\right)^{\frac{d-1}{2}-d_G}\,
\cdot \dim(\nu)^2\cdot
\mathrm{vol}(Z^\tau/G)\nonumber\\
&&
\cdot \lambda\,\left[\dfrac{\chi (0)}{\frac{d+1}{2}-d_G}+\sum_{k\ge 1}\,\lambda^{-k}\,\mathfrak{f}_k''\right].\nonumber
\end{eqnarray}
Similarly, arguing as for (\ref{eqn:Fubini lhs}) we obtain
\begin{eqnarray}
\label{eqn:Fubini lhs poisson}
\lefteqn{\int_\mathbb{R}\,\mathrm{d}\mathcal{L}(t)\,
\left[\int_\mathbb{R}\,f_\lambda(s,t)\, 
\mathrm{d}\widetilde{\mathcal{T}}^\tau_{\nu}(s)
  \right]}\\
 &=&\sqrt{2\,\pi}\,\chi (0)\,\mathcal{P}_\nu^\tau(\lambda)
+\int_{-\infty}^{+\infty}\,\left[\mathcal{P}_\nu^\tau(\lambda-t)
-\mathcal{P}_\nu^\tau(\lambda)\right]\,\hat{\chi}(t)
 \,\mathrm{d}\mathcal{L}(t)\nonumber\\
 &=&\sqrt{2\,\pi}\,\chi (0)\,\mathcal{P}_\nu^\tau(\lambda)
+O\left(\lambda ^{\frac{d-1}{2}-d_G} \right),
\nonumber
\end{eqnarray}
where the last equality follows from 
Proposition \ref{prop:weyl law integrand split 1 int glob 1} and the argument in Lemma 70 of \cite{p24}.

By the Fubini Theorem,
(\ref{eqn:Fubini lhs poisson}) equals 
(\ref{eqn:exp tot diag dx poisson}); the claim follows.

\end{proof}

\section{Index of Notation}

\begin{enumerate}
\item $(M,\kappa)$: the given $d$-dimensional Riemannian manifold;
$(\tilde{M},J)$: its complexification; 

\item $\rho:\tilde{M}\rightarrow [0,+\infty)$: the strictly plurisubharmonic
exhaustion function
determined by $\kappa$;

\item $h_0,\,\omega_0,\, g_0$: the standard hermitian, symplectic and Riemannian structure. $\Omega,\,\hat{\kappa}$: see (\ref{eqn:Omega and kappa}); 
$\omega$ and $\tilde{\kappa}$: see Remark \ref{rem:warning norm};

\item $\psi_2^{\omega}$: see Definition \ref{defn:psi2};

\item $G$: $d_G$-dimensional Lie group; $\mathfrak{g},\, \mathfrak{g}_{X^\tau}(x)$: see Notation \ref{defn:induced vct field};

\item $\tilde{M}^\tau$, $X^\tau$: see (\ref{eqn:MXtau}); 
$\alpha^\tau$: see (\ref{eqn:contact str});

\item $\mathcal{H}^\tau$, $\mathcal{R}^\tau$, $\mathcal{T}^\tau$:
see (\ref{eqn:defn Htau}), (\ref{eqn:reeb vct field}), 
(\ref{eqn:dicrect sum vert hor}); $T^vX^\tau,\,T^tX^\tau,\,T^hX^\tau$: see Definition \ref{defn:vth dec Ztau};

\item $\Delta,\,\mu_j,\,W_j, \,\varphi_{j,k}$: see (\ref{eqn:eigenvalue laplacian});
\item $\Pi^\tau$, $H(X^\tau)$: see (\ref{eqn:szego proj and ker});
$\mathfrak{D}^\tau_{\sqrt{\rho}}$: see (\ref{eqn:DsqrtrhoToeplitz});
$H(X^\tau)_j$, $\rho_{j,k}$: see (\ref{eqn:smoothed proj kern});

\item $ P^\tau_{\chi,\,\lambda}$: see
(\ref{eqn:temepered_proj_kernel_poisson}); 
$\lambda_j$, $\rho_{j,k}$,
$\Pi^\tau_{\chi,\,\lambda}$:
see (\ref{eqn:smoothed proj kern});
 
 \item $\mu:G\times M\rightarrow M$: isometric Lie group action;
$\hat{G}$, $\nu$, $V_\nu$, $\Xi_\nu$, $\dim(\nu)$: see Notation \ref{notn:irred repr char}; $V_{eff}$: see Definition \ref{defn:effective volume};

\item $\tilde{\mu}^\tau,\,Z,\,Z^\tau$: see
(\ref{eqn:defn di Ztau});

\item $x_{j,\,\lambda}$: see (\ref{eqn:rescaled coord 12}); $x_{12}$: see (\ref{eqn:defn di x12});

\item the matrix $B$: see (\ref{eqn:defn of M}), (\ref{eqn:defn of M12});

\item $W_{j,\nu},\,H(X^\tau)_{j,\nu}$: see (\ref{eqn:L2Mdec})
and (\ref{eqn:L2Mdecompequiv}); $P_\nu$: see (\ref{eqn:Pi chi nu lambda projector});

\item $P^\tau_{\chi,\nu,\lambda}$: see (\ref{eqn:temepered_proj_kernel_poisson_equiv});
$\Pi^\tau_{\chi,\nu,\lambda}$: see 
(\ref{eqn:smoothed proj kern equiv}); 

\item $\mathrm{d}V_G(g)$: the Haar measure on $G$;
$\mathrm{d}V_{X^\tau}(x)$: the measure on $X^\tau$ associated to
the Riemannian volume form $\mathrm{vol}^R_{X^\tau}$ (see 
\S 3.2.3 of \cite{p24});

\item  $\Gamma^\tau_t$, $
x^{G\times \chi}$, $
\mathfrak{X}^\tau_\chi$, $x^\chi$, $x^G$: see Definition
\ref{defn:concentration locus} and the discussion 
immediately preceding it;

\item the matrices $A_c$, $P$, $Q$ associated to a symplectic matrix
$A$: see Definition \ref{defn:sympl compl mat};
the function $\Psi_A:\mathbb{R}^{2d-2}\times \mathbb{R}^{2d-2}\rightarrow \mathbb{C}$ associated to a symplectic matrix $A$: see
Definition \ref{defn:quadratice form};

\item $e^{\imath \theta^\tau_t(x)},\,
e^{\imath \tilde{\theta}^\tau_t(x)}$:
see Remark \ref{rem:unitary factor Pi};

\item $A_\chi$: see Definition \ref{eqn:defn di Achi};
$\mathcal{F}_{\chi}$, $\mathcal{B}_\nu$:
see Definition \ref{defn:FchiBnu};

\item $\psi^\tau$, $s^\tau$: see (\ref{eqn:szego as FIO});
$r^\tau_t$: see (\ref{eqn:rttauexp});

\item $\Sigma_\chi(x_1,x_2)$: see (\ref{eqn:Sigmax12});

\item $\mathcal{W}^\tau_\nu(\lambda)$: see (\ref{eqn:defn di Wtaonu}); $\mathcal{P}^\tau_\nu(\lambda)$: see (\ref{eqn:defn Ptaunuweyl});

\item $\psi_{\gamma},\, J_{\mathrm{ad}}$: see Notation \ref{defn:defn di psigamma};

\item $\Phi,\,\varphi^{\xi},\,\xi_{X^\tau}^\sharp$: see Notation \ref{notn:moment map};

\item $\mathbf{v}_g$: see (\ref{eqn:vg}); $\mathbf{v}^{(l)}$: see (\ref{eqn:vl}); $\mathbf{v}(\tau)$: see (\ref{eqn:rescale vj}); $\mathbf{v}^v,\, \mathbf{v}^t,\, \mathbf{v}^h$: see Definition \ref{defn:vth dec Ztau};

\item $U_{\sqrt{\rho}}(t),\, \Pi^\tau_{t},\, \tilde{\Pi}^\tau_t,\, J^{\tau}_t,\,\mathcal{P}^\tau_t,\, \sigma^\tau_{t,\, j},\,\sigma^{(x)}_J$: see \S\ref{sctn:preamble1};

\item $\Psi(x_1,x_2; g, t,y ,u,v)$: see (\ref{eqn:fasePsi'}); $\mathcal{I}_k(\theta_1,\mathbf{v}_!,\theta_2,\mathbf{v}_2)$: see (\ref{eqn:defn di mathcalIl}); $A(\mathbf{v}_1 ,\mathbf{v}_2 , \mathbf{u})_l$: see (\ref{eqn:defn di Al});

\item $U(\imath\tau),\, \mathfrak{B}^\tau,\,\tilde{\varphi}_{j,k},\, U_{\mathbb{C}}(t+2\imath\tau),\, \mathcal{Q}^\tau_t, \gamma^\tau_{t}, \gamma^\tau_{t,j}$: see \S\ref{sctn:complexified eigenf lapl};

\item $\Re,\, \Im$: the real and imaginary part of a complex number;

\item The Fourier transform of $f\in \mathcal{S}(\mathbb{R})$ is
$$
\hat{f}(\lambda)=\frac{1}{\sqrt{2\,\pi}}\,\int_{-\infty}^{+\infty}\,
e^{-\imath\,\lambda\,t}\,f(t)\,\mathrm{d}t.
$$
\item $\mathrm{dist}_{X^\tau}:
X^\tau\times X^\tau\rightarrow \mathbb{R}$: 
the Riemannian distance function
on $X^\tau$.
\item $\bullet^\dagger$ indicates the transpose of a matrix or a vector, $\bullet^*$ indicates the conjugate transpose of a matrix.

\end{enumerate}

\textbf{Acknowledgments.} 
The authors are members of GNSAGA (Gruppo Nazionale
per le Strutture Algebriche, Geometriche e le loro Applicazioni) of INdAM
(Istituto Nazionale di Alta Matematica ”Francesco Severi”) and thank the
group for its support.

\end{document}